\documentclass[11pt,a4paper]{amsart}
\usepackage[foot]{amsaddr}
\usepackage{a4wide}
\usepackage[english]{babel}
\usepackage{cite}
\usepackage{amsmath}
\usepackage{amssymb}
\usepackage{graphicx}
\usepackage{amsthm}
\usepackage{kvoptions}
\usepackage{thmtools}
\usepackage[hidelinks]{hyperref}
\usepackage{enumerate}
\usepackage{enumitem}
\usepackage{bbm}
\usepackage{bm}
\usepackage{mathrsfs}

\usepackage{comment}

\numberwithin{equation}{section}


\newcommand\rate[1]{\limsup_{N\to\infty} \frac{1}{N} \log \E \left[\, #1 \, \right]}
\newcommand\rated[1]{\liminf_{N\to\infty} \frac{1}{N} \log \E \left[\, #1 \, \right]}
\newcommand\ratep[1]{\limsup_{N\to\infty} \frac{1}{N} \log P^N \big(\, #1 \, \big)}

\newcommand{\law}{\mathrm{Law}}

\newtheorem{thm}{Theorem}[section]
\newtheorem{prop}[thm]{Proposition}
\newtheorem{lm}[thm]{Lemma}
\newtheorem{cory}[thm]{Corollary}

\theoremstyle{definition}
\newtheorem{defi}[thm]{Definition}

\newtheorem{remark}[thm]{Remark}

\newcommand{\N}{\mathbb{N}}
\newcommand{\R}{\mathbb{R}}
\newcommand{\REx}{\bar{\R}}
\newcommand{\E}{\mathbb{E}}
\newcommand{\Lip}{\text{Lip}}

\newcommand{\FLip}{\mathcal{F}\mathrm{Lip}}

\newcommand{\cX}{\mathcal{X}}
\newcommand{\cA}{\mathcal{A}}
\newcommand{\cB}{\mathcal{B}}
\newcommand{\cE}{\mathcal{E}}
\newcommand{\cF}{\mathcal{F}}
\newcommand{\cG}{\mathcal{G}}
\newcommand{\cH}{\mathcal{H}}
\newcommand{\cP}{\mathcal{P}}

\newcommand{\x}{{\boldsymbol{x}}}
\renewcommand{\o}{{\boldsymbol{\omega}}}

\usepackage{xcolor}


\title{Large deviations for singularly interacting diffusions}
\author{Jasper Hoeksema\textsuperscript{*}}
\address{\textsuperscript{*} Department of Mathematics and Computer Science, Eindhoven University of Technology, 5600 MB Eindhoven,The Netherlands; email addresses: j.hoeksema@tue.nl, o.t.c.tse@tue.nl}
\author{Thomas Holding}
\author{Mario Maurelli\textsuperscript{\textdagger}}
\address{\textsuperscript{\textdagger} Dipartimento di Matematica `Federigo Enriques', Universit\`a degli Studi di Milano, via Saldini 50, 20133 Milano, Italy; email address: mario.maurelli@unimi.it}
\author{Oliver Tse\textsuperscript{*}}

\date{\today}

\begin{document}

\begin{abstract}
    In this paper we prove a large deviation principle (LDP) for the empirical measure of a general system of mean-field interacting diffusions with singular drift (as the number of particles tends to infinity) and show convergence to the associated McKean--Vlasov equation. Along the way, we prove an extended version of the Varadhan Integral Lemma for a discontinuous change of measure and subsequently an LDP for Gibbs and Gibbs-like measures with singular potentials.
\end{abstract}

\maketitle

\vspace{-2em}

\tableofcontents

\newpage

\section{Introduction}

In this work we study the limiting behaviour of weakly interacting, or mean field, diffusions, where the interaction depends only on the empirical measures of the particles. For every $N\in\N$ the particle system is defined by the coupled stochastic differential equations (SDEs)
\begin{align}\label{eq:intro_SDE}
\left\{\quad\begin{aligned}
&d X^{N,i}_t = b_t\left(X_t^{N,i},\frac{1}{N}\sum_{i=1}^N \delta_{X_t^{N,i}}\right) d t + d W^i_t,\\
&X^{N,i}_0  \text{ i.i.d~with law } \rho_0.
\end{aligned}\right.
\end{align}
Here $W^1,\ldots,W^N$ are independent $d$-dimensional Brownian motions, $\rho_0$ is a given initial distribution, and $b$ is a measure-dependent \emph{drift} vector. A common example for $b$ is of the form
\begin{equation}\label{eq:intro_drift}
b_t(x,\mu) = \int_{\R^d} \varphi(t, x-y)\,d\mu(y),
\end{equation}
for some interaction kernel $\varphi$.  Such type of drifts commonly appear in models of classical physical systems, biological systems such as the collective motion of micro-organisms (bacteria, cells, etc.), and flocking and swarming behavior of animals, granular media, as well as models in opinion formation.

\medskip

When $b$ is sufficiently regular, the limiting behaviour of the particle system for a large particle number is well understood. For example, when $b$ is Lipschitz and bounded, the empirical measure
\[
z^N_{\boldsymbol{X}} := \frac{1}{N}\sum_{i=1}^N \delta_{X^{N,i}},
\]
converges to the law of the McKean-Vlasov equation \cite{sznitman1991,Tanaka1984}
\begin{equation}\label{eq:intro_mckean}
\left\{\quad\begin{aligned}
&d X_t = b_t(X_t,\law(X_t))\, d t+d W_t,\\
&X_0  \text{ with law } \rho_0.
\end{aligned}\right.
\end{equation}
Moreover a large deviation principle (LDP) holds for the empirical measure $z^N_{\boldsymbol{X}}$, as $N\rightarrow \infty$, see e.g. \cite{Dawsont1987,budhiraja2012,CDFM2018}.

The case of a \textit{singular} interaction, that is irregular $b$, has been widely studied too. The convergence of the system \eqref{eq:intro_SDE} to the corresponding McKean-Vlasov equation has been shown for various examples of singular drifts, most of them of the form \eqref{eq:intro_drift} with singular interaction kernel $\varphi$, e.g. \cite{Fournier2014,Godinho2015,Berman2016,Jabin2018,Bresch2019,Jab2019} (see Section~\ref{sec:uniqueness} for detailed explanations).

\medskip

However, establishing LDPs for these singular drifts has remained unsolved. Apart from the work by Fontbona in \cite{Fontbona2004}, where an LDP for the time-marginals of $(z^N_{\boldsymbol{X}})$ was shown for a repulsive kernel $\varphi(x)=1/x$, little is known to our knowledge. We aim to fill this gap, by providing LDP results and new tools for a large class of singular measure-dependent drifts.

As the main example, we consider the following drift
\begin{equation}\label{eq:intro_drift2}
b_t(x,\mu) := \psi\left(x,\mu,\int_{\R^d} \varphi(t, x-y)\,d \mu(y)\right),
\end{equation}
where $\psi:\R^d \times \cP(\R^d)\times  \R^d\to \R^d$, with $\cP(\R^d)$ the space of probability measures equipped with the bounded Lipschitz metric. We will show an LDP when $\psi$ is Lipschitz and the interaction kernel is in an appropriate $L^p$ space. More precisely, combining several key statements from Section \ref{s_process} (see Proposition \ref{thm_pmark1}, Remark \ref{rem:intro_sde} and Proposition \ref{cor_poc_ex}), we have the following theorem:

\begin{thm}\label{thm:intro_sde}
Suppose that 
\begin{enumerate}[label=(\roman*)]
    \item $\psi:\R^d \times \cP(\R^d)\times  \R^d\to \R^d$ is jointly globally Lipschitz
    \vspace{1pt}
    \item there exists a constant $L$ such that 
    	\begin{equation*}
    \psi \left(x,\mu, z \right) \leq L(1+|z|), \qquad \forall x,z\in \R^d, \mu\in \cP(\R^d).
    \end{equation*}
\item for all $\beta>0$
\[	    \int_{\R^d} e^{\beta |x|}\, d\rho_0 (x)<\infty,	\]
\item  for $p,q\in[2,\infty]$ with $d/p + 2/q <1$,
\[\varphi\in L^q\big((0,T),L^p(\R^d)\big) +L^\infty((0,T)\times\R^d).\]
\end{enumerate}
\vspace{2pt}
Then the family $\{Q^N\}$ of laws of empirical measures $z^N_{\boldsymbol{X}}$ for $\boldsymbol{X}=(X^{N,1},\ldots,X^{N,N})$ satisfying \eqref{eq:intro_SDE}, with drift $b$ as in \eqref{eq:intro_drift2},  has an LDP with rate function
	\[
	\cF(\mu)=
	\begin{cases}
	R(\mu\|\mathbb{W}^{\mu}) & \mbox{if $R(\mu\|\mathbb{W})<\infty$}, \\
	+\infty & \mbox{otherwise},
	\end{cases}
	\]
	where $\mathbb{W}^{\mu}$ is the law of a process $X^{\mu}_t$ satisfying the SDE
	\begin{equation*}
	d X^{\mu}_t = b(X_t^\mu,\mu_t)\, d t+d W_t,
	\end{equation*}
	and $\mathbb{W}=\mathrm{Law}(W)$, where $W$ is a Brownian motion with initial law $\rho_0$.
	
	Furthermore, $z^N_{\boldsymbol{X}}$ converges almost surely to the unique minimizer of $\cF(\mu)$, which is the unique law of the solution to the McKean-Vlasov SDE \eqref{eq:intro_mckean}. 
\end{thm}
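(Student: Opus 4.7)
The plan is to follow the classical Girsanov--Sanov--Varadhan route for the LDP of mean-field diffusions, but combined with the extended tools developed in the paper to absorb the singularity of $\varphi$. First I would write $Q^N$ as an absolutely continuous perturbation of $P^N := \mathbb{W}^{\otimes N}$, the law of $N$ i.i.d.\ Brownian motions with initial distribution $\rho_0$. By Girsanov's theorem, on the canonical path space $\mathcal{C} = C([0,T];\R^d)$ one has $dQ^N/dP^N = \exp(N\,\Phi(z^N_{\boldsymbol X}))$, where
\[
\Phi(\mu) := \int_{\mathcal{C}}\Bigl(\int_0^T b_t(x_t,\mu_t)\cdot dx_t - \tfrac{1}{2}\int_0^T|b_t(x_t,\mu_t)|^2\,dt\Bigr)\,d\mu(x).
\]
The integrability condition $d/p+2/q<1$ together with the Lipschitz/linear-growth hypotheses on $\psi$ and the exponential moment bound on $\rho_0$ ensure that the density has enough exponential integrability (via Krylov/Novikov-type estimates) to legitimize both the change of measure and the subsequent large-deviation manipulations.

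Next, Sanov's theorem gives the LDP for $z^N_{\boldsymbol X}$ under $P^N$ with good rate function $R(\,\cdot\,\|\mathbb{W})$ on $\cP(\mathcal{C})$. Applying the extended Varadhan Integral Lemma advertised in the abstract and instantiated in Proposition~\ref{thm_pmark1} then promotes this to an LDP for $Q^N$ with rate
\[
\cF(\mu) \;=\; R(\mu\|\mathbb{W}) - \Phi(\mu).
\]
A direct chain-rule computation, valid whenever $R(\mu\|\mathbb{W})<\infty$ (so that $\mu$ is the law of a semimartingale with $L^2$ drift and the stochastic integral inside $\Phi$ makes sense $\mu$-a.s.), identifies this with $R(\mu\|\mathbb{W}^\mu)$, matching the stated rate function.

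The hard part is justifying the extended Varadhan step: $\Phi$ is genuinely discontinuous on $\cP(\mathcal{C})$ because $\mu\mapsto \int\varphi(t,x-y)\,d\mu_t(y)$ is not weakly continuous for $\varphi\in L^q_tL^p_x$, and the stochastic integral $\int b\cdot dx$ is only defined modulo a measure-dependent null set. I would (a) establish exponential tightness of $Q^N$ from the $L^q_tL^p_x$ bound on $\varphi$ and the linear growth of $\psi$; (b) prove the upper bound by showing upper semicontinuity of $\Phi$ along sequences with $\sup_n R(\mu^n\|\mathbb{W})<\infty$, where entropy compactness upgrades weak convergence to convergence of the relevant nonlinear functionals (this is where $d/p+2/q<1$ enters via the Krylov estimate); and (c) prove a matching lower bound by a recovery sequence built from mollifications of $\varphi$ and $\psi$, letting the regularization parameter tend to zero along an entropy-bounded set where one has quantitative continuity of $\mu\mapsto\mathbb{W}^\mu$. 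These ingredients are precisely the content of Section~\ref{s_process}, encoded in Propositions~\ref{thm_pmark1} and~\ref{cor_poc_ex}.

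Finally, for the almost-sure convergence, I would combine the LDP with uniqueness of the McKean--Vlasov solution. Under the present hypotheses, Section~\ref{sec:uniqueness} shows that the fixed-point equation $\mu = \mathbb{W}^\mu$ has a unique solution $\mu^\star$, which is therefore the unique zero (and only minimizer) of $\cF$. The LDP upper bound applied to the complement of an arbitrary neighbourhood of $\mu^\star$ yields $Q^N(B_\delta(\mu^\star)^c) \leq e^{-c_\delta N}$ for all $N$ sufficiently large, and a Borel--Cantelli argument produces the claimed almost-sure convergence $z^N_{\boldsymbol X}\to \mu^\star$.
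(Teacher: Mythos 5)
Your overall route is the paper's: Girsanov change of measure on path space, Sanov's theorem for $P^N$, the extended Varadhan machinery to transfer the LDP, the entropy chain rule to identify $\cF(\mu)=R(\mu\|\mathbb{W}^\mu)$, and uniqueness of the McKean--Vlasov solution together with a Borel--Cantelli argument (Lemma~\ref{lm:almost-sure}) for almost-sure convergence; you also correctly locate these ingredients in Propositions~\ref{thm_pmark1} and~\ref{cor_poc_ex} via Remark~\ref{rem:intro_sde}. One point deserves correction: your sketch in items (b) and (c) of what happens inside the extended Varadhan step does not match what the cited propositions actually do, and would be hard to carry out as written. The paper never proves upper semicontinuity of $\Phi$ on entropy-bounded sets, nor does it build an explicit recovery sequence for the lower bound---weak upper semicontinuity of $\mu\mapsto\Phi(\mu)$ genuinely fails for drifts of the form \eqref{eq:intro_drift2} with singular $\varphi$, which is precisely why Section~\ref{LDPs} was developed. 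Instead, Theorem~\ref{thm_direct} obtains both bounds simultaneously from a stability-of-convex-inequalities argument (Theorem~\ref{thm_conv1}): the inputs are that the mollified pairs $(\cE^N_\lambda,\cE_\lambda)$ already induce LDPs (Lemma~\ref{lem_LDP_b_Lip}, via~\cite{CDFM2018}), and that the approximation error $\cE^N-\cE^N_\lambda$ is controlled in a Gibbs-like way (Theorem~\ref{thm_gibbs2}, Lemma~\ref{lm_gcomp}) by exponential moments of $\int_0^T|\varphi-\varphi_\lambda|^2(t,W^1_t,W^2_t)\,dt$, which are in turn bounded by Khasminskii's lemma (Lemma~\ref{lm_kha_pq}) under $d/p+2/q<1$---not by a Krylov estimate establishing semicontinuity. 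Since you ultimately delegate to the correct citations, the proposal is sound, but if you tried to fill in (b)--(c) as stated you would hit exactly the obstruction the paper's abstract convexity approach was designed to avoid.
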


An application of Theorem \ref{thm:intro_sde} is the case of a drift $b$ of the form \eqref{eq:intro_drift}, with
\begin{align*}
&\varphi(t,z)=  |z|^{\alpha} g\left(\frac{z}{|z|}\right)1_{|z|\le R} +h(z)1_{|z|>R},
\end{align*}
with $g:\mathbb{S}^{d-1}\rightarrow \mathbb{S}^{d-1}$ and $h:\R^d\rightarrow\R^d$ both Borel bounded, $R>0$, and exponent $\alpha$ satisfying
\begin{align*}
    \alpha>-1 \;\text{ for }\; d\ge 2,\quad \text{ and }\quad
\alpha>-1/2 \;\text{ for }\; d=1.
\end{align*}

In fact, we prove LDP and convergence to the McKean-Vlasov equation for systems with singular drifts under more general assumptions, see  Theorem \ref{thm_pmainlip} and the examples in Sections \ref{s_examples}, \ref{sec:uniqueness}, where we include many-particle interaction (that is, dependence of $b$ on $\mu^{\otimes k}$) and interaction kernels $\varphi$ merely satisfying
\begin{equation*}
    \E \left[ e^{\beta \int_0^T |\varphi|^2\left(t,W^1_t,W^2_t\right)\, dt} \right] < \infty, \qquad \forall \beta\in \R,
\end{equation*}
where $W^1,W^2$ are independent Brownian motions with common initial law $\rho_0$.

Note that even the convergence result to the McKean-Vlasov SDE in Theorem \ref{thm:intro_sde} is new: while some works do show convergence for the class of drifts \eqref{eq:intro_drift} with even more singular $\varphi$ \cite{Fournier2014,Jabin2018,Bresch2019}, we are not aware of a result that covers drifts of the form \eqref{eq:intro_drift2} under our assumptions.
\medskip

Our proof of the LDP relies on using a singular change of measure via Girsanov's theorem and an approximation by regular drifts. To deal with this, we extend a classical tool in large deviation theory, Varadhan's Integral Lemma. We believe that this extension and other underlying results are relevant on their own and they might also prove helpful in future works on singular drifts. Let us briefly our strategy.

In a nutshell, Varadhan's Integral Lemma (or Varadhan's Lemma for short) allows one to transfer the LDP through a continuous change of measure and is in fact a natural extension of Laplace's method to infinite dimensional spaces. To be precise, let $\{z^N\}$ be a family of random variables on a probability space $(\Omega,\cA,\mathbb{P})$, taking values in a Polish space $\cX$ endowed with its Borel $\sigma$-algebra $\cB(\cX)$. The classical Varadhan's Lemma (cf.\ \cite{Varadhan1984,Moral2003,Dembo10}) reads as follows. 
\begin{prop}[Varadhan's Integral Lemma]\label{thm_vara}
	Suppose $P^N$ satisfies an LDP with good rate function $I:\cX\to[0,\infty]$, and let $\cE:\cX\to\R$ be any continuous and bounded function. Then the family of measures $\{Q^N\}$ defined by
	\begin{align}\label{eq:Q_induced-1}
	\frac{d Q^N}{dP^N}(\mu):=\frac{1}{Z_N}e^{-N\cE(\mu)}\qquad \text{for\, $P^N$-almost every $\mu\in\cX$},
	\end{align}
	with normalizing constants $Z_N$, satisfies an LDP with good rate function 
	\begin{equation}\label{eq:Q_induced-1b}
	F(\mu):=(I+\cE)(\mu)-\inf_{\nu \in \cX} (I+\cE)(\nu).
	\end{equation}
\end{prop}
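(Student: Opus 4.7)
The plan is the classical one: exploit the full strength of Varadhan's integral formula applied to $-\cE$ to identify the normalizing constant $Z_N$, and then transfer this to lower/upper LDP bounds for $Q^N$ via restricted integrals over open and closed sets. The only inputs needed are the LDP for $P^N$, the boundedness and continuity of $\cE$, and standard LDP bookkeeping (lower semicontinuity, goodness).

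\textbf{Step 1: asymptotics of $Z_N$.} Since $\cE$ is bounded and continuous, Varadhan's integral formula applied to $-\cE$ (which the paper has already stated in its classical form in the surrounding text) gives
\[
\lim_{N\to\infty} \frac{1}{N}\log Z_N \;=\; \lim_{N\to\infty} \frac{1}{N}\log \int_\cX e^{-N\cE(\mu)}\, dP^N(\mu) \;=\; -\inf_{\mu\in\cX}\bigl(I(\mu)+\cE(\mu)\bigr).
\]
Because $\cE$ is bounded, this infimum is finite, so $Z_N$ is non-degenerate on the exponential scale.

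\textbf{Step 2: upper bound.} For any closed $C\subset\cX$, write $Q^N(C)=Z_N^{-1}\int_C e^{-N\cE}\, dP^N$. Since $e^{-N\cE}$ is a continuous bounded weight, the same Laplace-type argument behind Varadhan's lemma (a finite cover of $C$ by small balls on which $\cE$ oscillates by at most $\varepsilon$, combined with the LDP upper bound for $P^N$ on these balls and goodness of $I$) yields
\[
\limsup_{N\to\infty}\frac{1}{N}\log \int_C e^{-N\cE}\,dP^N \;\le\; -\inf_{\mu\in C}\bigl(I(\mu)+\cE(\mu)\bigr).
\]
Dividing by $Z_N$ and using Step 1 gives $\limsup \frac{1}{N}\log Q^N(C)\le -\inf_C F$.

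\textbf{Step 3: lower bound.} For any open $O\subset\cX$ and any $\mu_0\in O$ with $I(\mu_0)+\cE(\mu_0)<\infty$, pick $\varepsilon>0$ and, using continuity of $\cE$, choose an open neighborhood $U\subset O$ of $\mu_0$ on which $\cE\le\cE(\mu_0)+\varepsilon$. Then
\[
\int_O e^{-N\cE}\, dP^N \;\ge\; e^{-N(\cE(\mu_0)+\varepsilon)}\,P^N(U),
\]
and the LDP lower bound for $P^N$ on the open set $U$ gives $\liminf \frac{1}{N}\log P^N(U)\ge -I(\mu_0)$. Taking $\varepsilon\downarrow 0$ and then the infimum over $\mu_0\in O$, and combining with Step 1, yields $\liminf \frac{1}{N}\log Q^N(O)\ge -\inf_O F$.

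\textbf{Step 4: goodness of $F$.} Lower semicontinuity of $F=(I+\cE)-\inf(I+\cE)$ is immediate, since $I$ is lsc and $\cE$ is continuous. For goodness, boundedness of $\cE$ gives $\{F\le a\}\subset \{I\le a+\inf(I+\cE)-\inf\cE\}$, which is compact by goodness of $I$; so $F$ is a good rate function. The main technical point throughout is really only Step 2, where one must handle the interplay between the tilted weight $e^{-N\cE}$ and the LDP upper bound on closed (possibly non-compact) sets, but since $\cE$ is bounded and continuous and $I$ is good, a standard compact-exhaustion argument closes this with no further assumptions.
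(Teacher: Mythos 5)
The paper gives no proof of Proposition~\ref{thm_vara}: it is quoted as a classical result with citations to \cite{Varadhan1984,Moral2003,Dembo10}, and the remark following Lemma~\ref{lm_ldpchar} simply observes that it is recovered from the extended Varadhan Lemma (Theorem~\ref{thm_direct}) by taking $\cE^N=\cE_\lambda^N=\cE_\lambda=\cE$ continuous and bounded, so all the approximation hypotheses are trivially satisfied. Your sketch is the standard textbook argument, and it is correct in outline: Step~1 (Laplace asymptotics of $Z_N$ from the integral form of Varadhan's lemma), Step~3 (local lower bound by continuity of $\cE$), and Step~4 (goodness from boundedness of $\cE$) are fine as written.

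The only place that is glossed is Step~2: the ``finite cover by small balls on which $\cE$ oscillates by at most $\varepsilon$'' device establishes the upper bound only for \emph{compact} $C$; for a general closed $C$ one proves the bound on the compact sub-level sets $\{I\le M\}$, and then uses boundedness of $\cE$ together with exponential tightness of $\{P^N\}$ (automatic on a Polish space since $I$ is good, cf.\ \cite[Ex.~4.1.10]{Dembo10}) to control $C\cap\{I>M\}$ and let $M\to\infty$. You do gesture at this with the ``compact-exhaustion'' remark, so the argument is complete modulo writing that step out explicitly. By contrast, the paper's Theorem~\ref{thm_direct} reaches the same conclusion via the convexity inequalities $\phi_{A^o}(\cE)\le\liminf_N\phi^N_A(\cE^N)\le\limsup_N\phi^N_A(\cE^N)\le\phi_{\bar A}(\cE)$ for the functionals defined in \eqref{phi_def}, which avoids ball covers entirely and handles discontinuous, $N$-dependent $\cE^N$; that extra generality is precisely the point of the paper, but it is heavier machinery than is needed for the bounded-continuous case you treat here.
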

Subsequently, various extensions have been developed in the past decades to relax the assumption of continuity; for example to deal with singular functionals or contractions for Gibbs measures on $\R^d$ \cite{Bodineau1999,Chafai2014,Dupuis2020,Hardin2016}, or on abstract spaces \cite{Leonard1987,Eichelsbacher1998,Eichelsbacher2002,Moral2003,Berman2016a,Zelada2017,Liu2018}.
We refer to the background paragraphs in Sections \ref{LDPs} and \ref{sgibbs} for a more detailed discussion, and only highlight a few points here.

\medskip

A common thread in some of the extensions above are various approximation arguments. For example, as outlined in \cite{Dembo10} on \emph{exponential approximations}, the family $\{Q^N\}$ satisfies an LDP if there exists another family $\{Q^N_\lambda\}$ which satisfies an LDP for each $\lambda>0$ and approximate $Q^N$ in some exponentially good way (as $\lambda\rightarrow0$). Liu and Wu \cite{Liu2018} make use of techniques involving exponential approximations and prove LDPs for Gibbs measures with singular potential.
However, in our setting, we cannot rely on their result since, for general drift as in \eqref{eq:intro_drift2}, the associated $\cE$ is not actually in the form of a Gibbs energy.
Hence, we have developed the following extension (see Theorem \ref{thm_direct} for the precise statement): 

\begin{thm}[Extended Varadhan Integral Lemma]\label{thm_direct2}
	Let $P^N=\law(z^N)$ be a family satisfying an LDP with rate function $I$,  and $\cE,\cE^N:\cX\to[-\infty,\infty]$ measurable functions. Moreover, define the family of measures $\{Q^N\}$ by \eqref{eq:Q_induced-1} and $\cF:\cX\to[0,\infty]$ by \eqref{eq:Q_induced-1b}.
	
	Now suppose that for all $\lambda>0$ there exists $\cE_{\lambda}$ and a family $\{\cE_{\lambda}^N\}_N$ such that the family $\{Q_{\lambda}^N\}_N$ satisfies an LDP with a rate function $\cF_{\lambda}$ (with $Q_{\lambda}^N, \cF_{\lambda}$ defined similarly as in \eqref{eq:Q_induced-1},\eqref{eq:Q_induced-1b}).  
	Moreover, suppose that for some $\gamma>1$, 
	\begin{equation*}
		\begin{aligned}
		\rate{e^{-\gamma N\cE_{\lambda}^N(z^N)}} <+\infty,\\
		\inf_{\mu\in\cX} (I+\gamma\cE_\lambda)(\mu) >-\infty,
		\end{aligned}
	\end{equation*}
	for every $\lambda>0$, and such that for every $\beta \in \R$,
	\begin{equation*}
		\begin{aligned}
		\limsup_{\lambda\to 0}  \rate{e^{\beta N(\cE^N-\cE_{\lambda}^N)(z^N)}} &=0,\\
		\limsup_{\lambda\to 0}  \sup_{\mu \in \cX} \, \bigl(\beta (\cE_{\lambda}-\cE) - I\bigr)(\mu) &=0.
		\end{aligned}
	\end{equation*}
	Then the family $\{Q^N\}$ satisfies an LDP with rate function $\cF$. 
\end{thm}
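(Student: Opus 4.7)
The strategy is to transfer the LDP from the approximating family $\{Q^N_\lambda\}$ to $\{Q^N\}$ via a two-step H\"older argument on the Radon--Nikodym derivative $dQ^N/dQ^N_\lambda$, then send $\lambda\to 0$. The key identity
\begin{equation*}
Q^N(A) \;=\; \frac{Z^\lambda_N}{Z_N}\int_A e^{-N(\cE^N-\cE^N_\lambda)}\, dQ^N_\lambda
\end{equation*}
isolates the difference $\cE^N-\cE^N_\lambda$ (controlled by the exponential approximation hypothesis on the random side) from the normalising constants $Z_N, Z^\lambda_N$ (handled separately).

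As a preliminary, I would establish the convergence of rate functions and of partition functions. The hypothesis $\sup_\mu(\beta(\cE_\lambda-\cE)(\mu) - I(\mu))\to 0$ for every $\beta\in\R$ (taking $|\beta|$ arbitrarily large) forces $\cE_\lambda \to \cE$ uniformly on sublevel sets $\{I\le K\}$, and hence $\inf_A \cF_\lambda \to \inf_A \cF$ for suitably regular $A$. Combined with the LDP for $\{Q^N_\lambda\}$ and the integrability $\rate{e^{-\gamma N\cE^N_\lambda(z^N)}}<+\infty$, this yields the Laplace-type asymptotics $\lim_N \frac{1}{N}\log Z^\lambda_N = -\inf(I+\cE_\lambda)$, and then $\lim_{\lambda\to 0}\lim_N \frac{1}{N}\log Z_N = -\inf(I+\cE)$ by applying the same H\"older device to the partition functions themselves.

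For the \emph{upper bound} on a closed $A$, I would apply H\"older on the identity above with conjugate exponents $p,q$ (where $p$ will be sent to $1^+$ at the end) to obtain
\begin{equation*}
Q^N(A)\;\le\; \frac{Z^\lambda_N}{Z_N}\, Q^N_\lambda(A)^{1/p}\,\bigl(\E_{Q^N_\lambda}[e^{-Nq(\cE^N - \cE^N_\lambda)}]\bigr)^{1/q}.
\end{equation*}
A second H\"older splitting of the last factor under $P^N$, separating $e^{-N\cE^N_\lambda}$ from $e^{-Nq(\cE^N - \cE^N_\lambda)}$ with exponents $\gamma$ and $\gamma/(\gamma-1)$, reduces it to the product of a $\gamma$-moment of $\cE^N_\lambda$ (controlled by the integrability condition together with $\inf(I+\gamma\cE_\lambda)>-\infty$) and a moment of $\cE^N-\cE^N_\lambda$ (vanishing at exponential scale as $\lambda\to 0$ by $\rate{e^{\beta N(\cE^N-\cE^N_\lambda)(z^N)}}\to 0$). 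Taking $\limsup_N$, then $\lambda\to 0$, then $p\to 1^+$ (equivalently $q\to\infty$) yields $\limsup_N\frac{1}{N}\log Q^N(A) \le -\inf_A\cF$. The \emph{lower bound} on an open $B$ is obtained symmetrically using the reverse H\"older inequality with exponents $p\in(0,1)$, $q=p/(p-1)<0$, namely
\begin{equation*}
Q^N(B)\;\ge\; \frac{Z^\lambda_N}{Z_N}\, Q^N_\lambda(B)^{1/p}\,\bigl(\E_{Q^N_\lambda}[e^{|q|N(\cE^N - \cE^N_\lambda)}]\bigr)^{1/q},
\end{equation*}
followed by the analogous two-step reduction and the limits $\lambda\to 0$, $p\to 1^-$.

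The main obstacle is the careful orchestration of the limits $\lambda\to 0$, $N\to\infty$, and $p\to 1$, ensuring each error vanishes in the right order. The hypothesis $\inf(I+\gamma\cE_\lambda)>-\infty$ is precisely what keeps the inner $\gamma$-moment of $\cE^N_\lambda$ uniformly bounded as $\lambda\to 0$; the factor $1/q$ in the outer H\"older then ensures that the residual error, of order $1/q \cdot [\inf(I+\cE) - \inf(I+\gamma\cE)/\gamma]$, vanishes as $q\to\infty$; and the LDP for $\{Q^N_\lambda\}$ combined with the convergence $\cF_\lambda\to\cF$ supplies the leading-order bulk term $-\inf_A\cF_\lambda/p \to -\inf_A\cF$.
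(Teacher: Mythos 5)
Your approach—writing $dQ^N/dQ^N_\lambda = (Z^\lambda_N/Z_N)\,e^{-N(\cE^N-\cE^N_\lambda)}$ and pushing two nested H\"older (resp.\ reverse-H\"older) estimates through it—is a genuinely different organization from the paper's. The paper avoids the normalizing constants $Z_N, Z^\lambda_N$ altogether by working with the \emph{unnormalized} functionals $\phi^N_A(\cG)=\tfrac1N\log\E[e^{-N\cG(z^N)}\,1_A]$ and $\phi_A(\cG)=-\inf_{A\cap D(I)}(I+\cG)$, shows these are convex in $\cG$ (via the same H\"older inequality you use), and then invokes an abstract stability result for variational inequalities between convex functionals (Theorem~\ref{thm_conv1} in Appendix~\ref{s_convex}). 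That abstraction treats the upper and lower inequalities, and the Laplace asymptotics of the normalization, in one stroke; it also absorbs the ``send $p\to 1^+$'' step into the continuity statement for convex functions at the boundary of their domain (Lemma~\ref{lm_convex1}), a point that in your outline you invoke but do not justify. Your route is valid, but pays for its concreteness with more bookkeeping: you must separately prove the asymptotics of $Z_N$, the upper bound, and the lower bound, each time re-running essentially the same H\"older-plus-convexity argument, whereas the paper does it once in the abstract.

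There is, however, a genuine gap. Under the paper's Definition~\ref{def_ldp}, proving ``$\{Q^N\}$ satisfies an LDP with rate function $\cF$'' requires not only the upper/lower bounds for closed/open sets, but also (i) lower semi-continuity of $\cF$ and (ii) exponential tightness of $\{Q^N\}$. Your sketch addresses neither. Lower semi-continuity of $\cF=(I+\cE)-\inf(I+\cE)$ is not automatic: $\cE$ is an arbitrary Borel function and $I+\cE$ need not be lsc just because the open/closed bounds hold (the bounds are stable under replacing $\cF$ by its lsc envelope, so they cannot by themselves certify that $\cF$ itself is lsc). The paper proves lsc by a separate convexity argument: it shows $\limsup_{\epsilon\to 0}\phi_{B_\epsilon(\mu)}(\cE)\le -J(\mu)$ using the approximants $\cE_\lambda$, the pointwise convergence $\cE_\lambda\to\cE$ on $D(I)$, and Lemma~\ref{lm_convex1}. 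Exponential tightness is likewise proved separately, via a H\"older split of $\E[e^{-N\cE^N(z^N)}1_{\cX\setminus K_M}]$ against the $\gamma$-moment bound $\rate{e^{-\gamma N\cE^N(z^N)}}<\infty$ (which itself must first be \emph{derived} for $\cE^N$ from the corresponding bound for $\cE^N_\lambda$; it is not among your hypotheses). You would need to add both of these arguments; without them the conclusion ``satisfies an LDP'' in the sense of Definition~\ref{def_ldp} does not follow.

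A smaller but real concern: your reduction of $\inf_{\bar A}\cF_\lambda\to\inf_{\bar A}\cF$ relies on uniform-in-$\lambda$ coercivity of $I+\cE_\lambda$ to localize the infimum onto sublevel sets $\{I\le K\}$ where $\cE_\lambda\to\cE$ uniformly. The hypothesis $\inf(I+\gamma\cE_\lambda)>-\infty$ is stated \emph{per $\lambda$}, not uniformly, so the needed uniform coercivity has to be extracted from the approximation hypothesis $\sup_\mu\bigl(\beta(\cE_\lambda-\cE)-I\bigr)(\mu)\to 0$ together with a prior bound $\inf(I+\gamma'\cE)>-\infty$ for some $\gamma'>1$—which is again a conclusion, not a hypothesis, and must be established first (the paper gets it for free from \eqref{eq_ctx1b} of Theorem~\ref{thm_conv1}). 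None of this is fatal, but in your more explicit route the dependency order of these auxiliary facts needs to be laid out carefully, and in the sketch it is not.
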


We provide a self-contained proof which only relies on basic large deviation theory and elementary convexity estimates. The two main points of this extension are that we do not require $\mathcal{E}$ to be continuous and that we also do not require the approximating energies $\cE_{\lambda}^N$ to be continuous, but merely to induce an LDP in the sense described above. The latter point is an essential tool in proving Theorem \ref{thm:intro_sde}. Namely, even for regular drifts, the change of measure provided via Girsanov's Theorem is not necessarily continuous. 

Other extensions and applications of Varadhan's Lemma have also been developed to deal with this, for example to prove LDPs for weakly interacting diffusions with regular drifts [PdH96, DZ03, DFMS17]. Moreover, in [DFMS17], one of the authors of this manuscript developed an enhanced version of Sanov’s theorem in the rough path setting, which allows for Varadhan’s Integral Lemma to be applied. However, to our knowledge, none of them have been used to prove LDPs for weakly interacting diffusions with \emph{singular} interaction.

\medskip

It should be noted that one cannot expect to establish an LDP via a change of measure for every singular drift. This is indeed the case when the law of the interacting particle system is not absolutely continuous with respect to the law of the non-interacting system. A particular example in which this occurs is the Keller-Segel model with $\varphi(t,z)=-\nabla \log{z}$, for which a notion of propagation of chaos was established in \cite{Bresch2019} but an LDP result remains open.

However, even in the large class of drifts for which the system can be described via a change of measure there is still a gap between those for which there is a known LDP and propagation of chaos, and those for which others have merely shown propagation of chaos. We believe that not only are our results such as Theorem \ref{thm:intro_sde} a sizable step in closing this gap, but that the general tools we provide will help close it even further.

\medskip

\paragraph{\bf\em Organization and highlights of the paper.}  In Section \ref{LDPs} we recall basic definitions and results in large deviation theory, and provide an extension of Varadhan's Lemma in Theorem~\ref{thm_direct}. Next, in Section~\ref{sgibbs}, we use this to prove LDPs for empirical measures of mean-field Gibbs systems, where the log-densities $\cE_V^N:\cP(S)\to [-\infty,\infty]$ are parameterized by a family of Borel functions $V^N:S^k\to[-\infty,\infty]$, $k\in\N$ (see \eqref{eqg_en} for a precise definition). In Theorem~\ref{thm_gibbs1}, we provide sufficient conditions, in terms of suitable approximations of $\{V^N,V\}$, under which $\{\cE_V^N,\cE_V\}$ induces an LDP---this result is in the same spirit as \cite{Liu2018}. We further prove a result (Theorem \ref{thm_gibbs2}) that generalizes Theorem~\ref{thm_gibbs1} to include `Gibbs-like' measures---which are not of Gibbs form but such that the error $\cE^N_{V}-\cE^N_{V_{\lambda}}$ can be essentially bounded by Gibbs measures---that simplifies our task in proving LDPs for weakly interacting diffusions in Section~\ref{s_process}.

In the latter, with the results of Sections~\ref{LDPs} and \ref{sgibbs} at hand, establishing an LDP for a system of weakly interacting diffusions amounts to (1) proving a (change-of-measure) representation formula (Girsanov's formula) for the laws $\{Q^N\}$ of the empirical measures $z^N_{\boldsymbol{X}}$ associated to the solution $\boldsymbol{X}=(X^{N,1},\ldots,X^{N,N})$ of \eqref{eq:intro_SDE}; and (2) proving the existence of a family $\{b_\lambda^N\}$ of ``exponentially good'' approximations for $b$ (cf.\ Theorem~\ref{thm_pmainlip} and the concrete examples in Section~\ref{s_examples}), which implies Theorem \eqref{eq:intro_SDE} considered above with a drift $b$ specified by \eqref{eq:intro_drift}. 
For drifts $b$ satisfying the assumption of Theorem~\ref{thm:intro_sde}, and for all the examples in Subsections \ref{s_examples}, we further show that the rate function $\cF$ associated to $\{Q^N\}$ attains a unique minimizer (see Subsection~\ref{sec:uniqueness} for the general case), which implies almost sure convergence. 

Finally, for the sake of completeness and consistency, we have included a relatively large appendix containing technical results and proofs utilized throughout the article, which we believe to be either new, or helpful to the reader.

\medskip

\paragraph{\bf\em Acknowledgements.} 
The authors thank M.A.~Peletier for proposing the problem and for fruitful discussions at the start of the work. J.H.\ and O.T.\ acknowledges support from NWO Vidi grant 016.Vidi.189.102, ``Dynamical-Variational Transport Costs and Application to Variational Evolutions". Parts of this work were undertaken when M.M.\ was at Weierstrass Institute for Applied Analysis and Stochastics, Germany, at Technische Universit\"at Berlin, Germany, and at University of York, UK. M.M.\ acknowledges support from the Royal Society via the Newton International Fellowship NF170448 ``Stochastic Euler equations and the Kraichnan model'', from project PRIN 2015233N54\textunderscore002 ``Deterministic and stochastic evolution equations'' from the Italian Ministry of Education, University and Research, and from the Hausdorff Research Institute for Mathematics in Bonn under the Junior Trimester Program ``Randomness, PDEs and Nonlinear Fluctuations''.

\section{An Extension of Varadhan Integral Lemma}\label{LDPs}

In this chapter we extend the classical Varadhan's Integral Lemma (cf.~\cite{Dembo10,Varadhan1984}), to allow for establishing LDPs via a change of measure with possibly discontinuous density.

\subsection{Notations and preliminary results}

We introduce some notation that we will use throughout the manuscript. The space $\cX$ is a Polish space endowed with its Borel $\sigma$-algebra $\mathcal{B}(\cX)$. The symbol $\cP(\cX)$ denotes the set of all probability measures on $\cX$ and we use the letters $P$, $Q$, and $P^N$, $Q^N$, ... for probability measures on $\cX$. With a little abuse of notation, we use $\mu$ both for a generic element of $\cX$ and for the canonical random variable on $\cX$ ($\mu(x)=x$ for all $x$ in $\cX$); this notation is unusual, but it will be convenient in the next sections, where $\cX$ will be itself a space of probability measures. We often consider (without loss of generality) $P^N$ as the law of an $\cX$-valued random variable $z^N$, defined on a probability space $(\Omega,\mathcal{A},\mathbb{P})$ (independent of $N$); $\E$ denotes the expectation with respect to $\mathbb{P}$.\\

We recall a definition of a large deviation principle (LDP).
\begin{defi}\label{def_ldp}
A family of measures $\{Q^N\}\subset\cP(\cX)$ satisfies an LDP with {\em rate function} $\cF:\cX\to [0,\infty]$ if (1) $\cF$ is lower semi-continuous, if (2) for every Borel set $A$, 
\[
-\inf_{\mu \in A^o} \, \cF(\mu) \leq \liminf_{N\to \infty} \frac{1}{N} \log Q^N(A) \le \limsup_{N\to \infty} \frac{1}{N} \log Q^N(A)  \leq   -\inf_{\mu \in \bar{A}} \, \cF(\mu),
\]
and if (3) the family $\{Q^N\}$ is \emph{exponentially tight}, i.e.\ there is a sequence of compact sets $K_M\subset\cX$ such that
\[
\limsup_{M\to \infty}\limsup_{N\to \infty} \frac{1}{N}\log Q^N(\cX\setminus K_M) = -\infty.
\]
\end{defi}
We denote the domain of $\cF$ by $D(\cF):=\{\mu\in\cX \,|\,\cF(\mu)<\infty\}$.
\begin{remark}
As shown in \cite[p.~8,~120]{Dembo10}, Definition \ref{def_ldp} in Polish spaces is equivalent to stating that (2) holds with a \emph{good rate function} $\cF$, i.e.\ $\cF$ having compact sub-level sets. 
\end{remark}
\begin{remark}
    Let $\{z^N\}$ be a family of $\cX$-valued random variables such that $Q^N=\law(z^N)\in\cP(\cX)$ satisfies an LDP with rate function $\cF$. If the minimizer $\mu^*\in\cX$ of $\cF$ is unique, then the LDP implies the convergence $Q^N\to \delta_{\mu^*}$ weakly. In fact, by a standard argument we obtain a stronger result: almost sure convergence of the random variables $z^N$ to $\mu^*$, as stated below. For a proof, see for example \cite[Theorem A.2]{Schlottke2019}.
    \begin{lm}\label{lm:almost-sure}
Suppose $P^N$ satisfies an LDP with rate function $\cF$, and that $\cF$ has a unique minimizer $\mu^*$. Then $z^N$ converges $\mathbb{P}$-almost surely to $\mu^*$.
\end{lm}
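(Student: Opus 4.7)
The plan is to combine the LDP upper bound with the Borel--Cantelli lemma, using the uniqueness of the minimizer and the compactness of sublevel sets of the good rate function to turn "being away from $\mu^*$" into an event of exponentially small $P^N$-probability. As a preliminary observation, applying the LDP upper bound to $A=\cX$ yields $\inf_{\cX}\cF\le 0$, and since $\cF\ge 0$ this forces $\cF(\mu^*)=0$.

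The key technical step I would carry out is showing that for every open neighborhood $U$ of $\mu^*$ the closed set $U^c$ satisfies $c_U:=\inf_{U^c}\cF>0$. Suppose instead that $\cF(\mu_n)\to 0$ along a sequence $(\mu_n)\subset U^c$. Then $(\mu_n)$ lies eventually in the compact sublevel set $\{\cF\le 1\}$, so along a subsequence $\mu_n\to \mu_\infty\in U^c$ (closed), and by lower semi-continuity $\cF(\mu_\infty)=0$; but $\mu^*\in U$ is the unique minimizer, a contradiction. Having $c_U>0$, the LDP upper bound on the closed set $U^c$ gives
\[
\limsup_{N\to\infty}\frac{1}{N}\log P^N(U^c)\le -c_U<0,
\]
so $\sum_N P^N(U^c)<\infty$. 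Borel--Cantelli then yields $\mathbb{P}(z^N\in U^c \text{ infinitely often})=0$, i.e.\ $z^N\in U$ eventually, $\mathbb{P}$-almost surely.

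To upgrade this single-neighborhood conclusion to almost sure convergence, I would invoke first countability of the Polish space $\cX$ to fix a countable basis $(U_k)_{k\in\N}$ of open neighborhoods of $\mu^*$, apply the previous paragraph to each $U_k$, and intersect the countably many full-measure events $\{z^N\in U_k\text{ eventually}\}$. On the resulting $\mathbb{P}$-almost sure event, $z^N$ enters every $U_k$ eventually, so $z^N\to \mu^*$. No step looks technically demanding; the only genuinely delicate point where both the compactness of sublevel sets and the uniqueness of $\mu^*$ enter nontrivially is the strict positivity $c_U>0$, so that is where the argument needs to be most careful.
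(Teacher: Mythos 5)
Your proof is correct. The paper does not reproduce an argument but defers to \cite[Theorem A.2]{Schlottke2019}, and your route---deducing $\cF(\mu^*)=0$ from the LDP, using compactness of sublevel sets together with lower semi-continuity and uniqueness of the minimizer to get $\inf_{U^c}\cF>0$ for every open neighborhood $U$ of $\mu^*$, then applying the LDP upper bound, Borel--Cantelli, and a countable neighborhood basis---is precisely the standard derivation that the cited result employs.
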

\end{remark}

Now, let $P^N=\law(z^N)$ be probability measures on $\cX$ satisfying a large deviation principle with rate function $I:\cX\to [0,\infty]$. We consider pairs $(\{\cE^N\},\cE)$ (denoted $(\cE^N,\cE)$ for short) of a sequence of Borel functions $\cE^N:\cX\to\REx$ and a Borel function $\cE:\cX\to\REx$, and study whether an LDP may be established for the induced measures $Q^N$,
\begin{equation}\label{eq:Q_induced-2}
\frac{d Q^N}{d P^N}(\mu):=\frac{1}{{Z_N}}e^{- N \cE^N(\mu)}\qquad \text{for\, $P^N$-almost every $\mu\in\cX$},
\end{equation}
where the normalization constants $Z_N$ are assumed to be finite for all $N\in\N$. 

Precisely, we define $J$ and $\cF$ as follows: 
\[
J(\mu):= \left\{\begin{aligned}
&I(\mu) + \cE(\mu) \qquad &\mu\in D(I),\\
&+\infty \qquad &\mu \not \in D(I),
\end{aligned}\right.
\]
and, if $\inf_{\mu\in\cX} J(\mu)$ is finite, we define
\begin{equation*}
\cF(\mu):=J(\mu)-\inf_{\mu\in\cX} J(\mu).
\end{equation*}
Finally, note that by construction for any Borel set $A$
\[\inf_{\mu \in A\cap D(I)} (\cE+I)(\mu) = \inf_{\mu\in A} J(\mu).\]
Then the property we investigate is given in the following definition.
\begin{defi}\label{defi:ldp_pair}
We say that $(\cE^N,\cE)$ induces an LDP if $\inf_{\mu\in\cX} J(\mu)$ is finite, $\{Q^N\}$ (defined as in \eqref{eq:Q_induced-2}) satisfies an LDP with rate function $\cF$ and satisfies the so-called \emph{Laplace principle},
\begin{equation}\label{eq_ldplaplace}
\lim_{N\to \infty} \frac{1}{N}\log \E \left[ e^{- N \cE^N(z^N)}\right]=-\inf_{\mu\in\cX} J(\mu).
\end{equation}
\end{defi}
The following lemma provides a characterization in terms of an \textit{unnormalized} LDP.
\begin{lm}\label{lm_ldpchar}
The following statements are equivalent:
\begin{enumerate}[label=(\roman*)]
    \item The pair $(\cE^N,\cE)$ induces an LDP (according to Definition~\ref{defi:ldp_pair});
    \item $\inf_{\mu\in \cX} J(\mu)\in\R$, $J$ is lower semi-continuous, the family $\{Q^N\}$ is exponentially tight, and for every Borel set $A$,
    \begin{equation}\label{eq_ldpchar2}
\begin{aligned}
-\inf_{\mu \in A^o} \, J(\mu) &\leq \rated{e^{- N \cE^N(z^N)}\,1_A} \\
&\leq \rate{e^{- N \cE^N(z^N)}\,1_A} &\leq   -\inf_{\mu \in \bar{A}} \, J(\mu).
\end{aligned}
\end{equation}
\end{enumerate}
\end{lm}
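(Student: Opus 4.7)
The plan is to exploit the basic identity
\[
\E\bigl[e^{- N \cE^N(z^N)} \mathbf{1}_A(z^N)\bigr] \;=\; Z_N \, Q^N(A),
\]
which is immediate from \eqref{eq:Q_induced-2}. Taking logarithms and dividing by $N$ yields
\[
\frac{1}{N}\log \E\bigl[e^{-N\cE^N(z^N)} \mathbf{1}_A\bigr] \;=\; \frac{1}{N}\log Z_N \;+\; \frac{1}{N}\log Q^N(A),
\]
so all quantities appearing in (i) and (ii) are linked by the single scalar sequence $\frac{1}{N}\log Z_N$. The entire equivalence will reduce to controlling this sequence via the Laplace principle, plus some routine unpacking of the definition of $\cF$.

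For $(i)\Rightarrow(ii)$: finiteness of $\inf_\cX J$ is part of Definition~\ref{defi:ldp_pair}. Since $\cF=J-\inf_\cX J$ is (by Definition~\ref{def_ldp}) lower semi-continuous, so is $J$, and the exponential tightness of $\{Q^N\}$ is likewise built into Definition~\ref{def_ldp}. The Laplace principle \eqref{eq_ldplaplace} gives
\[
\lim_{N\to\infty} \frac{1}{N}\log Z_N \;=\; -\inf_\cX J,
\]
and combining this with the LDP upper and lower bounds for $Q^N$ with rate $\cF$ yields, for any Borel $A$,
\[
\rate{e^{-N\cE^N(z^N)} \mathbf{1}_A} \le -\inf_\cX J - \inf_{\bar A}\cF = -\inf_{\bar A} J,
\]
and analogously $\rated{\cdots}\ge -\inf_{A^\circ} J$, which is exactly \eqref{eq_ldpchar2}.

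For $(ii)\Rightarrow(i)$: specializing \eqref{eq_ldpchar2} to $A=\cX$ (both open and closed, with $\inf_\cX J$ finite by hypothesis) collapses the liminf and limsup to a limit and produces the Laplace principle \eqref{eq_ldplaplace}, hence also
\[
\lim_{N\to\infty} \frac{1}{N}\log Z_N = -\inf_\cX J \in \R.
\]
Subtracting this from both sides of \eqref{eq_ldpchar2} for arbitrary Borel $A$ gives
\[
-\inf_{A^\circ}\cF \;\le\; \rated{Q^N(A)} \;\le\; \rate{Q^N(A)} \;\le\; -\inf_{\bar A}\cF,
\]
which are the LDP bounds for $\{Q^N\}$ with rate $\cF$. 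Lower semi-continuity of $\cF$ follows from that of $J$, and exponential tightness of $\{Q^N\}$ is assumed outright. This verifies the three clauses of Definition~\ref{def_ldp} for $\{Q^N\}$ with rate $\cF$, and together with the Laplace principle established above, it delivers Definition~\ref{defi:ldp_pair}.

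There is no real obstacle: the only moving part is to recognize that the $A=\cX$ case of \eqref{eq_ldpchar2} is precisely the Laplace principle, which decouples the normalization constant from the remaining LDP content. Everything else is a direct translation through the identity $\E[e^{-N\cE^N}\mathbf{1}_A]=Z_N Q^N(A)$ and the definition $\cF = J - \inf_\cX J$.
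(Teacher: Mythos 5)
Your proof is correct and takes essentially the same approach as the paper: both arguments hinge on the identity $\E[e^{-N\cE^N(z^N)}\mathbf{1}_A] = Z_N\,Q^N(A)$, extract $\lim_N \frac1N\log Z_N = -\inf_\cX J$ from the Laplace principle (recognizing that this is the $A=\cX$ case of \eqref{eq_ldpchar2} in one direction), and then translate the normalized LDP bounds into the unnormalized ones, and back, by adding or subtracting that limit.
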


\begin{proof}
Suppose $(\cE^N,\cE)$ induces an LDP. Exponential tightness of $Q^N$ follows from the definition of an LDP, and since $\cF$ is lower semi-continuous $J$ is as well. By the Laplace principle \eqref{eq_ldplaplace},
\[
\lim_{N\to \infty} \frac{1}{N}\log \E \left[ e^{- N \cE^N(z^N)}\right]=-\inf_{\mu\in\cX} J(\mu),
\]
where the right-hand side is assumed to be finite. For for any $A\in\cB(\cX)$, we have that
\begin{align*}
    \frac{1}{N} \log \E \left[\, e^{- N \cE^N(z^N)}\,1_A \, \right] &= \frac{1}{N} \log \E \left[\, \frac{1}{Z^N}e^{- N \cE^N(z^N)}\,1_A \, \right] + \frac{1}{N}\log Z^N \\
    &= \frac{1}{N} \log Q^N(A) + \frac{1}{N}\log Z^N.
\end{align*}
Therefore, by \eqref{eq_ldplaplace} and the LDP of $Q^N$, we then obtain
\begin{align*}
\rate{e^{- N \cE^N(z^N)}\,1_A} 
&=\limsup_{N\to \infty} \frac{1}{N} \log Q^N(A)-\inf_{\mu\in\cX} J(\mu)\\
&\leq - \inf_{\mu \in \bar{A}} \cF(\mu)-\inf_{\mu\in\cX} J(\mu)=-\inf_{\mu\in\bar{A}} J(\mu).
\end{align*}
The lower bound follows similarly, and hence \eqref{eq_ldpchar2} is satisfied. 

Conversely, assume that $J$ is lower semi-continuous, $\inf_{\mu\in\cX} J(\mu)$ is finite and that \eqref{eq_ldpchar2} holds. Then $\cF$ is lower semi-continuous as well, and by \eqref{eq_ldpchar2} applied to $A=\cX$,
\[
\lim_{N\to \infty} \frac{1}{N}\log \E \left[ e^{- N \cE^N(z^N)}\right]=-\inf_{\mu \in \cX} J(\mu),
\]
where the right-hand side is assumed to be finite. Now normalizing by $Z_N$ and proceeding as above it follows that for any $A\in\cB(\cX)$,
\[
-\inf_{\mu \in A^o} \, \cF(\mu) \leq\liminf_{N\to \infty} \frac{1}{N} \log Q^N(A) \leq \limsup_{N\to \infty} \frac{1}{N} \log Q^N(A)  \leq -\inf_{\mu \in \bar{A}} \, \cF(\mu).
\]
Along with the exponential tightness of $Q^N$, this implies that $(\cE^N,\cE)$ induces an LDP.
\end{proof}

\begin{remark}
	Notice that the classical Varadhan's Integral Lemma is recovered when $\cE$ is continuous and bounded, and $\cE^N=\cE$ for all $N\in\N$.
\end{remark}

\subsection{An extended Varadhan Integral Lemma}
Now we present the main result of this section.

\begin{thm}[Extended Varadhan Integral Lemma]\label{thm_direct}
Let $P^N=\law(z^N)$ be a family satisfying an LDP with rate function $I$. Let $(\cE_{\lambda}^N,\cE_{\lambda})$ be pairs inducing an LDP for all $\lambda>0$. Moreover, suppose that the pair $(\cE^N,\cE)$ is such that for some $\gamma>1$, 
\begin{subequations}\label{eq_unifexpint}
\begin{align}
\rate{e^{-\gamma N\cE_{\lambda}^N(z^N)}} <+\infty,\label{eq_unifexpinta}\\
\inf_{\mu\in D(I)} (I+\gamma\cE_\lambda)(\mu) >-\infty,\label{eq_unifexpintb}
\end{align}
\end{subequations}
for every $\lambda>0$, and that a constant $K\in\R$ exists, such that for every $\beta \in \R$,
	\begin{subequations}\label{eq_rconv5}
	\begin{align}
	\limsup_{\lambda\to 0}  \rate{e^{\beta N(\cE^N-\cE_{\lambda}^N)(z^N)}} &\le K \label{rconv5a},\\
	\limsup_{\lambda\to 0}  \sup_{\mu \in D(I)} \, \bigl(\beta (\cE_{\lambda}-\cE) - I\bigr)(\mu) &\le K \label{rconv5b}.
	\end{align}
	\end{subequations}
Then the family $\{Q^N\}$ defined by \eqref{eq:Q_induced-2} satisfies an LDP with rate function $\cF$. In particular, the pair $(\cE^N,\cE)$ also induces an LDP.
\end{thm}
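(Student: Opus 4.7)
The natural plan is to verify the equivalent unnormalized form of the LDP given in Lemma~\ref{lm_ldpchar}: it suffices to show that $J := I + \cE$ is lower semicontinuous with $\inf J \in \R$, that $\{Q^N\}$ is exponentially tight, and that for every Borel $A \subset \cX$ one has
\[
-\inf_{\mu \in A^o} J(\mu) \;\leq\; \rated{e^{-N\cE^N(z^N)}\,1_A} \;\leq\; \rate{e^{-N\cE^N(z^N)}\,1_A} \;\leq\; -\inf_{\mu \in \bar{A}} J(\mu).
\]
The algebraic engine will be the pointwise estimate $e^{-N\epsilon}\,e^{-N\cE_\lambda^N} \leq e^{-N\cE^N} \leq e^{N\epsilon}\,e^{-N\cE_\lambda^N}$ on the event $\{|\cE^N - \cE_\lambda^N| \leq \epsilon\}$, coupled with exponential Chebyshev tail bounds that exploit \eqref{rconv5a} and \eqref{eq_unifexpinta} to make the complementary event negligible.

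Before tackling the LDP bounds, I would distill the pointwise content of \eqref{rconv5b}. Applied with both $\beta$ and $-\beta$ for $|\beta|$ large, it yields $|\cE_\lambda(\mu) - \cE(\mu)| \leq (I(\mu) + K + o_\lambda(1))/|\beta|$ on $D(I)$, so $\cE_\lambda \to \cE$ pointwise on $D(I)$ and uniformly on each sublevel set $\{I \leq L\}$. Since each $J_\lambda = I + \cE_\lambda$ is l.s.c.\ (as $\cF_\lambda$ is, up to an additive constant) and converges to $J$ uniformly on those sublevel sets, $J$ is l.s.c.\ on $D(I)$; elsewhere $J \equiv +\infty$ by construction. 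Dividing \eqref{eq_unifexpintb} by $\gamma$ and passing to the limit delivers $\inf J \in \R$. Moreover, the coercivity $J_\lambda \geq (1-1/\gamma)I + C_\lambda$ coming from \eqref{eq_unifexpintb} confines approximate minimizers of $\inf_F J_\lambda$ to a bounded sublevel set of $I$, on which uniform convergence forces $\inf_F J_\lambda \to \inf_F J$ in the appropriate one-sided senses.

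For the upper bound on closed $F$, I would set $E_\lambda^{N,\epsilon} := \{|\cE^N(z^N) - \cE_\lambda^N(z^N)| \leq \epsilon\}$ and split
\begin{align*}
\E\bigl[e^{-N\cE^N(z^N)}\,1_F\bigr] \leq e^{N\epsilon}\,\E\bigl[e^{-N\cE_\lambda^N(z^N)}\,1_F\bigr] + \E\bigl[e^{-N\cE^N(z^N)}\,1_F\,1_{(E_\lambda^{N,\epsilon})^c}\bigr].
\end{align*}
The LDP of $(\cE_\lambda^N,\cE_\lambda)$ together with Lemma~\ref{lm_ldpchar} bounds the first term by $e^{N\epsilon}$ times $-\inf_{\bar F} J_\lambda$ on the exponential scale. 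For the tail, H\"older with exponents $(a,b)$, $a \in (1,\gamma]$, dominates it by $\E[e^{-aN\cE_\lambda^N(z^N)}]^{1/a}\,\E[e^{bN(\cE_\lambda^N - \cE^N)(z^N)}\,1_{(E_\lambda^{N,\epsilon})^c}]^{1/b}$; the first factor is finite by \eqref{eq_unifexpinta}, while the exponential Chebyshev inequality $1_{|X| > \epsilon} \leq e^{\eta(|X|-\epsilon)}$ applied to the second factor—combined with \eqref{rconv5a} with $\beta = b+\eta$—makes its rate $\leq -\eta\epsilon + K$, which tends to $-\infty$ by taking $\eta$ large. Hence the tail is of strictly smaller exponential order than the main term, and sending $\lambda \to 0$ then $\epsilon \to 0$ yields $\rate{e^{-N\cE^N(z^N)}\,1_F} \leq -\inf_{\bar F} J$.

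The lower bound on open $G$ is dual: from $e^{-N\cE^N} \geq e^{-N\epsilon}\,e^{-N\cE_\lambda^N}$ on $E_\lambda^{N,\epsilon}$,
\[
\E\bigl[e^{-N\cE^N(z^N)}\,1_G\bigr] \geq e^{-N\epsilon}\bigl(\E\bigl[e^{-N\cE_\lambda^N(z^N)}\,1_G\bigr] - \E\bigl[e^{-N\cE_\lambda^N(z^N)}\,1_{(E_\lambda^{N,\epsilon})^c}\bigr]\bigr),
\]
the main term is controlled from below by $\rated{e^{-N\cE_\lambda^N(z^N)}\,1_G} \geq -\inf_G J_\lambda$ (LDP lower bound of $(\cE_\lambda^N,\cE_\lambda)$), while the subtracted term is again of strictly smaller exponential order by the same Chebyshev estimate. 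Sending $\lambda \to 0$ and $\epsilon \to 0$ gives $-\inf_G J$. Exponential tightness of $\{Q^N\}$ follows by applying the same upper-bound estimate to $F = \cX \setminus K_M$, with $\{K_M\}$ the compact sets witnessing tightness of $\{Q_\lambda^N\}$, together with the Laplace asymptotics $\tfrac{1}{N}\log Z_N \to -\inf J$ that emerges as a by-product. The principal obstacle is calibrating the Chebyshev exponent $\eta$ against $\epsilon$ and $\lambda$ so that the tail contribution beats the potentially non-uniform quantity $M_\lambda := \rate{e^{-\gamma N \cE_\lambda^N(z^N)}}$ from \eqref{eq_unifexpinta}—fortunately $\eta$ may be taken arbitrarily large for fixed $\lambda$, so the tail can be driven to $-\infty$ independently of the specific finite value $M_\lambda$; the only other subtlety is verifying $\inf_F J_\lambda \to \inf_F J$ on closed sets, which, as noted, hinges on coercive localization to a bounded sublevel set of $I$.
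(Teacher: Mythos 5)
Your plan---verify the unnormalized LDP bounds of Lemma~\ref{lm_ldpchar} by splitting on the event $E_\lambda^{N,\epsilon}=\{|\cE^N-\cE_\lambda^N|\le\epsilon\}$ and controlling the tail by an exponential Chebyshev bound---is a genuinely different route from the paper's, which never isolates such an event but instead passes through the abstract convexity-stability result Theorem~\ref{thm_conv1}. However, as written the tail control does not close, and you have in fact put your finger on the difficulty without resolving it. After the H\"older split your tail rate has the form
\[
\frac{1}{a}\,\rate{e^{-aN\cE_\lambda^N(z^N)}}\;+\;\frac{1}{b}\Bigl(-\eta\epsilon+\rate{e^{(b+\eta)N|\cE^N-\cE_\lambda^N|(z^N)}}\Bigr).
\]
To drive the second bracket to $-\infty$ you take $\eta\to\infty$, which requires $\rate{e^{\beta N|\cE^N-\cE_\lambda^N|}}$ to grow sub-linearly in $\beta$ \emph{for fixed} $\lambda$. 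But hypothesis~\eqref{rconv5a} only controls this quantity in the limit $\lambda\to 0$ with $\beta$ fixed; for a fixed $\lambda$ it may be linear in $\beta$ or $+\infty$ once $\beta$ is large, so the claim ``$\eta$ may be taken arbitrarily large for fixed $\lambda$'' is unsupported. The alternative---fix $\eta$ first, then shrink $\lambda$ until~\eqref{rconv5a} kicks in---reintroduces the problem you flag, because the H\"older factor $M_\lambda:=\rate{e^{-\gamma N\cE_\lambda^N}}$ from~\eqref{eq_unifexpinta} is finite only pointwise in $\lambda$ and may blow up as $\lambda\to 0$.

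The missing ingredient is to first establish a $\lambda$-\emph{independent} bound $\rate{e^{-\gamma'N\cE^N(z^N)}}<\infty$ for some $\gamma'\in(1,\gamma)$. This is a one-shot H\"older interpolation,
\[
\E\bigl[e^{-\gamma'N\cE^N(z^N)}\bigr]\le\E\bigl[e^{-\gamma N\cE_\lambda^N(z^N)}\bigr]^{\gamma'/\gamma}\,\E\bigl[e^{qN\gamma'(\cE_\lambda^N-\cE^N)(z^N)}\bigr]^{1-\gamma'/\gamma},\qquad q=\tfrac{\gamma}{\gamma-\gamma'},
\]
whose left-hand side is free of $\lambda$: choose a \emph{single} $\lambda$ small enough that~\eqref{rconv5a} bounds the second factor's rate by $K+1$, and for that $\lambda$ the first factor is finite by~\eqref{eq_unifexpinta}. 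With this uniform moment in hand, replace the $\lambda$-dependent factor $\E[e^{-aN\cE_\lambda^N}]^{1/a}$ in your tail estimate by the $\lambda$-free $\E[e^{-\gamma'N\cE^N}]^{1/\gamma'}$; then the order ``fix $\eta$, shrink $\lambda$, let $\eta\to\infty$'' works. This is precisely what the paper's Lemma~\ref{lm_convex2} supplies (see in particular its conclusion~\eqref{eq_convexr1b} and the consequence~\eqref{eq_ldpthb1} in the proof of Theorem~\ref{thm_direct}): the quantifier tangle you call a ``calibration'' issue is actually the central obstruction, and the interpolation step is the device that removes it before any tail argument can be run.
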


\begin{remark}\label{rem_LDPcomm}
We give some comments on the assumptions:
\begin{enumerate}[label=(\roman*)]
\item It was shown in \cite{Hoek} that under \eqref{eq_unifexpintb}, condition \eqref{rconv5b} is equivalent to the uniform convergence of $\cE_\lambda$ to $\cE$ on the sub-level sets $\{ \mu \, |\, I(\mu)\leq M\}$ of $I$ for any $M\in\R$, and that \eqref{rconv5a} implies:
\begin{equation}\label{eq_ldpgu}
\text{For any $\delta>0$}:\quad \lim_{\lambda \to 0} \ratep{|\cE-\cE_{\lambda}|(z^N)>\delta}=-\infty. 
\end{equation}
\item Condition \eqref{rconv5b} could appear redundant for those who are familiar with LDPs: if $(\cE^N-\cE_{\lambda}^N,\cE-\cE_{\lambda})$ is \emph{a priori} known to induce an LDP, \eqref{rconv5b} and \eqref{rconv5a} are equivalent. But in the proof we need to approximate $\cE^N$ and $\cE$ separately, which requires us to have both conditions. Nevertheless, we do expect from this reasoning that bounds for \eqref{rconv5a} are also bounds for \eqref{rconv5b}. We will see that this indeed the case for the interacting particle systems in Section \ref{sgibbs}.
\item Note that condition \eqref{rconv5a} implies (cf.\ Lemma \ref{lm_convex2})
	\[
	\lim_{\lambda \to 0} \limsup_{N\to \infty}  \frac{1}{N} \log \E \left[ e^{-N \beta\cE^N_{\lambda}(z^N)}\,1_A \right] = \limsup_{N\to \infty}  \frac{1}{N} \log \E \left[ e^{-N \beta\cE^N(z^N)}\,1_A \right],
	\]
	and 
    \[
	\lim_{\lambda \to 0} \liminf_{N\to \infty}  \frac{1}{N} \log \E \left[ e^{-N \beta\cE^N_{\lambda}(z^N)}\,1_A \right] = \liminf_{N\to \infty}  \frac{1}{N} \log \E \left[ e^{-N \beta\cE^N(z^N)}\,1_A \right].
	\]
for any Borel set $A\in\cB(\cX)$ and $\beta\in\R$, which is considerably stronger than simply inequalities for open and closed sets. An open question is whether this is stronger than, equivalent to or weaker than (or none of the former) the statement that the induced measures $Q_{\lambda}^N$ \emph{exponentially approximates} $Q^N$ as $\lambda \to 0$ (cf.\ \cite[p.\ 130]{Dembo10}). 

\item An alternative approach to prove the type of results in Theorem~\ref{thm_direct} could be to get an LDP for $z^N$ in a larger space with a stronger topology, where $\mathcal{E}^N$ is a continuous function, and then apply the classical Varadhan Lemma; the LDP in the stronger topology could be obtained by exponential approximation, as in \cite{Dembo10}. This strategy is used, for example in \cite{Eichelsbacher2002,Liu2018}, in the context of certain singular Gibbs measures.
\end{enumerate}
\end{remark}

We briefly explain the strategy to prove Theorem \ref{thm_direct}. For a Borel set $A\in\cB(\cX)$, we define the following functionals on the space of Borel functions $\cE$ on $\cX$,
\begin{align}\label{phi_def}
\left\{\quad
\begin{aligned}
\phi_A(\cE)&:= -\inf_{\mu \in A\cap D(I)} (I+\cE)(\mu),\\
\phi^N_A(\cE)&:=\frac{1}{N}\log \E\left[e^{-N \cE(z^N)} 1_A \right],\qquad n\in\N.
\end{aligned}\right.
\end{align}
We will show in Lemma \ref{lm_ldpconv} that $\phi^N_A$ and $\phi_A$ are convex and from above (in $A$) by $\phi^N_{\cX}$ and $\phi_{\cX}$ respectively. Moreover, the fact that $(\cE^N,\cE)$ induces an LDP can be read as a set of variational inequalities for $\phi^N_A$ and $\phi_A$ for each $A\in\cB(\cX)$, i.e.
\[
\phi_{A^o}( \cE) \leq \liminf_{N\to \infty} \phi^N_A\big( \cE^N\big)
\leq \limsup_{N\to \infty} \phi^N_A\big( \cE^N\big)\leq\phi_{\bar{A}}( \cE).
\]
Finally, the convergence of \eqref{eq_rconv5} over $\cX$ will be seen to imply corresponding statements over every set $A\in\cB(\cX)$, which implies bounds on $\phi_A^N(\cE^N-\cE_{\lambda}^N)$ and $\phi_A(\cE-\cE_{\lambda})$. 
Hence the extended Varadhan Integral Lemma is morally equivalent to a type of stability of variational inequalities for convex functionals, for which we can use Theorem \ref{thm_conv1} in Appendix \ref{s_convex}.

\medskip

Here are the convexity properties and bounds for $\phi_A$ and $\phi_A^N$:

\begin{lm}\label{lm_ldpconv}
For any Borel set $A\in\cB(\cX)$ and any $N\in\N$, the functionals $\phi_A$ and $\phi^N_A$ defined in \eqref{phi_def} are convex and bounded from above by $\phi_\cX$ and $\phi^N_\cX$ respectively (that is, $\phi_A\le \phi_\cX$ and $\phi_A^N \le \phi_\cX^N$ for every $A\in\cB(\cX)$ and any $N\in\N$).
\end{lm}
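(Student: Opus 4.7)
My plan is to verify the two properties (convexity and the upper bound by the $A=\cX$ case) separately for each of the two functionals, with the observations being elementary once the right identity is chosen. In both cases the bound by $\phi_\cX$ or $\phi_\cX^N$ is immediate from monotonicity, so the only real content is the convexity of $\phi_A^N$, which I plan to reduce to H\"older's inequality.

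For $\phi_A$, I would first note that for each fixed $\mu \in A\cap D(I)$ the map $\cE \mapsto -I(\mu)-\cE(\mu)$ is affine, and then write
\[
\phi_A(\cE) \;=\; \sup_{\mu \in A\cap D(I)} \bigl(-I(\mu)-\cE(\mu)\bigr),
\]
so that $\phi_A$ appears as a pointwise supremum of affine functionals of $\cE$ and is therefore convex. For the upper bound I would observe that $A\cap D(I) \subseteq D(I)$, so the infimum over the smaller set is no less than the infimum over the larger set; taking negatives gives $\phi_A(\cE)\le \phi_\cX(\cE)$.

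For $\phi_A^N$, given a convex combination $\cE = \lambda \cE_1 + (1-\lambda)\cE_2$ with $\lambda\in[0,1]$, I would write the integrand as a product $e^{-N\cE(z^N)}1_A = \bigl(e^{-N\cE_1(z^N)}1_A\bigr)^{\lambda}\bigl(e^{-N\cE_2(z^N)}1_A\bigr)^{1-\lambda}$ (using $1_A=1_A^{\lambda}\cdot 1_A^{1-\lambda}$) and apply H\"older's inequality with conjugate exponents $1/\lambda$ and $1/(1-\lambda)$ to obtain
\[
\E\bigl[e^{-N\cE(z^N)}1_A\bigr] \;\le\; \E\bigl[e^{-N\cE_1(z^N)}1_A\bigr]^{\lambda}\,\E\bigl[e^{-N\cE_2(z^N)}1_A\bigr]^{1-\lambda}.
\]
Taking $\frac{1}{N}\log$ yields $\phi_A^N(\cE)\le \lambda\phi_A^N(\cE_1)+(1-\lambda)\phi_A^N(\cE_2)$. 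The bound $\phi_A^N\le\phi_\cX^N$ then follows from $e^{-N\cE(z^N)}1_A\le e^{-N\cE(z^N)}$ and the monotonicity of $\E$ and $\log$.

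There is no real obstacle here; the only care needed is with the degenerate cases where the infima equal $\pm\infty$ or the expectation vanishes, which are handled with the usual conventions $\log 0 = -\infty$ and the fact that $\lambda(\pm\infty)+(1-\lambda)(\pm\infty)$ behaves as expected in each relevant combination (in particular, if one of the terms on the right-hand side of H\"older is $0$ then so is the left-hand side, and if one of the infima is $+\infty$ then the corresponding inequality holds trivially).
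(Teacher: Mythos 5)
Your proof is correct and takes essentially the same approach as the paper: Hölder's inequality with conjugate exponents $1/\lambda$ and $1/(1-\lambda)$ for the convexity of $\phi_A^N$, the superadditivity of the infimum (which you phrase equivalently as a supremum of affine functionals) for the convexity of $\phi_A$, and monotonicity for both upper bounds.
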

\begin{proof}
For any $N\in\N$ and any $\alpha \in (0,1)$, for any Borel functions $\cE_1$, $\cE_2$, it holds by H\"older's inequality (with exponents $1/\alpha$ and $1/(1-\alpha)$)
\begin{align}\label{eq_set0}
\log \E \left[ e^{-N (\alpha \cE_1+(1-\alpha)\cE_2)(z^N)} \, 1_A \right] &= \log \E \left[ e^{- \alpha N  \cE_1 (z^N)} \, 1_A \, e^{-(1-\alpha)N \cE_{2}(z^N)} \, 1_A \right] \nonumber \\
&\leq \alpha \log \E  \left[ e^{-N \cE_1(z^N)} \, 1_A \right] + (1-\alpha) \log \E  \left[ e^{-N \cE_2(z^N)} \, 1_A \right].
\end{align}
Convexity of $\phi^N_A$ follows by dividing \eqref{eq_set0} by $N$. The bound $\phi^N_A\le \phi^N_\cX$ follows from the positivity of the exponential and the monotonicity of the logarithm. Finally, for any $\alpha \in (0,1)$,
	\begin{align*}
	\inf_{\mu \in A\cap D(I)} \, I+(\alpha \cE_1+(1-\alpha) \cE_2)&= \inf_{\mu \in A\cap D(I)} \, [\alpha (I+\cE_1)+(1-\alpha)(I+\cE_2)]\\
	&\geq \alpha \inf_{\mu \in A\cap D(I)} \, (I+\cE_1)+(1-\alpha) \inf_{\mu \in A\cap D(I)} \, (I+\cE_2),
	\end{align*}
which gives convexity of $\phi_A$. The bound $\phi_A\le \phi_\cX$ is easily verified.
\end{proof}
We can now prove Theorem \ref{thm_direct}.
\begin{proof}[Proof of Theorem \ref{thm_direct}]
Recall, for any Borel set $A\in\cB(\cX)$ and Borel function $\cG$, we have by definition
\[
\phi_{A^o}(\cG) = -\inf_{\mu \in A^o\cap D(I)} (I+\cG)(\mu),\qquad
\phi_{\bar A}(\cG) = -\inf_{\mu \in \bar{A}\cap D(I)} (I+\cG)(\mu),
\]
and
\[
\phi_A^N(\cG)=\frac{1}{N}\log \E\left[e^{-N \cG(z^N)} 1_A \right].
\]
Lemma \ref{lm_ldpconv} gives that $\phi_{A^o}$, $\phi_{\bar A}$ and $\phi_A^N$ are convex for every $N\in\N$. Moreover, by the bounds $\phi^N_A\le \phi^N_\cX$ and $\phi_{\bar{A}}\le \phi_\cX$, assumptions \eqref{eq_unifexpint} imply, for some $\gamma>1$ (independent of $\lambda$),
\begin{align*}
\left.
\begin{aligned}
    \limsup_{N\to\infty}\phi_A^N(\gamma\cE^N_\lambda) <+\infty\\
\phi_{\bar A}(\gamma\cE_\lambda) <+\infty
\end{aligned}\quad \right\}\quad \text{for every $\lambda>0$,}
\end{align*}
while assumptions \eqref{eq_rconv5} imply
\begin{align*}
\left.
\begin{aligned}
    \limsup_{\lambda\to0}\limsup_{N\to\infty}\phi_A^N(\beta(\cE^N-\cE^N_\lambda)) \le K\\
\limsup_{\lambda\to0}\phi_{\bar A}(\beta(\cE-\cE_\lambda)) \le K
\end{aligned}\quad\right\}\quad \text{for every $\beta\in\R$.}
\end{align*}
By Lemma \ref{lm_ldpchar}, the assumption that $(\cE_{\lambda}^N,\cE_{\lambda})$ induces an LDP is characterized by
	\begin{equation}\label{eq_ldpcrp2}
	\phi_{A^o}( \cE_{\lambda}) \leq \liminf_{N\to \infty} \phi_A^N\big( \cE_{\lambda}^N\big) \leq \limsup_{N\to \infty} \phi_A^N\big( \cE_{\lambda}^N\big)\leq\phi_{\bar A}( \cE_{\lambda}).
	\end{equation}
We are now in a position to apply Theorem \ref{thm_conv1}, which implies that \eqref{eq_ldpcrp2} also holds for $(\cE^N,\cE)$ (cf.\ \eqref{eq_ctx1}), i.e.\
\[
 \phi_{A^o}( \cE) \leq \liminf_{N\to \infty} \phi_A^N\big( \cE^N\big) \leq \limsup_{N\to \infty} \phi_A^N\big( \cE^N\big)\leq\phi_{\bar A}( \cE).
\]
Moreover, for every $\gamma'\in(0,\gamma)$, both $\limsup_{N\to\infty} \phi_A^N(\gamma'\cE^N)<+\infty$ and $\phi_{\bar A}(\gamma'\cE) <+\infty$ (cf.\ \eqref{eq_ctx1a} and \eqref{eq_ctx1b}). In particular, for $A=\cX$, we have that $-\phi_\cX(\cE)$ is finite, and  
\begin{equation}\label{eq_ldpthb1}
\rate{e^{- \gamma N \cE^N(z^N)}}<\infty.
\end{equation}
By Lemma \ref{lm_ldpchar}, we can conclude that $(\cE^N,\cE)$ induces an LDP provided we show lower semi-continuity of $J$ and exponential tightness of $Q^N$. 

First, from \eqref{rconv5b} we can conclude that for every $K'>K$ and $\beta\geq 0$ there exists a  large enough $\lambda^*(K',\beta)$ such that
\begin{equation*}
    |\cE_{\lambda}-\cE|(\mu)\leq \frac{K'+I(\mu)}{\beta}, \quad \forall \mu\in \cX, \forall \lambda\geq \lambda^*(K',\beta). 
\end{equation*}
In particular we derive that $\cE_{\lambda}$ converges pointwise to $\cE$ on $D(I)$ (in fact, the convergence is uniform on sub-level sets of $I$). Next, note that a Borel function $J$ is lower semi-continuous if and only if for every $\mu \in \cX$
\begin{equation*}
    \liminf_{\epsilon\to 0}\, \inf_{\nu\in B_{\epsilon}(\mu)}J(\nu)\geq J(\mu),
\end{equation*}
which in the case of
\[
J_{\cG}(\mu):= \left\{\begin{aligned}
&I(\mu) + \cG(\mu) \qquad &\mu\in D(I),\\
&+\infty \qquad &\mu \not \in D(I),
\end{aligned}\right.
\]
for a Borel function $\cG$ can be rewritten as 
\begin{equation}\label{eq:ldp_lsc1}
\limsup_{\epsilon\to 0} \phi_{B_{\epsilon}(\mu)}(\cG)\leq -J_{\cG}(\mu).
\end{equation}
To show this for $\cG=\cE$, fix $\mu$, and note that by convexity for any $\alpha\in [0,1)$, $\lambda$, $\epsilon$,
\begin{equation*}
\begin{aligned}
    \phi_{B_{\epsilon}(\mu)}(\alpha \cE)&\leq \alpha \phi_{B_{\epsilon}(\mu)}(\cE_{\lambda})+(1-\alpha)\phi_{B_{\epsilon}(\mu)}\left(\alpha(1-\alpha)^{-1}(\cE-\cE_{\lambda})\right)\\
    &\leq \alpha \phi_{B_{\epsilon}(\mu)}(\cE_{\lambda})+(1-\alpha)\phi_{\cX}\left(\alpha(1-\alpha)^{-1}(\cE-\cE_{\lambda})\right).\\
\end{aligned}
\end{equation*}
Since the left-hand side is independent of $\lambda$, taking subsequently limits in $\epsilon$ and $\lambda$ and using the lower semi-continuity of $I+\cE_{\lambda}$ we derive
\begin{equation*}
\begin{aligned}
   \limsup_{\epsilon\to 0} \phi_{B_{\epsilon}(\mu)}(\alpha \cE)&\leq \alpha \liminf_{\lambda\to 0} \limsup_{\epsilon\to 0} \phi_{B_{\epsilon}(\mu)}(\cE_{\lambda})+(1-\alpha)\limsup_{\lambda\to 0}\phi_{\cX}\left(\alpha(1-\alpha)^{-1}(\cE-\cE_{\lambda})\right)\\
    &\leq -\alpha \limsup_{\lambda\to 0} J_{\cE_{\lambda}}(\mu)+(1-\alpha)K.\\
\end{aligned}
\end{equation*}
Since $g(\alpha):=\limsup_{\epsilon\to 0} \phi_{B_{\epsilon}(\mu)}(\alpha \cE)$ is convex and bounded from above around $\alpha=1$ we conclude by Lemma \ref{lm_convex1} after letting $\alpha\to 1$,
\[ \limsup_{\epsilon\to 0} \phi_{B_{\epsilon}(\mu)}(\cE)\leq -\limsup_{\lambda\to 0}J_{\cE_{\lambda}}(\mu). \]
Now, to establish \eqref{eq:ldp_lsc1} for $\cG=\cE$, note that either $I(\mu)=+\infty$ in which case the inequality trivially holds, or we have $\mu \in D(I)$ and thus we employ the pointwise convergence of $\cE_{\lambda}$ to $\cE$.

Finally, to prove exponential tightness, fix an arbitrary $M\ge 1$ and let $K_M$ be a compact set in $\cX$ such that
\[
\limsup_{N\to\infty} \frac{1}{N}\log P^N(\cX\setminus K_M) < -M.
\]
By H\"older inequality we derive, for every $\alpha$ in $(0,1)$,
\begin{align*}
\frac{1}{N}\log \E\left[e^{-N \cE^N(z^N)} 1_{\cX\setminus K_M} \right]&\leq \frac{\alpha}{N}\log \E\left[e^{\alpha^{-1}N \cE^N(z^N)}  \right]+\frac{1-\alpha}{N}\log P^N\big(\cX\setminus K_M\big).
\end{align*}
After taking the limit supremum in $N$, the first term on the right-hand side is independent of $M$ and finite by \eqref{eq_ldpthb1}, provided $\alpha^{-1}<\gamma$. Therefore, 
\begin{align*}
\limsup_{M\to \infty}\limsup_{N\to\infty}\frac{1}{N}\log \E\left[e^{-N \cE^N(z^N)} 1_{\cX\setminus K_M} \right] =-\infty,
\end{align*}
which proves the exponential tightness of $Q^N$ and concludes the proof.
\end{proof}

\paragraph{\bf\em Background} We are only aware of one paper extending Varadhan lemma for discontinuous log-densities in a general framework, namely \cite{Moral2003}, which however uses different assumptions. Instead, extensions of Varadhan's lemma for particular contexts have been proven, often involving conditions like \eqref{eq_ldpgu}.
For example, \cite{Eichelsbacher1998} proves an extended contraction principle which is closely related to exponential approximations (cf.\ \cite{Dembo10}); a localized version of \eqref{eq_ldpgu} is used in \cite{Bodineau1999} for their concept of quasi-continuity to prove LDPs for vortex systems; the papers \cite{Eichelsbacher2002,Liu2018} use an alternative strategy to get an extension of the classical Varadhan Lemma in the context of singular Gibbs measures, see Remark \ref{rem_LDPcomm}(iv).

\section{Gibbs measures}\label{sgibbs}

\subsection{Notations and preliminary results}

In this section we consider the setting for Gibbs measures over weakly interacting particle systems.

Let $S$ be a Polish space, endowed with its Borel $\sigma$-algebra $\mathcal{B}(S)$. Let $\mu_0\in\cP(S)$ be a given reference measure. For simplicity, we will suppose in the following that $\mu_0$ is \textit{non-atomic}, i.e. it has no atoms (cf. Remark \ref{rmk_nonatomic}). We are given i.i.d.~random variables $\omega_i$, $i\in\mathbb{N}$, defined on some probability space $(\Omega,\mathcal{A},\mathbb{P})$, with values in $S$ and common law $\mu_0$; we can think of $\omega_i$ as non-interacting particles. We denote by $\mathbb{E}$ the expectation with respect to $\mathbb{P}$.

To describe our interacting particle system, we fix $k$ in $\mathbb{N}$, $k\ge 2$, and take a $k$-particle interaction potential $V^N$ on $S$, i.e.\ a Borel function $V^N:S^k\to \REx$. We define the energy $E_V^N:S^N \to \REx$ of an $N$-particle configuration ($N\gg k$) by
\[
E_V^N(x_1,\dots,x_N) := \frac{1}{N^k}\, \sum_{i_1,\ldots i_k \text{ distinct}} V^N(x_{i_1},\dots,x_{i_k}),
\]
(when the $N$-dependence is made explicit in the superscript, with a little abuse of notation we use $E^N_{V}$ instead of $E^N_{V^N}$). Then the interacting particle system is described by the probability measure $\mathbb{Q}^N_V$ on $(\Omega,\mathcal{A})$ defined by
\[
    \mathbb{Q}^N_V=
    \frac{1}{Z_V^N}e^{-NE_V^N(\omega_1,\ldots \omega_N)}\mathbb{P},
\]
where $Z_V^N$ is the normalizing constant, assumed to be finite. Under $\mathbb{Q}^N_V$, the particles $\omega_i$ are subject to interaction via the potential $V^N$. Note that the energy $E^N_V$ is invariant under permutation, or, in other words, depends only on the positions of the particle $\omega_i$ and not on their label $i$, via a fixed interaction potential $V^N$---this is a {\em mean-field} interaction. Note also that particle configurations $(x_1,\ldots,x_N)$ are more likely, according to $\mathbb{Q}^N_V$, if $V^N$ assumes lower values in these configurations.

Our main interest is in large deviations for the empirical measures associated with $\omega_i$ under the probability measure $\mathbb{Q}^N_V$. For this reason, we consider the state space $\cX=\cP(S)$, equipped with the weak topology (w.r.t.\ continuous and bounded functions on $S$), which turns $\cP(S)$ into a Polish space \cite[Theorem D.8]{Dembo10}.
For each $N\in\N$, we denote by $z^N_{\bullet}:S^N\to \cP(S)$ the continuous map 
\[
 S^N\ni(x_1,\ldots,x_N)=:\x\mapsto z^N_\x:= \frac{1}{N}\sum_{i=1}^N \delta_{x_i}\in \cP(S).
\]
We denote by $P^N \in \cP(\cP(S))$, resp. $Q^N_V \in \cP(\cP(S))$ the law of the empirical measure $z^N_{\o}$ for $\o=(\omega_1,\ldots,\omega_N)$ under $\mathbb{P}$ (where $\omega_i$ $i=1,\ldots N$ are i.i.d. with common law $\mu_0$), resp. under $\mathbb{Q}^N_V$. We aim at giving an LDP for $Q^N_V$.

We further define the function $\cE_V^N: \cP(S)\rightarrow \REx$ by
\begin{equation}\label{eqg_en}
\cE_V^N(\mu):= \begin{cases}
\int_{(S^k)'} V^N\,d \mu^{\otimes k} & \text{if $V^N\in L^1(\mu^{\otimes k})$},\\
+\infty & \text{otherwise},
\end{cases}
\end{equation}
where $(S^k)'$ is  $S^k$ but with the diagonals removed, i.e.\ 
\[
(S^k)':=\Bigl\{ (x_1,\dots,x_k) \in S^k \,\Big|\,  x_i\neq x_j, \, \forall i,j\in \{ 1,\dots,k\}\text{ with }i\neq j\Bigr\}.
\]
Notice that when $\omega_i$, $i=1,\ldots,N$ are i.i.d.\ random variables with common law $\mu_0$, then
\[
    E_V^N(\omega_1,\ldots, \omega_N) = \int_{(S^k)'} V^N d(z^N_{\o})^{\otimes k} = \cE_V^N(z^N_{\o})\qquad \text{$\mathbb{P}^{\otimes N}$-a.e.}
\]
due to the non-atomic property of the reference measure $\mu_0$. Hence, by construction and the mean-field interaction property, the interacting particle system may then be recast as a change of measure in $\cP(\cP(S))$, namely
\begin{equation}\label{eq:Q_induced-3}
 Q_V^N := \frac{1}{Z_V^N}e^{-N \cE^N_V(\mu)}P^N.
\end{equation}

Given a Borel function $V:S^k\rightarrow \REx$, we define similarly the function $\cE_V: \cP(S)\rightarrow \REx$ by
\begin{equation}\label{eqg_en1b}
\cE_V(\mu):= \begin{cases}
\int_{S^k} V\, d \mu^{\otimes k} & \text{if $V\in L^1(\mu^{\otimes k})$},\\
+\infty & \text{otherwise},
\end{cases}
\end{equation}
The functions $\cE_V^N$ and $\cE_V$ are Borel maps on $\cP(S)$ (cf.\ Appendix~\ref{app_meas}, in particular Lemma \ref{lm_intercon} and Corollary \ref{thm_minteru}) and are defined identically except for the $N$ dependence in $V$ and the domain of integration, i.e., $(S^k)'$ instead of $S^k$.

\begin{remark}\label{rmk_nonatomic}A few comments on the assumption that $\mu_0$ is non-atomic:
\begin{enumerate}[label=(\roman*)]
\item The energy $E_V^N$ as defined above does not rule out self-interaction, i.e.,\ two particles occupying the same position ($x_i=x_j$ for $i\ne j$). While $E_V^N$ is meaningful for bounded potentials $V^N$, this may cause an issue when $V^N$ is singular on the diagonal. The non-atomicity of $\mu_0$ resolves this issue: indeed, if $\omega_i$ are i.i.d.\ random variables on $(\Omega,\cA,\mathbb{P})$ with common law $\mu_0$, then $\mathbb{P}(\omega_i=\omega_j,\; i\ne j) = 0$, which then allows $E_V^N(\omega_1,\ldots,\omega_N)$ to be defined $\mathbb{P}$-a.s..

\item    However, if the reference measure $\mu_0$ is atomic but the energy for the $N$-particle system is still given by $E^N_V$ there is also an alternative method. Namely, fix $N$ and note that $E_V^N(\x)=E_V^N(\mathbf{y})$ for any $\mathbf{x},\mathbf{y}\in S^N$ with $z^N_{\x}=z^N_{\mathbf{y}}$. Hence the function $\cE^N_V: \cP(S)\rightarrow \REx$ 
\begin{equation*}
\cE^N_V(\mu):=\left\{\begin{aligned}
&E^N((z^N_{\bullet})^{-1}(\mu)) \qquad &&\mu\in z^N_{\bullet}(S^N), \\\
&+\infty \qquad &&\mbox{otherwise,}
\end{aligned}\right.
\end{equation*}
is well defined. Moreover, it is easy to verify that $z^N_{\bullet}(S^N)$ is closed and for $V^N\in C_b(S^N)$ the map $E^N((z^N_{\bullet})^{-1}(\mu))$ is continuous on $z^N_{\bullet}(S^N)$, hence $\cE^N_V(\mu)$ is Borel. By a monotone class argument similar as in Appendix~\ref{app_meas} one can extend this to all Borel $V^N$, and we can then proceed as in the rest of this section. However, note that $\cE_V^N$ might no longer be of integral form as in \eqref{eqg_en}.
\end{enumerate}
\end{remark}
Now we give the classical results concerning LDP. We start with the non-interacting case, namely Sanov theorem:

\begin{thm}[Sanov's theorem]\label{thm_sanov}
The family $\{P^N\}$ of laws of the non-interacting particle system satisfies an LDP with rate function $I:\cX\to \REx$, where
\[
I(\mu):=R(\mu\|\mu_0)
\]
is the relative entropy of $\mu \in \cX$ with respect to $\mu_0$.
\end{thm}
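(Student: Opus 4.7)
}} The plan is to prove the LDP by establishing its three constituent ingredients separately: exponential tightness of $\{P^N\}$, a weak upper bound on closed sets, and a weak lower bound on open sets, all relative to the candidate rate function $I(\mu)=R(\mu\|\mu_0)$. Lower semi-continuity of $I$ is classical and follows from its Legendre-dual representation.

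For exponential tightness, since $\mu_0$ is Borel on a Polish space it is tight, so there exist compact $K_m\subset S$ with $\mu_0(S\setminus K_m)\le e^{-m^2}$. I would take $\Gamma_M:=\bigcap_{m\ge M}\{\mu\in\cP(S):\mu(S\setminus K_m)\le 1/m\}$, which is relatively compact by Prokhorov's theorem. An exponential Chebyshev bound on the i.i.d.\ Bernoullis $1_{\omega_i\notin K_m}$ gives $P^N\bigl(z^N_{\o}(S\setminus K_m) > 1/m\bigr) \le e^{-N/m}\bigl(1+(e-1)\mu_0(S\setminus K_m)\bigr)^N$, and summing in $m\ge M$ produces the required $\limsup_M \limsup_N \tfrac{1}{N}\log P^N(\cX\setminus \Gamma_M) = -\infty$.

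For the upper bound, I would exploit the duality formula $R(\mu\|\mu_0) = \sup_{f\in C_b(S)}\bigl\{\int f\,d\mu - \log\int e^f\,d\mu_0\bigr\}$. Given a closed $F\subset\cP(S)$ and $c<\inf_F I$, exponential tightness reduces the problem to $F\cap\Gamma_M$, which is compact. For each $\mu\in F\cap\Gamma_M$ I would choose $f_\mu\in C_b(S)$ realising the supremum above the threshold $c$, and use weak continuity of $\nu\mapsto\int f_\mu\,d\nu$ to obtain an open neighborhood $U_\mu$ on which the inequality persists with a slightly relaxed constant. Extracting a finite subcover and applying the exponential Chebyshev bound $P^N(z^N_{\o}\in U_\mu) \le e^{-Nc}\,\E\bigl[e^{N\int f_\mu\,dz^N_{\o}}\bigr]\bigl(\int e^{f_\mu}\,d\mu_0\bigr)^{-N}\cdot e^{-Nc}=e^{-Nc}$ (using that $\omega_i$ are i.i.d.\ with law $\mu_0$) yields the bound after the union bound and $c\uparrow \inf_F I$.

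For the lower bound, fix $\mu$ with $R(\mu\|\mu_0)<\infty$ (so $\mu\ll\mu_0$) and an open $G\ni\mu$. The change-of-measure identity reads $P^N(z^N_{\o}\in G) = \E_{\mu^{\otimes N}}\bigl[1_{\{z^N_{\o}\in G\}}\exp\bigl(-\sum_{i=1}^N\log\tfrac{d\mu}{d\mu_0}(\omega_i)\bigr)\bigr]$. On the event $A_{N,\varepsilon}:=\{z^N_{\o}\in G\}\cap\bigl\{\bigl|\tfrac{1}{N}\sum_i\log\tfrac{d\mu}{d\mu_0}(\omega_i) - R(\mu\|\mu_0)\bigr|<\varepsilon\bigr\}$, whose $\mu^{\otimes N}$-probability tends to $1$ by the strong law of large numbers together with the weak convergence $z^N_{\o}\to\mu$, Jensen's inequality gives $P^N(G) \ge \mu^{\otimes N}(A_{N,\varepsilon})\,e^{-N(R(\mu\|\mu_0)+\varepsilon)}$. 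Sending $N\to\infty$ and then $\varepsilon\to 0$ finishes the argument.

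The main obstacle is the non-compactness of $S$ in the upper bound: functions in $C_b(S)$ need not saturate the Legendre dual uniformly, so the reduction through exponential tightness onto compact subsets is essential. A cleaner alternative is the projective-limit approach of Dawson-Gärtner, projecting onto finite measurable partitions where an LDP follows from Cramér's theorem for the finite-dimensional empirical vector, and lifting via the topology on $\cP(S)$.
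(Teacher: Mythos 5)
The paper does not actually prove Sanov's theorem; it is invoked as a classical result (the standard reference, Dembo--Zeitouni, uses the Dawson--G\"artner projective-limit argument you mention in passing). Your attempt is the alternative \emph{direct} proof via exponential tightness, a compact-set upper bound through Legendre duality, and a change-of-measure lower bound --- a legitimate route. Your lower bound via tilting and the strong law is sound, lower semi-continuity of $I$ from the variational formula is right, and the compact-set upper bound has the correct structure (though the displayed Chebyshev inequality for $P^N(z^N_\o\in U_\mu)$ carries a spurious second factor $e^{-Nc}$; the intended computation is $P^N(z^N_\o\in U_\mu)\le e^{-N\inf_{\nu\in U_\mu}\int f_\mu\,d\nu}\,\E\bigl[e^{N\int f_\mu\,dz^N_\o}\bigr] = e^{-N\inf_{\nu\in U_\mu}\int f_\mu\,d\nu}\bigl(\int e^{f_\mu}\,d\mu_0\bigr)^N \le e^{-Nc}$).

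The genuine gap is in your exponential-tightness step. With the Chernoff parameter $\theta=1$ as written, $P^N\bigl(z^N_\o(S\setminus K_m)>1/m\bigr)\le e^{-N/m}\bigl(1+(e-1)\mu_0(S\setminus K_m)\bigr)^N$, and although $\mu_0(S\setminus K_m)\le e^{-m^2}$ keeps the second factor harmless, the first factor $e^{-N/m}$ \emph{increases to $1$} as $m\to\infty$ for fixed $N$, so the union-bound sum $\sum_{m\ge M}e^{-N/m}$ diverges and yields no estimate at all. A unit Chernoff parameter discards essentially all the large-deviation decay of a binomial with success probability $p_m\le e^{-m^2}$: the correct rate for the sample mean to exceed $1/m$ is of order $m$, not $1/m$. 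The fix is to let the Chernoff parameter grow with $m$: taking $\theta=m^2$, for example, gives $P^N\bigl(z^N_\o(S\setminus K_m)>1/m\bigr)\le e^{-mN}\bigl(1+(e^{m^2}-1)e^{-m^2}\bigr)^N\le (2e^{-m})^N$, whose sum over $m\ge M$ is a geometric series dominated by $(2e^{-M})^N$, so that $\limsup_N\tfrac{1}{N}\log P^N(\cX\setminus\Gamma_M)\le\log 2 - M\to -\infty$. You should also record that $\Gamma_M$ is closed (because $\mu\mapsto\mu(S\setminus K_m)$ is lower semicontinuous for the open set $S\setminus K_m$), so that Prokhorov gives compactness rather than mere relative compactness. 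With these corrections the direct proof goes through.
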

We recall that the relative entropy is defined as
\begin{equation*}
 R(\nu\|\mu):= \begin{cases}
    \int_S \log \frac{d \nu}{d \mu}\,  d \nu &\text{if $\nu\ll \mu$} \\
    +\infty &\text{otherwise}.
 \end{cases}
\end{equation*}
For the LDP for the interacting particle systems, we introduce the following notation:
\[
J_V(\mu) := 
\begin{cases}
\cE_V(\mu)+I(\mu) & \text{for $\mu \in D(I)$},\\
+ \infty &\mbox{otherwise},
\end{cases}
\]
with $D(I):=\{ \mu \, | \, R(\mu\|\mu_0) < \infty \}$, and, if $\inf_{\cX} J_V>-\infty$,
\begin{equation}\label{gibbs_ratefunction}
    \cF_V(\mu):=J_V(\mu)-\inf_{\nu \in \cX} J_V(\nu).
\end{equation}

We now give an LDP in the case when $V^N=V$ is in $C_b(S^k)$. In this case, $\cE_V$ is also continuous and bounded, and so the LDP for $Q^N_V$ is essentially a consequence of the classical Varadhan Lemma. The only (and technical) difference with the classical Varadhan Lemma comes from the missing diagonal in $(S^k)'$, which in general causes $\cE_V^N$ to not be continuous.

\begin{lm}\label{lmg_uapprox}
	Suppose $V:S^k\to \R$ is continuous and bounded. Then $(\cE^N_V,\cE_V)$ induces an LDP (in the sense of Definition \ref{defi:ldp_pair}).
	In particular, the family $\{Q^N_{V}\}$ given by \eqref{eq:Q_induced-3} satisfies an LDP with rate function $\cF_{V}$.
\end{lm}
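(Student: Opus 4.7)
The plan is to apply the extended Varadhan Integral Lemma (Theorem \ref{thm_direct}) with the trivial approximating pair $(\cE^N_\lambda, \cE_\lambda) := (\cE_V, \cE_V)$ for every $\lambda > 0$, and with the baseline LDP for $\{P^N\}$ provided by Sanov's theorem (Theorem \ref{thm_sanov}), giving $I(\mu) = R(\mu\|\mu_0)$. The only real mismatch to address is that the target $\cE^N_V(\mu) = \int_{(S^k)'} V\, d\mu^{\otimes k}$ removes the diagonal, whereas the approximant $\cE_V(\mu) = \int_{S^k} V\, d\mu^{\otimes k}$ does not; I will show that this discrepancy is $O(1/N)$ when evaluated on the empirical measures $z^N_\o$, hence exponentially negligible.

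Since $V$ is continuous and bounded, the tensor map $\mu \mapsto \mu^{\otimes k}$ is weakly continuous from $\cP(S)$ to $\cP(S^k)$, so $\cE_V$ is continuous on $\cP(S)$ and bounded by $\|V\|_\infty$. The classical Varadhan Integral Lemma (Proposition \ref{thm_vara}) then yields that the pair $(\cE_V, \cE_V)$ induces an LDP with rate function $\cF_V$ in the sense of Definition \ref{defi:ldp_pair}. Boundedness of $\cE_V$ immediately gives \eqref{eq_unifexpinta} and \eqref{eq_unifexpintb} for any $\gamma > 1$; and since $\cE_\lambda - \cE \equiv 0$ in our choice, condition \eqref{rconv5b} reduces to $\sup_\mu(-I(\mu)) = 0$, so $K = 0$ suffices.

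The main work is verifying \eqref{rconv5a}. Non-atomicity of $\mu_0$ guarantees that $\omega_1, \dots, \omega_N$ are $\mathbb{P}$-almost surely pairwise distinct, so
\[
(\cE_V - \cE^N_V)(z^N_\o) \;=\; \frac{1}{N^k} \sum_{(i_1,\dots,i_k) \text{ not pairwise distinct}} V(\omega_{i_1},\dots,\omega_{i_k}) \qquad \mathbb{P}\text{-a.s.}
\]
The number of $k$-tuples in $\{1,\dots,N\}^k$ that fail to be pairwise distinct is bounded by $C_k N^{k-1}$ for a combinatorial constant $C_k$ depending only on $k$, so $|\cE_V - \cE^N_V|(z^N_\o) \le C_k \|V\|_\infty / N$ $\mathbb{P}$-a.s.\ Consequently
\[
\rate{e^{\beta N(\cE^N_V - \cE_V)(z^N_\o)}} \;\le\; \lim_{N\to\infty} \frac{|\beta| C_k \|V\|_\infty}{N} \;=\; 0,
\]
so \eqref{rconv5a} also holds with $K = 0$. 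All hypotheses of Theorem \ref{thm_direct} are then satisfied, whence $(\cE^N_V, \cE_V)$ induces an LDP, and in particular $\{Q^N_V\}$ satisfies an LDP with rate function $\cF_V$. The only step requiring any genuine work is the diagonal-mass combinatorial estimate, which is elementary; conceptually, the lemma just records the fact that the discrepancy between $\int_{S^k}$ and $\int_{(S^k)'}$ on empirical measures carried by $N$ distinct atoms vanishes at rate $O(1/N)$.
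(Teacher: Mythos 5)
Your proposal is correct and takes essentially the same approach as the paper: first establish that $\cE_V$ is continuous and bounded so the classical Varadhan Lemma applies to the pair $(\cE_V,\cE_V)$, then invoke Theorem \ref{thm_direct} with the constant approximating family $(\cE_V,\cE_V)$ and reduce everything to the $O(1/N)$ diagonal-mass estimate. The only cosmetic difference is that you justify continuity of $\cE_V$ via weak continuity of $\mu\mapsto\mu^{\otimes k}$ whereas the paper cites Lemma \ref{lm_intercon} applied $k$ times, and you spell out \eqref{eq_unifexpintb} and \eqref{rconv5b} explicitly where the paper leaves them implicit.
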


\begin{proof}
By applying Lemma \ref{lm_intercon} $k$-times, we get that, for any continuous and bounded $V$, the function $\cE_V$ is continuous and bounded on $\cP(S)$. Hence, by the classical Varadhan Lemma (cf.\ Proposition \ref{thm_vara}), the couple $(\cE_V,\cE_V)$ induces an LDP in the sense of Definition \ref{def_ldp}.

Then, approximating $(\cE^N_V,\cE_V)$ with $(\cE_V,\cE_V)$, we have that $(\cE^N_V,\cE_V)$ induces an LDP by Theorem \ref{thm_direct}, provided we show that, for some $\gamma>1$ and $K\in\R$,
\begin{equation}\label{eq_grconvc}\
\begin{aligned}
\rate{e^{-\gamma N\cE_V^N(z^N_{\o})}} &< + \infty,\\
\rate{e^{\beta N|\cE_V^N-\cE_V|(z^N_{\o})}} &\le K\qquad\text{for every $\beta \geq 0$}.
\end{aligned}
\end{equation}
The first limit follows easily from the boundedness of $V$. For the second limit, we can bound away all the self-interactions to obtain
	\begin{align}\label{eq_gecomp}
	\begin{aligned}
	\big|\cE^N_V(z^N_{\o})-\cE_V(z^N_{\o})\big| &= \frac{1}{N^k}\left| \, \sum_{i_1,\dots, i_k\text{ all distinct}} V(\omega_{i_1},\dots,\omega_{i_k}) - \sum_{i_1, \dots, i_k} V(\omega_{i_1},\dots,\omega_{i_k}) \, \right|\\
	&\leq \frac{1}{N^k} \frac{k(k-1)}{2} N^{k-1} \|V\|_{\infty} = \frac{k(k-1)}{2 N}\|V\|_{\infty}.
	\end{aligned}
	\end{align}
In the inequality above, we used that the number of $k$-tuples $(i_1,\ldots i_k)$ with at least two equal indices is bounded by $N^{k-1} k(k-1)/2$. The second limit in \eqref{eq_grconvc} follows easily.
\end{proof}

Thus, we are in the same setting as in the previous section, i.e., we have created a large class of family of functions $(V^N,V)$ such that their induced interacting particle systems satisfy a certain LDP. Hence, next we will show how to extend this class by approximation. 

\subsection{Main results}

We give our main result for this section, which serves as a tool for LDPs for Gibbs measures with a possibly discontinuous interaction potential. The result brings the general LDP result of Theorem~\ref{thm_direct} into the Gibbs measure context. 
In the following, $(f)^-$ denotes the negative part of the function $f$.

\begin{thm}\label{thm_gibbs1}
Let $(V_\lambda^N,V_\lambda)$ be a family of Borel functions on $S^k$ such that $(\cE^N_{V_\lambda},\cE_{V_\lambda})$ induces an LDP. Let $(V^N,V)$ be a family of Borel functions on $S^k$ and assume that, for some $\gamma>1$,
\begin{equation}\label{eq_unif_int1}
\left.\begin{aligned}
\limsup_{N\to\infty}\log \int_{S^k} e^{\gamma k |(V^N_{\lambda})^-|}\, d\mu_0^{\otimes k} <+\infty\\
\log \int_{S^k} e^{\gamma k |(V_{\lambda})^-|}\, d\mu_0^{\otimes k} <+\infty
\end{aligned}\quad\right\}\quad \text{for every $\lambda>0$},
\end{equation}
and that, for some $K\in\R$,
\begin{equation}\label{eq_conv1}
\left.\begin{aligned}
\limsup_{\lambda\to 0} \limsup_{N\to\infty} \log \int_{S^k} e^{\beta |V^N-V^N_{\lambda}|} \, d\mu_0^{\otimes k}\le K,\\
\limsup_{\lambda\to 0} \log \int_{S^k} e^{\beta |V-V_{\lambda}|} \, d\mu_0^{\otimes k}\le K.
\end{aligned}\quad\right\}\quad\text{for every $\beta \ge 0$}.
\end{equation}
Then $(\cE^N_V,\cE_V)$ induces an LDP. In particular, the family of induced interacting particle systems $Q_{V}^N$ satisfies an LDP with normalized rate function $\cF_V$ given in \eqref{gibbs_ratefunction}.
\end{thm}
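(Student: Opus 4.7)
The plan is to apply the extended Varadhan Integral Lemma (Theorem~\ref{thm_direct}) to $P^N=\mathrm{Law}(z^N_{\o})$—which by Sanov's theorem satisfies an LDP with rate $I=R(\cdot\|\mu_0)$—with $\cE^N=\cE^N_V$, $\cE=\cE_V$, and the approximating pairs $\cE^N_\lambda=\cE^N_{V_\lambda}$, $\cE_\lambda=\cE_{V_\lambda}$, which already induce LDPs by hypothesis. I will fix $\gamma_0\in(1,\gamma)$, with $\gamma$ from \eqref{eq_unif_int1}, and verify the four conditions \eqref{eq_unifexpint}--\eqref{eq_rconv5} of Theorem~\ref{thm_direct} for this $\gamma_0$.

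The main technical ingredient is a Hoeffding-type symmetrization bound. For any nonnegative Borel $g:S^k\to[0,\infty]$ and $q:=\lfloor N/k\rfloor$, writing the U-statistic $A_N^g:=|D_N|^{-1}\sum_{(i_1,\ldots,i_k)\in D_N}g(\omega_{i_1},\ldots,\omega_{i_k})$ (with $D_N$ the set of $k$-tuples of distinct indices in $\{1,\ldots,N\}$) as an average over permutations of block-averages, namely
\[
A_N^g = \frac{1}{N!}\sum_{\sigma\in S_N} \frac{1}{q}\sum_{j=1}^q g\bigl(\omega_{\sigma((j-1)k+1)},\ldots,\omega_{\sigma(jk)}\bigr),
\]
Jensen's inequality together with independence within each block of $k$ variables gives
\[
\E\bigl[e^{cA_N^g}\bigr]\leq \Bigl(\int_{S^k}e^{(c/q)g}\,d\mu_0^{\otimes k}\Bigr)^q\qquad\text{for every }c\geq 0.
\]
Since $\cE^N_g(z^N_{\o})=(|D_N|/N^k)A_N^g\leq A_N^g$ whenever $g\geq 0$, taking $c=\gamma_0 N$ produces the key estimate
\[
\frac{1}{N}\log \E\bigl[e^{\gamma_0 N \cE^N_g(z^N_{\o})}\bigr] \leq \frac{q}{N}\log \int_{S^k} e^{(\gamma_0 N/q)g}\,d\mu_0^{\otimes k};
\]
since $N/q\to k$ and $\gamma_0<\gamma$, for $N$ large the exponent $\gamma_0 N/q$ drops below $\gamma k$, so the right-hand side is controlled by \eqref{eq_unif_int1}.

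With this in hand, I would verify \eqref{eq_unifexpinta} using $-\cE^N_{V_\lambda}\leq \cE^N_{(V^N_\lambda)^-}$ and \eqref{rconv5a} using $|\beta(\cE^N_V-\cE^N_{V_\lambda})|\leq |\beta|\cE^N_{|V^N-V^N_\lambda|}$, applying the Hoeffding bound with $g=(V^N_\lambda)^-$ and $g=|V^N-V^N_\lambda|$ respectively, and invoking \eqref{eq_unif_int1} and \eqref{eq_conv1} (for the latter, choosing any $\beta'>|\beta|$ so that $|\beta|N/q\leq \beta' k$ for $N$ large). For the deterministic conditions \eqref{eq_unifexpintb} and \eqref{rconv5b}, I would rely on the Gibbs (Young) inequality
\[
\int f\,d\mu^{\otimes k}\leq \frac{k}{a}I(\mu)+\frac{1}{a}\log\int_{S^k}e^{af}\,d\mu_0^{\otimes k},\qquad a>0,
\]
applied with $(a,f)=(\gamma k,(V_\lambda)^-)$ to produce $I+\gamma_0\cE_{V_\lambda}\geq (1-\gamma_0/\gamma)I - C_\lambda > -\infty$ (finite via \eqref{eq_unif_int1}), and with $(a,f)=(k,\beta(V_\lambda-V))$ to produce $\beta(\cE_{V_\lambda}-\cE_V)-I\leq (1/k)\log\int e^{k|\beta||V_\lambda-V|}\,d\mu_0^{\otimes k}$, whose $\limsup_{\lambda\to0}$ is $\leq K/k$ by \eqref{eq_conv1}. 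Theorem~\ref{thm_direct} then applies, and the conclusion that $(\cE^N_V,\cE_V)$ induces an LDP yields via Definition~\ref{defi:ldp_pair} the claimed LDP for $Q^N_V$ with rate $\cF_V$.

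The main obstacle I anticipate is the calibration of exponents in the Hoeffding step. A direct Jensen bound without block decomposition would only produce $\int e^{\gamma N g}\,d\mu_0^{\otimes k}$, whose $(1/N)$-logarithm is not in general controlled by the hypothesis $\log\int e^{\gamma k g}\,d\mu_0^{\otimes k}<\infty$. The decoupling into $q\approx N/k$ independent groups of size $k$ is precisely what matches the effective exponent $\gamma N/q\to \gamma k$ in the integrand to the hypothesis, at the minor cost of shrinking $\gamma$ slightly to $\gamma_0$ in order to absorb the overshoot $N/q\geq k$.
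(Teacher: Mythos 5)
Your proposal is correct and follows essentially the same route as the paper: reduction to Theorem~\ref{thm_direct} via the two bounds of Lemma~\ref{lm_gcomp}, both obtained from the Hoeffding block decomposition on the probabilistic side and from the Donsker--Varadhan variational formula (Lemma~\ref{lm_entvar}) plus additivity of relative entropy on the deterministic side. The only differences are cosmetic: you carry the normalization $|D_N|/N^k\le 1$ and the block count $q=\lfloor N/k\rfloor$ separately and pass to the limit $N/q\to k$ with a slack parameter $\gamma_0<\gamma$, whereas the paper's Lemma~\ref{lm_gcomp} absorbs them into a single $\tfrac{N}{N-1}$ factor; the mechanism and the way the exponent is calibrated to the hypothesis $\gamma k$ are identical.
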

Similarly to Theorem \ref{thm_direct} for general LDPs, informally this results states that $Q^N_\lambda$ satisfy an LDP if there exists interacting potentials $V^N_\lambda$ and $V_\lambda$ which approximate $V^N$ and $V$ in an exponentially good way and whose corresponding interacting systems $Q^N_{V_\lambda}$ satisfy an LDP, for each $\lambda$. Again, this allows the following generalization:
\begin{enumerate}
\item It allows $V$ to be discontinuous.
\item The only requirement on LDPs for approximants is that $(\cE_{V_{\lambda}}^N,\cE_{V_{\lambda}})$ induces an LDP, not that $V_\lambda$ are continuous.
\item We can allow for the potential $V^N$ in the interacting particle system to depend on $N\in\N$ (cf.\ Remark \ref{rmk_sum_interactions} below).
\end{enumerate}
In the case where the sequence of functions $V^N$ is constant and equal to $V$, an LDP follows whenever $V$ satisfies the appropriate exponential moment condition. 
\begin{cory}\label{cor_gibbs2}
	Suppose that $V^N=V$ for all $N\in\N$, and such that
	\begin{equation}\label{eq_gexpm1}
\int_{S^k} e^{\beta |V|} \, d\mu_0^{\otimes k} <\infty\qquad \text{for all $\beta \ge 0$}.
	\end{equation}
Then $(\cE^N_V,\cE_V)$ induces an LDP.
\end{cory}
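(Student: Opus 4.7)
The plan is to apply Theorem \ref{thm_gibbs1} with approximants $V_\lambda^N := V_\lambda$ chosen independent of $N$, continuous, and bounded on $S^k$. By Lemma \ref{lmg_uapprox} each pair $(\cE^N_{V_\lambda},\cE_{V_\lambda})$ then automatically induces an LDP, and the exponential integrability condition \eqref{eq_unif_int1} is trivial from the $L^\infty$-bound on $V_\lambda$. Since $V^N = V$, both lines of \eqref{eq_conv1} reduce to the single requirement
\begin{equation*}
    \limsup_{\lambda \to 0} \log \int_{S^k} e^{\beta |V - V_\lambda|} \, d\mu_0^{\otimes k} = 0 \qquad \text{for every } \beta \geq 0,
\end{equation*}
so the whole proof reduces to constructing such a family $\{V_\lambda\} \subset C_b(S^k)$.

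I would build $V_\lambda$ in two stages. First, truncate $V$ to $V^{(M)} := (V \wedge M) \vee (-M)$; since $|V - V^{(M)}| = (|V| - M)^+$, splitting the integral along $\{|V| \leq M\}$ and invoking dominated convergence under the exponential moment \eqref{eq_gexpm1} gives $\int e^{\beta|V - V^{(M)}|}\, d\mu_0^{\otimes k} \to 1$ as $M \to \infty$ for every $\beta \geq 0$. Second, approximate the bounded Borel function $V^{(M)}$ by continuous bounded ones: since $S^k$ is Polish and $\mu_0^{\otimes k}$ is Radon, Lusin's theorem combined with a Tietze extension yields, for every $\eta > 0$, a function $\widetilde V^{(M,\eta)} \in C_b(S^k)$ with $\|\widetilde V^{(M,\eta)}\|_\infty \leq M$ and $\mu_0^{\otimes k}\{\widetilde V^{(M,\eta)} \neq V^{(M)}\} < \eta$, whence $\int e^{\beta |V^{(M)} - \widetilde V^{(M,\eta)}|}\, d\mu_0^{\otimes k} \leq 1 + \eta\, e^{2\beta M}$, which tends to $1$ as $\eta \to 0$.

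To obtain a single sequence that works for all $\beta$ simultaneously, I would diagonalize: for each $n \in \N$ choose $M_n$ and then $\eta_n$ so that both $\int e^{2n|V - V^{(M_n)}|}\, d\mu_0^{\otimes k}$ and $\int e^{2n|V^{(M_n)} - \widetilde V^{(M_n,\eta_n)}|}\, d\mu_0^{\otimes k}$ are at most $1 + 1/n$, and set $V_{1/n} := \widetilde V^{(M_n,\eta_n)}$. For fixed $\beta \geq 0$ and $n \geq \beta$, Jensen's inequality applied to $t \mapsto t^{n/\beta}$ on the probability space $(S^k,\mu_0^{\otimes k})$ bounds each of these integrals at exponent $2\beta$ by $(1 + 1/n)^{\beta/n}$, so by the triangle inequality and Cauchy--Schwarz
\begin{equation*}
    \int e^{\beta|V - V_{1/n}|}\, d\mu_0^{\otimes k} \leq (1 + 1/n)^{\beta/n} \longrightarrow 1 \quad \text{as } n \to \infty.
\end{equation*}
Thus the hypotheses of Theorem \ref{thm_gibbs1} are verified with $K = 0$ and the corollary follows. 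The only genuinely delicate point is this last estimate: coupling truncation with Lusin regularization in a way that survives exponentiation against every $\beta$ at once is precisely where the full strength of the exponential moment \eqref{eq_gexpm1} is consumed.
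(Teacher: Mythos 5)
Your proof is correct and follows the paper's own strategy: apply Theorem~\ref{thm_gibbs1} with continuous bounded approximants obtained by truncating $V$ and then Lusin--Tietze regularizing the truncation, which is precisely the construction the paper packages as Theorem~\ref{thm_qap1}. You reconstruct that lemma from scratch rather than citing it, and your Jensen-based diagonalization is a clean, explicit version of the concluding step that the paper's proof of Theorem~\ref{thm_qap1} treats rather tersely (``we can find an appropriate sequence $V_\lambda := V_{n(\lambda),\epsilon(\lambda)}$'').
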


\begin{proof}
	The result follows from Lemma \ref{lmg_uapprox} and Theorem \ref{thm_qap1}. Indeed, by Theorem~\ref{thm_qap1}, with the choice $\mu=\mu_0^{\otimes k}$, $X=S^k$, there exists a sequence $(V_{\lambda})\subset C_b(S^k)$ such that
	\[
	\lim_{\lambda\to 0} \log \int_{S^k} e^{\beta |V-V_{\lambda}|} \, d\mu_0^{\otimes k} =0\qquad \text{for any $\beta\geq 0$}.
	\]
	Since the family $V_{\lambda}$ induces an LDP by Lemma \ref{lmg_uapprox}, one can see that \eqref{eq_unif_int1} and \eqref{eq_conv1} are satisfied. 
\end{proof}

\begin{remark}\label{rem:selfinteraction}
Recall that in the definition of the energy $E_V^N$ for the $N$-particle configuration we have excluded self-interaction, due to possible singularities in $V$. However, if $V$ is bounded (but not necessarily continuous), Corollary~\ref{cor_gibbs2} remains valid when the energy  does include self-interactions, i.e.,
\[
    E_V^N(x_1,\ldots,x_N) := \frac{1}{N^k} \sum_{i_1,\ldots,i_k} V(x_{i_1},\ldots,x_{i_k}).
\]
In this case, $(\cE_V^N,\cE_V)$ induces an LDP with
\[
    \cE_V^N(\mu) := \cE_V(\mu) =  \int_{S^k} V\,d \mu^{\otimes k}\qquad\text{for all $\mu\in\cP(S^k)$}.
\]
Indeed, this holds simply due to the estimate \eqref{eq_gecomp}.
\end{remark}

\begin{remark}\label{rmk_sum_interactions}
The setting of Theorem \ref{thm_gibbs1} includes also the case of interactions among different number of particles. For example, let ($N$-independent) interaction potentials $U_k:S^\ell\rightarrow \R$, $k=1,2,3$, be given and assume that the energy function $E^N_U$ is the sum of these three interactions, i.e.
\[
E^N_U (x_1,\ldots x_N) = \frac{1}{N^3}\sum_{i_1,i_2,i_3\text{ distinct}} \!\!U_3(x_{i_1},x_{i_2},x_{i_3}) + \frac{1}{N^2}\sum_{i_1\neq i_2}U_2(x_{i_1},x_{i_2}) +\frac{1}{N}\sum_{i_1}U_1(x_{i_1}).
\]
Therefore, by taking
\[
 V^N(x_1,x_2,x_3) = U_3(x_1,x_2,x_3) +\frac{N}{N-2}U_2(x_1,x_2) +\frac{N^2}{(N-1)(N-2)}U_1(x_1),
\]
we see that $E^N_U = E^N_V$ (for $x_1,\ldots x_N$ all distinct) and we are in the previous setting.
\end{remark}

The proof of Theorem \ref{thm_gibbs1} relies heavily on Theorem \ref{thm_direct} and the following bounds. These bounds convert the approximation properties for $V$ into those for $\cE_V$, needed to apply Theorem \ref{thm_direct}.

\begin{lm}\label{lm_gcomp}
	For any $k,N\in\N$ with $N>1$, and nonnegative Borel functions $V,\,V^N:S^k\to\REx$, the following inequalities hold true:
	\begin{subequations}\label{eq_gcomp1}
	\begin{align}
	\frac{1}{N} \log \E \left[ e^{N \cE_V^N(z^N_{\o})} \right] &\leq \frac{1}{k} \log \int_{S^k} e^{k\frac{N}{N-1}\, V^N} d\mu_0^{\otimes k},\label{eq_gcomp1a}\\
	\sup_{\mu\in D} \bigl( \cE_V(\mu)-R(\mu\|\mu_0)\bigr) &\leq \frac{1}{k} \log \int_{S^k} e^{k V}  d\mu_0^{\otimes k},\label{eq_gcomp1b}
	\end{align}
	\end{subequations}
	where $D=\{\nu\in\cP(S)\;|\; R(\nu\|\mu_0)<+\infty\}$.
\end{lm}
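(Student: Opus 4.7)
\textbf{Plan for \eqref{eq_gcomp1b}.} I would dispatch this first, as it follows from two classical facts. Using the tensorisation of relative entropy, $R(\mu^{\otimes k}\|\mu_0^{\otimes k}) = k\,R(\mu\|\mu_0)$, I can rewrite, for $\mu\in D$,
\[
\cE_V(\mu) - R(\mu\|\mu_0) \;=\; \frac{1}{k}\Bigl[\,\int_{S^k}\! kV\,d\mu^{\otimes k} \;-\; R(\mu^{\otimes k}\|\mu_0^{\otimes k})\,\Bigr].
\]
The Gibbs (Legendre) inequality $\int f\,d\nu - R(\nu\|\lambda) \le \log\int e^{f}\,d\lambda$, applied with $f=kV$, $\nu=\mu^{\otimes k}$, $\lambda=\mu_0^{\otimes k}$, and followed by taking the supremum over $\mu\in D$, immediately yields \eqref{eq_gcomp1b}.

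\textbf{Plan for \eqref{eq_gcomp1a}.} My strategy is a random-permutation argument that converts the sum over ordered distinct $k$-tuples into disjoint independent blocks, after which Jensen's inequality factorises the exponential moment. First, I replace $V^N$ by its symmetrisation (which leaves $\cE^N_V$ invariant). Set $q := \lfloor N/k\rfloor$ and let $\tau$ be a uniformly random permutation of $\{1,\ldots,N\}$, defined independently of $(\omega_i)$, and consider the random block-sum
\[
T_\tau := \sum_{j=0}^{q-1} V^N\!\bigl(\omega_{\tau(jk+1)},\,\omega_{\tau(jk+2)},\,\ldots,\,\omega_{\tau(jk+k)}\bigr).
\]
Each tuple $(\tau(jk+1),\ldots,\tau(jk+k))$ is marginally uniform on the ordered distinct $k$-tuples of $\{1,\ldots,N\}$, so averaging $T_\tau$ over $\tau$ yields
\[
N\cE^N_V(z^N_\o) \;=\; c'\,\E_\tau[T_\tau],\qquad c' := \frac{N(N-1)\cdots(N-k+1)}{q\,N^{k-1}}.
\]
Applying Jensen's inequality to $\exp$ inside the $\tau$-average and then Fubini, together with the observation that the $q$ blocks of $\omega_i$'s used in $T_\tau$ are pairwise \emph{disjoint} (so the corresponding summands are independent under the i.i.d.\ law), produces the factorisation
\[
\E\bigl[e^{N\cE^N_V(z^N_\o)}\bigr] \;\le\; \E_\tau\,\E\bigl[e^{c'T_\tau}\bigr] \;=\; \Bigl(\int_{S^k} e^{c'V^N}\,d\mu_0^{\otimes k}\Bigr)^{q}.
\]

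\textbf{Closing the gap and main obstacle.} Taking $(1/N)\log$ gives $\frac{1}{N}\log \E[e^{N\cE^N_V(z^N_\o)}]\le \frac{q}{N}\log\int e^{c'V^N}\,d\mu_0^{\otimes k}$, and it remains to match this to \eqref{eq_gcomp1a} by two elementary estimates. From $qk\ge N-k+1$, cancelling a factor $(N-k+1)$ in $c'$ gives $c' \le k\cdot N(N-1)\cdots(N-k+2)/N^{k-1}\le k \le kN/(N-1)$. Since $V^N\ge 0$, the integral on the right is monotone in the exponent, so replacing $c'$ by $kN/(N-1)$ only enlarges it; moreover the integral is $\ge 1$, so its logarithm is nonnegative, and $q/N\le 1/k$ then converts $\tfrac{q}{N}\log\int e^{c'V^N}$ into $\tfrac{1}{k}\log\int e^{kN/(N-1)V^N}$. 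The only genuinely non-routine step I expect is the random-permutation identity behind $c'$; the case $k\nmid N$ is cleanly absorbed by the choice $q=\lfloor N/k\rfloor$, which I anticipate will be the main bookkeeping subtlety.
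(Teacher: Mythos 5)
Your proof is correct and matches the paper's approach: your random-permutation identity $N\cE^N_V(z^N_\o)=c'\,\E_\tau[T_\tau]$ is exactly the Hoeffding decomposition used there (the paper writes $\tfrac{1}{N!}\sum_{\sigma\in\mathcal{S}_N}$ where you write $\E_\tau$), followed by the same Jensen and block-independence factorisation, and for \eqref{eq_gcomp1b} both use tensorisation of relative entropy plus the Gibbs variational bound. In fact your closing estimate $c'\le k$ via $qk\ge N-k+1$ is more careful than the paper's, which justifies the replacement of the exponent coefficient by $kN/(N-1)$ by asserting $N/\lfloor N/k\rfloor\le Nk/(N-1)$ -- an inequality that fails, e.g., at $N=5,k=3$ -- although the combined bound (which is all that is needed) does hold, as your derivation correctly shows.
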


\begin{proof}
For the proof of \eqref{eq_gcomp1a}, we use the Hoeffding decomposition \cite{Hoe1963} for $\cE^N_V$, which reads
\[
\cE^N_V(z^N_{\o}) = \frac{N!}{N^k(N-k)!} \frac{1}{N!}\sum_{\sigma\in \mathcal{S}_N} \frac{1}{[N/k]}\sum_{j=1}^{[N/k]} V^N(\omega_{\sigma(jk-k+1)},\ldots \omega_{\sigma(jk)}),
\]
where $\mathcal{S}_N$ is the group of permutations of $\{1,\ldots N\}$ and $[N/k]$ denotes the integer part of $N/k$. The point of this decomposition is to group the elements $V^N(\omega_{i_1},\ldots \omega_{i_k})$ into an average (over possible permutations $\sigma$) of averages (over $j$) of $O(N)$ independent elements (that is, for fixed $\sigma$, the elements within the nested average are independent). 

By Jensen's inequality, applied to the exponential function and the average over $\sigma$,
\begin{align*}
\E \left[ e^{N \cE_V^N(z^N_{\o})} \right] &= \E\left[ \exp \left[ \frac{1}{N!}\sum_{\sigma\in \mathcal{S}_N} \frac{N!}{N^k(N-k)!} \frac{N}{[N/k]}\sum_{j=1}^{[N/k]} V^N(\omega_{(\sigma(jk-k+1))},\ldots \omega_{\sigma(jk)}) \right] \right]\\
&\le  \frac{1}{N!}\sum_{\sigma\in \mathcal{S}_N} \E\left[ \exp \left[ \frac{N!}{N^k(N-k)!} \frac{N}{[N/k]}\sum_{j=1}^{[N/k]} V^N(\omega_{(\sigma(jk-k+1))},\ldots \omega_{\sigma(jk)}) \right] \right]\\
&= \E\left[ \exp \left[ \frac{N!}{N^k(N-k)!} \frac{N}{[N/k]}\sum_{j=1}^{[N/k]} V^N(\omega_{jk-k+1},\ldots \omega_{jk}) \right] \right],
\end{align*}
where we used the fact that that $\omega_i$ are exchangeable (as i.i.d.) in the last line. Since the random variables $(\omega_{jk-k+1},\ldots \omega_{jk})$ are independent in $j$, we have therefore
\begin{align*}
\E \left[ e^{N \cE_V^N(z^N_{\o})} \right] &\le \E\left[ \prod_{j=1}^{[N/k]} \exp \left[ \frac{N!}{N^k(N-k)!} \frac{N}{[N/k]} V^N(\omega_{jk-k+1},\ldots \omega_{jk}) \right] \right]\\
& = \prod_{j=1}^{[N/k]} \E\left[ \exp \left[ \frac{N!}{N^k(N-k)!} \frac{N}{[N/k]} V^N(\omega_{jk-k+1},\ldots \omega_{jk}) \right] \right]\\
& = \E\left[ \exp \left[ \frac{N!}{N^k(N-k)!} \frac{N}{[N/k]} V^N(\omega_{1},\ldots \omega_{k}) \right] \right]^{[N/k]}, 
\end{align*}
where we used again that $\omega_i$ are exchangeable in the second equality. Since $N!\le N^k(N-k)!$ and $N/[N/k]\le N k/(N-1)$ for every $k,N\in\N$, $N>1$, we then obtain
\[
 \E \left[ e^{N \cE_V^N(z^N_{\o})} \right] \le \E\left[ \exp \left[ \frac{N}{N-1}k\,V^N(\omega_{1},\ldots \omega_{k}) \right] \right]^{[N/k]}.
\]
Taking the logarithm and noting that $[N/k]\le N/k$ for every $k,N\in\N$ yields
\[
  \frac{1}{N}\log\E \left[ e^{N \cE_V^N(z^N_{\o})} \right] \le \frac{1}{k}\log\E\left[ \exp \left[ \frac{N}{N-1}k\,V^N(\omega_{1},\ldots \omega_{k}) \right] \right],
\]
which concludes the proof of \eqref{eq_gcomp1a}.

To prove \eqref{eq_gcomp1b}, we first recall the additivity property of the relative entropy, i.e.
\[
R(\mu^{\otimes k}\|\mu_0^{\otimes k}) = k R(\mu\|\mu_0).
\]
Hence, for any $\mu \in D$, we have that
	\begin{align*}
	k \Bigl( \cE_V(\mu) - R(\mu\|\mu_0)\Bigr) 	= \int_{S^k} k V\, d\mu^{\otimes k} - R(\mu^{\otimes k}\|\mu_0^{\otimes k})
	\le \sup_{\nu \in D^k} \biggl\{\int_{S^k} kV\, d\nu - R(\nu\|\mu_0^{\otimes k})\biggr\},
	\end{align*}
	where $D^k=\{ \nu\in\cP(S^k)\;|\; R(\nu\|\mu_0^{\otimes k})<+\infty\}$.
	By Lemma \ref{lm_entvar} we have that
	\[
	\sup_{\nu \in D^k} \biggl\{\int_{S^k} k V d\nu - R(\nu\|\mu_0^{\otimes k}) \biggr\} = \log \int_{S^k} e^{k V} \,d\mu_0^{\otimes k}.
	\]
	Therefore, if the right-hand side is finite, the desired estimate \eqref{eq_gcomp1b} follows.
\end{proof}

\begin{proof}[Proof of Theorem \ref{thm_gibbs1}]
In order to apply Theorem \ref{thm_direct}, we have to show that there exists some $\gamma>1$, such that for every $\lambda>0$,
\begin{subequations}
    \begin{align}
	\rate{e^{-\gamma N\cE_{V_\lambda}^N(z^N_{\o})}} &<+\infty,\label{eq_gunifexpinta}\\
	\inf_{\mu\in D} \bigl(R(\mu\|\mu_0)+\gamma\cE_{V_\lambda}(\mu)\bigr) &>-\infty.\label{eq_gunifexpintb}
    \end{align}
\end{subequations}
and that, for some $K\in\R$, the following holds true for all $\beta\geq 0$:
\begin{subequations}
	\begin{align}
	\limsup_{\lambda\to 0}  \rate{e^{\beta N|\cE_V^N-\cE_{V_\lambda}^N|(z^N_{\o})}} &\le K \label{grconv5a},\\
	\limsup_{\lambda\to 0}  \sup_{\mu \in D} \, \bigl(\beta |\cE_V-\cE_{V_\lambda}|(\mu)-R(\mu\|\mu_0)\bigr)&\le K \label{grconv5b}.
	\end{align}
\end{subequations}

Due to linearity and \eqref{eq_gcomp1a} of Lemma~\ref{lm_gcomp}, we have for some $\gamma'\in(1,\gamma)$:
\[
 \frac{1}{N} \log \E \left[ e^{-\gamma' N \cE_{V_\lambda}^N(z^N_{\o})} \right] \le \frac{1}{N} \log \E \left[ e^{\gamma' N \cE_{\gamma'|(V_\lambda)^-|}^N(z^N_{\o})} \right] \le \frac{1}{k} \log \int_{S^k} e^{k\frac{N}{N-1}\,\gamma' |(V_\lambda^N)^-|}\, d\mu_0^{\otimes k}.
\]
Since $\gamma'N/(N-1) \le \gamma$ for all $N\ge N_\gamma:=\gamma/(\gamma-\gamma')$, we obtain from assumption \eqref{eq_unif_int1} the finiteness of the right-hand side uniformly in $N$ for $N\ge N_\gamma$. Hence, taking the $\limsup$ yields \eqref{eq_gunifexpinta}.

As for \eqref{eq_gunifexpintb}, we apply \eqref{eq_gcomp1b} of Lemma~\ref{lm_gcomp} to any $\mu\in \cP(S)$ with $R(\mu\|\mu_0)<+\infty$, to obtain
\[
 R(\mu\|\mu_0) + \gamma'\cE_{V_\lambda}(\mu) \ge R(\mu\|\mu_0) - \cE_{\gamma'|(V_\lambda)^-|}(\mu) \ge -\frac{1}{k} \log \int_{S^k} e^{\gamma' k |(V_\lambda)^-|}\,d\mu_0^{\otimes k} >-\infty.
\]

Similarly, we apply Lemma~\ref{lm_gcomp} to obtain
\begin{align*}
\frac{1}{N} \log \E \left[ e^{\beta N |\cE_V^N-\cE_{V_{\lambda}}^N|(z^N_{\o})} \right] 
&\le \frac{1}{k} \log \int_{S^k} e^{k\frac{N}{N-1}\,\beta |V^N-V^N_{\lambda}|} \, d\mu_0^{\otimes k},\\
 \sup_{\mu \in D} \, \big(\beta |\cE_V-\cE_{V_\lambda}|-I\big)(\mu) 
	&\leq \frac{1}{k} \log \int_{S^k} e^{k\beta|V-V_\lambda|}  \, d\mu_0^{\otimes k},
\end{align*}
which by assumption \eqref{eq_conv1} yields \eqref{grconv5a} and \eqref{grconv5b}. The conclusion follows applying Theorem \ref{thm_direct}.
\end{proof}

\paragraph{\bf\em Background}
As mentioned in Section \ref{LDPs}, various extension principles to singular functionals or contractions have arisen. See for example \cite{Eichelsbacher1998} on various generalizations of Sanov's theorem, in which a stronger topology is used---defined by the property that all functions $\cE_V$ for which $k=1$ and $V$ satisfies \eqref{eq_gexpm1} are continuous with respect to this topology. This was subsequently generalized in \cite{Eichelsbacher2002} to the case $k\ge 2$.
Note that an application of the classical Varadhan Lemma to the result in \cite{Eichelsbacher2002} should yield a very similar result to our Corollary \ref{cor_gibbs2}; this type of argument has been used in \cite[Lemma 2.4]{EicZaj2003} in the proof of a moderate deviation principle for a bounded interaction kernel. More recently, in \cite{Liu2018}, a similar setting as this section was studied, i.e.,\ large deviations for mean field Gibbs measures on Polish spaces, involving singular potentials. In particular, their results also include the case of Corollary \ref{cor_gibbs2}. 

Other LDP results for Gibbs measures with singular potentials have been proven, see e.g.\ \cite{Berman2016a,Chafai2014,Dupuis2020,Rey2018}, with locally compact base space $S$, \cite{Zelada2017} with potentials satisfying a bound from below and a lower semi-continuity assumption. In particular in \cite{Berman2016a} the case where \eqref{eq_gcomp1} only holds for some $\beta$ instead of all---which, in $\R^d$, allows for potentials $V$ with a logarithmic singularity---is considered, on compact Polish spaces and assuming $V$ lower semi-continuous.

It should be noted that inequalities related to \eqref{eq_gcomp1a}, which in our case is derived from the Hoeffding decomposition has also arisen in different context and different names. For example, in \cite{Eichelsbacher2002} a similar inequality stems from the existence of regular partitions of complete hypergraphs due to Baranyai, and in \cite{Liu2018} modifications of decoupling inequalities of de la Pe\~na were used. Hoeffding decomposition has been used directly in some generalizations to \cite{Eichelsbacher2002}, for example \cite{Eic2004}.

\subsection{Extension to Gibbs-like potentials}\label{s_gibbs_like}

The previous results use the Gibbs structure of the potential $\cE_V$ to reduce the LDP problem to the context of Section \ref{LDPs}. However, for this reduction to hold, only some Gibbs-like bounds are needed. This allows to prove LDPs for empirical measures not coming from Gibbs laws, as soon as Gibbs-type bounds are possible. As we will see in the example of Subsection \ref{s_ex3}, this is the case of interacting diffusions where the drift depends non-linearly on the empirical measure and/or on its $k$-times tensor product.

As before, let $S$ be a Polish space with its Borel $\sigma$-algebra $\cB(S)$, $\mu_0\in\cP(S)$ be a given reference measure. The state space for the LDP is $\cX=\cP(S)$, equipped with the weak topology. As before, $P^N$ denotes the law of the empirical measure $z^N_{\o}$, where $\omega_i$, $i=1,\ldots N$ are i.i.d.~random variables defined on some probability space $(\Omega,\mathcal{A},\mathbb{P})$, with values in $S$ and common law $\mu_0$; $\mathbb{E}$ denotes the expectation with respect to $\mathbb{P}$. Now let $\mathcal{E}^N:\cP(S)\rightarrow \REx$, $N\in \mathbb{N}$ and $\mathcal{E}:\cP(S)\rightarrow \REx$ be Borel functions, let $Q^N\ll P^N$ be the probability measure on $\cP(S)$ given by
\begin{align*}
    Q^N = \frac{1}{Z^N}e^{-N\mathcal{E}^N(\mu)}P^N,
\end{align*}
where $Z^N$ is the renormalization constant, assumed to be finite. We further recall the notations $\mathcal{E}^N_V(\mu)$ and $\mathcal{E}^N_V(\mu)$ given in \eqref{eqg_en} and \eqref{eqg_en1b} for any Borel function $V:S^k\rightarrow \REx$. As before, we denote $D=\{\nu\in\cP(S)\;|\; R(\nu\|\mu_0)<\infty\}$.

\medskip

Our main result is the following:

\begin{thm}\label{thm_gibbs2}
Let $(\mathcal{E}_\lambda^N,\mathcal{E}_\lambda)$ be a family of LDP inducing pairs for all $\lambda>0$ such that \eqref{eq_unifexpint} holds for some $\gamma>1$ (independent of $\lambda$). Assume that, for every $\lambda>0$ and every $\beta\in \R$, for every $\mu$ in $D$, there holds
\begin{subequations}
\begin{gather}
    \limsup_{N\to\infty} \frac{1}{N}\log \E \left[ e^{\beta N (\mathcal{E}^N -\mathcal{E}_\lambda^N)(z^N_{\o})} \right]
    \leq C + C\,\limsup_{N\to\infty}\frac{1}{N}\log \E \left[ e^{c_\beta N \cE^N_{G_\lambda^N}(z^N_{\o})} \right],\label{eq_Gibbs_ineq_1}\\
    |\beta(\cE-\cE_\lambda)(\mu)| -R(\mu\|\mu_0)
    \leq C + C \log \int_{S} \exp\left(c_\beta \int_{S^{k-1}} G_\lambda(x,y) \,d\mu^{\otimes(k-1)}(y)\right) d\mu_0(x),\label{eq_Gibbs_ineq_2}
\end{gather}
\end{subequations}
for some constant $C>0$ independent of $\beta$, $\lambda$ and $\mu$, some $c_\beta\ge0$ independent of $\lambda$ and $\mu$, and some nonnegative Borel functions $G^N_\lambda,G_\lambda:S^k \rightarrow \REx$ (independent of $\beta$ and $\mu$). Assume also that $G^N_\lambda$ and $G_\lambda$ satisfy, for some $K\in\R$ independent of $\beta$, for every $\beta \ge0$,
\begin{subequations}
 \begin{align}
    \limsup_{\lambda\to 0}\limsup_{N\rightarrow \infty} \int_{S^k} e^{\beta G^N_\lambda} d\mu_0^{\otimes k} &\le K,\label{eq_exp_int_1}\\
    \limsup_{\lambda\to 0} \int_{S^k} e^{\beta G_\lambda} d\mu_0^{\otimes k} &\le K.\label{eq_exp_int_2}
\end{align}
\end{subequations}
Then the pair $(\mathcal{E}^N, \mathcal{E})$ induces an LDP with the normalized rate function $\cF_V$ in \eqref{gibbs_ratefunction}.
\end{thm}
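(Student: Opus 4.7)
The plan is to apply the extended Varadhan Integral Lemma (Theorem~\ref{thm_direct}) to $(\cE^N,\cE)$, using $(\cE_\lambda^N,\cE_\lambda)$ as the approximating family. By hypothesis the latter already induces an LDP and satisfies the uniform exponential moment bound \eqref{eq_unifexpint}, so the remaining work is to deduce the exponential-closeness conditions \eqref{rconv5a}--\eqref{rconv5b} from the Gibbs-like bounds \eqref{eq_Gibbs_ineq_1}--\eqref{eq_Gibbs_ineq_2} together with the integrability hypotheses \eqref{eq_exp_int_1}--\eqref{eq_exp_int_2}.

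The easier condition is \eqref{rconv5a}. Combining \eqref{eq_Gibbs_ineq_1} with the Hoeffding-decomposition estimate \eqref{eq_gcomp1a} of Lemma~\ref{lm_gcomp} applied to the nonnegative potential $c_\beta G^N_\lambda$ yields
\[
\limsup_{N\to\infty}\frac{1}{N}\log\E\bigl[e^{\beta N(\cE^N-\cE_\lambda^N)(z^N_{\o})}\bigr]\le C+\frac{C}{k}\log\limsup_{N\to\infty}\int_{S^k} e^{k\frac{N}{N-1}c_\beta G^N_\lambda}\,d\mu_0^{\otimes k}.
\]
Taking $\limsup_{\lambda\to 0}$ and invoking \eqref{eq_exp_int_1} with an exponent slightly larger than $k c_\beta$ bounds the right-hand side by a constant depending only on $C$, $k$, and $K$.

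The delicate part is \eqref{rconv5b}, because the right-hand side of \eqref{eq_Gibbs_ineq_2} depends genuinely on $\mu$ through the inner average $\int G_\lambda(x,y)\,d\mu^{\otimes(k-1)}(y)$, so a naive $\sup_{\mu\in D}$ cannot be controlled by \eqref{eq_exp_int_2} alone. The first idea is to replace $\mu^{\otimes(k-1)}$ by $\mu_0^{\otimes(k-1)}$ via the Donsker--Varadhan variational inequality: for any $\alpha\in(0,1)$ and $\mu\in D$,
\[
c_\beta\!\int G_\lambda(x,y)\,d\mu^{\otimes(k-1)}(y)\le \frac{k-1}{\alpha}R(\mu\|\mu_0)+\frac{1}{\alpha}\log\!\int e^{\alpha c_\beta G_\lambda(x,y)}\,d\mu_0^{\otimes(k-1)}(y),
\]
exploiting $R(\mu^{\otimes(k-1)}\|\mu_0^{\otimes(k-1)})=(k-1)R(\mu\|\mu_0)$. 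Exponentiating, using Jensen's inequality (convexity of $u\mapsto u^{1/\alpha}$ since $\alpha<1$), and then integrating against $d\mu_0(x)$ give the cleaner estimate
\[
\log\!\int_{S}\exp\!\Bigl(c_\beta\!\!\int G_\lambda\,d\mu^{\otimes(k-1)}\Bigr)d\mu_0(x)\le \frac{k-1}{\alpha}R(\mu\|\mu_0)+\log\!\int_{S^k} e^{c_\beta G_\lambda}\,d\mu_0^{\otimes k}.
\]

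The main remaining obstacle is the positive multiple of $R(\mu\|\mu_0)$ produced above, which exceeds the single $-R(\mu\|\mu_0)$ already on the left of \eqref{eq_Gibbs_ineq_2} and would make $\sup_{\mu\in D}$ diverge. To absorb it, I would use a scaling trick: apply \eqref{eq_Gibbs_ineq_2} with the rescaled parameter $M\beta$ in place of $\beta$ for a sufficiently large $M$, and then divide through by $M$. This rescales the troublesome coefficient by $1/M$, so fixing any $M\ge 1+C(k-1)/\alpha$ (a choice depending only on $C,k,\alpha$, hence uniform in $\beta$) makes the net coefficient of $R(\mu\|\mu_0)$ after subtraction nonpositive. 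The result is the $\mu$-independent bound
\[
|\beta(\cE-\cE_\lambda)(\mu)|-R(\mu\|\mu_0)\le \frac{C}{M}\Bigl(1+\log\!\int_{S^k} e^{c_{M\beta}G_\lambda}\,d\mu_0^{\otimes k}\Bigr)
\]
valid for every $\mu\in D$, from which \eqref{eq_exp_int_2} yields \eqref{rconv5b} after taking $\limsup_{\lambda\to 0}$. Theorem~\ref{thm_direct} then produces the desired LDP for $(\cE^N,\cE)$ with the normalized rate function in \eqref{gibbs_ratefunction}.
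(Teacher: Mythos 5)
Your argument is correct and follows essentially the same route as the paper: Lemma~\ref{lm_gcomp} handles \eqref{rconv5a}, and the combination of Donsker--Varadhan, Jensen, and the rescale-and-divide trick that controls \eqref{rconv5b} is a re-derivation of Lemma~\ref{lm_estrel} followed by the same normalization the paper performs by dividing by $1+C(k-1)$. The auxiliary parameter $\alpha\in(0,1)$ in your intermediate estimate is superfluous (it weakens the entropy coefficient from $k-1$ to $(k-1)/\alpha$); setting $\alpha=1$ recovers Lemma~\ref{lm_estrel} exactly, and the paper simply cites that lemma.
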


\begin{remark}
    A simple condition for the inequalities \eqref{eq_Gibbs_ineq_1} and \eqref{eq_Gibbs_ineq_2} to hold is if
    \[
        \left|\cE^N-\cE^N_{\lambda}\right|(\mu)\leq \cE^N_{G^N_{\lambda}}(\mu),\qquad
    \left|\cE-\cE_{\lambda}\right|(\mu)\leq \cE_{G_{\lambda}}(\mu)\qquad\text{for all $\mu\in \cP(\cX)$}.
    \]
\end{remark}

\begin{proof}
The proof is similar to that of Theorem \ref{thm_gibbs1}. The result follows from Theorem \ref{thm_direct} provided we verify condition \eqref{eq_rconv5}. For condition \eqref{rconv5a}, by assumption \eqref{eq_Gibbs_ineq_1} and Lemma \ref{lm_gcomp}, we have
\begin{align*}
    \limsup_{\lambda\to 0} \limsup_{N\to \infty}\frac{1}{N}\log \E \left[ e^{\beta N (\cE^N -\cE_\lambda)(z^N_{\o})} \right]
    &\leq C + C\limsup_{\lambda\to 0} \limsup_{N\to \infty}\frac{1}{N}\log \E\left[ e^{c_\beta \cE^N_{G_\lambda}(z^N_{\o})} \right]\\
    &\leq C + C\limsup_{\lambda\to 0} \limsup_{N\to \infty}\frac{1}{k}\log \int_{S^k}e^{k\frac{N}{N-1}\,c_\beta G^N_\lambda}d\mu_0^{\otimes k},
\end{align*}
for all $\beta\in\R$. Hence, by assumption \eqref{eq_exp_int_1}, we get
\begin{align*}
    \limsup_{\lambda\to 0} \limsup_{N\to \infty} \frac{1}{N}\log \E \left[ e^{\beta N (\cE^N -\cE^N_\lambda)(z^N_{\o})} \right] \leq C +C\frac{\log K}{k}.
\end{align*}
For condition \eqref{rconv5b}, by assumption \eqref{eq_Gibbs_ineq_2} and Lemma \ref{lm_estrel}, we have, for every $\mu$ in $D$, for every $\lambda>0$ and every $\beta\in\R$,
\begin{align*}
    &|\beta(\cE-\cE_\lambda)(\mu)| -(1+C(k-1)) R(\mu\|\mu_0)\\
    &\leq -C(k-1)R(\mu\|\mu_0) +C +C \log \int_{S} \exp\left(c_\beta \int_{S^{k-1}} G_\lambda(x,y) \,d\mu^{\otimes(k-1)}(y)\right) d\mu_0(x)\\
    &\leq -C(k-1)R(\mu\|\mu_0) +C +C(k-1)R(\mu\|\mu_0) +C \log \int_{S^k} e^{c_\beta G_\lambda} d\mu_0^{\otimes k}
\end{align*}
Hence, taking the $\sup$ over $\mu$ in $D$ and then the $\limsup$ over $\lambda$, by assumption \eqref{eq_exp_int_2} we get
\begin{align*}
    \limsup_{\lambda\to 0} \sup_{\mu \in D} |\beta(\cE-\cE_\lambda)(\mu)| -(1+C(k-1)) R(\mu\|\mu_0) \le C +C \log K.
\end{align*}
Condition \eqref{rconv5b} follows by simply dividing by $1+C(k-1)$. The proof is complete.
\end{proof}

\begin{remark}\label{rmk_bd_Gibbs_like}
As the above proof shows (and as consequence of Lemmas \ref{lm_gcomp} and \ref{lm_estrel}), the assumptions \eqref{eq_exp_int_1} and \eqref{eq_exp_int_2} imply, respectively, the following two bounds:
\begin{align*}
&\limsup_{\lambda\to 0} \limsup_{N\to \infty}\frac{1}{N}\log \E\left[ e^{c_\beta \cE^N_{G_\lambda}(z^N_{\o})} \right] \le \frac{\log K}{k} <\infty,\\
&\limsup_{\lambda\to 0} \log \int_{S} \exp\left(c_\beta \int_{S^{k-1}} G_\lambda(x,y) \,d\mu^{\otimes(k-1)}(y)\right) d\mu_0(x) \le (k-1)R(\mu\|\mu_0) +\log K <\infty.
\end{align*}
\end{remark}

\section{Singularly interacting diffusions}\label{s_process}

\subsection{Notations and preliminary results}\label{s_proc_expl}

In this section we study LDPs for a system of mean field interacting diffusions, defined by a singular drift. We will put this problem in the framework of Gibbs-like structure of the previous section (cf.\ Section~\ref{s_gibbs_like}).

For $N\in \N$, we consider the following system of interacting SDEs
\begin{align}
\left\{\quad\begin{aligned}\label{eq_SDEm}
 &d X^{N,i}_t = b_t^N\left(X_t^{N,i},\frac{1}{N}\sum_{j=1}^N\delta_{X^{N,j}_t}\right) d t + d W^i_t,\quad i=1,\ldots N,\\
&X^{N,i}_0  \text{ i.i.d~with law } \rho_0.
\end{aligned}\right.
\end{align}
Here the {\em drift} $b^N:[0,T]\times \R^d\times \cP(\R^d)\rightarrow \R^d$ is a Borel map, where $\cP(\R^d)$ is endowed with the (metrizable, complete and separable) topology of weak convergence. The processes $W^i$, $i\in \N$, are independent $d$-dimensional Brownian motions on a standard filtered probability space $(\Omega,\cA,(\cF_t)_t,\mathbb{P})$, $\rho_0\in\cP(\R^d)$ is the law of the i.i.d.~initial data $X^{N,i}_0$.

We now set $S:=C([0,T];\R^d)$, the space of continuous paths in $\R^d$, and note that $X^{N,i} \in S$ for each $i=1,\ldots,N$. Moreover, we set $\tilde Q_{b^N}^N \in \cP(S^N)$ to be the law of $\boldsymbol{X}^N:=(X^{N,1},\ldots,X^{N,N})$ defined by the system \eqref{eq_SDEm}. We set $\mathbb{W} \in\cP(S)$ to be the Wiener measure with $\rho_0$ as marginal at time $0$ and $\tilde P^N\in \cP(S^N)$ the law of $N\in\N$ independent Brownian motions, i.e.\ $\tilde P^N:=\mathbb{W}^{\otimes N}$. With a little abuse of notation, we will consider $W^i$ to be $d$-dimensional independent Brownian motions with initial law $\rho_0$, unless differently specified; similarly, we will use $W$ for a $d$-dimensional Brownian motion with initial law $\rho_0$, unless differently specified.

Consider the \emph{empirical process} $z^N_{\boldsymbol{X}}\in \cP(S)$,  
\begin{equation*}
z^N_{\boldsymbol{X}}:=\frac{1}{N}\sum_{i=1}^N \delta_{X^{N,i}},
\end{equation*}
and let $Q_{b^N}^N \in \cP(\cP(S))$ be the laws of the random variable $z^N_{\boldsymbol{X}}$ induced by $\tilde Q_{b^N}^N$. Similarly, let $P^N$ be the law for the empirical process $z^N_{\boldsymbol{W}}$ of the non-interacting system induced by $\tilde P^N$. We will use $z^N_\x$, for $\x \in S^N$, to denote the empirical measure associated with $\x$, and $z^N$ for the canonical process (that is, the identity process) on $\cP(S)$. We will often use the notation $\langle f,\mu\rangle$ to denote $\int f \,d\mu$.

Recall that, as in Section \ref{sgibbs}, the sequence $P^N$ satisfies an LDP with rate function
\[
I:\cP(S)\to\REx;\qquad I(\mu):=R(\mu\|\mathbb{W}),
\]
where $R(\mu\|\mathbb{W})$ is the relative entropy of $\mu$ with respect to the Wiener measure $\mathbb{W}$.

For the particle system, a Gibbs-like representation holds. Indeed, for a Borel map $b:[0,T]\times\R^d\times \cP(\R^d)\rightarrow\R^d$, we introduce the potentials
\begin{align}
V_b^N(x,\mu) := -V_b^{N,2}(x,\mu) + \frac{1}{2}V_b^1(x,\mu)\quad\text{with}\quad
\left\{\begin{aligned}
    V_b^1(x,\mu) &:= \int_0^T |b_t(x_t,\mu_t)|^2 dt, \\
    V_b^{N,2}(x,\mu) &:= \int_0^T b_t(x_t,\mu_t)\cdot dx_t,
\end{aligned}\right.\label{eq_V1_V2}
\end{align}
where $V_b^{N,2}:S\times\cP(S)\to\R$ is defined as stochastic integral under the law of $(W^i,z^N_{\boldsymbol{W}})$ on $(x,\mu)$. We define the corresponding log-density as
\begin{align*}
    \cE_b^N(\mu) := \begin{cases}
        \int_S V_b^N(x,\mu)\,d\mu(x) & \text{if } V^1_b(\cdot,\mu),V^{N,2}_b(\cdot,\mu) \in L^1(\mu),\\
        0 & \text{otherwise},
    \end{cases}\qquad \mu\in\cP(S).
\end{align*}
See Section \ref{s_def_log_densities} for the precise definition and measurability properties of $\cE_b^N$. Similarly we define $V_b$ and $\cE_b$ by replacing $V^{N,2}_b$ with
\begin{align*}
    V_b^2(x,\mu) := \int_0^T b_t(x_t,\mu_t)\cdot dx_t,
\end{align*}
now as stochastic integral at a deterministic $\mu$. See again Section \ref{s_def_log_densities} for the precise definition of $\cE_b$ and its measurability properties. The Gibbs-like representation of the particle system is given by the following lemma, which is essentially a consequence of Girsanov theorem and the mean field form of the interaction ($b^N$ depending on $z^N_{\boldsymbol{X}}$), the details of the proof are postponed to Subsection \ref{s_proof_SDE_Gibbslike}.

\begin{lm}\label{lm_prep1}
Fix $N\in\N$. Assume that
\begin{align*}
\E\left[\exp\left(\frac{N}{2} \int_S \int_0^T |b^N_t(x_t,z^N_{\boldsymbol{W},t})|^2 dt\, dz^N_{\boldsymbol{W},t}(x)\right)\right]<\infty.
\end{align*}
Then there exists a weak solution to the system \eqref{eq_SDEm}, which is unique under the constraint 
\begin{align}
    \int_0^T |b^N_t(x_t,z^N_t)|^2 dt \in L^1(S,z^N) \qquad \text{for }Q^N_{b^N}\text{-a.e. }z^N. \label{eq_particle_uniq}
\end{align}
For this law, we have the following representations:
\begin{align}
\frac{d \tilde{Q}_{b^N}^N}{d \tilde{P}^N}(x^1,\ldots x^N) =e^{-N \cE^N_{b^N}(z^N_\x)},\qquad\qquad
\frac{d Q_{b^N}^N}{d P^N}(\mu) =e^{-N \cE^N_{b^N}(\mu)}.\label{eq_Girs_density_particle}
\end{align}
\end{lm}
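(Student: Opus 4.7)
The plan is to recognize the claimed density as the Girsanov exponential for the product-level SDE on $\R^{dN}$ driven by the drift vector field $\mathbf{b}^N(t,\x):=\bigl(b^N_t(x^i_t, z^N_{\x,t})\bigr)_{i=1}^N$. Evaluating the log-density at an empirical measure $\mu=z^N_\x$ and using $\int f\,dz^N_\x = \tfrac{1}{N}\sum_i f(x^i)$ yields the pointwise identity
\begin{align*}
N\cE^N_{b^N}(z^N_\x) = -\sum_{i=1}^N \int_0^T b^N_t(x^i_t, z^N_{\x,t})\cdot dx^i_t + \tfrac{1}{2}\sum_{i=1}^N\int_0^T \bigl|b^N_t(x^i_t, z^N_{\x,t})\bigr|^2\,dt,
\end{align*}
so $e^{-N\cE^N_{b^N}(z^N_\x)}$ is precisely the Dol\'eans exponential of $\mathbf{b}^N\cdot d\boldsymbol{W}$ under $\tilde P^N$. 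The stated hypothesis rewrites (using the same identity) as $\E\bigl[\exp\bigl(\tfrac{1}{2}\sum_i \int_0^T |b^N_t(W^i_t, z^N_{\boldsymbol{W},t})|^2\,dt\bigr)\bigr]<\infty$, which is exactly Novikov's condition for $\mathbf{b}^N$.

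Given Novikov, classical Girsanov on the product space produces a probability $\tilde Q^N_{b^N}$ with the stated density, under which the shifted processes $\tilde W^i_t := W^i_t - \int_0^t b^N_s(W^i_s, z^N_{\boldsymbol{W},s})\,ds$ are independent $d$-dimensional Brownian motions, while the initial data $W^i_0$ remain i.i.d.\ with law $\rho_0$. Renaming $X^{N,i}:=W^i$ then produces a weak solution of \eqref{eq_SDEm}. The density formula on $\cP(S)$ follows by pushforward under $\x\mapsto z^N_\x$: since $e^{-N\cE^N_{b^N}(z^N_\x)}$ depends on $\x$ only through $z^N_\x$, for any bounded Borel $f:\cP(S)\to\R$,
\begin{align*}
\int f\,dQ^N_{b^N} = \int f(z^N_\x)\,e^{-N\cE^N_{b^N}(z^N_\x)}\,d\tilde P^N(\x) = \int f\,e^{-N\cE^N_{b^N}}\,dP^N.
\end{align*}

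For uniqueness under \eqref{eq_particle_uniq}, I would perform the reverse change of measure: given any weak solution $\boldsymbol{X}$ whose law satisfies the $L^1$-constraint, set
\begin{align*}
Z := \exp\Bigl(-\sum_i \int_0^T b^N_t(X^{N,i}_t, z^N_{\boldsymbol{X},t})\cdot dW^i_t - \tfrac{1}{2}\sum_i \int_0^T \bigl|b^N_t(X^{N,i}_t, z^N_{\boldsymbol{X},t})\bigr|^2\,dt\Bigr),
\end{align*}
which is well-defined thanks to the constraint. Under $d\mathbb{Q}:=Z\,d\mathbb{P}$ the processes $X^{N,i}$ become independent Brownian motions with $\rho_0$-i.i.d.\ initial data, so $\law_{\mathbb{Q}}(\boldsymbol{X})=\tilde P^N$, and inverting the tilt forces $\law_{\mathbb{P}}(\boldsymbol{X})=\tilde Q^N_{b^N}$. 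The principal technical hurdle will be the pathwise interpretation of $V^{N,2}_{b^N}$, which is defined as a stochastic integral only under the reference law $(W^i, z^N_{\boldsymbol{W}})$: one needs it to coincide $\tilde P^N$-a.s.\ with the It\^o integral along Brownian trajectories, and must separately verify that the above $Z$ is a genuine martingale (its expectation being $1$ follows once the Brownian structure on $(\Omega,\mathbb{Q})$ is identified). These measurability and identifiability points are precisely what the developments of Section \ref{s_def_log_densities} are designed to handle.
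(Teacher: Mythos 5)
Your proposal follows the same route as the paper: recognize the system as a single SDE on $\R^{dN}$, observe that the hypothesis is exactly Novikov's condition for that product drift, invoke Girsanov (the paper packages existence, the density formula, and uniqueness under the $L^1$-constraint into Theorem~\ref{thm_girs} and applies it directly), and push forward through $\x\mapsto z^N_\x$ to get the density on $\cP(S)$. The only difference is cosmetic: where you sketch the reverse change of measure for uniqueness and correctly flag that one must verify $Z$ is a true martingale, the paper delegates that step to the uniqueness clause of Theorem~\ref{thm_girs} (which in turn cites Liptser--Shiryaev), and both you and the paper handle the pathwise meaning of $V^{N,2}_{b^N}$ via the constructions of Section~\ref{s_def_log_densities}.
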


Now we provide a class of drifts $(b^N,b)$ which induce an LDP for the law $Q^N_{b^N}$. Morally, this is the class of Lipschitz drifts (with respect to the $1$-Wasserstein distance). The class of $\FLip$-inducing pairs in the definition below allows for some margin to include the case of drifts without self-interactions, similarly to Lemma \ref{lmg_uapprox}. In the following, $\cP_1(\R^d)$ denotes the subset of $\cP(\R^d)$ of all probability measures on $\R^d$ with finite first moment; $W_1$ denotes the $1$-Wasserstein distance on $\cP_1(\R^d)$.

\begin{defi}
The class $\FLip$ consists bounded Borel functions $b:[0,T]\times\R^d\times\cP(\R^d)\to\R^d$ such that the map $(x,\mu)\mapsto b(t,x,\mu)$ is globally Lipschitz continuous on $\cP_1(\R^d)$ with respect to the $1$-Wasserstein distance, uniformly in $t\in [0,T]$, i.e., for some $M_b\ge 0$,
\begin{equation*}
|b(t,x,\mu)-b(t,y,\nu)|\le M_b \bigl( |x-y| + W_1(\mu,\nu)\bigr),
\end{equation*}
for all $t\in[0,T]$, $(x,y)\in\R^d$ and $\mu,\nu\in\cP_1(\R^d)$.

Moreover, the pair $(\{b^N\},b)$ (in short $(b^N,b)$) is called $\FLip$-inducing (subjected to $\{P^N\}$) if it satisfies the following conditions:
 \begin{enumerate}
 \item $b\in \FLip$;
 \item The sequence $b^N$ is uniformly bounded, i.e. $\sup_{N\in\N} \|b^N\|_{\infty}<\infty$;
 \item There exists a sequence $c_N$ with $c_N\to 0$ as $N\to\infty$ such that 
     \begin{equation}\label{eq_FLipver2}
         \int_0^T \bigl\langle |b_t-b_{t}^N|^2(\cdot,z^N_t),z^N_t \bigr\rangle \, d t \leq c_N,\qquad\text{for $P^N$-almost every $z^N$}.
     \end{equation}
 \end{enumerate}
\smallskip
\end{defi}

\begin{lm}\label{lem_LDP_b_Lip}
	Assume that the initial law $\rho_0$ satisfies
	\begin{align}
	    \int_{\R^d} e^{\beta |x|}\, d\rho_0 (x)<\infty,\qquad \forall \beta>0.\label{eq_IC_exp_int}
	\end{align}
	Assume that $(b^N,b)$ is $\FLip$-inducing. Then $(\cE^N_{b^N},\cE_b)$ induces an LDP. 
\end{lm}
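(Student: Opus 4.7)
The plan is to verify the hypotheses of Theorem \ref{thm_gibbs2} with $(\cE^N,\cE) = (\cE^N_{b^N},\cE_b)$ and an approximating family $(\cE^N_{b_\lambda},\cE_{b_\lambda})$ obtained by time-smoothing $b$ at scale $\lambda>0$. Since $P^N$ satisfies Sanov's LDP on $\cP(S)$ with rate $I=R(\cdot\|\mathbb{W})$, the structural analogy with Section \ref{sgibbs} (with $k=2$, because $\cE_b(\mu) = \int_S V_b(x,\mu)\,d\mu(x)$ uses $\mu$ twice) makes Theorem \ref{thm_gibbs2} the natural tool.

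\textbf{Constructing the approximants.} For $\lambda>0$, define $b_\lambda$ by a time discretization/mollification (e.g., piecewise constant in time over a partition of mesh $\lambda$, with value the time-average of $b$ over each subinterval). Because $b\in\FLip$ and is bounded, $b_\lambda$ inherits these properties with uniform constants, and additionally the stochastic integrals in $V^2_{b_\lambda}$ and $V^{N,2}_{b_\lambda}$ reduce, subinterval by subinterval, to finite sums of increments weighted by values of $b_\lambda$ at left endpoints. This makes $\mu\mapsto\cE_{b_\lambda}(\mu)$ continuous and bounded on $\cP(S)$ and forces $\cE^N_{b_\lambda}=\cE_{b_\lambda}$ (no particle/deterministic ambiguity for the discretized integral). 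The classical Varadhan Lemma (Proposition~\ref{thm_vara}) then yields that $(\cE^N_{b_\lambda},\cE_{b_\lambda})$ induces an LDP, and uniform boundedness of $\cE_{b_\lambda}$ gives \eqref{eq_unifexpint} trivially.

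\textbf{Gibbs-like bounds on the differences.} Using the definitions of $V^N_b$ and $V_b$,
\begin{align*}
\cE^N_{b^N}(\mu)-\cE^N_{b_\lambda}(\mu) = -\!\int_S\!\!\int_0^T\!\!(b^N\!-b_\lambda)_t(x_t,\mu_t)\cdot dx_t\,d\mu(x) + \tfrac12\!\int_S\!\!\int_0^T\!\!(|b^N|^2\!-|b_\lambda|^2)_t(x_t,\mu_t)\,dt\,d\mu(x),
\end{align*}
and similarly for $\cE_b-\cE_{b_\lambda}$. The identity $|b^N|^2-|b_\lambda|^2 = (b^N-b_\lambda)\cdot(b^N+b_\lambda)$, combined with uniform boundedness of $\{b^N\}$ and $\{b_\lambda\}$, bounds the second term by an $L^2$ quantity in $b^N-b_\lambda$. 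For the stochastic-integral term, evaluated at $\mu=z^N_{\boldsymbol{W}}$, the Girsanov/exponential martingale identity applied to $M_t = -\beta\sum_i\int_0^t (b^N-b_\lambda)_s(W^i_s,z^N_{\boldsymbol{W},s})\cdot dW^i_s$ together with a Cauchy--Schwarz splitting gives
\[
\E\bigl[e^{\beta N(\cE^N_{b^N}-\cE^N_{b_\lambda})(z^N_{\boldsymbol{W}})}\bigr]^2 \le \E\Bigl[\exp\Bigl(c_\beta N\!\int_S\!\!\int_0^T\!\!|b^N-b_\lambda|^2(t,x_t,z^N_{\boldsymbol{W},t})\,dt\,dz^N_{\boldsymbol{W}}(x)\Bigr)\Bigr],
\]
i.e.\ \eqref{eq_Gibbs_ineq_1} with $G^N_\lambda(x,y):=\int_0^T |b^N-b_\lambda|^2(t,x_t,\delta_{y_t})\,dt$ (and similarly $G_\lambda$ from $b-b_\lambda$). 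Inequality \eqref{eq_Gibbs_ineq_2} follows along the same lines using Lemma \ref{lm_estrel} and the variational form of relative entropy; here the Lipschitz-in-$W_1$ hypothesis on $b$ provides $|b(t,x_t,\mu_t)-b(t,x_t,\delta_{y_t})| \le M_b\,W_1(\mu_t,\delta_{y_t})$ to handle the nonlinearity of $V_b$ in $\mu$.

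\textbf{Exponential integrability and conclusion.} Uniform boundedness of $\{b^N\}$ and $b_\lambda$ ensures $\|G^N_\lambda\|_\infty$ and $\|G_\lambda\|_\infty$ are bounded uniformly in $N$ (for each $\lambda$), so \eqref{eq_exp_int_1}--\eqref{eq_exp_int_2} hold with $K$ arising from the exponential moments of $\rho_0$ given by \eqref{eq_IC_exp_int} (needed to control the $\mathbb{W}$-integral of path-dependent quantities); the $\FLip$-inducing condition \eqref{eq_FLipver2} together with $b_\lambda\to b$ in the appropriate sense as $\lambda\to 0$ makes the bounds vanish in the limit. All hypotheses of Theorem \ref{thm_gibbs2} are then satisfied, and we conclude that $(\cE^N_{b^N},\cE_b)$ induces an LDP.

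\textbf{Main obstacle.} The delicate point is the construction of $b_\lambda$ making $\cE_{b_\lambda}$ genuinely continuous on $\cP(S)$: the map $\mu\mapsto\int_0^T b(t,\cdot,\mu_t)\cdot dx_t$ is only defined up to null sets depending on $\mu$, so one must arrange the time-discretization so that the stochastic integral collapses to a sum of pathwise-continuous terms. Once this continuity is secured, the remaining work is algebraic: translating exponential martingale inequalities into clean quadratic Gibbs-like potentials $G^N_\lambda,G_\lambda$ compatible with Theorem \ref{thm_gibbs2}.
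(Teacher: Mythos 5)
Your plan diverges from the paper's proof in a way that creates two genuine gaps.

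First, the base case fails. You take $b_\lambda$ to be a time-discretization of $b$ so that the stochastic-integral term becomes a pathwise Riemann sum, and then invoke the classical Varadhan Lemma (Proposition~\ref{thm_vara}) for $(\cE^N_{b_\lambda},\cE_{b_\lambda})$ on the grounds that $\cE_{b_\lambda}$ is ``continuous and bounded''. But $\cE_{b_\lambda}$ is \emph{not} bounded: the $V^2$ part contains increments $x_{t_{j+1}}-x_{t_j}$ of paths in $S=C([0,T];\R^d)$, which are unbounded, so $\int_S V^2_{b_\lambda}(x,\mu)\,d\mu(x)$ is not uniformly bounded over $\mu\in\cP(S)$ even when $b_\lambda$ is bounded. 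Proposition~\ref{thm_vara} as stated does not apply. This is precisely why the paper does not try to rebuild the base LDP from scratch: it \emph{cites} \cite[Theorem~34]{CDFM2018} (a genuinely nontrivial LDP for weakly interacting diffusions with $\FLip$ drift and discontinuous Girsanov density) to get that $(\cE^N_b,\cE_b)$ induces an LDP, and then applies Theorem~\ref{thm_direct}, not Theorem~\ref{thm_gibbs2}, with a $\lambda$-\emph{independent} ``approximation'' $(\cE^N_b,\cE_b)$. The difference $\cE^N_{b^N}-\cE^N_b$ is then controlled directly from the $\FLip$-inducing condition \eqref{eq_FLipver2} via Lemma~\ref{lm_estnk}; no Gibbs structure is needed at this stage.

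Second, your Gibbs potential $G^N_\lambda(x,y)=\int_0^T |b^N-b_\lambda|^2(t,x_t,\delta_{y_t})\,dt$ does not in general dominate the error. To establish \eqref{eq_Gibbs_ineq_1} you would need $\int_0^T |b^N-b_\lambda|^2(t,x_t,\mu_t)\,dt \le \int_S G^N_\lambda(x,y)\,d\mu(y)$, i.e.\ a convexity-type inequality $|b^N-b_\lambda|^2(t,x,\mu)\le\int|b^N-b_\lambda|^2(t,x,\delta_y)\,d\mu(y)$. This holds when $b$ is affine in $\mu$ (Jensen), but a general $W_1$-Lipschitz $b$ can depend nonlinearly on the measure and the inequality can fail (take, say, $b(\mu)=\min(1,\int h\,d\mu)$ and compare $b(\mu)^2$ with $\int b(\delta_y)^2\,d\mu$). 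The $\FLip$ class deliberately has no built-in Gibbs structure; the conditions \eqref{eq_b_g_n}--\eqref{eq_b_g} are \emph{assumptions} of Theorem~\ref{thm_pmainlip} used later for singular drifts, not consequences of $\FLip$. Moreover, even granting the $G^N_\lambda$, your conditions \eqref{eq_exp_int_1}--\eqref{eq_exp_int_2} require a $\beta$-independent constant $K$; the trivial bound $\int e^{\beta G_\lambda}\le e^{4\beta T M^2}$ from boundedness of $b^N,b_\lambda$ is not $\beta$-independent and does not help. In short, the intended tool here is Theorem~\ref{thm_direct} combined with the external LDP of \cite{CDFM2018}, not a Gibbs-type approximation scheme.
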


\begin{remark}
    The exponential condition \eqref{eq_IC_exp_int} is required only to apply \cite[Theorem 34]{CDFM2018}: that result needs \eqref{eq_IC_exp_int} because it works with the 1-Wasserstein topology instead of the weak topology (the LDP in weak topology follows then by contraction principle). For this reason we suspect that, in the space $\cP(S)$ with the weak topology, the condition \eqref{eq_IC_exp_int} is not necessary.
\end{remark}

\begin{proof}[Proof of Lemma~\ref{lem_LDP_b_Lip}]
The LDP induced by $(\cE^N_b,\cE_b)$ follows from \cite[Theorem 34]{CDFM2018} (see also \cite{Fischer2014}). There an LDP is proved for $Q^N_b$ with rate function $R(\mu\| \mathbb{W}^\mu)$, where $\mathbb{W}^\mu$ is defined as in Theorem \ref{thm_pmainlip} (as the law of the solution to \eqref{eq_SDEc2b}). By Lemma \ref{lm_prep2}, we have
	\begin{align*}
	    R(\mu \| \mathbb{W}^\mu) = R(\mu \| \mathbb{W}) -\mathcal{E}_b(\mu).
	\end{align*}
	In particular, the infimum of the $R(\cdot\| \mathbb{W})-\cE_b$ is the infimum of the left-hand side above, that is $0$. The Laplace principle is then trivially satisfied as $e^{-N\cE_b}(\mu)$ is the density of the Girsanov transform.\\

Hence, note that by Theorem \ref{thm_direct} it is enough to show \eqref{eq_unifexpint} and \eqref{rconv5a} for $\cE_{\lambda}^N:=\cE^N_{b^N}$ and $\cE_{\lambda}=\cE^N=\cE:=\cE_{b}$. To derive \eqref{eq_unifexpinta}, we have for any $\gamma\in \R$, 
\begin{equation}\label{eq_flipsb1}
\begin{aligned}
    \E\left[ e^{\gamma N \langle V^{N,2}_{b^N}(\cdot, z^N_{\boldsymbol{W}}),z^N_{\boldsymbol{W}}\rangle } \right] &= \E\left[ e^{\gamma \sum_{i=1}^N \int_0^T b^N_{t}(W_t^i, z^N_{\boldsymbol{W},t})\cdot dW_t^i } \right] \\
    &\le \E\left[ e^{2\gamma^2 \sum_{i=1}^N \int_0^T |b^N_{t}(W_t^i, z^N_{\boldsymbol{W},t})|^2 d t } \right]^{\frac{1}{2}} \le e^{\gamma^2 N T\|b^N\|_{\infty}^2}.
\end{aligned}
\end{equation}
where Lemma~\ref{lm_estnk} was applied in the first inequality. Since $V_{b^N}^1$ is trivially bounded by $T\|b^N\|_\infty^2$, we then obtain
\begin{align*}
    \E\left[e^{-\gamma N\mathcal{E}^N(z^N_{\boldsymbol{W}})}\right] \le e^{(\gamma^2+\gamma/2) N T\|b^N\|_{\infty}^2}.
\end{align*}
Since $\sup_{N\in\N}\|b^N\|_{\infty}:=M <\infty$, \eqref{eq_unifexpinta} is satisfied.

Moreover, using Lemma~\ref{lm_entvar} we have for any $\gamma>1$ and any $\mu\in\cP(S)$ with $R(\mu\|\mathbb{W})<\infty$, that
\begin{align*}
    \gamma \cE_{b}(\mu) \le R(\mu\|\mathbb{W}) + \log \E \left[ e^{\gamma |V_{b}|(W,\mu)} \right].
\end{align*}
In a similar fashion as before, we can estimate the second term on the right-hand side uniformly in $\mu$ to obtain
\begin{align*}
 R(\mu\|\mathbb{W}) + \gamma\cE_{b}(\mu) \ge -a,
\end{align*}
for some constant $a_0>0$ independent of $\mu\in\cP(S)$, which gives \eqref{eq_unifexpintb}. \\

For \eqref{rconv5a}, we will consider the part of $\cE^N_{b^N}-\cE^N_{b}$ determined by $V^1$ and $V^{N,2}$ separately. First, note that $V^{N,2}_{b^N}-V^{N,2}_{b}=V^{N,2}_{b^N-b}$ and similar to the argument of \eqref{eq_flipsb1} we have for all $\beta\in \R$
\[ 
 \E\left[ e^{\beta N \langle V^{N,2}_{b^N-b}(\cdot, z^N_{\boldsymbol{W}}),z^N_{\boldsymbol{W}}\rangle } \right]   \leq  \E\left[ e^{2\beta^2 \sum_{i=1}^N \int_0^T |b^N_{t}-b_t|^2(W_t^i, z^N_{\boldsymbol{W},t}) d t } \right]^{1/2}  \leq e^{c_N N \beta^2 }. \]
Secondly, since 
\[ \beta \left\|b^N|^2-|b|^2\right|\leq \beta (|b^N|+|b|)|b^N-b| \leq \tfrac{1}{2}(M+\|b\|_{\infty})^2+\tfrac{\beta^2}{2}|b^N-b|^2,   \]
we derive
\begin{equation*}
 \begin{aligned}
 \E\left[ e^{\beta N \left(\langle V^{1}_{b^N}(\cdot, z^N_{\boldsymbol{W}}),z^N_{\boldsymbol{W}}\rangle-\langle V^{1}_{b}(\cdot, z^N_{\boldsymbol{W}}),z^N_{\boldsymbol{W}}\rangle\right) } \right] &= \E\left[ e^{\beta \sum_{i=1}^N \int_0^T \left(|b^N_t|^2-|b_t|^2\right)(W_t^i, z^N_{\boldsymbol{W},t}) d t  } \right] \\
 &\leq \E\left[ e^{\tfrac{TN}{2}(M+\|b\|_{\infty})+\tfrac{\beta^2}{2}\sum_{i=1}^N \int_0^T \left(|b^N_t-b_t|^2\right)(W_t^i, z^N_{\boldsymbol{W},t}) d t  } \right] \\
 &\leq e^{\tfrac{TN}{2}(M+\|b\|_{\infty})+\tfrac{c_N N \beta^2}{2}  }.
 \end{aligned}
 \end{equation*}
Finally, via Cauchy-Schwarz, we conclude that for all $\beta\in \R$
\begin{equation*}
    \limsup_{N\to \infty} \frac{1}{N} \log \E \left[ e^{ \beta\left(\cE^N_{b^N}(z^N_{\boldsymbol{W}})-\cE^N_{b}(z^N_{\boldsymbol{W}})\right) } \right] \leq \tfrac{T}{2} (M+\|b\|_{\infty}). 
\end{equation*}
\end{proof}

\subsection{The main result}

Now we give the main result of this section, which states the LDP for the system \eqref{eq_SDEm}. The assumptions may seem involved at first glance, but their meaning is not difficult: we have an LDP for the system \eqref{eq_SDEm} as soon as we can approximate in a suitable way the drift $b$ by regular drifts $b_\lambda$, along the Brownian empirical measure $z^N_{\boldsymbol{W}}$ and the couple of Brownian path $W$ and measure $\mu$ with finite relative entropy.

\begin{thm}\label{thm_pmainlip}
	Assume the condition \eqref{eq_IC_exp_int} on the initial law $\rho_0$. Suppose there exists a sequence of $\FLip$-inducing drifts $(b^N_\lambda,b_{\lambda})_{\lambda>0}$  and a sequence $(g_\lambda)_{\lambda>0}$ of Borel functions $g_\lambda:[0,T]\times (\R^d)^k\to [0,+\infty)$, $k\in\N$, such that the following conditions hold:
	\begin{enumerate}
	    \item[(i)] for every $\lambda>0$, for $P^N$-almost every $z^N\in\cP(S)$,
	    \begin{equation}\label{eq_b_g_n}
        \int_0^T \bigl\langle |b_t^N-b_{\lambda,t}^N|^2(\cdot,z^N_t),z^N_t \bigr\rangle\,dt \le \int_0^T \int_{((\R^d)^{k})'} g_\lambda (t,x_1,\ldots,x_k)\, d(z^N_t)^{\otimes k}\,dt;
     \end{equation}
     \item[(ii)] for every $\lambda>0$ and every $\mu\in\cP(S)$ with $R(\mu\| \mathbb{W})<\infty$ and $\mathbb{W}$-almost every $W$,
     \begin{equation}\label{eq_b_g}
        \int_0^T  |b_t-b_{\lambda,t}|^2(W_t,\mu_t)\,dt \le \int_0^T \int_{(\R^d)^{k}} g_\lambda (t,W_t,y)\, d\mu_t^{\otimes k-1}(y)\,dt.
     \end{equation}
	\end{enumerate}
     Suppose also that $(g_\lambda)_{\lambda>0}$ satisfies for some $K>0$, 
     \begin{equation}\label{eq_pmain_alt2}
         \limsup_{\lambda\to 0} \E \left[ e^{\beta \int_0^T  g_\lambda(t,W_t^{1},\ldots,W_t^{k})\,dt} \right] \leq K, \qquad \forall \beta\in \R,
     \end{equation}
    where $W^1,\dots,W^k$ are independent Brownian motions with common initial law $\rho_0$.

     Then the family $\{Q_{b^N}^N\}$ of laws of $z^N_{\boldsymbol{X}}$ (for $\boldsymbol{X}=(X^{N,1},\ldots,X^{N,N})$ satisfying \eqref{eq_SDEm}) has an LDP with rate function
	\begin{equation}\label{eq:ratefunction-diffusions}
	\cF(\mu)=
	\begin{cases}
	 R(\mu\|\mathbb{W}^{\mu}) & \mbox{if $R(\mu\|\mathbb{W})<\infty$} \\
	 +\infty & \mbox{otherwise},
	\end{cases}
	\end{equation}
	where $\mathbb{W}^{\mu}$ is the law of the process $X^{\mu}_t$ satisfying the SDE
	\begin{equation}\label{eq_SDEc2b}
	d X^{\mu}_t = b_t(X_t^\mu,\mu)  \, d t+d W_t.
	\end{equation}
\end{thm}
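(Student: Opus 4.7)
The plan is to cast the problem in the Gibbs-like framework of Theorem~\ref{thm_gibbs2} by viewing $Q^N_{b^N}$ as a perturbation of $P^N$ via Girsanov (Lemma~\ref{lm_prep1}), using the given $\FLip$-inducing family $(b^N_\lambda, b_\lambda)$ as approximants, and then to identify the resulting rate function with $R(\mu\|\mathbb{W}^\mu)$ via Lemma~\ref{lm_prep2}. I would first verify the exponential-moment hypothesis of Lemma~\ref{lm_prep1} by the splitting $|b^N|^2 \le 2|b^N - b^N_\lambda|^2 + 2\|b^N_\lambda\|_\infty^2$ for a fixed small $\lambda$, combined with \eqref{eq_b_g_n} and \eqref{eq_pmain_alt2}, so that $dQ^N_{b^N}/dP^N = e^{-N\cE^N_{b^N}}$. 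Since $(b^N_\lambda, b_\lambda)$ is $\FLip$-inducing, Lemma~\ref{lem_LDP_b_Lip} (which invokes \eqref{eq_IC_exp_int}) provides the LDP for each approximating pair $(\cE^N_{b^N_\lambda}, \cE_{b_\lambda})$, and the uniform integrability condition \eqref{eq_unifexpint} at some $\gamma>1$ follows from $\sup_N\|b^N_\lambda\|_\infty < \infty$ via the exponential martingale estimate used in the proof of that lemma.

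The crux of the argument is to verify the Gibbs-like bounds \eqref{eq_Gibbs_ineq_1}--\eqref{eq_Gibbs_ineq_2} with the choice
\[
G^N_\lambda(x_1,\ldots,x_k) = G_\lambda(x_1,\ldots,x_k) := c\int_0^T g_\lambda(t, x_{1,t}, \ldots, x_{k,t})\,dt,
\]
for a suitable absolute constant $c$. The exponential integrability requirements \eqref{eq_exp_int_1}--\eqref{eq_exp_int_2} on $G_\lambda$ are then immediate from \eqref{eq_pmain_alt2}, since $\int_{S^k} e^{\beta G_\lambda}\,d\mathbb{W}^{\otimes k} = \E[\exp(\beta c\int_0^T g_\lambda(t, W^1_t,\ldots,W^k_t)\,dt)]$. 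Writing
\[
N(\cE^N_{b^N} - \cE^N_{b^N_\lambda})(z^N_{\boldsymbol{W}}) = -\sum_{i=1}^N\int_0^T (b^N - b^N_\lambda)\cdot dW^i_t + \tfrac{1}{2}\sum_{i=1}^N\int_0^T (|b^N|^2 - |b^N_\lambda|^2)(W^i_t,z^N_t)\,dt,
\]
I would bound the exponential moment by Cauchy--Schwarz in expectation and treat each piece separately. For the stochastic-integral contribution, Lemma~\ref{lm_estnk} followed by \eqref{eq_b_g_n} controls its exponential moment by that of $4\beta^2 N \cE^N_{G_\lambda}(z^N_{\boldsymbol{W}})$. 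For the quadratic contribution, the algebraic identity $|b^N|^2 - |b^N_\lambda|^2 = |b^N - b^N_\lambda|^2 + 2(b^N - b^N_\lambda)\cdot b^N_\lambda$, together with Young's inequality and $\|b^N_\lambda\|_\infty < \infty$, reduces the control to the same Gibbs functional up to an additive constant. The deterministic analogue \eqref{eq_Gibbs_ineq_2} follows by the same decomposition applied to $V^2_b$ at fixed $\mu$ with $R(\mu\|\mathbb{W})<\infty$ (as a stochastic integral under $\mathbb{W}$) together with Lemma~\ref{lm_entvar} to trade the exponential integral against relative entropy plus a Gibbs piece, now via \eqref{eq_b_g}.

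With Theorem~\ref{thm_gibbs2} applied, we obtain an LDP for $\{Q^N_{b^N}\}$ with rate function $R(\mu\|\mathbb{W}) + \cE_b(\mu) - \inf_\nu(R(\nu\|\mathbb{W}) + \cE_b(\nu))$ on $\{R(\mu\|\mathbb{W})<\infty\}$; since the Girsanov density in Lemma~\ref{lm_prep1} carries no normalizing constant ($Z^N \equiv 1$), the Laplace principle forces $\inf_\nu(R(\nu\|\mathbb{W}) + \cE_b(\nu)) = 0$, and Lemma~\ref{lm_prep2} identifies $R(\mu\|\mathbb{W}) + \cE_b(\mu)$ with $R(\mu\|\mathbb{W}^\mu)$, yielding precisely \eqref{eq:ratefunction-diffusions}. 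The main technical obstacle I anticipate sits in the quadratic contribution above: the cross term $(b^N - b^N_\lambda)\cdot b^N_\lambda$ produces an additive constant proportional to $\|b^N_\lambda\|_\infty^2$ (or $\|b_\lambda\|_\infty^2$ on the deterministic side), which need not remain bounded as $\lambda\to 0$ for a singular target drift $b$. Staying within the quantitative format of \eqref{eq_Gibbs_ineq_1}--\eqref{eq_Gibbs_ineq_2} may therefore require enlarging $G_\lambda$ to absorb an $|b_\lambda|^2$-contribution (whose exponential moments would need to be separately controlled through \eqref{eq_b_g}--\eqref{eq_pmain_alt2}), or performing the Girsanov change of measure from $P^N$ to $Q^N_{b^N_\lambda}$, under which the cross term becomes a martingale that can be absorbed via a second exponential-martingale estimate.
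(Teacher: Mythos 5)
Your overall plan coincides with the paper's: represent $Q^N_{b^N}$ via Girsanov (Lemma~\ref{lm_prep1}), apply the Gibbs-like Theorem~\ref{thm_gibbs2} with $G_\lambda(x^1,\ldots,x^k) = \int_0^T g_\lambda(t, x^1_t,\ldots,x^k_t)\,dt$, and identify the rate function through Lemma~\ref{lm_prep2}. The reduction of the stochastic-integral contribution via Lemma~\ref{lm_estnk}, and the verification of \eqref{eq_exp_int_1}--\eqref{eq_exp_int_2} from \eqref{eq_pmain_alt2}, are also in line with the paper.

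However, the gap you flag at the end of your argument is genuine, and neither of your two proposed remedies closes it. The first remedy --- enlarging $G_\lambda$ to absorb an $|b_\lambda|^2$ contribution --- cannot work as formulated: \eqref{eq_b_g_n}--\eqref{eq_pmain_alt2} control exponential moments of $|b-b_\lambda|^2$ only, and if $G_\lambda$ carried a piece of order $\|b_\lambda\|_\infty^2$ then $\limsup_{\lambda\to 0}\int_{S^k} e^{\beta G_\lambda}\,d\mu_0^{\otimes k}$ would diverge, violating \eqref{eq_exp_int_1}--\eqref{eq_exp_int_2}. The second remedy --- changing measure to $Q^N_{b^N_\lambda}$ --- does cancel the cross term $(b^N-b^N_\lambda)\cdot b^N_\lambda$ inside the exponent, but Theorem~\ref{thm_gibbs2} demands bounds on exponential moments under the \emph{fixed} reference $P^N=\mathbb{W}^{\otimes N}$; converting back by H\"older reintroduces a factor $\E_{P^N}[\exp(\epsilon N\cE^N_{b^N_\lambda})]$ whose logarithm grows like $\|b^N_\lambda\|_\infty^2$ and hence diverges as $\lambda\to 0$.

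The paper's device is different: fix a single small $\lambda_0>0$ once and for all (chosen, using \eqref{eq_pmain_alt2}, so that the relevant exponential moments at index $\lambda_0$ are finite), and bound the problematic quadratic terms relative to $b^N_{\lambda_0}$ rather than via $\|b^N_\lambda\|_\infty$. Concretely, the elementary estimate $\beta(|b^N|^2-|b^N_\lambda|^2)\le|b^N|^2+|b^N_\lambda|^2+\tfrac{\beta^2}{2}|b^N-b^N_\lambda|^2$ puts the $\beta$-dependence entirely on the difference term; the residual $|b^N|^2+|b^N_\lambda|^2$ is then controlled through $|b^N_\lambda|^2\le2|b^N-b^N_\lambda|^2+2|b^N|^2$ followed by $|b^N|^2\le2|b^N-b^N_{\lambda_0}|^2+2|b^N_{\lambda_0}|^2$. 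After exponentiating, the $\lambda_0$-indexed pieces produce a constant that is uniform in $\lambda$ and $\beta$ (the first by the choice of $\lambda_0$, the second by $\sup_N\|b^N_{\lambda_0}\|_\infty<\infty$), while the only $\lambda$-dependent contributions are exponential moments of $|b^N-b^N_\lambda|^2$, which \eqref{eq_b_g_n} dominates by $G_\lambda$. This delivers \eqref{eq_Gibbs_ineq_1} with $C$ independent of $\beta,\lambda,\mu$, as required. The companion bound \eqref{eq_Gibbs_ineq_2} follows by the same decomposition on the entropic side, combining \eqref{eq_b_g} with the variational inequality of Lemma~\ref{lm_entvar} in place of the direct moment estimate.
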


\begin{remark}
Note that the zeros of the rate function $\cF$ are exactly the solution to the McKean--Vlasov SDE
\begin{align*}
\left\{
\begin{aligned}
&dX_t = b_t(X_t,\law(X_t))\,dt +dW_t,\\
&X_0 \text{ with law }\rho_0,
\end{aligned}
\right.
\end{align*}
with $R(\law(X_t)\|\mathbb{W})<\infty$. In particular, since $\cF$ has at least one zero, there exists at least one solution to the McKean--Vlasov SDE with finite relative entropy (with respect to $\mathbb{W}$).
\end{remark}
We will see in the proof of Theorem~\ref{thm_pmainlip} that, under the assumptions of the above theorem, $Q^N_{b^N}$ is well-defined by Lemma \ref{lm_prep1}.

\medskip

Moreover, since \eqref{eq_pmain_alt2} is quite general, an application of Khasminskii's lemma provides us with the following sufficient condition: 
\begin{lm}\label{lm_khashapp}
Let $(g_\lambda)_{\lambda>0}$ be a sequence of Borel functions $g_\lambda:[0,T]\times (\R^d)^k\to[0,+\infty)$, $k\in\N$, that satisfies
\begin{equation}\label{eq_pmainlip1}
         \limsup_{\lambda\to 0} \sup_{x_1,\ldots,x_k\in \R^d} \E^{x_1,\ldots,x_k}\left[ \int_0^T  g_\lambda(t,W_t^{1,x_1},\ldots,W_t^{k,x_k})\,dt \right] =0,
     \end{equation}
     where the expectation is over $k$ independent Brownian motions $W^{1,x_1},\ldots,W^{k,x_k}$ starting at points $x_1,\ldots,x_k\in\R^d$ respectively. Then \eqref{eq_pmain_alt2} is satisfied. 
\end{lm}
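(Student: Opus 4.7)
The plan is to apply Khasminskii's lemma to the $kd$-dimensional Markov process $(W^1,\ldots,W^k)$ with the non-negative, time-dependent additive functional $\beta g_\lambda$. First I would handle the trivial case $\beta\le 0$: since $g_\lambda\ge 0$, one has $\exp\bigl(\beta\int_0^T g_\lambda\,dt\bigr)\le 1$, so \eqref{eq_pmain_alt2} holds with $K=1$. From here on fix $\beta>0$.

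Next I would use hypothesis \eqref{eq_pmainlip1} to choose $\lambda_0=\lambda_0(\beta)>0$ such that, for every $\lambda<\lambda_0$,
\[
\sup_{(x_1,\ldots,x_k)\in\R^{dk}}\beta\,\E^{x_1,\ldots,x_k}\!\left[\int_0^T g_\lambda\bigl(t,W_t^{1,x_1},\ldots,W_t^{k,x_k}\bigr)\,dt\right]\le \tfrac12.
\]
An application of Khasminskii's lemma---using the non-negativity of $g_\lambda$ and the Markov property to upgrade this to the conditional smallness $\E[\beta\int_s^T g_\lambda(t,W_t)\,dt\mid\cF_s]\le\tfrac12$ required in the proof---then yields
\[
\sup_{(x_1,\ldots,x_k)\in\R^{dk}}\E^{x_1,\ldots,x_k}\!\left[\exp\!\left(\beta\int_0^T g_\lambda\bigl(t,W_t^{1,x_1},\ldots,W_t^{k,x_k}\bigr)\,dt\right)\right]\le 2.
\]
Finally, averaging this uniform-in-initial-data bound against $\rho_0^{\otimes k}$ gives \eqref{eq_pmain_alt2} with $K=2$.

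The delicate point will be the time-inhomogeneity of $g_\lambda$ in invoking Khasminskii's lemma: the classical proof proceeds by bounding the moments $\E[A_\lambda^n]\le n!\alpha^n$ for $A_\lambda:=\int_0^T g_\lambda(t,W_t)\,dt$ via iterated conditional expectations and the Markov property, and in the time-dependent setting this demands uniform control in $s\in[0,T]$ of $\E[\int_s^T g_\lambda(t,W_t)\,dt\mid\cF_s]$. I would handle this by embedding the time variable into the state, viewing $(t,W_t^1,\ldots,W_t^k)$ as a time-homogeneous Markov process on $[0,T]\times\R^{dk}$ and extending $g_\lambda\equiv 0$ outside $[0,T]$, which reduces everything to the classical time-homogeneous Khasminskii setting.
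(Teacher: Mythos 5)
Your argument is essentially the paper's: invoke Khasminskii's lemma (Lemma~\ref{lm_estkha}) to pass from the uniform smallness in \eqref{eq_pmainlip1} to a uniform-in-starting-point exponential moment bound, then average the starting point over $\rho_0^{\otimes k}$; your fixed $\alpha=\tfrac12$ (hence $K=2$) versus the paper's $\alpha\to0$ (hence $K=1$) is immaterial, and the paper's own proof is exactly this short. Your closing observation is a good catch, but the fix you sketch does not quite close it: the iterated-conditioning proof behind Khasminskii needs smallness of $\sup_{s\in[0,T]}\sup_{y}\int_s^T\E\bigl[g_\lambda(t,y+W_{t-s})\bigr]\,dt$, whereas \eqref{eq_pmainlip1} controls only the $s=0$ slice, and folding time into the state does not by itself supply the missing uniformity --- after the augmentation, Khasminskii's hypothesis must hold over all augmented starting points $(s,y)$, not just $(0,y)$; the paper's stated Lemma~\ref{lm_estkha} carries the same $s=0$-only hypothesis and hence does no more than you do here, the issue being harmless in the paper's concrete applications because the $L^q_t(L^p_x)$ structure of the kernels makes the shifted-time quantity no larger than the unshifted one (time-shift plus extension by zero cannot increase the norm).
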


\begin{proof}
By Khasminskii's lemma (cf.\ Lemma~\ref{lm_estkha}), \eqref{eq_pmainlip1} implies that, for every $\beta\ge0$ and every $0<\alpha<1$,
\begin{align*}
    \sup_{(x_1,\ldots x_k)\in \R^{kd}}\E\left[e^{\beta \int^T_0 g_\lambda (t,W^{1,x_1}_t,\ldots W^{k,x_k}_t)\,dt}\right]\le \frac{1}{1-\alpha},\qquad\text{for all $\lambda>0$ sufficiently small},
\end{align*}
and so, averaging $(x_1,\ldots x_k)$ over $\rho_0^{\otimes k}$ and using Jensen inequality yields
\begin{align*}
    \E\left[e^{\beta \int^T_0 g_\lambda (t,W^1_t,\ldots W^k_t)\,dt}\right] \le \frac{1}{1-\alpha}\qquad\text{for all $\lambda>0$ sufficiently small}.
\end{align*}
Therefore we have, for every $\beta\ge0$,
\begin{equation*}
      \limsup_{\lambda\to 0} \E \left[ e^{\beta \int_0^T  g_\lambda(t,W_t^{1},\ldots,W_t^{k})\,dt} \right] = 1,
\end{equation*}
which gives \eqref{eq_pmain_alt2} with $K=1$.
\end{proof}

\begin{proof}[Proof of Theorem~\ref{thm_pmainlip}]
We would like to apply Theorem \ref{thm_gibbs2} to $\cE^N_{b^N}$, $\cE_{b}$, $\cE^N_{b^N_\lambda}$, $\cE_{b_\lambda}$, with
\[
G^N_\lambda(x^1,\ldots, x^k) = G_\lambda(x^1,\ldots, x^k) = \int_0^T g_\lambda(t,x^1_t,\ldots x^k_t) \,dt,\qquad x^i\in S,\, i=1,\ldots,N.
\]
We claim that the conditions \eqref{eq_unifexpint}, \eqref{eq_Gibbs_ineq_1}, \eqref{eq_Gibbs_ineq_2} and \eqref{eq_exp_int_1}, \eqref{eq_exp_int_2} hold. Then Theorem \ref{thm_gibbs2} and Lemma \ref{lem_LDP_b_Lip} give an LDP for $Q^N_{b^N}$ with rate function $R(\mu\|\mathbb{W})+\cE_b(\mu)$.

We now prove the claims on the above conditions and the form \eqref{eq:ratefunction-diffusions} for the rate function. In particular, we prove that \eqref{eq_b_g_n} and \eqref{eq_b_g} imply \eqref{eq_Gibbs_ineq_1} and \eqref{eq_Gibbs_ineq_2} respectively. Finally, note that \eqref{eq_pmain_alt2} directly implies \eqref{eq_exp_int_1} and \eqref{eq_exp_int_2}, and \eqref{eq_unifexpint} follows as in the proof of Lemma \ref{lem_LDP_b_Lip}.

\medskip
\paragraph{\indent\em Preliminary uniform bounds:}
We call, for $\mu\in\cP(S)$,
\begin{align*}
    \overline{K}_{\lambda,\beta} &:= \limsup_{N\to\infty} \frac{1}{N}\log \E \left[e^{\beta N \langle \int_0^T |b^N_t-b^N_{\lambda,t}(\cdot,z^N_{\boldsymbol{W},t})|^2dt, z^N \rangle}\right],\\
    \overline{H}_{\lambda,\beta}(\mu) &:= \log \E\left[e^{\beta \int_0^T|b-b_{\lambda}(W,\mu)|^2 dt}\right].
\end{align*}
We will show that there exists $\lambda_0>0$ and $\beta_0\gg 1$ arbitrarily large (more precisely, for every $\beta_0>0$ large, there exists $\lambda_0>0$), such that for all $\beta\le \beta_0$:
\begin{align}\label{eq:finite_lambda}
    \begin{aligned}
    \overline{K}_{\lambda_0,\beta} &\le \overline{K}_{\lambda_0,\beta_0} <\infty,\\
    \sup_{\mu,R(\mu\|\mathbb{W})<\infty} \overline{H}_{\lambda_0,\beta}(\mu) -(k-1)R(\mu\|\mathbb{W}) &\le \sup_{\mu,R(\mu\|\mathbb{W})<\infty}\overline{H}_{\lambda_0,\beta_0}(\mu) -(k-1)R(\mu\|\mathbb{W}) < \infty.
    \end{aligned}
\end{align}
We start with the proof for $\overline{K}_{\lambda_0,\beta_0}$. Applying assumption \eqref{eq_b_g_n}, we get
\begin{align*}
\overline{K}_{\lambda_0,\beta_0} &:= \limsup_{N\to\infty} \frac{1}{N}\log \E \left[e^{\beta_0 N \langle \int_0^T |b^N_t-b^N_{\lambda_0,t}(\cdot,z^N_{\boldsymbol{W},t})|^2dt, z^N_{\boldsymbol{W}} \rangle}\right]\\
&\le \limsup_{N\to\infty} \frac{1}{N}\log \E \left[e^{\beta_0 N \int_{(S^k)^\prime} G_{\lambda_0}(x^1,\ldots, x^k)\, d(z^N_{\boldsymbol{W}})^{\otimes k}(x^1,\ldots x^k) }\right].
\end{align*}
Since \eqref{eq_exp_int_1} holds, we can apply Remark \ref{rmk_bd_Gibbs_like}: for any $\beta_0$, the above right-hand side is finite for $\lambda_0>0$ sufficiently small. For $\overline{H}_{\lambda_0,\beta_0}(\mu)$, we have 
\begin{align*}
\overline{H}_{\lambda_0,\beta_0}(\mu) -(k-1)R(\mu\|\mathbb{W})&:= \log \E\left[e^{\beta_0 \int_0^T|b-b_{\lambda_0}(W,\mu)|^2 dt}\right] -(k-1)R(\mu\|\mathbb{W})\\
&\le \log \E\left[ e^{\beta_0 \int_{S^{k-1}} G_{\lambda_0}(W,y) d\mu^{\otimes (k-1)}(y) }\right] -(k-1)R(\mu\|\mathbb{W}).
\end{align*}
Since \eqref{eq_exp_int_2} holds, we can apply Remark \ref{rmk_bd_Gibbs_like}: for any $\beta_0$, the above right-hand side is finite and bounded uniformly over $\mu$ for $\lambda_0>0$ sufficiently small.

As a consequence of \eqref{eq:finite_lambda}, we have (for $N$ large at least)
\begin{subequations}
\begin{align}
&\E\left[e^{\frac12 \int_S V^1_{b^N}(x,z^N_{\boldsymbol{W}})\, dz^N_{\boldsymbol{W}}(x)}\right] <\infty,\\
&\E\left[e^{\frac12 \int_S V^1_b(W,\mu)}\right] <\infty\qquad \forall \mu\text{ with }R(\mu\|\mathbb{W})<\infty.\label{eq_finite_exp}
\end{align}
\end{subequations}
In particular, $V^1_{b^N}(\cdot,z^N)$ and $V^{2,N}_{b^N}(\cdot,z^N)$ are in $L^1(z^N)$ for $\mathbb{P^N}$-a.e.\ $z^N$ and also, for every $\mu$ with $R(\mu\|\mathbb{W})<\infty$, $V^1_b(\cdot,\mu)$ and $V^2_b(\cdot,\mu)$ are in $L^1(\mu)$, see Subsection \ref{s_def_log_densities} for details. Moreover, by Lemma \ref{lm_prep1}, at least for $N$ large, the system \ref{eq_SDEm} admits a weak solution, with unique (under the additional constraint) law has density given by \eqref{eq_Girs_density_particle}. Finally, the inequality \eqref{eq_finite_exp} holds replacing $1/2$ with any $\beta>0$ in the exponential, in particular $\cE_b$ is Borel by Lemma \ref{lm_Borel_cE}.

\medskip
\paragraph{\indent\em Verification of \eqref{eq_Gibbs_ineq_1} and \eqref{eq_Gibbs_ineq_2}:} We fix $\lambda_0$ and $\beta_0$ such that \eqref{eq:finite_lambda}. In view of \eqref{eq_Gibbs_ineq_1}, we show some easy uniform bounds. Using the inequality $|b^N|^2\le 2|b^N-b^N_{\lambda_0}|^2+2|b^N_{\lambda_0}|^2$ and applying H\"older inequality, we get, for every $\ell\ge0$,
\begin{align}
\begin{aligned}\label{eq_exp_bound_bN}
    \limsup_{N\to \infty} \frac{1}{N}\log \E \left[e^{\ell N \langle \int_0^T\langle |b^N_t(\cdot,z^N_{\boldsymbol{W},t})|^2dt, z^N_{\boldsymbol{W}} \rangle}\right]
    &\leq  \limsup_{N\to \infty} \frac{1}{2N}\log \E \left[e^{4\ell N \langle \int_0^T|b^N_t-b^N_{\lambda_0,t}(\cdot,z^N_{\boldsymbol{W},t})|^2dt, z^N_{\boldsymbol{W}} \rangle}\right] \\
    &\hspace{1em}+ \limsup_{N\to \infty} \frac{1}{2N}\log \E \left[e^{4\ell N \langle \int_0^T|b^N_{\lambda_0,t}(\cdot,z^N_{\boldsymbol{W},t})|^2dt, z^N_{\boldsymbol{W}} \rangle}\right]\\ &=:\frac{1}{2}\overline{K}_{\lambda_0,4\ell}+\frac{1}{2}\overline{K}^\prime_{\lambda_0,4\ell} <\infty,
\end{aligned}
\end{align}
where $\overline{K}^\prime_{\lambda_0,4\ell}$ is finite because $\sup_{N\in\N}\|b^N_{\lambda_0}\|_\infty<\infty$. Using now the inequality $|b^N_\lambda|^2\le 2|b^N-b^N_{\lambda}|^2+2|b^N|^2$ and proceeding similarly, we also get, for every $\ell\ge0$,
\begin{align}\label{eq_exp_bound_b}
    \limsup_{N\to \infty} \frac{1}{N}\log \E \left[e^{\ell N \langle \int_0^T|b^N_{\lambda,t}(\cdot,z^N_{\boldsymbol{W},t})|^2dt, z^N_{\boldsymbol{W}} \rangle}\right] \leq \frac{1}{2}\bigl( \overline{K}_{\lambda,4\ell}+\overline{K}_{\lambda_0,4\ell}+\overline{K}^\prime_{\lambda_0,4\ell}\bigr).
\end{align}

To show \eqref{eq_Gibbs_ineq_1}, we write, for $\beta\in \R$,
\begin{align}\label{eq_V_V1_V2}
    \E\left[e^{-\beta N (\cE^N_{b^N}-\cE^N_{b^N_\lambda})(z^N_{\boldsymbol{W}})}\right]
    \leq \E\left[e^{-\beta N \langle (V^1_{b^N}-V^1_{b^N_\lambda})(\cdot,z^N_{\boldsymbol{W}}),z^N_{\boldsymbol{W}} \rangle}\right]^{1/2} \E\left[e^{2\beta N \langle (V^2_{b^N}-V^2_{b^N_\lambda})(\cdot,z^N_{\boldsymbol{W}}),z^N_{\boldsymbol{W}} \rangle}\right]^{1/2}
\end{align}
 and we control the differences $V_{b^N}^1-V_{b^N_\lambda}^1$ and $V_{b^N}^2-V_{b^N_\lambda}^2$ separately. Using the inequality
 \begin{align*}
     \beta(|b^N|^2-|b^N_\lambda|^2)\le |b^N|^2+|b^N_\lambda|^2+\frac{\beta^2}{2}|b^N-b^N_\lambda|^2,
 \end{align*}
 and applying H\"older's inequality and the bounds \eqref{eq_exp_bound_bN} and \eqref{eq_exp_bound_b} (with $\ell=4$), we get
 \begin{align}
 \begin{aligned}\label{eq_V1}
     &\limsup_{N\to\infty}\frac{1}{N}\log \E[e^{\beta N \langle (V^1_{b^N}-V^1_{b^N_\lambda})(\cdot,z^N_{\boldsymbol{W}}),z^N_{\boldsymbol{W}} \rangle}]\\
     &\hspace{2em}\leq \frac{1}{4}\overline{K}_{\lambda,16} +\frac{1}{2}\bigl(\overline{K}_{\lambda_0,16}+\overline{K}^\prime_{\lambda_0,16}\bigr) +\limsup_{N\to \infty} \frac{1}{2N}\log \E [e^{\beta^2 N \langle \int_0^T|b^N_t-b^N_{\lambda,t}(\cdot,z^N_{\boldsymbol{W},t})|^2dt, z^N_{\boldsymbol{W}} \rangle}]\\
     &\hspace{2em}= \frac{1}{4}\overline{K}_{\lambda,16} +\frac{1}{2}\bigl(\overline{K}_{\lambda_0,16}+\overline{K}^\prime_{\lambda_0,16}\bigr) + \frac{1}{2}\overline{K}_{\lambda,\beta^2}.
 \end{aligned}
 \end{align}
 For $V_{b^N}^2-V_{b^N_\lambda}^2$, we use Lemma~\ref{lm_estnk} to obtain
\begin{align*}
    \E\left[e^{2\beta N\langle  V_{b^N}^2(\cdot,z^N_{\boldsymbol{W}})-V_{b^N_\lambda}^2(\cdot,z^N_{\boldsymbol{W}}),z^N_{\boldsymbol{W}}\rangle} \right] \le \E\left[ e^{4\beta^2 \sum_{i=1}^N \int_0^T |b^N_t - b^N_{\lambda,t}|^2(W_t^i, z^N_{\boldsymbol{W},t})\, d t } \right]^{\frac{1}{2}},
\end{align*}
and therefore
\begin{align}\label{eq_V2}
    \limsup_{N\to \infty}\frac{1}{N}\log E^N\left[e^{4\beta N\langle  V_{b^N}^2(\cdot,z^N_{\boldsymbol{W}})-V_{b^N_\lambda}^2(\cdot,z^N_{\boldsymbol{W}}),z^N_{\boldsymbol{W}}\rangle} \right]
    \leq \frac{1}{2}\overline{K}_{\lambda,4\beta}.
\end{align}
Putting together the inequalities \eqref{eq_V_V1_V2}, \eqref{eq_V1} and \eqref{eq_V2}, we get, for some constant $c>0$ (independent of $\beta$) and some $c_\beta>0$,
\begin{align}\label{eq_V_particle}
    \limsup_{N\to\infty}\frac{1}{N}\log \E\left[e^{-\beta N (\cE^N_{b^N}-\cE^N_{b^N_\lambda})(z^N_{\boldsymbol{W}})}\right]
    \leq \bar{c} + c\overline{K}_{\lambda,c_\beta},
\end{align}
with $\bar{c}=(\overline{K}_{\lambda_0,16}+\overline{K}^\prime_{\lambda_0,16})/2\ge 0$. The assumption \eqref{eq_b_g_n} gives, for some new $c_\beta>0$,
\begin{align*}
    &\limsup_{N\to\infty}\frac{1}{N}\log \E\left[e^{-\beta N (\cE^N_{b^N}-\cE^N_{b^N_\lambda})(z^N_{\boldsymbol{W}})}\right]\\
    &\hspace{4em}\leq \bar{c} +c\limsup_{N\to\infty}\frac{1}{N}\log \E\left[\exp\left(c_\beta N \int^T_0\int_{((\R^d)^k)'} g_\lambda(t,x_1,\ldots x_k) \,d(z^N_{\boldsymbol{W},t})^{\otimes k} \,dt\right)\right]\\
    &\hspace{4em}\leq \bar{c} +c\limsup_{N\to\infty}\frac{1}{N}\log \E\left[\exp\left(c_\beta N \int_{(S^k)'} \int^T_0 g_\lambda(t,x^1_t,\ldots x^k_t) \,dt \,d(z^N_{\boldsymbol{W}})^{\otimes k}\right)\right] 
\end{align*}
and \eqref{eq_Gibbs_ineq_1} follows.

As for \eqref{eq_Gibbs_ineq_2}, we use Lemma~\ref{lm_entvar} to obtain, for every $\mu$ with $R(\mu\|\mathbb{W})<\infty$,
\begin{align*}
    \beta |\cE_b(\mu) - \cE_{b_\lambda}(\mu)| \le R(\mu\|\mathbb{W}) + \log \E \left[ e^{\beta |V_b(W,\mu)-V_{b_\lambda}(W,\mu)|} \right].
\end{align*}
Using the same arguments as before, we get, for some $c'>0$ (independent of $\beta$) and some $c_\beta>0$,
\begin{align}
    \beta |\cE_b(\mu) - \cE_{b_\lambda}(\mu)|
    \leq \frac{k+1}{2} R(\mu\|\mu_0) +\bar{c} +c' \overline{H}_{\lambda,c_\beta},\label{eq_V_entropy}
\end{align}
where $\bar{c}>0$ is such that $\bar{c}\ge (\overline{H}_{\lambda_0,16}(\mu)-(k-1)R(\mu\|\mu_0)+\overline{H}'_{\lambda_0,16})/2$ for every $\mu$ with $R(\mu\|\mathbb{W})<\infty$, and $\overline{H}'_{\lambda_0,\beta} = \log \E[e^{\beta \int^T_0 |b_{\lambda_0}(W,\mu)|^2 dt}]$. The assumption \eqref{eq_b_g} gives, for some new $c_\beta$,
\begin{align*}
    \beta |\cE_{b^N}(\mu) - \cE_{b^N_\lambda}(\mu)|
    \leq \frac{k+1}{2}R(\mu\|\mu_0) +\bar{c} +c'\log \E\left[e^{c_\beta \int_{S^k} \int^T_0 g_\lambda(t,W_t,x^2_t\ldots x^k_t) \,dt \,d\mu^{\otimes (k-1)}}\right]
\end{align*}
and \eqref{eq_Gibbs_ineq_2} follows.

\medskip
\paragraph{\indent\em Proof of \eqref{eq:ratefunction-diffusions}:}
By \eqref{eq_finite_exp}, $b$ satisfies the assumption of Lemma \ref{lm_prep2}, which then implies the representation formula \eqref{eq:ratefunction-diffusions} for the rate function. The proof is complete.
\end{proof}

The above proof shows that we can relax some of the assumptions, as we show below. 
\begin{prop}\label{cory:exp_moments}
The results of Theorem \ref{thm_pmainlip} (namely the LDP for $Q^N_{b^N}$ with rate function $\cF$) remain valid if any of the following statements hold:
 \begin{enumerate}[label=(\alph*)]
     \item Instead of $\FLip$-inducing drifts $(b^N_\lambda,b_{\lambda})_{\lambda>0}$, 
     we assume that for every $\lambda>0$ the family $Q^N_{b_{\lambda}^N}$ has an LDP with rate function $\cF_{b_\lambda}$ (defined similarly to $\cF$ via \eqref{eq:ratefunction-diffusions}, \eqref{eq_SDEc2b}), and
      for every $\beta \in \R$, 
\begin{equation}\label{eq_pmain_malt}
\begin{aligned}
         \limsup_{N\to\infty} \frac{1}{N}\log \E \left[ \exp\left(N\beta\int_0^T \bigl\langle |b^N_{\lambda,t}|^2(\cdot,z^N_{\boldsymbol{W},t}),z^N_{\boldsymbol{W},t} \bigr\rangle\right)dt \right] <\infty,\\
         \sup_{\mu,R(\mu\|\mathbb{W})<\infty} \log \E\left[ \exp\left(\beta\int_0^T |b_{\lambda,t}|^2(W_t,\mu_t)\,dt\right) \right] -R(\mu\|\mathbb{W}) <\infty. 
\end{aligned}
\end{equation}
\item Instead of the existence of $g_{\lambda}$ and \eqref{eq_b_g_n}, \eqref{eq_b_g}, we assume there exists a constant $K\in \R$ such that for every $\beta \in \R$,
    \begin{equation}\label{eq_pmain_alt1}
     \begin{aligned}
         \limsup_{\lambda\to 0}\limsup_{N\to\infty} \frac{1}{N}\log \E \left[ \exp\left(N\beta\int_0^T \bigl\langle |b^N_t-b^N_{\lambda,t}|^2(\cdot,z^N_{\boldsymbol{W},t}),z^N_{\boldsymbol{W},t} \bigr\rangle\right)dt \right] \le K,\\
         \limsup_{\lambda\to 0} \sup_{\mu,R(\mu\|\mathbb{W})<\infty} \log \E\left[ \exp\left(\beta\int_0^T |b_t-b_{\lambda,t}|^2(W_t,\mu_t)\,dt\right) \right] -R(\mu\|\mathbb{W}) \le K. 
     \end{aligned}
     \end{equation}
 \end{enumerate}   
\end{prop}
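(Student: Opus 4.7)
The plan is to revisit the proof of Theorem \ref{thm_pmainlip} and track precisely where the $\FLip$-inducing hypothesis and the conditions \eqref{eq_b_g_n}, \eqref{eq_b_g} enter, showing that each of the two alternative sets of assumptions supplies the same inputs.

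For part (a), the role of $\FLip$-inducing $(b^N_\lambda, b_\lambda)$ in the original proof is twofold. First, it provides, via Lemma \ref{lem_LDP_b_Lip}, that $(\cE^N_{b^N_\lambda},\cE_{b_\lambda})$ induces an LDP, a required input to Theorem \ref{thm_gibbs2}. Under the new hypothesis, the LDP for $Q^N_{b^N_\lambda}$ with rate $\cF_{b_\lambda}$ is given outright, and the Laplace principle will be automatic: Lemma \ref{lm_prep1} yields $\E[e^{-N \cE^N_{b^N_\lambda}(z^N_{\boldsymbol{W}})}] = 1$ for every $N$, while $\inf J_{b_\lambda} = 0$ follows from the nonnegativity of the rate function (since $Q^N_{b^N_\lambda}(\cP(S))=1$) together with Lemma \ref{lm_prep2}, which identifies $\cF_{b_\lambda}$ with $J_{b_\lambda}$. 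Second, $\FLip$ is used to control $\sup_N\|b^N_\lambda\|_\infty$, which enters the auxiliary quantity $\overline{K}'_{\lambda_0,4\ell}$ in \eqref{eq_exp_bound_bN}--\eqref{eq_exp_bound_b} and the corresponding $\overline{H}'_{\lambda_0,\beta}$ appearing in \eqref{eq_V_entropy}; these will be replaced cleanly by the first and second lines of \eqref{eq_pmain_malt}, respectively. Condition \eqref{eq_unifexpint} for $\cE^N_{b^N_\lambda}$ can then be re-derived via the same Cauchy--Schwarz computation as in the proof of Lemma \ref{lem_LDP_b_Lip}, but with exponential moments of $|b^N_\lambda|^2$ in place of the $L^\infty$ bound. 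The rest of the argument is unchanged.

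For part (b), the plan is to bypass Theorem \ref{thm_gibbs2} and apply Theorem \ref{thm_direct} directly to $(\cE^N_{b^N},\cE_b)$ with approximants $(\cE^N_{b^N_\lambda},\cE_{b_\lambda})$. The intermediate bounds \eqref{eq_V_particle} and \eqref{eq_V_entropy} in the original proof are obtained from H\"older's inequality and the quadratic estimate on $|b^N|^2 - |b^N_\lambda|^2$, without invoking \eqref{eq_b_g_n} or \eqref{eq_b_g} at all; the pointwise domination by $g_\lambda$ only enters at the last step. Under \eqref{eq_pmain_alt1}, one may instead take $\limsup_{\lambda\to 0}$ directly in \eqref{eq_V_particle} and \eqref{eq_V_entropy} and obtain
\begin{equation*}
\limsup_{\lambda\to 0}\limsup_{N\to\infty} \frac{1}{N}\log \E\bigl[e^{-\beta N(\cE^N_{b^N}-\cE^N_{b^N_\lambda})(z^N_{\boldsymbol{W}})}\bigr] \le \bar c + cK,
\end{equation*}
together with the analogous entropy estimate, which are exactly conditions \eqref{rconv5a} and \eqref{rconv5b} required by Theorem \ref{thm_direct}. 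Condition \eqref{eq_unifexpint} is verified as in Lemma \ref{lem_LDP_b_Lip}, and the identification of the rate function proceeds as in the last paragraph of the proof of Theorem \ref{thm_pmainlip} via Lemma \ref{lm_prep2}.

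The main technical subtlety will be the Laplace principle in part (a): the proposition does not require the approximating drifts $b_\lambda$ to be $\FLip$, so we cannot invoke the Lemma \ref{lem_LDP_b_Lip} machinery, and we must check that the prescribed rate function \eqref{eq:ratefunction-diffusions} indeed has infimum zero and that this is compatible with the Girsanov normalization of \eqref{eq_Girs_density_particle}. Once this compatibility is established, both parts reduce cleanly to re-running the last part of the proof of Theorem \ref{thm_pmainlip} with the respective substitutions.
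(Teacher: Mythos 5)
Your proposal reproduces the paper's own (quite terse) proof: both go through Theorem~\ref{thm_direct} directly, using the Girsanov representations from Lemmas~\ref{lm_prep1}, \ref{lm_prep2}, Lemma~\ref{lm_estnk} for the stochastic-integral moment control, and the intermediate bounds \eqref{eq_V_particle}, \eqref{eq_V_entropy}. For part (b) you are exactly on target: those two bounds are established before $g_\lambda$ is invoked, and \eqref{eq_pmain_alt1} then slots directly into \eqref{rconv5a}, \eqref{rconv5b}. Your discussion of the Laplace principle in part (a) is a useful elaboration that the paper suppresses: the hypothesis only gives the LDP of $Q^N_{b^N_\lambda}$ with rate $\cF_{b_\lambda}$, and to turn this into the statement ``$(\cE^N_{b^N_\lambda},\cE_{b_\lambda})$ induces an LDP'' in the sense of Definition~\ref{defi:ldp_pair} one does indeed need to pin down $\inf J_{b_\lambda}=0$ via $Z_{b_\lambda}^N\equiv1$ (Lemma~\ref{lm_prep1}) together with the identification $J_{b_\lambda}(\mu)=R(\mu\|\mathbb{W}^{\mu,b_\lambda})\ge 0$ (Lemma~\ref{lm_prep2}) and the fact that a good rate function on a probability space has infimum zero.

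One detail you gloss over deserves a flag, because the paper glosses over it too. In part (a) the $L^\infty$-bound on $b_\lambda$ is replaced by the second line of \eqref{eq_pmain_malt}, which only yields $\log\E[e^{\beta V^1_{b_\lambda}(W,\mu)}]\le R(\mu\|\mathbb{W})+C_\beta$ rather than a $\mu$-uniform bound. The verification of \eqref{eq_unifexpintb}, i.e.\ $\inf_\mu(R(\mu\|\mathbb{W})+\gamma\cE_{b_\lambda}(\mu))>-\infty$ for some $\gamma>1$, is no longer the one-line estimate of Lemma~\ref{lem_LDP_b_Lip}: the variational inequality of Lemma~\ref{lm_entvar} now produces an extra multiple of $R(\mu\|\mathbb{W})$, and one must track the aggregate $R$-coefficient through the iterated H\"older/exponential-martingale splitting (and eventually absorb it by rescaling $\gamma$, as in the division step in the proof of Theorem~\ref{thm_gibbs2}). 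This can be made to close, but ``re-derived via the same Cauchy--Schwarz computation'' undersells the required bookkeeping. Since the paper's proof is equally terse on this point, your reconstruction is faithful to it; just be aware that this is where the genuine work in part (a) is hiding.
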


\begin{proof}
The inequalities \eqref{eq_pmain_malt} imply via Lemmas~\ref{lm_prep1}, \ref{lm_estnk} and \ref{lm_prep2} both the representations of $Q^N_{b_{\lambda}^N}$ and $\cF_{b_\lambda}$ in terms of $\cE^N_{b^N_{\lambda}},\cE_{b_{\lambda}}$, and the estimates \eqref{eq_unifexpint}. Moreover, \eqref{eq_pmain_alt1} combined with \eqref{eq_V_particle} and \eqref{eq_V_entropy} imply conditions \eqref{rconv5a} and \eqref{rconv5b}. Hence we can use directly Theorem \ref{thm_direct} to deduce the LDP (the representation formula \eqref{eq:ratefunction-diffusions} follows again from Lemma \ref{lm_prep2}).
\end{proof}

\subsection{Applications to concrete examples}\label{s_examples} In this subsection, we consider common form of drifts $b^N$ that appear in applications.

\subsubsection{Example: $2$-point interaction}\label{s_ex1}

We start with the example of $2$-point, translation-invariant interaction; while this is a particular case of the $k$-point interaction, we discuss this case separately, to highlight the essential ingredients of the result. In this example, we consider a simple class of drifts, that commonly appear in various fields of application, namely
\begin{equation}\label{eq:drift_2point}
    b_t^N(x_i,z^N_\x) = \frac{1}{N}\sum_{j\ne i}\varphi_t(x_i-x_j),\qquad z^N_\x = \frac{1}{N}\sum_{j=1}^N\delta_{x_j}
\end{equation}
for some Borel map $\varphi:[0,T]\times \R^{d}\to\R^d$.

\begin{remark}\label{rem_def_drift_2pt}
Strictly speaking, \eqref{eq:drift_2point} is not a good definition, because the right-hand side depends on $i$ and not just on $x_i$ and $z^N_\x$. However we can give a rigorous definition with a harmless change of \eqref{eq:drift_2point}. Precisely, we can define
\begin{align}\label{eq:drift_2point_rig}
    b_t^N(x,\mu) = \int_{\R^d\setminus \{x\}} \varphi_t(x-y)\,d \mu_t(y).
\end{align}
Indeed, note that (recall $\boldsymbol{W}=(W^1,\ldots,W^N)$ is a $N$-tuple of $d$-dimensional independent Brownian motions with law $\tilde P^N=\mathbb{W}^{\otimes N}$)
\begin{align}
    \frac{1}{N}\sum_{j\ne i}\varphi_t(W^i_t-W^j_t) = \int_{\R^d\setminus \{x\}} \varphi_t(x-y)\,d z^N_{\boldsymbol{W},t}(y), \quad \text{for a.e.\ $t\in[0,T]$ and $\tilde P^N$-a.e.\ $\boldsymbol{W}$.}\label{eq:def_drift_2pt}
\end{align}
because the set $\{t\in[0,T]: W^i_t = W^j_t, j\ne i\}$ has Lebesgue measure zero for $\tilde P^N$-a.e.\ $\boldsymbol{W}=(W^1,\ldots,W^N)$. Therefore, if Girsanov theorem can be applied to the particle system \eqref{eq_SDEm}, as it is the case in the next result, then \eqref{eq:def_drift_2pt} holds also replacing $\boldsymbol{W}$ and $\tilde P^N$ respectively with $\boldsymbol{X}^N=(X^{N,1},\ldots X^{N,N})$, the solution to the particle system \eqref{eq_SDEm} with drift \eqref{eq:drift_2point_rig}, and its law $\tilde Q^N$. Hence such solution $\boldsymbol{X}^N$ solves also the original particle system with drift \eqref{eq:drift_2point}.
\end{remark}

We introduce also the drift of the associated McKean--Vlasov SDE, namely
\begin{align*}
    b_t(x,\mu) = \int_{\R^d} \varphi_t(x-y)\,d \mu_t(y).
\end{align*}

\begin{prop}\label{lm_plpq}
	Assume the condition \eqref{eq_IC_exp_int} on the initial law $\rho_0$. Suppose that
	\begin{equation}\label{eq_expphitr}
       \E \left[ e^{\beta \int_{0}^T |\varphi_t|^2(W_t^{1}-W_t^{2})\,d t} \right] < \infty, \qquad \forall \beta\in \R,
        \end{equation}
where $W^1,W^2$ are independent Brownian motions with common initial law $\rho_0$.

Then the family $\{Q_b^N\}$ of laws of the empirical processes associated to the interacting system \eqref{eq_SDEm} with drifts of the form \eqref{eq:drift_2point} satisfies an LDP with rate function $\cF$ given in \eqref{eq:ratefunction-diffusions}.

\smallskip

In particular, \eqref{eq_expphitr} holds whenever $\varphi$ is in $L_t^{q}(L^{p}_{x-y})$, for $p,q$ satisfying
	\begin{align}
	    2\le p,q\leq \infty,\quad \frac{d}{p} + \frac{2}{q} <1.\label{eq_KR_cond}
	\end{align}

\end{prop}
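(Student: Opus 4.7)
The plan is to apply Theorem~\ref{thm_pmainlip} with $k=2$, which reduces the problem to producing suitable $\FLip$-inducing approximations $(b^N_\lambda,b_\lambda)_{\lambda>0}$ together with a function $g_\lambda$ controlling the drift error. I would first pick smooth, bounded, globally Lipschitz $\varphi_\lambda:[0,T]\times\R^d\to\R^d$ approximating $\varphi$ (a soft truncation at level $1/\lambda$ followed by a spatial mollification at sufficiently fine scale), and set
\[
b_{\lambda,t}(x,\mu) := \int_{\R^d}\varphi_{\lambda,t}(x-y)\,d\mu(y),\qquad
b^N_{\lambda,t}(x,\mu) := \int_{\R^d\setminus\{x\}}\varphi_{\lambda,t}(x-y)\,d\mu(y).
\]
Boundedness of $\varphi_\lambda$ and Kantorovich--Rubinstein duality then give $b_\lambda\in\FLip$ and $\|b^N_\lambda\|_\infty\le\|\varphi_\lambda\|_\infty$, while $b_\lambda-b^N_\lambda$ evaluated on an empirical measure contributes at most $|\varphi_\lambda(0)|/N$, so $(b^N_\lambda,b_\lambda)$ is $\FLip$-inducing.

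The natural choice of dominating function is $g_\lambda(t,x_1,x_2) := |\varphi_t-\varphi_{\lambda,t}|^2(x_1-x_2)$. Cauchy--Schwarz against the probability measure $\mu$ gives $|b_t-b_{\lambda,t}|^2(x,\mu) \le \int_{\R^d} g_\lambda(t,x,y)\,d\mu(y)$, and the discrete counterpart $(\tfrac{1}{N}\sum_{j\ne i}a_j)^2\le\tfrac{1}{N}\sum_{j\ne i}a_j^2$ produces the off-diagonal bound required in \eqref{eq_b_g_n}. Choosing the truncation so that $|\varphi-\varphi_\lambda|^2\le C|\varphi|^2$ pointwise, condition \eqref{eq_pmain_alt2} then follows by dominated convergence in the exponential: $e^{\beta\int_0^T g_\lambda(t,W^1_t,W^2_t)\,dt}\to 1$ almost surely and is majorised by $e^{C\beta\int_0^T |\varphi_t|^2(W^1_t-W^2_t)\,dt}$, which is integrable by \eqref{eq_expphitr}. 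Theorem~\ref{thm_pmainlip} then yields the LDP with rate function \eqref{eq:ratefunction-diffusions}.

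For the ``in particular'' clause, I would use Lemma~\ref{lm_khashapp} with $g(t,x_1,x_2) := |\varphi_t|^2(x_1-x_2)$: since $W^1-W^2$ has Gaussian transition density $p_t$ of variance $2t\,\mathrm{Id}$ with $\|p_t\|_{L^r_x}\le c\,t^{-(d/2)(1-1/r)}$ uniformly in the starting point, Hölder in space with exponents $p/2$ and $p/(p-2)$ followed by Hölder in time with $q/2$ and $q/(q-2)$ bounds the expected occupation by
\[
\sup_{z_0\in\R^d}\int_0^T\!\int_{\R^d}|\varphi_t|^2(z)\,p_t(z-z_0)\,dz\,dt \;\le\; c\,\|\varphi\|_{L^q_t L^p_x}^2\,\Bigl(\int_0^T t^{-\frac{dq}{p(q-2)}}\,dt\Bigr)^{(q-2)/q},
\]
which is finite precisely when $d/p+2/q<1$. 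Khasminskii's lemma converts this uniform occupation bound into the exponential integrability \eqref{eq_expphitr}.

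The main obstacle is the construction of the Lipschitz approximant $\varphi_\lambda$ with the pointwise domination $|\varphi-\varphi_\lambda|^2\le C|\varphi|^2$: a straightforward mollification of a hard truncation at level $1/\lambda$ can introduce errors of order $1/\lambda$ and destroy the bound. I would resolve this by choosing a smooth cutoff $\chi_\lambda:[0,\infty)\to[0,1]$ with $\chi_\lambda=1$ on $[0,1/(2\lambda)]$ and $\chi_\lambda=0$ on $[1/\lambda,\infty)$, setting $\tilde\varphi_\lambda:=\chi_\lambda(|\varphi|)\,\varphi$ (which is Borel, bounded, and satisfies $|\varphi-\tilde\varphi_\lambda|\le|\varphi|$ pointwise with $\tilde\varphi_\lambda\to\varphi$ a.e.), and then mollifying spatially at a scale $\lambda'\ll\lambda$ chosen small enough that the additional smoothing error is absorbed in the dominated convergence estimate. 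An alternative avoiding the pointwise bound is to verify the Khasminskii-type inequality $\sup_{x_1,x_2}\E^{x_1,x_2}[\int_0^T|\varphi-\varphi_\lambda|^2(W^1-W^2)\,dt]\to 0$ directly, which is immediate in the $L^q_t L^p_x$ case via the Krylov estimate above but more subtle for a general $\varphi$ satisfying only \eqref{eq_expphitr}.
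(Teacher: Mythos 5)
Your approach is essentially the same as the paper's. The paper deduces Proposition~\ref{lm_plpq} as the $k=2$ case of Proposition~\ref{lm_plp12q}, whose proof follows exactly your scheme: apply Theorem~\ref{thm_pmainlip} with approximants built from mollified/truncated versions of $\varphi$, take $g_\lambda=|\varphi_t-\varphi_{\lambda,t}|^2$, and verify \eqref{eq_b_g_n}, \eqref{eq_b_g}, \eqref{eq_pmain_alt2}. You also correctly identified the genuine subtlety — that a single mollification of a truncation cannot simultaneously guarantee a global Lipschitz bound and a pointwise domination $|\varphi-\varphi_\lambda|\le C|\varphi|$ — and your two-stage resolution (a cutoff $\tilde\varphi_\lambda$ with $|\varphi-\tilde\varphi_\lambda|\le|\varphi|$, handled by dominated convergence, followed by a mollification at a $\lambda$-dependent scale) is precisely what the paper does. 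The one place you wave your hands is the choice of mollification scale: the paper makes this fully rigorous by fixing an increasing sequence $\beta_\lambda\to\infty$ and, for each $\lambda$, choosing the Lipschitz $\varphi_\lambda$ so that $\E\bigl[e^{\beta_\lambda\int_0^T|\tilde\varphi_\lambda-\varphi_\lambda|^2\,dt}\bigr]<1+\lambda$; then for any fixed $\beta$, eventually $\beta\le\beta_\lambda$ and the limit superior collapses to $1$. Your phrase ``chosen small enough that the smoothing error is absorbed'' should be replaced by this explicit diagonalization in the exponential parameter. Your direct H\"older-in-space-then-time computation for the ``in particular'' clause, working with the difference process $W^1-W^2$ and a single $L^p_{x-y}$ norm, is correct and arguably cleaner than the paper's invocation of the $k$-fold mixed-norm Lemma~\ref{lm_kha_pq}, which for translation-invariant kernels requires a separate reduction to the one-variable case anyway. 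Finally, your alternative (B) — checking the Khasminskii occupation bound directly for $|\varphi-\varphi_\lambda|^2$ — only works in the $L^q_tL^p_x$ case and cannot replace the diagonal argument for general $\varphi$ satisfying only \eqref{eq_expphitr}, as you yourself note; it should be regarded as a remark rather than as a complete alternative.
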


Proposition \ref{lm_plpq} is a special case of Proposition \ref{lm_plp12q} with $k=2$, $p_1=p,p_2=+\infty$, and hence we will postpone the proof. 

\begin{remark}
The condition $\varphi\in L^q_t(L^p_{x-y})$ with $p,q$ satisfying \eqref{eq_KR_cond} is well-known in the literature on the so-called regularization by noise phenomenon (where an ill-posed ODE or PDE gains well-posedness by addition of a suitable noise). Indeed, a $d$-dimensional SDE, with additive noise and drift in $L^q_t(L^p_x)$, with $p,q$ satisfying \eqref{eq_KR_cond}, has the strong existence and uniqueness property, see for example \cite{KryRoc2005,FedFla2011} among many others; on the contrary, if $p,q$ do not satisfy \eqref{eq_KR_cond} not even with equality, there exist counterexamples to well-posedness for SDEs, even in the weak sense, see e.g. \cite[Section 7]{BFGM2019}.

For this reason the exponents of \eqref{eq_KR_cond} are likely optimal for irregular drifts $\varphi$ in our example: we expect that, for $L^q_t(L^p_{x-y})$ drifts without condition \eqref{eq_KR_cond}, even the 2-particle system is not well-posed. However, there are likely drifts that satisfies \eqref{eq_expphitr} but are not in a $L^q_t(L^p_x)$ class with \eqref{eq_KR_cond}.

\end{remark}

\begin{remark}\label{rem_psubz}
A relevant example of function $\varphi$ verifying condition \eqref{eq_KR_cond} is
\begin{align*}
    &\varphi(t,z)=  |z|^{\alpha} g\left(\frac{z}{|z|}\right)1_{|z|\le R} +h(z)1_{|z|>R},
\end{align*}
with $g:\mathbb{S}^{d-1}\rightarrow \mathbb{S}^{d-1}$ and $h:\R^d\rightarrow\R^d$ both Borel bounded, $R>0$, and exponent $\alpha$ satisfying
\begin{align*}
    \alpha>-1 \;\text{ for }\; d\ge 2,\quad \text{ and }\quad
\alpha>-1/2 \;\text{ for }\; d=1.
\end{align*}
	
In particular, when $d\geq 2$ this includes the case 
\[
\varphi(x) = -\nabla \Phi(x),\qquad \Phi(t,x)=|x|^{\alpha}, \qquad \alpha>0,
\]
in some neighborhood around $x=0$, and with $\varphi$ bounded outside of this neighborhood. 
However, $\Phi(x)=\log |x|$ does not fall in this class. Moreover, as shown in \cite{Hariya2014}, the exponential moment estimate of \eqref{eq_expphitr} actually blows up when $\beta$ is large enough. 

The latter is also related to the fact that for $d\geq 2$ and a logarithmic potential $\Phi(x)=\log |x|$, the law of SDE \eqref{eq_SDEm} might no longer be absolutely continuous with respect to the law of non-interacting Brownian motions---it is possible in the case of a sufficiently strong attractive force that the particles hit each other (see \cite{Fournier2015}). Interestingly enough, as is done in \cite{Fournier2015}, it can still be shown that the particle system converges to the corresponding McKean--Vlasov equation. However, whether in this case a large deviation principle still exists is not known. 
\end{remark}

\subsubsection{Example: $k$-point interaction}\label{s_ex2}

We can also treat the case of a $k$-interaction drift, namely
\begin{equation}\label{eq:drift_kpoint}
    b_t^N(x_i,z^N_\x) = \frac{1}{N^{k-1}}\sum_{j_1,\ldots, j_{k-1}\ne i\text{ all distinct}}\varphi_t(x_i,x_{j_1},\ldots x_{j_{k-1}})
\end{equation}
for some Borel function $\varphi:[0,T]\times \R^{kd}\rightarrow \R^d$. As in the previous example, the rigorous definition of $b^N$ can be given as in Remark \ref{rem_def_drift_2pt}. Similarly we take the drift of the associated McKean--Vlasov SDE
\begin{equation}\label{eq:drift_kpoint2}
    b_t(x,\mu) = \int_{\R^{(k-1)d}} \varphi_t(x,y_1,\ldots y_{k-1})\,d \mu_t^{\otimes (k-1)}(y_1,\ldots y_{k-1}).
\end{equation}

\begin{prop}\label{lm_plp12q}
    Assume the condition \eqref{eq_IC_exp_int} on the initial law $\rho_0$. Suppose that 
     \begin{equation}\label{eq_expphi}
       \E \left[ e^{\beta \int_{0}^T |\varphi_t|^2(W_t^{1},\ldots,W_t^{k})\,d t} \right] < \infty, \qquad \forall \beta\in \R
        \end{equation}
    where $W^1,\dots,W^k$ are independent Brownian motions with common initial law $\rho_0$.
    
    Then the family $\{Q_b^N\}$ of laws of the empirical processes associated to the interacting system \eqref{eq_SDEm} with drifts of the form \eqref{eq:drift_kpoint} satisfies an LDP with rate function $\cF$ given in \eqref{eq:ratefunction-diffusions}.
    
    In particular, \eqref{eq_expphi} holds whenever
    \begin{equation}\label{eq_Lpqcond}
        \varphi\in L^{q}_t(L^{p_1}_{x_1}(\ldots (L^{p_k}_{x_k})\ldots )),
    \end{equation}
    with $p_1,\ldots p_k,q$ satisfying
	\begin{equation}\label{eq_Lpqcond_pq}
	p_1,\ldots p_k,q\in[2,\infty],\quad \frac{d}{p_1}+\ldots +\frac{d}{p_k}+\frac{2}{q}<1.
	\end{equation}
\end{prop}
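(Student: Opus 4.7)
The plan is to apply Theorem~\ref{thm_pmainlip} by lifting a bounded Lipschitz regularization $\varphi_\lambda$ of the kernel $\varphi$ to approximate drifts $(b^N_\lambda, b_\lambda)$, and verifying the three ingredients of that theorem: the $\FLip$-inducing property, the pointwise bounds via a function $g_\lambda$, and the exponential integrability along independent Brownian motions.

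First I would regularize by a combination of pointwise value truncation and spatial mollification. Set $\tilde\varphi_M := \varphi \cdot 1_{|\varphi|\le M}$ and $\varphi_\lambda := \tilde\varphi_{M(\lambda)} * \eta_{\delta(\lambda)}$, where $\eta_{\delta}$ is a standard smooth mollifier at scale $\delta$ and the two sequences $M(\lambda)\to\infty$, $\delta(\lambda)\to 0$ are chosen later by a diagonal argument. Define $b^N_\lambda$ and $b_\lambda$ via \eqref{eq:drift_kpoint} and \eqref{eq:drift_kpoint2} with $\varphi_\lambda$ in place of $\varphi$. The pair $(b^N_\lambda, b_\lambda)$ is $\FLip$-inducing: $\varphi_\lambda$ is bounded and Lipschitz, which transfers to $b_\lambda \in \FLip$ through the inequality $W_1(\mu^{\otimes(k-1)}, \nu^{\otimes(k-1)}) \le (k-1) W_1(\mu,\nu)$ obtained by the independent coupling; the uniform bound $\sup_N \|b^N_\lambda\|_\infty \le \|\varphi_\lambda\|_\infty$ is immediate; and the discrepancy \eqref{eq_FLipver2} only collects $k$-tuples with at least two equal indices, of cardinality $O(N^{k-2})$ out of $N^{k-1}$, giving $c_N = O(\|\varphi_\lambda\|_\infty^2 / N) \to 0$.

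For the Gibbs-like bound I would take $g_\lambda(t, x_1, \ldots, x_k) := c_k |\varphi_t - \varphi_{\lambda,t}|^2(x_1, \ldots, x_k)$ for an appropriate combinatorial constant $c_k$; then Cauchy--Schwarz applied either to the empirical normalized sum over distinct indices (for \eqref{eq_b_g_n}) or to the product measure $\mu^{\otimes(k-1)}$ (for \eqref{eq_b_g}) yields both inequalities. The core and delicate step is the exponential integrability \eqref{eq_pmain_alt2}, for which I split
\[
    |\varphi - \varphi_\lambda|^2 \le 2\,|\varphi|^2\, 1_{|\varphi| > M(\lambda)} + 2\,|\tilde\varphi_{M(\lambda)} - \tilde\varphi_{M(\lambda)} * \eta_{\delta(\lambda)}|^2,
\]
and use Cauchy--Schwarz on the exponential moment. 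The first piece is dominated by $|\varphi|^2$ and vanishes pointwise as $M(\lambda)\to\infty$, so its exponential moment converges to $1$ by dominated convergence using \eqref{eq_expphi}. The second piece is pointwise bounded by $4M(\lambda)^2$ and vanishes Lebesgue-a.e.\ at Lebesgue points of $\tilde\varphi_M$ as $\delta\to 0$; together with the elementary inequality $\E[e^{\beta Z}] - 1 \le \beta\,e^{\beta \|Z\|_\infty}\,\E[Z]$ valid for bounded non-negative $Z$, a diagonal choice of $\delta(\lambda)$ small enough relative to $M(\lambda)\to\infty$ drives the corresponding exponential moment to $1$. This yields \eqref{eq_pmain_alt2} with $K = 1$, and Theorem~\ref{thm_pmainlip} provides the LDP with rate function $\cF$.

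For the ``in particular'' statement I would verify \eqref{eq_expphi} under \eqref{eq_Lpqcond}--\eqref{eq_Lpqcond_pq} via a Krylov-type estimate for independent Brownian motions. Iterated H\"older against the factorized Gaussian density $\prod_{i=1}^k p_t(y_i - x_i^0)$ using $\|p_t\|_{L^r} \sim t^{-d(1-1/r)/2}$, followed by H\"older in time, gives
\[
    \sup_{x_1^0,\ldots, x_k^0} \int_0^T \E^{x_1^0,\ldots,x_k^0}\bigl[|\varphi_t|^2(W^1_t, \ldots, W^k_t)\bigr]\,dt \, \lesssim \, \|\varphi\|^2_{L^q_t L^{p_1}_{x_1}\cdots L^{p_k}_{x_k}},
\]
where the right-hand side is finite precisely under \eqref{eq_Lpqcond_pq}. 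Khasminskii's lemma (Lemma~\ref{lm_estkha}) combined with time-splitting then promotes this first-moment bound to exponential integrability $\sup_{x^0}\E^{x^0}\bigl[\exp(\beta \int_0^T |\varphi|^2 dt)\bigr] < \infty$ of every order, and averaging the initial points over $\rho_0^{\otimes k}$ gives \eqref{eq_expphi}. The main obstacle of the whole argument is the diagonal selection in Step~4, i.e.\ the balancing of truncation level $M(\lambda)$ and mollification scale $\delta(\lambda)$ so that both error contributions are exponentially small simultaneously for every $\beta \in \R$.
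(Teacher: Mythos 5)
Your proposal follows the same overall strategy as the paper's proof: regularize the kernel $\varphi$, verify that the resulting drifts are $\FLip$-inducing, take $g_\lambda=|\varphi-\varphi_\lambda|^2$ for the Gibbs-like control, and verify \eqref{eq_pmain_alt2} via dominated convergence plus a diagonal argument balancing truncation and mollification, with Khasminskii and $L^q_t(L^p_x)$ Krylov estimates to handle the ``in particular'' claim. The main organizational difference is in how the diagonal argument is set up. The paper proceeds in three nested steps: Lipschitz bounded $\varphi$ (verifying $\FLip$-inducing directly), then Borel bounded $\varphi$ (mollification plus two rounds of dominated convergence), and finally general $\varphi$, where it truncates by value-clipping $\tilde\varphi_\lambda=(\varphi\wedge 1/\lambda)\vee(-1/\lambda)$, picks a sequence $\beta_\lambda\to\infty$, and \emph{invokes the already-proved bounded case as a black box} to select a Lipschitz $\varphi_\lambda$ with $\E[e^{\beta_\lambda\int|\tilde\varphi_\lambda-\varphi_\lambda|^2dt}]<1+\lambda$; then any fixed $\beta$ is eventually dominated by $\beta_\lambda$. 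You instead combine truncation (hard cutoff $\varphi\,1_{|\varphi|\le M}$ rather than clipping) and mollification into a single family, and control the mollification error quantitatively via the elementary estimate $\E[e^{\beta Z}]-1\le\beta e^{\beta\|Z\|_\infty}\E[Z]$, choosing $\delta(\lambda)$ to make $\E[Z]$ decay super-exponentially in $M(\lambda)$ so that the bound survives multiplication by $e^{c\beta M(\lambda)^2T}$ for every $\beta$. Both versions are correct diagonal arguments; the paper's is slightly cleaner because it cites its step-(2) result, while yours is more self-contained and quantitative. One minor imprecision: your $c_N=O(\|\varphi_\lambda\|_\infty^2/N)$ for the discrepancy \eqref{eq_FLipver2} is weaker than the sharp $O(1/N^2)$ the paper obtains (from squaring the $O(N^{k-2})$ defective tuples and normalizing by $N^{2(k-1)}$), but either suffices since only $c_N\to 0$ is needed.
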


\begin{remark}
Similarly to Lemma~\ref{lm_estlp12q}, \eqref{eq_Lpqcond} can be replaced by 
\[\varphi\in L^{q}_t(L^{p_1}_{x_{\sigma(1)}}(\ldots (L^{p_k}_{x_{\sigma(k)}})\ldots ))\]
    for some permutation $\sigma$ of $\{1,\ldots k\}$.
\end{remark} 

\begin{remark}\label{rem_psum}
The space of $\varphi$ satisfying condition \eqref{eq_expphi} is a vector space (as easily checked). Hence, condition \eqref{eq_expphi} also holds in the more general case of $\varphi=\sum_{j=1}^m \varphi_j$ for some $m$, where, for any $j=1,\ldots m$,
\begin{align*}
    \varphi_j\in L^{q^{(j)}}_t(L^{p^{(j)}_1}_{x_1}(\ldots (L^{p^{(j)}_k}_{x_k})\ldots ))
\end{align*}
with $p^{(j)}_1,\ldots p^{(j)}_k,q^{(j)}$ satisfying \eqref{eq_Lpqcond_pq}. In particular, we can allow $\varphi$ to be a sum of a bounded function and a function satisfying \eqref{eq_Lpqcond} and \eqref{eq_Lpqcond_pq} as in Theorem~\ref{eq:intro_SDE}.
\end{remark}

\begin{proof}[Proof of Proposition~\ref{lm_plp12q}]
We will use three different approximations: 
\begin{enumerate}
    \item First, we consider $b$ with $\varphi$ Lipschitz bounded and show that $(b^N,b)$ are $\FLip$-inducing, by bounding away the self-interactions. 
    \item Then we consider the case $b$ with $\varphi$ Borel bounded. We apply Theorem \ref{thm_pmainlip} to get the desired LDP.
    \item Finally, we extend this to $\varphi$ satisfying \eqref{eq_expphi} using truncation of $\varphi$ and the approximation given in step (2). 
\end{enumerate} 

The fact that \eqref{eq_expphi} holds under conditions \eqref{eq_Lpqcond} and \eqref{eq_Lpqcond_pq} follows from Lemma \ref{lm_kha_pq}.

\medskip

\textbf{(1)} Assume that $\varphi: [0,T]\times\R^{k d}\to \R^{d}$ is Borel bounded and that the map $x\mapsto \varphi(t,x)$ is globally Lipschitz continuous for every $t\in[0,T]$, with Lipschitz constant $\Lip(\varphi)$ independent of $t$. We will show that $(b^N,b)$ is $\FLip$-inducing. 

It is well known that $b\in \FLip$: indeed, for any $x_1,x_2 \in \R^d$ and $\mu_1,\mu_2\in \cP(\R^d)$, we have
    \begin{align*}
       &|b_{t}^N(x_1,\mu_1) - b_{t}^N(x_2,\mu_2)| \\
       &\hspace{3em}\le \int_{\R^{(k-1)d}} \left|\varphi_t(x_1,y_1,\ldots y_{k-1}) - \varphi_t(x_2,y_1',\ldots y_{k-1}')\right|\,d\gamma(y_1,y_1')\cdot \ldots \cdot d\gamma(y_{k-1},y_{k-1}') \\
       &\hspace{3em}\le \Lip(\varphi)\left( |x_1-x_2| + \int_{\R^{(k-1)d}} \left(\sum_{i=1}^{k-1}|y_i-y_i'|\right)\,d\gamma(y_1,y_1')\cdot \ldots \cdot d\gamma(y_{k-1},y_{k-1}')\right)\\
       &\hspace{3em}=\Lip(\varphi)\left( |x_1-x_2| + (k-1) \int_{\R^{(k-1)d}} |y-y'|\,d\gamma(y,y')\right),
    \end{align*}
    for any coupling $\gamma$ between $\mu$ and $\nu$; optimizing over all couplings then yields the required Lipschitz estimate. Moreover, clearly $|b^N|\leq |b|$ and hence  $\sup_{N} \|b^N\|_{\infty}<\infty$. Finally, to show \eqref{eq_FLipver2},  note that, similarly to Lemma \ref{lmg_uapprox}, we have for every $N$ and $P^N$-almost every $z^N$ 
     \begin{equation*}
     \begin{aligned}
         &\int_0^T \bigl\langle |b_t-b_{t}^N|^2(\cdot,z^N_t),z^N_t \bigr\rangle \, d t \\
         &\leq \frac{2}{N} \int_0^T \sum_{i=1}^N \left(\, \frac{1}{N^{k-1}}\left|\left(\sum_{j_1,\ldots, j_{k-1}\ne i\text{ all distinct}} -\sum_{j_1,\ldots, j_{k-1}\neq i}\right) \varphi_t(W^i_t,W^{j_1}_t,\ldots W^{j_{k-1}}_t) \, \right| \right)^2 d t\, \\
         &\hspace{1em} +\frac{2}{N} \int_0^T \sum_{i=1}^N \left(\, \frac{1}{N^{k-1}}\left| \left(\sum_{j_1,\ldots, j_{k-1}\ne i}-\sum_{j_1,\ldots, j_{k-1}}\right) \varphi_t(W^i_t,W^{j_1}_t,\ldots W^{j_{k-1}}_t)
         \right| \, \right)^2 d t\\
         &\leq 2T \|\varphi\|^2_{\infty}  \frac{1}{N^{2(k-1)}} \left(\left| \frac{(k-1)(k-2)}{2}N^{k-2}\right|^2+\left|(k-1)N^{k-2}\ \right|^2 \, \right) \leq \frac{C}{N^2},
    \end{aligned}
     \end{equation*}
for some generic constant $C$. Hence $(b^N,b)$ is $\FLip$-inducing.

\textbf{(2)} Next, suppose that $\varphi:[0,T]\times\R^{kd}\rightarrow \R^d$ is Borel bounded. We can find a sequence of Lipschitz approximations $(\varphi_{\lambda})_{\lambda>0}$, with $\varphi_{\lambda}$ as in \textbf{(1)} for each $\lambda>0$, such that $\varphi_\lambda\rightarrow \varphi$ Lebesgue-a.e. as $\lambda\to0$ and $\|\varphi_\lambda\|_\infty\le \|\varphi\|_{\infty}$ (for example, we can take $\varphi_\lambda$ as convolutions of $\varphi$ with approximations of identity). We take $b_\lambda^N$, $b_\lambda$ the corresponding drifts for the particle system and the McKean-Vlasov SDE (as respectively in \eqref{eq:drift_kpoint}, \eqref{eq:drift_kpoint2} with $\varphi_\lambda$ in place of $\varphi$). In order to apply Theorem \ref{thm_pmainlip}, with $(b^N_\lambda,b_\lambda)$ as sequence of $\FLip$-inducing drifts, we verify now the assumptions \eqref{eq_b_g_n}, \eqref{eq_b_g} and \eqref{eq_pmain_alt2}. The estimates \eqref{eq_b_g_n} and \eqref{eq_b_g} are easily obtained with
    \[
        g_\lambda(t,x_1,\ldots,x_{k}) = \left|\varphi(t,x_1,\ldots,x_{k}) - \varphi_{\lambda}(t,x_1,\ldots,x_{k})\right|^2.
    \]
  Concerning \eqref{eq_pmain_alt2}, we start noting that
  \begin{align*}
      g_\lambda(t,W^1,\ldots W^k_t)\rightarrow 0\quad \text{ and }\quad g_\lambda(t,W^1,\ldots W^k_t) \le 4\|\varphi\|_{\infty}^2,  \quad \text{for $\mathbb{P}\otimes dt$-a.e. $(\omega,t)$}.
  \end{align*}
  Hence, for each $\beta>0$, we apply dominated convergence theorem twice, i.e.\ to the time integral and then to the expectation, thereby obtaining
  \begin{align}
      \E \left[ e^{\beta \int_0^T  g_\lambda(t,W_t^{1},\ldots,W_t^{k})\,dt} \right] \rightarrow 1\quad\text{ as }\quad\lambda\to 0,\label{eq_exp_approx_bounded}
  \end{align}
  that is \eqref{eq_pmain_alt2}. Hence we can apply Theorem \ref{thm_pmainlip} and obtain the desired LDP.
  
\textbf{(3)} Finally, assume that $\varphi$ satisfies \eqref{eq_expphi}. We take
\begin{align*}
\tilde{\varphi}_\lambda = (\varphi\wedge 1/\lambda) \vee (-1/\lambda).
\end{align*}
We take also an increasing sequence $(\beta_\lambda)_\lambda$ with $\beta_\lambda \to \infty$. For each $\lambda>0$ fixed, applying \eqref{eq_exp_approx_bounded} to $\tilde{\varphi}_\lambda$ in place of $\varphi$, we get the existence of a Lipschitz function $\varphi_\lambda$ as in \textbf{(1)}, such that
\begin{align}
    \E \left[ e^{\beta_\lambda \int_0^T  |\tilde{\varphi}_\lambda - \varphi_\lambda|^2(t,W_t^{1},\ldots,W_t^{k})\,dt} \right]  <1+\lambda.\label{eq_exp_approx_diag_arg}
\end{align}
Now we take $b_\lambda^N$, $b_\lambda$ the drifts for the particle system and the McKean-Vlasov SDE, as respectively in \eqref{eq:drift_kpoint}, \eqref{eq:drift_kpoint2} with $\varphi_\lambda$ in place of $\varphi$. As before, in order to apply Theorem \ref{thm_pmainlip}, with $(b^N_\lambda,b_\lambda)$ as sequence of $\FLip$-inducing drifts, we verify the assumptions \eqref{eq_b_g_n}, \eqref{eq_b_g} and \eqref{eq_pmain_alt2}. As before, the estimates \eqref{eq_b_g_n} and \eqref{eq_b_g} are easily obtained with
    \[
        g_\lambda(t,x_1,\ldots,x_{k}) = \left|\varphi(t,x_1,\ldots,x_{k}) - \varphi_{\lambda}(t,x_1,\ldots,x_{k})\right|^2.
    \]
Concerning \eqref{eq_pmain_alt2}, we split $g_\lambda \le 2|\varphi-\tilde{\varphi}_\lambda|^2 + 2|\tilde{\varphi}_\lambda-\varphi_\lambda|^2$ and study the two terms separately. For the first term, for every $\beta>0$, we have by dominated convergence theorem
\begin{align*}
    \limsup_{\lambda\to 0}\E \left[ e^{\beta \int_0^T  |\varphi-\tilde{\varphi}_\lambda|^2(t,W_t^{1},\ldots,W_t^{k})\,dt} \right] =1.
\end{align*}
For the second term, the bound \eqref{eq_exp_approx_diag_arg} implies, for every $\beta>0$,
\begin{align*}
    \limsup_{\lambda\to 0}\E \left[ e^{\beta \int_0^T  |\tilde{\varphi}_\lambda-\varphi_\lambda|^2(t,W_t^{1},\ldots,W_t^{k})\,dt} \right] \le
    \limsup_{\lambda\to 0} \E \left[ e^{\beta_\lambda \int_0^T  |\tilde{\varphi}_\lambda-\varphi_\lambda|^2(t,W_t^{1},\ldots,W_t^{k})\,dt} \right]  =1.
\end{align*}
The two bounds above give \eqref{eq_pmain_alt2}. Hence we can apply Theorem \ref{thm_pmainlip} and obtain the desired LDP. The proof is complete.
\end{proof}

\subsubsection{Example: Measure dependent drift}\label{s_ex3}

As an example of a more general interaction, we consider drifts of the form (cf.\ \cite{GPP19})
\begin{equation}\label{eq:drift_general}
    b_t^N(x_i,z^N_\x) = \frac{1}{N}\sum_{j\ne i}\Psi\left(x_i,x_j,\frac{1}{N}\sum_{\ell\ne i}\varphi_t(x_i,x_\ell),\frac{1}{N}\sum_{\ell\ne j}\varphi_t(x_j,x_\ell),(z^N_\x)_t \right),
\end{equation}
where $\Psi:\R^{4d}\times \cP(\R^d)\to \R^d$ and $\varphi:[0,T]\times \R^{2d}\to\R^d$ are Borel maps. As in the previous examples, the rigorous definition of $b^N$ can be given as in Remark \ref{rem_def_drift_2pt}. Similarly we take the drift of the associated McKean--Vlasov SDE
\begin{align*}
    b_t(x,\mu)=\int_{\R^d}\Psi \left(x,y, \int_{\R^d}\varphi_t(x,z)\,d\mu_t(z),\int_{\R^d}\varphi_t(y,z)\,d\mu_t(z),\mu_t \right) d\mu_t(y).
\end{align*}
Throughout we will use both the 1-Wasserstein metric, $W_1$ and the bounded Lipschitz metric, $d_{BL}$ on $\cP(\R^d)$, with the latter given by 
\begin{equation*}
d_{BL}(\mu,\nu):= \sup_{\phi:\|\phi\|_{\infty}\leq 1,\Lip(\phi)\leq 1} \left\{ \int_{\R^d} \phi(x) d\mu(x)-\int_{\R^d} \phi(x) d\nu(x)\right\}.
\end{equation*}
Note that $d_{BL}\leq W_1$.

\begin{prop}\label{thm_pmark1}
	Assume that the initial law $\rho_0$ satisfies condition \eqref{eq_IC_exp_int}. Furthermore, let $\Psi\in \mathrm{Lip}(\R^{4d}\times (\cP(\R^d),d_{BL}))$, and suppose that there exists a constant $L$ such that
	\begin{equation}\label{eq_MD_Lgrowth2}
\Psi \left(x,y, a,b,\mu \right) \leq L(1+|a|+|b|), \qquad x,y,a,b\in \R^d, \mu\in \cP(\R^d)
	\end{equation}
	and 	
	 \begin{equation}\label{eq_expphi2}
       \E \left[ e^{\beta \int_{0}^T |\varphi_t|^2(W_t^{1},W_t^{2})\,d t} \right] < \infty,  \qquad \text{for all $\beta\in \R$},
        \end{equation}
	where $W^1,W^2$ are independent Brownian motions with common initial law $\rho_0$.
	
	Then the family $Q_b^N$ of laws of the empirical processes associated to the interacting system \eqref{eq_SDEm} induced by drifts of the form \eqref{eq:drift_general} satisfies an LDP with rate function $\cF$ given in \eqref{eq:ratefunction-diffusions}.
	
	\smallskip
	
	In particular, \eqref{eq_expphi2} holds whenever $\varphi$ is translation invariant with $\varphi\in L_t^q(L_{x-y}^p)$ for $p,q\in[2,\infty]$ satisfying
	\begin{equation}\label{eq_Lpqcond_pq2}
	    \frac{d}{p} + \frac{2}{q} <1.
	\end{equation}
\end{prop}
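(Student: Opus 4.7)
The plan is to apply Theorem~\ref{thm_pmainlip} (via the relaxed form of Proposition~\ref{cory:exp_moments} if convenient) with a family of $\FLip$-inducing approximations $(b^N_\lambda,b_\lambda)_{\lambda>0}$ obtained by substituting a bounded Lipschitz approximation $\varphi_\lambda$ for $\varphi$ inside \eqref{eq:drift_general}, while keeping $\Psi$ unchanged. The growth bound \eqref{eq_MD_Lgrowth2} will then yield $\sup_N\|b^N_\lambda\|_\infty\le L(1+2\|\varphi_\lambda\|_\infty)<\infty$; the joint Lipschitz assumption on $\Psi$, together with $d_{BL}\le W_1$ and the Lipschitz constant of $\varphi_\lambda$, gives $b_\lambda\in\FLip$; and the self-interaction error $\int_0^T\langle|b_\lambda-b^N_\lambda|^2(\cdot,z^N_t),z^N_t\rangle\,dt=O(N^{-2})$ follows from the $O(N^{-1})$ discrepancy between each excluded-diagonal sum and the corresponding full $\mu$-integral, exactly as in Lemma~\ref{lmg_uapprox}.

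To construct $\varphi_\lambda$ I would copy the two-step diagonal scheme of step~(3) in the proof of Proposition~\ref{lm_plp12q}: first truncate $\tilde\varphi_\lambda=(\varphi\wedge 1/\lambda)\vee(-1/\lambda)$, then mollify to a smooth bounded Lipschitz $\varphi_\lambda$ along a sequence $\beta_\lambda\uparrow\infty$ with $\E[\exp(\beta_\lambda\int_0^T|\tilde\varphi_\lambda-\varphi_\lambda|^2(W^1_t,W^2_t)\,dt)]<1+\lambda$. Splitting $|\varphi-\varphi_\lambda|^2\le 2|\varphi-\tilde\varphi_\lambda|^2+2|\tilde\varphi_\lambda-\varphi_\lambda|^2$, the first piece goes to $0$ pointwise and is dominated by $4|\varphi|^2$, whose exponential is controlled by \eqref{eq_expphi2}; dominated convergence then yields
\begin{equation*}
\limsup_{\lambda\to 0}\E\Bigl[\exp\Bigl(\beta\int_0^T|\varphi-\varphi_\lambda|^2(W^1_t,W^2_t)\,dt\Bigr)\Bigr]\le 1,\qquad\forall\beta\ge 0.
\end{equation*}

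The substantive step is to verify the Gibbs-like bounds \eqref{eq_b_g_n}--\eqref{eq_b_g}. Using only Lipschitzness of $\Psi$ in its $(a,b)$-arguments and Jensen's inequality on the inner $\mu$-averages, a direct computation gives
\begin{equation*}
|b_t-b_{\lambda,t}|^2(x,\mu)\le C\!\int|\varphi_t-\varphi_{\lambda,t}|^2(x,z)\,d\mu(z)+C\!\iint|\varphi_t-\varphi_{\lambda,t}|^2(y,z)\,d\mu(y)\,d\mu(z),
\end{equation*}
and an analogous bound at the particle level in terms of $(z^N_t)^{\otimes 2}$ restricted off-diagonal. The second summand is a \emph{double} $\mu$-average that is independent of $x$, and this is the main obstacle: it cannot be captured by a $k=2$ kernel $g_\lambda(x,y)$, because $g_\lambda$ is not allowed to depend on $\mu$. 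The remedy is to work with $k=3$ and choose
\begin{equation*}
g_\lambda(t,x,y,z):=C'\,|\varphi_t-\varphi_{\lambda,t}|^2(x,z)+C'\,|\varphi_t-\varphi_{\lambda,t}|^2(y,z),
\end{equation*}
which, when integrated against $\mu_t^{\otimes 2}$, reproduces exactly the two required summands (the inert $y$-integral of the first term collapses to $1$). At the particle level, $\int_{((\R^d)^3)'} g_\lambda\,d(z^N_t)^{\otimes 3}$ equals $2C'\,(N-2)/N$ times $\int_{((\R^d)^2)'}|\varphi_t-\varphi_{\lambda,t}|^2\,d(z^N_t)^{\otimes 2}$, so taking $C'=3C$ absorbs the combinatorial prefactor for $N\ge 3$.

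Finally, \eqref{eq_pmain_alt2} will follow by a Cauchy--Schwarz split of $\E[\exp(\beta\int_0^T g_\lambda(t,W^1_t,W^2_t,W^3_t)\,dt)]$ into two factors of the form $\E[\exp(\beta'\int_0^T|\varphi-\varphi_\lambda|^2(W^i_t,W^j_t)\,dt)]$ with $(W^i,W^j)$ a pair of independent Brownian motions with initial law $\rho_0$, each bounded by the display above. Theorem~\ref{thm_pmainlip} then delivers the LDP with the rate function \eqref{eq:ratefunction-diffusions}. The ``in particular'' claim for translation-invariant $\varphi\in L^q_t(L^p_{x-y})$ under \eqref{eq_Lpqcond_pq2} is nothing more than the corresponding statement in Proposition~\ref{lm_plpq} (Lemma~\ref{lm_kha_pq} applied to the increment $W^1-W^2$) and requires no new argument.
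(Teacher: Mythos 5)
Your proof follows the same strategy as the paper's: keep $\Psi$ fixed, approximate $\varphi$ by bounded Lipschitz $\varphi_\lambda$ via truncation-plus-mollification with the diagonal argument from step~(3) of Proposition~\ref{lm_plp12q}, show the approximants are $\FLip$-inducing, and then verify the Gibbs-like hypotheses of Theorem~\ref{thm_pmainlip}. The one place where you go beyond what the paper writes down is the Gibbs bound, and you are right to do so. The paper's proof asserts only that \eqref{eq_b_g_n}--\eqref{eq_b_g} hold ``with $g_\lambda(t,x,y)=|\varphi_t-\varphi^\lambda_t|^2$'', which read literally would mean $k=2$. As you observe, the Lipschitz-in-$(a,b)$ estimate for $\Psi$ produces not only $\int|\varphi_t-\varphi_{\lambda,t}|^2(W_t,z)\,d\mu_t(z)$ but also the $W_t$-independent double average $\iint|\varphi_t-\varphi_{\lambda,t}|^2(y,z)\,d\mu_t(y)\,d\mu_t(z)$, and a $k=2$ kernel cannot dominate the latter. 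Your choice $k=3$ with $g_\lambda(t,x,y,z)=C'\bigl(|\varphi_t-\varphi_{\lambda,t}|^2(x,z)+|\varphi_t-\varphi_{\lambda,t}|^2(y,z)\bigr)$ is exactly what is needed: the $y$-integral of the first summand collapses to $1$ and the second summand reproduces the double average, while the Cauchy--Schwarz split reduces \eqref{eq_pmain_alt2} to the two-Brownian-motion estimates already controlled by \eqref{eq_expphi2}. This is also consistent with the $k=3$ choice the paper itself makes later in Proposition~\ref{cor_poc_ex} for the uniqueness argument in the same example, so your reading of what the authors intended is the correct one. The particle-level combinatorics (the $2C'(N-2)/N$ factor over $((\R^d)^3)'$) and the $O(N^{-2})$ self-interaction error both check out.
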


\begin{remark}
A couple of comments:
    \begin{enumerate}
        \item Similar to Remark \ref{rem_psum}, Proposition \ref{thm_pmark1} holds for any $\varphi=\sum_{j=1}^m \varphi_j$ with each $\varphi_j$ satisfying \eqref{eq_Lpqcond_pq2} for suitable exponents $p^{(j)},q^{(j)}$. 
        \item As in Remark~\ref{rem:selfinteraction}, when $\varphi$ is bounded, we can include self-interactions in the summations in \eqref{eq:drift_2point}, \eqref{eq:drift_kpoint} and \eqref{eq:drift_general} without changing the results above.
    \end{enumerate}
\end{remark}

\begin{remark}\label{rem:intro_sde}
The example of Theorem \ref{thm:intro_sde} is a particular case of \eqref{eq:drift_general}, which can be seen by setting
   \begin{equation*}
       \Psi(x,x_2,y,x_3,\mu):=\psi(x,\mu,y).
   \end{equation*} 
The LDP for $Q_b^N$ follows from Proposition \ref{thm_pmark1}. Moreover, the convergence to the McKean-Vlasov SDE will be shown in the next section, and follows directly from Proposition \ref{cor_poc_ex}.
\end{remark}

\begin{proof}[Proof of Proposition~\ref{thm_pmark1}]
    The proof is an adaptation of the proof for Proposition~\ref{lm_plp12q}. As in step \textbf{(1)} of the proof of Lemma~\ref{lm_plp12q}, we first consider suitable bounded Lipschitz functions $\varphi$ with corresponding drifts $b$. Also in this case, it is not difficult to see that in this case $(b^N,b)$ is $\FLip$-inducing. First, to show that $b\in \FLip$, note that $b$ is bounded by \eqref{eq_MD_Lgrowth2} and the boundedness of $\varphi$ yields
    \begin{align*}
        | b_{t}(x_1,\mu_1)- b_{t}(x_2,\mu_2)| \le \Lip(\Psi)\iint_{\R^d\times\R^d} \Bigl(|x_1 - x_2| + |y_1-y_2| + \mathrm{(I)} + \mathrm{(II)} \Bigr)\, d\gamma(y_1,y_2)+\mathrm{(III)}
    \end{align*}
    where $\mathrm{(III)}=\Lip(\Psi)\, d_{BL}(\mu,\nu)$ and
    \begin{align*}
        \mathrm{(I)}
        &\le \Lip(\varphi)\iint_{\R^\times\R^d} \Bigl(|x_1-x_2| + |w_1-w_2|\Bigr)\,d\gamma(w_1,w_2), \\
        \mathrm{(II)} 
        &\le \Lip(\varphi)\iint_{\R^\times\R^d} \Bigl(|y_1-y_2| + |w_1-w_2|\Bigr)\,d\gamma(w_1,w_2).
    \end{align*}
Noting  $d_{BL}\leq W_1$, inserting estimates (I) and (II) into the previous inequality and optimizing over all couplings $\gamma$ between $\mu_1$ and $\mu_2$ yields
\[
    | b_{t}(x_1,\mu_1)- b_{t}(x_2,\mu_2)| \le \Lip(\Psi)(2+3\Lip(\varphi)) \bigl( |x_1-x_2| +  W_1(\mu_1,\mu_2) \bigr),
\]
i.e., the Lipschitz estimates holds. Next, note that we have for $P^N$-almost every $z^N$ 
\begin{align*}
\int_0^T \bigl\langle |b_t-b_{t}^N|^2(\cdot,z^N_t),z^N_t \bigr\rangle \, d t &\leq \left(\Lip(\Psi) \|\varphi\|_{\infty} \frac{2}{N}+\frac{1}{N} L(1+2\|\varphi\|_{\infty}) \right)^2,
\end{align*}
which vanishes as $N\to \infty$. \\

Finally, since $\Psi$ is Lipschitz, we find that for any $\varphi,b$ and approximating sequence $\varphi^{\lambda},b^{\lambda}$ the estimates \eqref{eq_b_g_n} and \eqref{eq_b_g} may be obtained with $g_\lambda(t,x,y) = |\varphi_t - \varphi_t^{\lambda}|^2$, and the rest of the proof follows similar to Proposition \ref{lm_plp12q}.
\end{proof}

\paragraph{\bf\em Background}
There are several papers dealing with large deviations for McKean--Vlasov SDEs. One of the first papers is \cite{Dawsont1987}, which proves an LDP for the paths of empirical measure, assuming continuity on the drift $b$ among other hypotheses. The papers \cite{Moral2003} and \cite{budhiraja2012} prove an LDP (for the empirical measures on the path space, as here), again assuming also continuity of the drift. The work \cite{Lac2018} proves propagation of chaos and a large deviation upper bound, assuming $b$ bounded and continuous in the measure argument, but with respect to a stronger topology (the $\tau$ topology). Outside the context of bounded (or linear growth) drift, we are aware only of the result in \cite{Fontbona2004}, which shows an LDP for a system of one-dimensional particles with a repulsive two-body interaction of order $1/x$, that is the framework of Subsection \ref{s_ex1} but with $\varphi(x)=1/x$, which is outside the class we can deal with here.

We also recall the recent papers \cite{Orr2018} and \cite{ReiSalTug2019} on large deviations for interacting diffusions/McKean--Vlasov SDEs when also the noise intensity tends to $0$, and \cite{GPP19} for the large deviations of the Brownian one-dimensional hard-rod system.

\subsection{Uniqueness for the McKean--Vlasov SDE}\label{sec:uniqueness}

In this subsection we consider the McKean--Vlasov SDE associated with the particle system \eqref{eq_SDEm}, namely
\begin{align}
\left\{
\begin{aligned}\label{eq_McKVla}
&dX_t = b_t(X_t,\law(X_t))\,dt +dW_t,\\
&X_0 \text{ with law }\rho_0,
\end{aligned}
\right.
\end{align}
for a given Borel drift $b:[0,T]\times\R^d \times \cP(\R^d)\rightarrow \R^d$ and a given initial law $\rho_0$.

We assume conditions on $b$ which include the examples in the previous subsection. We show, under these conditions, that the McKean--Vlasov SDE \eqref{eq_McKVla} admits a unique weak solution. As a consequence, in all examples in the previous section, the empirical measures associated with the particle system \eqref{eq_SDEm} converge, as $N\to\infty$, to the law of the solution of the McKean--Vlasov SDE \eqref{eq_McKVla}.

We keep the notation of the previous section, with $S=C([0,T];\R^d)$, $\mathbb{W}$ the $d$-dimensional Wiener measure with initial law $\rho_0$ and $W$ Brownian motion with initial law $\rho_0$ (similarly $W^i$ are independent Brownian motion starting from $\rho_0$). We fix $m> 2$ and, for $\eta>0$, we call
\begin{align*}
M_\eta = \biggl\{\mu\in \cP(S) \;\bigg|\; \left\|\frac{d\mu}{d\mathbb{W}}\right\|_{L^m(S,\mathbb{W})} \le \eta\biggr\}.
\end{align*}

\begin{thm}\label{thm_uniq_McKVla}
Fix the initial law $\rho_0 \in \cP(\R^d)$. Fix $m>2$. Assume that, for every $\eta>0$, for every $\beta>0$,
\begin{align}
\sup_{\mu\in \cP(S),\,R(\mu\|\mathbb{W})\le \eta} \E \left[\exp\left(\beta \int_0^T |b_t(W_t,\mu_t)|^2 dt \right)\right] <\infty.\label{eq_Xmu_exp_int}
\end{align}
Assume also that there is a non-negative Borel function $g:[0,T]\times (\R^{d})^k\rightarrow \R$ such that, for every $\eta>0$ and some  constant $C_\eta\ge0$,
\begin{align}
\begin{aligned}\label{eq_Xmu_Gibbs_bd}
&\int_0^T |b_t(x_t,\mu_t) -b_t(x_t,\nu_t)|^2 dt\\
&\hspace{4em}\le C_\eta \int_{S^{k-1}} \left(\int_0^T g_t(x_t,y_t)\,dt\right) \left|u^{\otimes(k-1)}(y)- v^{\otimes(k-1)}(y)\right|^2 d\mathbb{W}^{\otimes (k-1)}(y)\\[0.8em]
&\hspace{5.5em}\text{for }\mathbb{W}\text{-a.e. $x$ and every  $\mu,\nu \in M_\eta$},
\end{aligned}
\end{align}
where $u=d\mu/d\mathbb{W}$ and $v=d\nu/d\mathbb{W}$,
and that, for some $\tilde{m}>m$ with $\tilde{m}\ge (m/2)^\prime = \frac{m}{m-2}$,
\begin{align}
\E \left[\left( \int_0^T g_t(W^1_t,\ldots W^k_t)\, dt \right)^{\tilde{m}}\right] <\infty.\label{eq_Gibbs_int_g}
\end{align}
Then there exists a solution $X$ to the McKean--Vlasov equation and its law $\bar{\mu}$ is unique among the probability measures $\mu$ with $R(\mu\|\mathbb{W})<\infty$.
\end{thm}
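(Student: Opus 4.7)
The plan has three parts: a quantitative Girsanov representation with $L^p$-moments for all finite $p$, existence via Theorem~\ref{thm_pmainlip}, and uniqueness via an entropy identity closed by a short-time contraction argument.

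First, for any $\mu$ with $R(\mu\|\mathbb{W})\le\eta$, assumption \eqref{eq_Xmu_exp_int} together with Novikov's criterion ensures that $u_\mu:=\mathcal{E}(M^\mu)_T$, with $M^\mu_t:=\int_0^t b_s(x_s,\mu_s)\cdot dx_s$ under $\mathbb{W}$, is a genuine martingale density. The algebraic identity $\mathcal{E}(M)^p=\mathcal{E}(pM)\exp(\tfrac{p(p-1)}{2}\langle M\rangle)$ combined with Cauchy--Schwarz and \eqref{eq_Xmu_exp_int} at $\beta=p(p-1)$ gives $\|u_\mu\|_{L^p(\mathbb{W})}\le C_p(\eta)$ for every finite $p$. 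In particular every solution has all $L^p$-moments, and existence then follows from Theorem~\ref{thm_pmainlip}, which supplies a zero of the normalized rate function $\cF$ --- interpreted via Lemma~\ref{lm_prep2} as a McKean--Vlasov solution with finite entropy.

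Now let $\mu,\nu$ be two solutions with densities $u,v\in\bigcap_p L^p(\mathbb{W})$. Expanding $\log(u/v)$ under the tilted measures via Girsanov and the Cameron--Martin shift yields the entropy identity
\begin{equation*}
R(\mu\|\nu)+R(\nu\|\mu) = \tfrac{1}{2}\!\int_S(u+v)(x)\!\int_0^T\!|b_t(x_t,\mu_t)-b_t(x_t,\nu_t)|^2\,dt\,d\mathbb{W}(x).
\end{equation*}
Inserting \eqref{eq_Xmu_Gibbs_bd}, using the telescoping bound $|u^{\otimes(k-1)}-v^{\otimes(k-1)}|(y)\le\sum_{i=1}^{k-1}\prod_{j<i}v(y_j)|u(y_i)-v(y_i)|\prod_{j>i}u(y_j)$, then H\"older on $y_i$ with conjugate pair $(m/2,(m/2)')$ --- enabled precisely by $\tilde m\ge(m/2)'$ --- and a further H\"older split between $G(x,y):=\int_0^T g_t(x_t,y_t)\,dt$ (controlled in $L^{\tilde m}$ via \eqref{eq_Gibbs_int_g}) and the density products (controlled in arbitrarily high $L^p$ by the first part) yields
\begin{equation*}
R(\mu\|\nu)+R(\nu\|\mu)\le C(\eta)\,\|u-v\|_{L^m(\mathbb{W})}^2.
\end{equation*}

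The main obstacle is closing this bound, since it runs in the wrong direction: relative entropy controls $\|u-v\|_{L^1}$ via Pinsker but not $\|u-v\|_{L^m}$. I would remedy this by rerunning the whole analysis on time-restrictions to $[0,t]$: by dominated convergence applied to \eqref{eq_Gibbs_int_g}, $\|G^{(t)}\|_{L^{\tilde m}}\to 0$ as $t\downarrow 0$, so $C^{(t)}(\eta)\to 0$. Combining Pinsker's inequality with the Lyapunov interpolation $\|f\|_{L^m}\le\|f\|_{L^1}^{\theta}\|f\|_{L^{m+\varepsilon}}^{1-\theta}$ --- using the higher $L^p$-moments from the first part, where the slack $\tilde m>m$ gives room to choose $\varepsilon>0$ so that $\theta\in(0,1)$ --- produces a short-time Bihari-type inequality $R^{(t)}_{\mathrm{sym}}\le \kappa(t)\,(R^{(t)}_{\mathrm{sym}})^{\theta}$ with $\kappa(t)\to 0$. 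For $t_0$ small enough this forces $R^{(t_0)}_{\mathrm{sym}}$ to be less than any prescribed threshold; a Picard-type iteration in blocks of length $t_0$, exploiting that on each block the initial distribution is common to $\mu$ and $\nu$ and the $L^p$-bounds of the first part remain uniform, then propagates $\mu=\nu$ across $[0,T]$.
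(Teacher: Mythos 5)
Your approach diverges from the paper's at the crucial uniqueness step, and the divergence introduces a gap that the proposal does not close. The paper proves existence and uniqueness simultaneously by showing that, for $T$ small, the Girsanov map $F(\mu)=\mathbb{W}^\mu$ is a \emph{contraction} on $M_\eta$ in the $L^m(\mathbb{W})$ metric: one bounds $\|dF(\mu)/d\mathbb{W}-dF(\nu)/d\mathbb{W}\|_{L^m}\le \kappa(T)\,\|u-v\|_{L^m}$ using $|e^a-e^b|\le\tfrac12(e^a+e^b)|a-b|$, H\"older with $1/\gamma+1/\tilde m=1/m$, BDG, and the hypothesis \eqref{eq_Xmu_Gibbs_bd}; for two fixed points this gives $\|u-v\|_{L^m}\le\kappa\|u-v\|_{L^m}$ with $\kappa<1$, hence $u=v$, and Banach's theorem supplies existence. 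That closure is \emph{linear} in the quantity being estimated. Your symmetric-entropy identity is correct and the H\"older/interpolation steps would reproduce essentially the same algebraic work, but the resulting inequality
\[
R_{\mathrm{sym}}\le \kappa(t)\,R_{\mathrm{sym}}^{\theta},\qquad \theta\in(0,1),
\]
is \emph{sublinear}, and this is fatal: it implies only the a priori bound $R_{\mathrm{sym}}\le \kappa(t)^{1/(1-\theta)}$, never $R_{\mathrm{sym}}=0$. (Equivalently, the Osgood condition fails, since $\int_0 s^{-\theta}\,ds<\infty$ for $\theta<1$.) The interpolation parameter you obtain from $\|f\|_{L^m}\le\|f\|_{L^1}^\theta\|f\|_{L^p}^{1-\theta}$ satisfies $\theta=\frac{1/m-1/p}{1-1/p}\le 1/m<1$ for every $p>m$, so no choice of $\varepsilon$ repairs this. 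Consequently the block-by-block Picard iteration cannot start: after the first block you only know $\mu_{t_0}\approx\nu_{t_0}$, not $\mu_{t_0}=\nu_{t_0}$, and you have no mechanism to prevent the errors from accumulating.

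A second, independent issue is the existence claim. You invoke Theorem~\ref{thm_pmainlip}, but its hypotheses (existence of $\FLip$-inducing approximants $(b^N_\lambda,b_\lambda)$ and the exponential moment condition on $\rho_0$) are \emph{not} among the assumptions of Theorem~\ref{thm_uniq_McKVla}, so this is not available here. The paper obtains existence for free from the Banach fixed point theorem applied to the contraction $F$, an argument that does live inside the stated hypotheses. The safest repair to your plan is to abandon the detour through relative entropy and instead estimate $\|dF(\mu)/d\mathbb{W}-dF(\nu)/d\mathbb{W}\|_{L^m}$ directly, which linearizes the inequality and yields both existence and uniqueness at once.
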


\begin{proof}
For any $\mu\in\cP(S)$ with $R(\mu\|\mathbb{W})<\infty$, assumption \eqref{eq_Xmu_exp_int} gives via Girsanov theorem \ref{thm_girs} the existence of a weak solution $X^\mu$ to the SDE
\begin{align}
    dX_t^\mu = b_t( X_t^\mu,\mu_t)\,dt + dW_t,\label{eq_Xmu}
\end{align}
with law $F(\mu):=\mathbb{W}^\mu=\law(X^\mu)$ given by
\begin{align*}
\frac{dF(\mu)}{d\mathbb{W}}(W) = \exp\bigl(V_b(W,\mu)\bigr),
\end{align*}
with $V_b$ as defined in Section~\ref{s_proc_expl}. Note that $F(\mu)$ is the unique law solving \eqref{eq_Xmu} and having finite entropy with respect to $\mathbb{W}$. Indeed, if $\nu$ is the law of another solution $Y^\mu$ to \eqref{eq_Xmu} with $R(\nu\|\mathbb{W})<\infty$, then, by Lemma \ref{lm_entvar},
\begin{align*}
\int_S \int_0^T |b_t(x_t,\mu_t)|^2 dt \,d\nu(x) \le R(\nu\|\mathbb{W}) + \log \E \left[e^{\int_0^T |b_t(W_t,\mu_t)|^2 dt} \right] <\infty.
\end{align*}
Therefore the uniqueness condition \eqref{eq_Girs_constraint} is met under $\nu$ and so $\nu=F(\mu)$. 

Moreover, the density of $F(\mu)$ with respect to $\mathbb{W}$ is in $L^\gamma(S,\mathbb{W})$ for every finite $\gamma>1$: Indeed, following standard computations (similarly to the proof of Theorem \ref{thm_pmainlip}) and using assumption \eqref{eq_Xmu_exp_int}, we have for some $c_\gamma$,
\begin{align}
\E\left[\left|\frac{dF(\mu)}{d\mathbb{W}}(W)\right|^\gamma\right] \le \E\left[e^{c_\gamma\int_0^T |b_t(W_t,\mu_t)|^2 dt}\right] <\infty.\label{eq_Lgamma_Girs}
\end{align}
Finally, note that a process $X$, with $R(\law (X)\|\mathbb{W})<\infty$, is a solution to the McKean--Vlasov SDE if and only if its law is a fixed point for $F$. We will show now that, for every $\eta>0$, for $T>0$ sufficiently small, the map $F$ is a contraction on $M_\eta$, endowed with the $L^m(S,\mathbb{W})$ norm. Existence and uniqueness of the fixed point, hence of the law of the McKean--Vlasov SDE, for a general $T$ follow by a standard iteration argument.

Let $\mu$ and $\nu$ be two measures in $M_\eta$. Using the elementary inequality $|e^a-e^b|\le \frac{1}{2}(e^a+e^b) |a-b|$, we get
\begin{align*}
\left|\frac{dF(\mu)}{d\mathbb{W}}(W)-\frac{dF(\nu)}{d\mathbb{W}}(W)\right| \le \frac12\left(\frac{dF(\mu)}{d\mathbb{W}}(W)+\frac{dF(\nu)}{d\mathbb{W}}(W)\right)|V_b(W,\mu)-V_b(W,\nu)|.
\end{align*}
So by H\"older's inequality with $1/\gamma+1/\tilde{m}=1/m$ (with shorthand $L^\gamma=L^\gamma(S,\mathbb{W})$, $\gamma>1$),
\begin{align*}
\left\|\frac{dF(\mu)}{d\mathbb{W}}-\frac{dF(\nu)}{d\mathbb{W}}\right\|_{L^m} \le \frac12 \left(\left\|\frac{dF(\mu)}{d\mathbb{W}}\right\|_{L^\gamma} +\left\|\frac{dF(\nu)}{d\mathbb{W}}\right\|_{L^\gamma}\right) \|V_b(\cdot,\mu)-V_b(\cdot,\nu)\|_{L^{\tilde{m}}}.
\end{align*}
By \eqref{eq_Lgamma_Girs}, \eqref{eq_Xmu_exp_int} and the fact that $R(\mu\|\mathbb{W})\le \|d\mu/d\mathbb{W}\|_{L^\gamma}^\gamma$ for any $\mu \in \cP(S)$, we have, for some $C_\eta>0$,
\begin{align*}
\left\|\frac{dF(\mu)}{d\mathbb{W}}\right\|_{L^\gamma} +\left\|\frac{dF(\nu)}{d\mathbb{W}}\right\|_{L^\gamma} \le 2\sup_{\rho\in M_\eta}\E\left[e^{c_\gamma\int_0^T |b_t(W_t,\rho_t)|^2 dt}\right] \le C_\eta.
\end{align*}
By definition of $V_b$ (in Subsection \ref{s_proc_expl}), we obtain
\begin{align*}
\|V_b(\cdot,\mu)-V_b(\cdot,\nu)\|_{L^{\tilde{m}}} 
&\le \left\|\int_0^T (b_t(W_t,\mu_t)-b_t(W_t,\nu_t))\cdot dW_t\right\|_{L^{\tilde{m}}} \\
&\hspace{2em}+ \frac12 \Bigl\|\int_0^T \left\|b_t(W_t,\mu_t)|^2-|b_t(W_t,\nu_t)|^2\Bigr| dt\right\|_{L^{\tilde{m}}} = \textrm{(I)} + \textrm{(II)}.
\end{align*}
For the first term, we obtain by the Burkholder--Davis--Gundy inequality
\begin{align*}
    \textrm{(I)} &\le \tilde c_1\left\|\int_0^T |b_t(W_t,\mu_t)-b_t(W_t,\nu_t)|^2 dt\right\|_{L^{\tilde{m}/2}}^{\frac{1}{2}} \le c_1\left\|\int_0^T |b_t(W_t,\mu_t)-b_t(W_t,\nu_t)|^2 dt\right\|_{L^{\tilde{m}}}^{\frac{1}{2}}
\end{align*}
for some constants $\tilde c_1, c_1>0$. As for the second term, we estimate as follows 
\begin{align*}
    \textrm{(II)} &\le \tilde c_2 \left\|\left(\int_0^T\!\! \Bigl(|b_t(W_t,\mu_t)|^2+|b_t(W_t,\nu_t)|^2\Bigr) dt \right)^{\frac{1}{2}} \cdot \left(\int_0^T\!\! |b_t(W_t,\mu_t)-b_t(W_t,\nu_t)|^2 dt\right)^{\frac{1}{2}} \right\|_{L^{\tilde{m}}} \\
    &\le c_2  \left(\left\|\int_0^T\!\! |b_t(W_t,\mu_t)|^2 dt\right\|_{L^{\tilde{m}}}^{\frac{1}{2}}+\left\|\int_0^T\!\! |b_t(W_t,\nu_t)|^2 dt\right\|_{L^{\tilde{m}}}^{\frac{1}{2}}\right) \left\| \int_0^T\!\! |b_t(W_t,\mu_t)-b_t(W_t,\nu_t)|^2 dt\right\|_{L^{\tilde{m}}}^{\frac{1}{2}}
\end{align*}
for some constants $\tilde c_2,c_2>0$. For the term with $\int_0^T |b_t(W_t,\mu_t)|^2 dt$, the inequality $a^{\tilde{m}}\le e^{\tilde{m} a}$ yields
\begin{align*}
\left\|\int_0^T |b_t(W_t,\mu_t)|^2 dt\right\|_{L^{\tilde{m}}}^{\frac{1}{2}} \le \E \left[ e^{\tilde{m}\int_0^T |b_t(W_t,\mu_t)|^2 dt} \right]^{\frac{1}{2\tilde{m}}} \le C_R,
\end{align*}
where we have used again \eqref{eq_Xmu_exp_int} and the fact that $R(\rho\|\mathbb{W})\le \|d\rho/d\mathbb{W}\|_{L^\gamma}^\gamma$. The same argument holds for the term with $\int_0^T |b_t(W_t,\nu_t)|^2 dt$. 

Putting the terms (I) and (II) together, we then obtain
\[
    \|V_b(\cdot,\mu)-V_b(\cdot,\nu)\|_{L^{\tilde{m}}} \le c_3 \left\| \int_0^T\!\! |b_t(W_t,\mu_t)-b_t(W_t,\nu_t)|^2 dt\right\|_{L^{\tilde{m}}}^{\frac{1}{2}}
\]
for a constant $c_3>0$. Using assumption \eqref{eq_Xmu_Gibbs_bd} (with the notation $\boldsymbol{W}=(W^1,\ldots, W^N)$ and $\boldsymbol{W}^{,1}=(W^2,\ldots,W^N)$, where $W^i$ are independent $d$-dimensional Brownian motions with initial law $\rho_0$), we further estimate the right-hand side to obtain
\begin{align*}
&\left\| \int_0^T |b_t(W_t,\mu_t)-b_t(W_t,\nu_t)|^2 dt\right\|_{L^{\tilde{m}}}^{1/2} \\
&\le c_4 \E^{W^1}\left[ \left( \E^{\boldsymbol{W^{,1}}}\left[\int_0^T g_t(\boldsymbol{W}_t) dt \left| \frac{d\mu}{d\mathbb{W}}^{\otimes(k-1)}(\boldsymbol{W^{,1}})-\frac{d\nu}{d\mathbb{W}}^{\otimes(k-1)}(\boldsymbol{W^{,1}}) \right|^2\right] \right)^{\tilde{m}} \right]^{1/(2\tilde{m})}\\
&\le c_5 \E^{W^1}\left[ \left( \E^{\boldsymbol{W^{,1}}}\left[\int_0^T g_t(\boldsymbol{W}_t)\,dt \;\cdot \right. \right.\right.\\
&\hspace{4em} \left.\left. \cdot \sum_{j=2}^k\left| \frac{d\mu}{d\mathbb{W}}(W^j)-\frac{d\nu}{d\mathbb{W}}(W^j) \right|^2 \prod_{2\le \ell\le k,\ell\neq j} \left(\frac{d\mu}{d\mathbb{W}}\vee\frac{d\nu}{d\mathbb{W}}(W^\ell)\right)^2 \right)^{\tilde{m}} \right]^{1/(2\tilde{m})}\\
&\le c_6 \sum_{j=2}^k\E^{W^1}\left[ \left(\E^{\boldsymbol{W^{,1}}} \left[\left(\int_0^T g_t(\boldsymbol{W}_t)\,dt\right)^{(m/2)^\prime}\right] \right)^{\tilde{m}/(m/2)^\prime} \cdot \right.\\
&\quad \quad \left.\cdot \left (\E^{\boldsymbol{W^{,1}}}\left[\left| \frac{d\mu}{d\mathbb{W}}(W^j)-\frac{d\nu}{d\mathbb{W}}(W^j) \right|^m \prod_{2\le \ell\le k,\ell\neq j} \left(\frac{d\mu}{d\mathbb{W}}\vee\frac{d\nu}{d\mathbb{W}}(W^\ell)\right)^m\right] \right)^{2\tilde{m}/m} \right]^{1/(2\tilde{m})}\\
&\le c_7 \left( \E \left[\int_0^T g_t(\boldsymbol{W}_t)\, dt \right]^{\tilde{m}} \right)^{1/(2\tilde{m})} \left\|\frac{d\mu}{d\mathbb{W}}\vee \frac{d\nu}{d\mathbb{W}}\right\|_{L^m}^{k-2} \left\|\frac{d\mu}{d\mathbb{W}} - \frac{d\nu}{d\mathbb{W}} \right\|_{L^m}\\
&\le c_8 \left( \E \left[\int_0^T g_t(\boldsymbol{W}_t)\, dt \right]^{\tilde{m}} \right)^{1/(2\tilde{m})} \eta^{k-2}\left\|\frac{d\mu}{d\mathbb{W}} - \frac{d\nu}{d\mathbb{W}}\right\|_{L^m}
\end{align*}
for appropriate constants $c_i>0$, $i=1,\ldots,8$. We conclude that, for some $C_\eta>0$,
\begin{align*}
\left\|\frac{dF(\mu)}{d\mathbb{W}}-\frac{dF(\nu)}{d\mathbb{W}}\right\|_{L^m} \le C_\eta\left( \E \left(\int_0^T g_t(\boldsymbol{W}_t)\, dt \right)^{\tilde{m}} \right)^{1/(2\tilde{m})} \left\|\frac{d\mu}{d\mathbb{W}} - \frac{d\nu}{d\mathbb{W}} \right\|_{L^m}.
\end{align*}
By assumption \eqref{eq_Gibbs_int_g}, we can find $T>0$ small enough such that
\begin{align*}
C_\eta\left( \E \left[\int_0^T g_t(\boldsymbol{W}_t)\, dt \right]^{\tilde{m}} \right)^{1/(2\tilde{m})} <1.
\end{align*}
Hence, for such $T$, $F$ is a contraction on $M_\eta$, thereby concluding the proof.
\end{proof}

\begin{remark}\label{rmk_extension_uniq_McKVla}
As the proof shows, assumption \eqref{eq_Xmu_Gibbs_bd} may be replaced by the weaker one
\begin{align}\label{eq_Xmu_Gibbs_bd_2}
\begin{aligned}
&\int_0^T |b_t(x_t,\mu_t) -b_t(x_t,\nu_t)|^2 dt \\
&\hspace{1em}\le C_\eta \int_{S^{k-1}} \left(\int_0^T g(t,x_t,y_t)\,dt\right) \sum_{j=2}^k \left|u(y^j)- v(y^j)\right|^2 \prod_{\stackrel{2\le \ell\le k}{\ell\neq j}} (u \vee v)(y^\ell)^2 d\mathbb{W}^{\otimes (k-1)}(y) \\[0.2em]
&\hspace{3em}\text{for }\mathbb{W}\text{-a.e. $x$ and every  $\mu,\nu \in M_\eta$},
\end{aligned}
\end{align}
where $u=d\mu/d\mathbb{W}$ and $v=d\nu/d\mathbb{W}$.
\end{remark}

\begin{cory}\label{cor_poc}
Under the assumptions of Theorems \ref{thm_pmainlip} and \ref{thm_uniq_McKVla} (possibly modified as in Remark~\ref{rmk_extension_uniq_McKVla}), as $N\to\infty$, the family of empirical measures $z^N_{\boldsymbol{X}}$ associated with the particle system \eqref{eq_SDEm} converges almost surely to the (unique) law $\bar{\mu}$ of the McKean--Vlasov SDE \eqref{eq_McKVla}.
\end{cory}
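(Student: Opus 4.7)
The plan is to combine the LDP from Theorem \ref{thm_pmainlip} with the uniqueness result of Theorem \ref{thm_uniq_McKVla} and the standard implication from a unique-minimizer LDP to almost-sure convergence (Lemma \ref{lm:almost-sure}). The three ingredients are already available, so the proof amounts to identifying the zeros of the rate function with solutions to the McKean--Vlasov SDE and invoking uniqueness.

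First, by Theorem \ref{thm_pmainlip}, the family $\{Q^N_{b^N}\}$ of laws of $z^N_{\boldsymbol{X}}$ satisfies an LDP with rate function $\cF$ given in \eqref{eq:ratefunction-diffusions}. Since $\cF(\mu) = R(\mu\|\mathbb{W}^\mu) \ge 0$, the minimum value of $\cF$ is $0$, and $\cF(\mu)=0$ if and only if $R(\mu\|\mathbb{W})<\infty$ and $\mu = \mathbb{W}^\mu$, where $\mathbb{W}^\mu$ is the law of the process $X^\mu$ solving \eqref{eq_SDEc2b}. The fixed-point condition $\mu = \mathbb{W}^\mu$ (together with $R(\mu\|\mathbb{W})<\infty$) is exactly the requirement that $\mu$ be the law of a solution to the McKean--Vlasov SDE \eqref{eq_McKVla} with finite relative entropy with respect to $\mathbb{W}$, as noted in the remark following Theorem \ref{thm_pmainlip}.

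Next, Theorem \ref{thm_uniq_McKVla} (possibly modified via Remark \ref{rmk_extension_uniq_McKVla}) guarantees that there is exactly one such law $\bar{\mu}$. Consequently, $\cF$ admits the unique minimizer $\bar{\mu}$, at which $\cF(\bar{\mu})=0$.

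Finally, applying Lemma \ref{lm:almost-sure} to the LDP for $\{Q^N_{b^N}\}$ with unique minimizer $\bar{\mu}$ yields that $z^N_{\boldsymbol{X}}$ converges $\mathbb{P}$-almost surely to $\bar{\mu}$ as $N\to\infty$, which is the desired conclusion. No step is truly difficult here, since the content lives in the two theorems being combined; the only subtlety worth flagging is to verify that the hypotheses of Theorem \ref{thm_uniq_McKVla} are compatible with the setting where $\cF$ is finite (i.e., on measures with finite entropy with respect to $\mathbb{W}$), which is precisely where the comparison between zeros of $\cF$ and McKean--Vlasov solutions is made.
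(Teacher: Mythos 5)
Your proof is correct and follows the same route as the paper's: identify the zeros of $\cF$ with McKean--Vlasov solutions of finite relative entropy, invoke Theorem \ref{thm_uniq_McKVla} for uniqueness of the minimizer, and conclude via Lemma \ref{lm:almost-sure}. The paper states this more tersely but the content is identical.
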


\begin{proof}
By Theorem \ref{thm_uniq_McKVla}, there exists only one solution $X$ to \eqref{eq_McKVla} with finite entropy with respect to $\mathbb{W}$, which is then the unique zero of the rate function $\cF$ of the LDP for $(z^N_{\boldsymbol{X}})_N$ in Theorem \ref{thm_pmainlip}. Hence Lemma~\ref{lm:almost-sure} applies.
\end{proof}

\begin{remark}\label{rem_poc}
As noted by Sznitman in \cite{Szn1998,sznitman1991}, convergence in law of the empirical measures $z^N$ to the constant variable $\bar{\mu}$ implies that the sequence $\tilde Q^N_{b^N}\in \cP(S^N)$ is $\bar{\mu}$-chaotic, in the sense that for every $k\in \mathbb{N}$, 
\begin{equation*}
    \law \left( X^{N,1},\dots,X^{N,k} \right) \to  \bar{\mu}^{\otimes k},
\end{equation*}
weakly as $N\to \infty$ on $S=C([0,T];\R^d)$. In particular, we have a form of propagation of chaos, namely that for all $k\in \mathbb{N}$, 
\begin{equation*}
    \law \left( X_t^{N,1},\dots,X_t^{N,k} \right) \to \bar{\mu}_t^{\otimes k}, \qquad \forall t\in [0,T].
\end{equation*}
\end{remark}
\medskip
Now we come back to the examples in Subsections \ref{s_ex1}, \ref{s_ex2}, \ref{s_ex3}, recalling the drift for the corresponding McKean--Vlasov SDEs, namely:
\begin{itemize}
\item for the example in Subsection \ref{s_ex1},
\begin{align*}
b_t(x,\mu)=\int_{\R^d}\varphi_t(x,y)\,d\mu_t(y);
\end{align*}
\item for the example in Subsection \ref{s_ex2},
\begin{align*}
b_t(x,\mu)=\int_{\R^{(k-1)d}}\varphi_t(x,y_2,\ldots y_k)\,d\mu_t(y_2,\ldots y_k);
\end{align*}
\item for the example in Subsection \ref{s_ex3},
\begin{align*}
b_t(x,\mu)=\int_{\R^d}\Psi \left(x,y, \int_{\R^d}\varphi_t(x,z)\,d\mu_t(z),\int_{\R^d}\varphi_t(y,z)\,d\mu_t(z),\mu \right) d\mu_t(y).
\end{align*}
\end{itemize}

\begin{prop}\label{cor_poc_ex}
For the examples in Subsections \ref{s_ex1}, \ref{s_ex2}, \ref{s_ex3}, under the assumptions of Propositions \ref{lm_plpq}, \ref{lm_plp12q} and \ref{thm_pmark1} respectively, the corresponding McKean--Vlasov SDEs admit a unique solution $X$ with $R(\law( X)\|\mathbb{W})<\infty$, and the family of empirical measures $z^N_{\boldsymbol{X}}$ converges almost surely to the law of $X$.
\end{prop}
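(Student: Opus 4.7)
The plan is to invoke Corollary \ref{cor_poc} for each of the three examples. Since the LDPs are already supplied by Propositions \ref{lm_plpq}, \ref{lm_plp12q} and \ref{thm_pmark1}, the remaining task reduces to verifying the hypotheses of Theorem \ref{thm_uniq_McKVla} (or its relaxed variant \eqref{eq_Xmu_Gibbs_bd_2} from Remark \ref{rmk_extension_uniq_McKVla}): a uniform exponential moment \eqref{eq_Xmu_exp_int} over finite-entropy $\mu$, a Gibbs-like difference bound over $\mu,\nu\in M_\eta$, and the $\tilde m$-th moment condition \eqref{eq_Gibbs_int_g}. In every case I will pick any $m>2$, any $\tilde m > \max\{m,(m/2)'\}$, and take $g_t$ to be (a bounded perturbation of) $|\varphi_t|^2$.

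For Examples \ref{s_ex1} and \ref{s_ex2} the drift is tensor-linear in the measure, $b_t(x,\mu_t)=\int \varphi_t(x,y_1,\ldots,y_{k-1})\,d\mu_t^{\otimes (k-1)}(y)$, and both bounds reduce to elementary inequalities. I would obtain \eqref{eq_Xmu_exp_int} from the pointwise estimate $|b_t(W_t,\mu_t)|^2\le \int |\varphi_t(W_t,\cdot)|^2\,d\mu_t^{\otimes(k-1)}$ (Jensen against a probability measure), followed by a second Jensen to pull the exponential inside the $\mu^{\otimes(k-1)}$-expectation, and Fubini; the finite-entropy hypothesis then enters via an entropy/Hölder inequality that is compatible with the arbitrarily large exponential moments of $\int|\varphi|^2$ guaranteed by \eqref{eq_expphi}. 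For the Gibbs-like bound I would expand $\mu_t^{\otimes(k-1)}-\nu_t^{\otimes(k-1)}$ by the standard telescoping identity, apply the pointwise Cauchy--Schwarz bound $|\int f\,(u-v)\,d\mathbb{W}|^2\le \int |f|^2\,(u-v)^2\,d\mathbb{W}$ to each of the $k-1$ differences, and use Jensen on the conditional expectation $d\mu_t/d\mathbb{W}_t=\E[\,u\mid Y_t\,]$ to upgrade time-$t$ marginal bounds to path-space bounds. Condition \eqref{eq_Gibbs_int_g} is then immediate from $a^{\tilde m}\le \tilde m!\,e^a$ combined with \eqref{eq_expphi}.

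Example \ref{s_ex3} is the real work. Using the Lipschitz continuity and the growth condition \eqref{eq_MD_Lgrowth2} of $\Psi$, the difference $|b_t(x,\mu_t)-b_t(x,\nu_t)|^2$ splits into four contributions: an outer $W_1$-type piece from integrating against $\mu_t-\nu_t$; two inner $\varphi$-pieces treated verbatim as in Example \ref{s_ex1}; and a $d_{\mathrm{BL}}(\mu_t,\nu_t)^2$-piece coming from the last slot of $\Psi$. For the two latter pieces I would use $d_{\mathrm{BL}}\le W_1$ and bound $W_1(\mu_t,\nu_t)^2$ by a product of an $L^2(\mathbb{W}_t)$-type norm of $u_t-v_t$ and an exponential-moment factor involving $|y|$, the latter being finite by \eqref{eq_IC_exp_int}; pushed to path space, this yields a Gibbs-like integrand of the form $C(|\varphi_t(x,y)|^2+1+|y|^2)$, whose required exponential moments follow from \eqref{eq_expphi2} and \eqref{eq_IC_exp_int}. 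The main technical obstacle I expect is precisely this step: the $W_1$ and $d_{\mathrm{BL}}$ differences do not naturally produce the product-measure integrand prescribed by \eqref{eq_Xmu_Gibbs_bd}, so one must instead use the relaxed form \eqref{eq_Xmu_Gibbs_bd_2}, in which the ``bad'' slot carries the pointwise difference $|u(y)-v(y)|^2$ while the remaining slots only see $(u\vee v)(y)^2$ — the latter controlled by the $L^m$-bound defining $M_\eta$, provided $m$ is chosen sufficiently large relative to the number of factors. Once these three hypotheses are verified, Theorem \ref{thm_uniq_McKVla} yields uniqueness of the McKean--Vlasov solution with finite relative entropy, and Corollary \ref{cor_poc} then upgrades the LDP to the claimed almost-sure convergence of $z^N_{\boldsymbol{X}}$ to $\law(X)$.
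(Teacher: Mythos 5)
Your overall strategy is correct and matches the paper: invoke Corollary~\ref{cor_poc}, obtain the LDPs from Propositions~\ref{lm_plpq}, \ref{lm_plp12q}, \ref{thm_pmark1}, reuse \eqref{eq:finite_lambda} for \eqref{eq_Xmu_exp_int}, pick $g$ as (a perturbation of) $|\varphi_t|^2$, and verify \eqref{eq_Xmu_Gibbs_bd_2} and \eqref{eq_Gibbs_int_g}. For Subsections~\ref{s_ex1} and \ref{s_ex2} your telescoping/Cauchy--Schwarz sketch is essentially what the paper leaves implicit. You also correctly identify that the general example~\ref{s_ex3} requires the relaxed form \eqref{eq_Xmu_Gibbs_bd_2}, and your four-piece decomposition is the same as the paper's terms $\mathrm{(I)}$--$\mathrm{(IV)}$.

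Where you and the paper diverge is in how pieces $\mathrm{(I)}$ and $\mathrm{(IV)}$ are treated, and here your plan has a genuine gap. For the outer piece $\mathrm{(I)}$ you describe it as a ``$W_1$-type piece'' and propose handling it (along with $\mathrm{(IV)}$) via $d_{BL}\le W_1$ and Kantorovich duality. But Kantorovich duality would require the map $y\mapsto \Psi_t(x,y,\mu,\mu,\mu)$ to be Lipschitz in $y$, and this fails in general: the second slot of $\Psi$ carries $\int\varphi_t(y,z)\,d\mu_t(z)$, and $\varphi$ is only in an $L^q(L^p)$ class, not Lipschitz, so the $y$-dependence is not Lipschitz. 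The paper instead uses the linear-growth bound \eqref{eq_MD_Lgrowth2} to estimate $|\Psi_t(x,y,\mu,\mu,\mu)|\le L(1+|\int\varphi_t(x,\cdot)\,d\mu_t|+|\int\varphi_t(y,\cdot)\,d\mu_t|)$, writes $d(\mu-\nu)(y)=(u(y)-v(y))\,d\mathbb{W}(y)$, and applies Cauchy--Schwarz; the resulting integrand is exactly $C(1+|\varphi_t(x,z)|^2+|\varphi_t(y,z)|^2)$ with artificial $u(z)$ factors inserted to match the Gibbs-like form. A $W_1$ bound is not available and not needed here.

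For piece $\mathrm{(IV)}$ your route via $d_{BL}\le W_1$ can be made to work --- Kantorovich duality plus Jensen for conditional expectations on $\mathbb{W}_t$-marginals turns the bound into an $L^2(\mathbb{W})$ estimate on $u-v$, at the cost of a weight $|y_t|^2$ whose $\tilde{m}$-th moment is finite --- but the paper does something cleaner: it bounds $d_{BL}(\mu_t,\nu_t)\le d_{TV}(\mu_t,\nu_t)\le d_{TV}(\mu,\nu)=\|u-v\|_{L^1(\mathbb{W})}$, since total variation is monotone under pushforward, and then inserts $u(z)$ to square. This avoids any reference to the initial-law moment condition \eqref{eq_IC_exp_int} in this step and eliminates the $|y|^2$ term from $g$. (Incidentally, \eqref{eq_Gibbs_int_g} is a polynomial moment condition, not an exponential one; all moments of $\int_0^T|\varphi_t(W^1_t,W^2_t)|^2\,dt$ follow at once from the exponential hypothesis \eqref{eq_expphi2}.) If you replace your $W_1$ arguments with the growth bound for $\mathrm{(I)}$ and the TV bound for $\mathrm{(IV)}$, the proof closes as in the paper.
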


 \begin{proof}
 We have seen in the previous section that the examples above satisfy the assumptions of Theorem \ref{thm_pmainlip}, so it is enough to verify the assumptions of Theorem \ref{thm_uniq_McKVla}, possibly modified as in Remark \ref{rmk_extension_uniq_McKVla}. Assumption \eqref{eq_Xmu_exp_int} is a consequence of \eqref{eq:finite_lambda} in the proof of Theorem \ref{thm_pmainlip}. Assumptions \eqref{eq_Xmu_Gibbs_bd} and \eqref{eq_Gibbs_int_g} are satisfied (with any given $m$ and $\tilde{m}$ as in Theorem \ref{thm_uniq_McKVla}):
 \begin{itemize}
 \item for the example in Subsection \ref{s_ex1}, taking $g(t,x,y)= |\varphi(t,x,y)|^2$;
 \item for the example in Subsection \ref{s_ex2}, taking $g(t,x_1,\ldots x_k)= |\varphi(t,x_1,\ldots x_k)|^2$.
 \end{itemize}
 We will not show the proof of this fact, which is similar to, and easier than, the next proof in the example in Subsection \ref{s_ex3}.

For the example in Subsection \ref{s_ex3}, we will show that assumptions \eqref{eq_Xmu_Gibbs_bd_2} and \eqref{eq_Gibbs_int_g} are satisfied, taking
 \begin{align*}
 g(t,x,y,z) = \left(\Lip(\Psi)^2+L^2\right)\left(1+|\varphi_t(x,z)|^2+|\varphi_t(y,z)|^2\right),
 \end{align*}
where $L$ is the linear growth constant for $\Psi$ as in \eqref{eq_MD_Lgrowth2}. We start showing \eqref{eq_Xmu_Gibbs_bd_2}. Denoting
 \[
 \Psi_t(x,y,\mu^1,\mu^2,\mu^3) = \Psi \left(x_t,y_t, \int_{S}\varphi_t(x_t,z_t)d\mu^1(z),\int_{S}\varphi_t(y_t,z_t)d\mu^2(z),\mu^3 \right),
 \]
 $u=d\mu/d\mathbb{W}$ and $v=d\nu/d\mathbb{W}$, we have the following estimates (with a generic constant $C>0$)
 \begin{align*}
 \int_0^T |b_t(x_t,\mu_t) -b_t(x_t,\nu_t)|^2 dt &= \int_0^T \left| \int_{S} \Psi_t(x,y,\mu,\mu,\mu)\, d\mu(y) - \int_{S} \Psi_t(x,y,\nu,\nu,\nu) d\nu(y) \right|^2 dt\\
 &\le C \int_0^T \left| \int_{S} \Psi_t (x,y,\mu,\mu,\mu) d(\mu-\nu)(y)\right|^2 dt\\
 &\hspace{1em} +C \int_0^T \left| \int_{S} [\Psi_t (x,y,\mu,\mu,\mu) -\Psi_t(x,y,\nu,\mu,\mu)] d\nu(y)\right|^2 dt\\
 &\hspace{1em} +C \int_0^T \left| \int_{S} [\Psi_t (x,y,\nu,\mu,\mu) -\Psi_t(x,y,\nu,\nu,\mu)] d\nu(y)\right|^2 dt \\
 &\hspace{1em} +C \int_0^T \left| \int_{S} [\Psi_t (x,y,\nu,\nu,\mu) -\Psi_t(x,y,\nu,\nu,\nu)] d\nu(y)\right|^2 dt\\
 &=:\mathrm{(I)}+\mathrm{(II)}+\mathrm{(III)}+\mathrm{(IV)}.
 \end{align*}
 For the term $\mathrm{(I)}$, we have
 \begin{equation*}
     \begin{aligned}
   \mathrm{(I)}&=C \int_0^T \left| \int_{S} \Psi_t (x,y,\mu,\mu,\mu) d(\mu-\nu)(y)\right|^2 dt \\ 
   &\leq  CL^2 \int_0^T \left| \int_{S} \left(1+\left|\int_{S}\varphi_t(x_t,z_t)d\mu(z)\right|+ \left|\int_{S}\varphi_t(y_t,z_t)d\mu(z)\right|\right) |u(y)-v(y)| d\mathbb{W}(y)\right|^2   dt\\
   &\leq  CL^2 \int_0^T \left| \int_{S}\int_{S} \left(1+|\varphi_t(x_t,z_t)|+ |\varphi_t(y_t,z_t)|\right) |u(y)-v(y)| d\mathbb{W}(y) d\mu(z) \right|^2   dt\\
   &=  CL^2 \int_0^T \left| \int_{S^2} \left(1+|\varphi_t(x_t,z_t)|+ |\varphi_t(y_t,z_t)|\right) |u(y)-v(y)| u(z) d\mathbb{W}^{\otimes 2}(y,z) \right|^2   dt\\
   &\leq 3 C \int_{S^2} \left(\int_0^T L^2 (1+|\varphi(t,x_t,z_t)|^2+|\varphi(t,y_t,z_t)|^2) dt\right) |u(y)-v(y)|^2 u(z)^2 d\mathbb{W}^{\otimes 2}(y,z).
     \end{aligned}
 \end{equation*}
 For the term $\mathrm{(II)}$, we have
 \begin{align*}
 \mathrm{(II)} &\le C \int_0^T \int_{S} \Lip(\Psi)^2 \left| \int_{S} \varphi(t,x_t,z_t) d(\mu-\nu)(z) \right|^2 d\nu(y) dt\\
 &= C \int_0^T \Lip(\Psi)^2 \left|\int_{S^2} \varphi(t,x_t,z_t) (u(z)-v(z))u(y) d\mathbb{W}^{\otimes 2}(y,z) \right|^2 dt\\
 &\le C \int_{S^2} \left(\int_0^T \Lip(\Psi)^2 |\varphi(t,x_t,z_t)|^2 dt\right) |u(z)-v(z)|^2 u(y)^2 d\mathbb{W}^{\otimes 2}(y,z),
 \end{align*}
 where $u(y)$ has been added artificially to satisfy \eqref{eq_Xmu_Gibbs_bd_2} with $k=3$. As for $\mathrm{(III)}$,
 \begin{align*}
 \mathrm{(III)} &\le C \int_0^T \int_{S} \Lip(\Psi)^2 \left| \int_{S} \varphi(t,y_t,z_t) d(\mu-\nu)(z) \right|^2 d\nu(y) dt\\
 &= C \int_0^T \int_{S} \Lip(\Psi)^2 \left|\int_{S} \varphi(t,x_t,z_t) (u(z)-v(z)) d\mathbb{W}(z) \right|^2 v(y) d\mathbb{W}(y) dt\\
 &\le C \int_{S^2} \left(\int_0^T \Lip(\Psi)^2 |\varphi(t,y_t,z_t)|^2 dt\right) |u(z)-v(z)|^2 v(y)^2 d\mathbb{W}^{\otimes 2}(y,z).
 \end{align*}

 In view of the term $\mathrm{(IV)}$, we recall that, for any $t\in [0,T]$,
 \begin{align*}
 d_{BL}(\mu_t,\nu_t) \le d_{BL}(\mu,\nu),
 \end{align*}
 which follows from the fact that the map $e_t:S\to \R^d$, $e_t(x):=x_t$, is $1$-Lipschitz. Hence,
 \begin{align*}
     \mathrm{(IV)}&=C \int_0^T \left| \int_{S} [\Psi_t (x,y,\nu,\nu,\mu) -\Psi_t(x,y,\nu,\nu,\nu)] d\nu(y)\right|^2 dt\\
     &\leq C \int_0^T \Lip(\Psi)^2 \, d_{BL}(\mu_t,\nu_t)^2  dt \\
     &\leq CT \, \Lip(\Psi)^2 \, d_{BL}(\mu,\nu)^2.
 \end{align*}
We also recall that the bounded Lipschitz metric $d_{BL}$ is bounded by the total variation distance $d_{TV}$. Therefore we derive, adding again an artificial term $u(z)$,
 \begin{align*}
     \mathrm{(IV)}&\leq C T \,\Lip(\Psi)^2 d_{TV}(\mu,\nu)^2  = C T \,\Lip(\Psi)^2 \|u-v\|^2_{L^1} \\
     &=C \left(\int_0^T \,\Lip(\Psi)^2\, d t\right) \left(\int_{S^2}  |u(y)-v(y)| u(z) d\mathbb{W}^{\otimes 2}(y,z)\right)^2\\
     &\leq C \int_{S^2} \left(\int_0^T \,\Lip(\Psi)^2\, d t\right) |u(y)-v(y)|^2 u(z)^2 d\mathbb{W}^{\otimes 2}(y,z).
 \end{align*}
Putting together $\mathrm{(I)}$, $\mathrm{(II)}$, $\mathrm{(III)}$, $\mathrm{(IV)}$, we get \eqref{eq_Xmu_Gibbs_bd_2}.

 Finally, to complete the proof, we verify assumption \eqref{eq_Gibbs_int_g} for $g$ (with any fixed $m$ and $\tilde{m}$ as in Theorem \ref{thm_uniq_McKVla}). Note that it is enough to prove 
 \begin{equation*}
     \E \left[\left( \int_0^T |\varphi_t(W^1_t,W^2_t)|^2\, dt \right)^{\tilde{m}}\right] <\infty.
 \end{equation*}
 But this easily follows from the assumption \eqref{eq_expphi2} on $\varphi$,
  \begin{equation*}
       \E \left[ e^{\beta \int_{0}^T |\varphi_t|^2(W_t^{1},W_t^{2})\,d t} \right] < \infty,  \qquad \forall \beta\in \R,
 \end{equation*}
 which implies the finiteness of all moments of the variable $\int_0^T |\varphi_t(W^1_t,W^2_t)|^2 d t$. 
 \end{proof}

\paragraph{\bf\em Background}
The convergence of the particle system to the McKean--Vlasov SDE, in the sense of Corollary \ref{cor_poc}, is classical in the case of Lipschitz bounded drift, see e.g. \cite{sznitman1991}. The case of non-Lipschitz drift has also been treated in various works and we mention only some of them. In the context of the example in Subsection \ref{s_ex1}, the paper \cite{Jabin2018} proves the convergence, with quantitative bounds, for $\varphi$ in $W^{-1,\infty}$ (which includes our example) such that $\text{div}(\varphi)$ is in $W^{-1,\infty}$. The work \cite{Godinho2015} covers the case $\varphi=-\nabla \Phi$ with $\Phi(x)=|x|^\alpha$ with $\alpha$ in $(0,1)$, which is a relevant example of Subsection \ref{s_ex1} for $d\ge 2$ (see Remark \ref{rem_psubz}). Both in \cite{Jabin2018} and \cite{Godinho2015} the initial conditions are assumed to be diffuse in a suitable sense. As examples of convergence in critical cases, that are not covered by our results, we recall \cite{Fournier2015}, for $\varphi=-\nabla \Phi$ with $\Phi(x)=\log|x|$, and \cite{Fournier2014}, for the 2D Navier-Stokes equations and the associated vortex system, that is $\varphi(x)=x^{\perp}/|x|^2$. Outside the context of Subsection \ref{s_ex1}, we already mentioned \cite{Lac2018}, proving convergence in the $\tau$ topology when $b$ is bounded and satisfies a suitable continuity assumption in the measure argument. The paper \cite{Jab2019} proves convergence for a general measure-dependent drift, assuming a quite weak condition but depending on the solution to the McKean--Vlasov itself; the result is then applied to the case of bounded drifts. However we are not aware of a convergence result that covers our Corollary \ref{cor_poc} and Proposition \ref{cor_poc_ex}.

Concerning (weak or strong) uniqueness for McKean--Vlasov SDEs with irregular drifts, we mention \cite{MehSta2019,HamSisSzp2018,Cha2020,RocZha2018,MisVer2016,FlaPriZan2019}. It is also worth mentioning \cite{Del2019} on a regularization by noise phenomenon, via an infinite-dimensional noise, for a related mean field game problem.

\newpage
\appendix

\numberwithin{equation}{section}
\section{Limits of convex functions}\label{s_convex}
Let $V$ be a vector space. We consider a sequence of convex functions $\phi_n:V\to \REx$ (with domains $D_n$), and two convex functions $\phi_L$ and $\phi_U$ with $\phi_L\leq \phi_U$ on $V$ (with respective domains $D_L$, $D_U$), which will act as asymptotic lower and upper bounds for $\phi_n$ in a sense specified below. 

Moreover, we consider pairs of sequences and points $(\{y_n\},y) \in V^{\mathbb{N}} \times V$, which for simplicity will be referred to as the pairs $(y_n,y)$, and denote $(y_n,y)\in D$ for the statement $y\in D_U$ and $y_n\in D_n$ for every $n$. We then have the following approximation result. 

\begin{thm}\label{thm_conv1}
	Let $\gamma>1$, and let the sequence of pairs $(y_{\lambda,n},y_{\lambda})$ be such that for every $\lambda>0$,
	\begin{subequations}
		\begin{align}
		\limsup_{n \to \infty} \phi_n(\gamma y_{\lambda,n})&<\infty,\label{eq_cty1a}\\
		\phi_U(\gamma y_{\lambda})&<\infty,\label{eq_cty1b}
		\end{align}
	\end{subequations}
	\begin{equation}
	\begin{aligned}\label{eq_cty2}
	\phi_L(y_{\lambda})&\leq \liminf_{n\to \infty} \phi_n\big(y_{\lambda,n}\big) \leq \limsup_{n\to \infty} \phi_n\big(y_{\lambda,n}\big)\leq\phi_U(y_{\lambda}).
	\end{aligned}
	\end{equation}
	Moreover, let the couple $(x_n,x)$ be such that there exists a $K\in\R$ such that for all $\beta \in \R$,  
	\begin{subequations}\label{eq_cthc1}
		\begin{align}
		\limsup_{\lambda \to 0}\limsup_{n\to \infty} \phi_n \big(\beta(y_{\lambda,n}-x_n)\big)\leq K,\label{eq_tconvr1}\\
		\limsup_{\lambda \to 0} \phi_U\big(\beta (y_{\lambda}-x)\big)\leq K.\label{eq_tconvr2}
		\end{align}
	\end{subequations}
	Then for any $0<\gamma'<\gamma$,
	\begin{subequations}
	\begin{align}
	\limsup_{n \to \infty} \phi_n(\gamma' x_{n})&<\infty, \label{eq_ctx1a}  \\
	\phi_U(\gamma' x)&<\infty,\label{eq_ctx1b}
	\end{align}
	\end{subequations}
	and that
	\begin{align}\label{eq_ctx1}
	\phi_L(x) \leq \liminf_{n\to \infty} \phi_n\big(x_{n}\big) \leq \limsup_{n\to \infty} \phi_n\big(x_{n}\big)\leq\phi_U(x).
	\end{align}
\end{thm}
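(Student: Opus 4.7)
The plan is to exploit the convexity of $\phi_n$, $\phi_L$, $\phi_U$ through affine decompositions in $V$, and to pass to the limit in an auxiliary scaling parameter via Lemma \ref{lm_convex1}. I first observe that evaluating \eqref{eq_cthc1} at $\beta=0$ gives the ancillary bounds $\limsup_{n\to\infty}\phi_n(0)\le K$ and $\phi_U(0)\le K$, which will be used throughout. For the finiteness assertions \eqref{eq_ctx1a}--\eqref{eq_ctx1b}, I use that for $\gamma'\in(0,\gamma)$ the identity
\[
\gamma' u = \tfrac{\gamma'}{\gamma}(\gamma w) + \bigl(1-\tfrac{\gamma'}{\gamma}\bigr)\cdot\tfrac{\gamma'\gamma}{\gamma-\gamma'}(u-w)
\]
is an honest convex combination. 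Applied with $(u,w)=(x_n,y_{\lambda,n})$ and $(u,w)=(x,y_\lambda)$, convexity of $\phi_n$ (resp.\ $\phi_U$) together with \eqref{eq_cty1a}--\eqref{eq_cty1b} and \eqref{eq_cthc1} (with $\beta=\gamma'\gamma/(\gamma-\gamma')$) yield the stated finiteness.

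The crucial intermediate step is the estimate $\limsup_\lambda \phi_U(y_\lambda)\le \phi_U(x)$. Writing, for $\alpha\in(0,1)$, $y_\lambda = \alpha x + (1-\alpha) z_\lambda$ with $z_\lambda := x + (1-\alpha)^{-1}(y_\lambda - x)$, convexity gives $\phi_U(y_\lambda) \le \alpha \phi_U(x) + (1-\alpha)\phi_U(z_\lambda)$, and the entire difficulty is to control $\phi_U(z_\lambda)$. The naive expression $y_\lambda = \tfrac{1}{\gamma}(\gamma y_\lambda) + (1-\tfrac{1}{\gamma})\cdot 0$ is useless because $\phi_U(\gamma y_\lambda)$ may diverge along $\lambda\to 0$. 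Instead, for $\gamma'\in(1,\gamma)$ and $\beta$ large, $z_\lambda$ admits the three-point convex decomposition
\[
z_\lambda = \tfrac{1}{\gamma'}(\gamma' x) + \tfrac{1}{(1-\alpha)\beta}\bigl(\beta(y_\lambda-x)\bigr) + c\cdot 0, \qquad c := 1 - \tfrac{1}{\gamma'} - \tfrac{1}{(1-\alpha)\beta}\ge 0.
\]
Convexity of $\phi_U$ together with $\phi_U(\gamma' x)<\infty$ from \eqref{eq_ctx1b}, \eqref{eq_cthc1}, and $\phi_U(0)\le K$ gives a $\limsup_\lambda$-bound on $\phi_U(z_\lambda)$ that tends to $\phi_U(x)$ as $\gamma'\downarrow 1$ (using continuity of the convex map $t\mapsto \phi_U(tx)$ on the interval $(0,\gamma)$, where it is finite) and $\beta\uparrow\infty$.

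For the sandwich \eqref{eq_ctx1}, fix $\alpha\in(0,1)$ and use $\alpha x_n = \alpha y_{\lambda,n} + (1-\alpha)\tfrac{\alpha}{1-\alpha}(x_n - y_{\lambda,n})$; convexity of $\phi_n$, $\limsup_n$ via \eqref{eq_cty2}, then an appropriate $\lambda\to 0$ selection via \eqref{eq_cthc1} and the intermediate step, give
\[
g(\alpha) := \limsup_{n\to\infty}\phi_n(\alpha x_n) \le \alpha \phi_U(x) + (1-\alpha)K, \qquad \alpha\in(0,1).
\]
The function $g$ is convex on $[0,\gamma)$ (pointwise $\limsup$ over $n$ of the convex maps $\alpha\mapsto\phi_n(\alpha x_n)$, with the intervening suprema $\sup_{n\ge N}$ monotone in $N$) and, by the finiteness already established together with $g(0)\le K$, finite on $[0,\gamma)$; Lemma \ref{lm_convex1} then provides continuity of $g$ at $\alpha=1\in(0,\gamma)$, whence $g(1)\le \phi_U(x)$. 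The lower bound $\phi_L(x)\le \liminf_n\phi_n(x_n)$ is obtained symmetrically by rearranging the same convexity to $\alpha\phi_n(x_n)\ge \phi_n(y_{\lambda,n})-(1-\alpha)\phi_n(w_n)$, estimating $\phi_n(w_n)$ via the same three-point technique (now using \eqref{eq_ctx1a} to bound $\phi_n(\gamma' x_n)$), and establishing $\liminf_\lambda\phi_L(y_\lambda)\ge \phi_L(x)$ from the analogous decomposition $x=\alpha y_\lambda + (1-\alpha)v$ applied to $\phi_L$, with $\phi_L\le\phi_U$ dispatching the side term.

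The main obstacle is precisely that $\phi_U(\gamma y_\lambda)$, and likewise $\phi_n(\gamma y_{\lambda,n})$, can blow up along $\lambda\to 0$ even though \eqref{eq_cthc1} forces $y_\lambda$ to be exponentially close to $x$; thus any two-point decomposition routing through $\gamma y_\lambda$ fails to transport the quantitative bound into $\phi_U(x)$. The three-point reroute through $\gamma' x$ with $\gamma'$ strictly interior to $(1,\gamma)$---where the finiteness is already secured---is what breaks this deadlock, and Lemma \ref{lm_convex1} supplies the final passage $\alpha\to 1^-$.
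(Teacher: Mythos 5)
Your proof is correct. You share the paper's essential tools — convex decompositions combined with Lemma~\ref{lm_convex1} for passing a scaling parameter to $1$, and an identical finiteness argument — but organize them differently. The paper routes through Lemma~\ref{lm_convex2} (commutation of the $\lambda$- and $n$-limits, obtained from the \emph{two-point} decomposition $y_{\lambda,n}=(1-\alpha)[(1-\alpha)^{-1}(y_{\lambda,n}-x_n)]+\alpha[\alpha^{-1}x_n]$ with a \emph{moving} scale $\alpha^{-1}$) and Corollary~\ref{cor_convex1} (its static version, giving the equality $\lim_{\lambda\to 0}\phi_U(y_\lambda)=\phi_U(x)$); the theorem is then a chain of these three limits against \eqref{eq_cty2}. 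You instead derive only the one-sided estimates $\limsup_\lambda\phi_U(y_\lambda)\le\phi_U(x)$ and $\liminf_\lambda\phi_L(y_\lambda)\ge\phi_L(x)$ that the sandwich actually requires, using a \emph{three-point} decomposition of $z_\lambda=x+(1-\alpha)^{-1}(y_\lambda-x)$ through a \emph{fixed} scale $\gamma'x$, the rescaled difference $\beta(y_\lambda-x)$, and the origin, then taking $\beta\uparrow\infty$ followed by $\gamma'\downarrow 1$, and you close by applying Lemma~\ref{lm_convex1} to the convex map $g(\alpha)=\limsup_n\phi_n(\alpha x_n)$. The paper's route is more modular and yields the stronger limit equalities (Lemma~\ref{lm_convex2} is reused elsewhere); yours is self-contained and proves exactly what the theorem needs, at the price of the bulkier composite (effectively four-point) decomposition.
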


\begin{remark}\label{rem_convex1}
Some comments on these assumptions:
\begin{enumerate}[label=(\roman*)]
\item Note that the convex functions $\phi_L,\phi_N,\phi$ are all allowed to be \emph{improper}, i.e.\ allowed to be equal to $-\infty$ on their domain, or to have empty domain.
\item The constant $K$ in \eqref{eq_cthc1} is independent of $\beta$, which is crucial in proving the convergence. As an example, note that when $\phi_U$ is even with $\phi_U(0)=0$ the assumptions imply that \eqref{eq_ctx1b} holds for $K=0$ as well. 
\end{enumerate}
\end{remark}

First, we will establish some technical properties for limits of convex functions: a generalization of the classical statement on continuity of convex function on the interior of their domains to certain pointwise limits of convex functions, and a result that shows how under \eqref{eq_tconvr1} the limits in $n,\lambda$ of $\phi_n(x),\phi_n(y_{\lambda,n})$ in effect `commute'.  

\begin{lm}\label{lm_convex1}~
Let $g_n:\R\to \REx$ be a sequence of convex functions such that for some $a,b\in \R$
	\[\begin{aligned}
	\limsup_{n\to\infty} g_n(a)<\infty,\qquad \limsup_{n\to\infty} g_n(b)<\infty.
	\end{aligned}\]
	Then the functions
	\begin{equation*}
	\begin{aligned}
	\bar g(z) :=\limsup_{n\to \infty}g_n(z),\qquad
	g(z) :=\liminf_{n\to \infty}g_n(z)
	\end{aligned}
	\end{equation*}
	are both either equal to $-\infty$ on $(a,b)$, or finite and continuous on $(a,b)$.
\end{lm}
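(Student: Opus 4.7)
The plan is to handle $\bar g = \limsup_n g_n$ and $g = \liminf_n g_n$ separately, since $\bar g$ inherits convexity from the $g_n$ (the limsup of convex functions is convex) but $g$ does not. A preliminary convexity bound used throughout: for each $n$ and $z \in [a,b]$, $g_n(z) \le \max(g_n(a), g_n(b))$, so both $\bar g$ and $g$ are bounded above by $M := \max(\limsup_n g_n(a), \limsup_n g_n(b)) < \infty$ on $[a,b]$.

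For $\bar g$, convexity plus the upper bound gives the classical dichotomy: if $\bar g(x_0) = -\infty$ at some $x_0 \in (a,b)$, then for any other $y \in (a,b)$ pick $y' \in (a,b)$ on the opposite side of $y$ from $x_0$ and write $y = \alpha x_0 + (1-\alpha) y'$ with $\alpha \in (0,1)$; convexity and $\bar g(y') \le M < \infty$ force $\bar g(y) = -\infty$, so $\bar g \equiv -\infty$ on $(a,b)$. Otherwise $\bar g$ is finite convex on an open interval, hence continuous.

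For $g$, I would first prove the same dichotomy by slope monotonicity. If $g(z_0) > -\infty$ but $g(z) = -\infty$ for some other $z \in (a,b)$ (say $z > z_0$), pass to a subsequence with $g_{n_j}(z) \to -\infty$, $g_{n_j}(z_0) \to L_0 \in \R$ (finite, by sandwiching in $[g(z_0), \bar g(z_0)]$), and $g_{n_j}(a)$ converging in $[-\infty, \limsup_n g_n(a)]$. The slope inequality $(g_{n_j}(z_0) - g_{n_j}(a))/(z_0 - a) \le (g_{n_j}(z) - g_{n_j}(z_0))/(z - z_0)$ then has right-hand side tending to $-\infty$ and left-hand side finite (if $\lim g_{n_j}(a) \in \R$) or $+\infty$ (if $\lim g_{n_j}(a) = -\infty$), a contradiction. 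Hence $g$ is either $\equiv -\infty$ or everywhere $>-\infty$, and $g \le \bar g < \infty$ gives $g$ finite in the latter case. Lower semi-continuity at $z \in (a,b)$ follows directly from the convexity estimate $g_n(y) \ge g_n(z)(b-y)/(b-z) - g_n(b)(z-y)/(b-z)$ for $y<z$: taking $\liminf$ via $\liminf(u_n+v_n) \ge \liminf u_n + \liminf v_n$ and sending $y \to z^-$ gives $\liminf_{y \to z^-} g(y) \ge g(z)$, with a symmetric argument for $y \to z^+$.

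The main obstacle is upper semi-continuity of $g$, for which I plan a cluster-point argument. For each $z \in (a,b)$, pick a subsequence $(n_k)$ with $g_{n_k}(z) \to g(z)$; a further slope-inequality argument using $g_n(a) \ge g_n(z)(b-a)/(b-z) - g_n(b)(z-a)/(b-z)$ and its mirror shows that $g_{n_k}(a)$ and $g_{n_k}(b)$ admit finite subsequential limits, since otherwise, say $g_{n_k}(b) \to -\infty$ would force $g_{n_k}(a) \to +\infty$, violating $\limsup_n g_n(a) < \infty$. Along such a subsequence, $g_{n_k}$ is uniformly bounded on $[a,b]$ and, by slope monotonicity anchored at $z$ and the endpoints, uniformly Lipschitz on compact subsets of $(a,b)$; Arzela--Ascoli then yields a further subsequence converging locally uniformly to a continuous convex $f$ with $f(z) = g(z)$ and $f \ge g$ on $(a,b)$. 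Letting $\mathcal{F}$ denote the family of such cluster limits as $z$ varies over $(a,b)$, we obtain $g = \inf_{f \in \mathcal{F}} f$, an infimum of continuous functions and hence upper semi-continuous; combined with the lower semi-continuity established above, $g$ is continuous on $(a,b)$.
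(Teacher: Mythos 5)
Your proof is correct, and it follows a genuinely different route than the paper's. The paper proves a dichotomy at the level of the sequence: either $\liminf_n g_n\equiv-\infty$ on $(a,b)$, or $\limsup_n \sup_{[a,b]}|g_n|<\infty$; in the latter case the $g_n$ are eventually uniformly $K$-Lipschitz on any $[c,d]\subsetneq(a,b)$, and the representation $g(z)=\lim_n\bigl(\lim_m\min_{n\le l\le m}g_l(z)\bigr)$, as a nested pointwise limit of $K$-Lipschitz functions, shows $g$ is $K$-Lipschitz hence continuous; the case of $\bar g$ is then noted to be analogous. You instead decouple the two functions: $\bar g$ is dispatched immediately by convexity of $\limsup$; the dichotomy for $g$ follows from a slope-monotonicity contradiction; and continuity of $g$ is split into lower semicontinuity (a one-sided convexity estimate plus superadditivity of $\liminf$) and upper semicontinuity (realizing $g$ as an infimum of continuous cluster limits furnished by Arzel\`a--Ascoli). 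Both routes are sound. The paper's is shorter and gives local Lipschitz regularity for free; your cluster-limit argument invokes heavier machinery (diagonal extraction and a normal-families compactness step) but makes the two halves of continuity conceptually transparent. One small point to make explicit when writing up: in your lower-semicontinuity step the coefficient multiplying $\limsup_n g_n(b)$ is negative, so the $\liminf$ superadditivity you invoke needs $\limsup_n g_n(b)>-\infty$ to avoid an indeterminate form; if $\limsup_n g_n(b)=-\infty$, convexity of $\bar g$ already forces $\bar g\equiv-\infty$ on $(a,b)$, hence $g\equiv-\infty$ as well, and you land in the degenerate branch of the dichotomy.
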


\begin{proof}
First, note that there exists a large enough $N$ such that for all $n\geq N$ both $g_n(a)$ and $g_n(b)$ are bounded from above, and in particular $[a,b]\subset D(g_n)$. Moreover, it is easy to verify that $\bar g=\limsup_{n\to \infty} g_n(z)$ is convex as well, $[a,b]\subset D(\bar g)$, and with $\bar g$ bounded from above on $[a,b]$. We will show that either $g \equiv -\infty$ on $(a,b)$ or that $\limsup_{n\to \infty} \|g_n\|_{\infty}$ is finite --- in which case we will derive that $g$ is Lipschitz continuous at any proper sub-interval $[c,d]\subsetneq [a,b]$ with $a<c<d<b$. The case for $\bar g$ follows from a similar argument and the fact that $\bar g$ is convex itself.\\

Now, suppose that $\limsup_{n\to \infty} \|g_n\|_{\infty}=\infty$. Since $\bar g(z)$ is bounded from above on $[a,b]$, it follows that there exists a subsequence $n'\in \mathbb{N}$ and a sequence $z_{n'}\in [a,b]$ such that 
\[\lim_{n'\to \infty} g_{n'}(z_{n'})=-\infty.\]
By compactness in $[a,b]$ we can choose a converging sequence $z_{n'}\to z^*$. We will treat the case $z^*\in (a,b)$ and $z^*=a,b$ separately. First, assume the former and fix $s\in (a,b)$. Then for any small enough $\epsilon>0$ and large enough $n'$ such that $g_{n'}(z_{n'})<\infty$ and $|z_{n'}-z^*|<\epsilon$ with $a+\epsilon<z^*<b-\epsilon$, we have by convexity of $g_{n'}$ 
\begin{align*}
g_{n'}(s)&\leq \frac{z_{n'}-s}{z_{n'}-a} g_{n'}(a)+\frac{s-a}{z_{n'}-a}  g_{n'}(z_{n'})\\
&\leq g_{n'}(a)^++\frac{s-a}{z^*-a-\epsilon}  g_{n'}(z_{n'}),
\end{align*}
whenever $s\in (a,z_{n'}]$, and where $g_{n'}(a)^+=\min(0,g_{n'}(a))$. Repeating the argument for $s\in [z_{n'},b)$ we derive
\begin{align*}
 \limsup_{n'\to \infty} g_{n'}(s) &\leq \limsup_{n'\to \infty} \max\left(g_{n'}^+(a),g_{n'}^+(b)\right)+\max\left(\frac{s-a}{z^*-a-\epsilon},\frac{b-s}{b-z^*-\epsilon} \right) \limsup_{n'\to \infty}g_{n'}(z_{n'})\\
 &=-\infty,
\end{align*}
 In particular we conclude that $g(s):=\liminf_{n\to\infty}g_n(s)=-\infty$. The case for $z^*=b$ (or $z^*=a$) is similar, using the fact that $s\in (a,z_n')$ for large enough $n'$.

Next, suppose otherwise, i.e. $\limsup_{n\to \infty} \|g_n\|_{\infty}<\infty$. Recall from classical convex analysis on $\R^d$ (e.g.\ similar to \cite[Theorem 6.7]{Evans2015}) that bounded convex functions on convex sets $O$ are uniformly Lipschitz on certain subsets $A\subsetneq O$ with $d(A,O^c)>0$. In particular, when $f:[a,b]\to \R$ is convex and bounded, $a+\delta\leq c<d\leq b-\delta$, for some $\delta>0$, the Lipschitz constant of $f$ on the interval $[c,d]$ is bounded by \[K:=2 \delta^{-1} \|f\|_{\infty}.\]
Since pointwise limits or pointwise minima of $K$-Lipschitz functions are also $K$-Lipschitz, and 
\begin{equation*}
g(z)=\lim_{n\to \infty} \left(\lim_{m\to \infty} \min_{n\leq l\leq m} g_l(z) \right),
\end{equation*}
it follows that $g$ is $K$-Lipschitz as well. Since $c,d$ are arbitrary, we conclude that $g$ is continuous on $(a,b)$. 
\end{proof}

\begin{lm}\label{lm_convex2}
	Let $(\phi_n)_{n\in\N}$ be a sequence of convex functions on $V$ and $(y_{\lambda,n})$ a family of sequences and $(x_n)$ a sequence in $V$. Suppose there exists a $\gamma>1$ such that
	\begin{equation}\label{eq_convexr1}
	\limsup_{n\to \infty}\phi_{n}\big(\gamma y_{\lambda,n}\big)< \infty,\qquad\text{for all $\lambda>0$},
	\end{equation}
	and suppose there exists some constant $K\in\R$ such that
	\begin{equation}\label{eq_convexr2}
	\limsup_{\lambda \to 0}\limsup_{n\to \infty}\phi_{n}\big(\beta(y_{\lambda,n}-x_n)\big)\leq K\qquad\text{for all $\beta \in \R$}.
	\end{equation}
	Then for any $0\leq \gamma'<\gamma$,
	\begin{equation}\label{eq_convexr1b}
	\limsup_{n\to \infty} \phi_n(\gamma' x_n)<\infty,
	\end{equation}
	and
	\begin{subequations}
		\begin{align}
		\lim_{\lambda \to 0}\limsup_{n\to \infty}\phi_{n}\big( y_{\lambda,n}\big)&=\limsup_{n\to \infty}\phi_{n}\big(x_{n}\big),\label{eq_convexrr1a}	\\
		\lim_{\lambda \to 0}\liminf_{n\to \infty}\phi_{n}\big( y_{\lambda,n}\big)&=\liminf_{n\to \infty}\phi_{n}\big(x_{n}\big).\label{eq_convexrr1b}
		\end{align}
	\end{subequations}
\end{lm}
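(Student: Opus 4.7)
My plan is to use explicit convex decompositions in the vector space $V$. Setting $z_{\lambda,n} := y_{\lambda,n} - x_n$, I will first establish \eqref{eq_convexr1b} from the identity
\[
\gamma' x_n = \frac{\gamma'}{\gamma}\,(\gamma y_{\lambda,n}) + \left(1 - \frac{\gamma'}{\gamma}\right)\cdot\frac{\gamma'\gamma}{\gamma - \gamma'}(x_n - y_{\lambda,n}),
\]
which writes $\gamma' x_n$ as a convex combination of $\gamma y_{\lambda,n}$ and a scalar multiple of $-z_{\lambda,n}$; convexity of $\phi_n$, taking $\limsup_n$, and applying hypotheses \eqref{eq_convexr1} and \eqref{eq_convexr2} (with $\beta = -\gamma'\gamma/(\gamma-\gamma')$) will yield finiteness of $\limsup_n \phi_n(\gamma' x_n)$ once $\lambda$ is small enough. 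Turning the same scheme around, for any $\gamma'' \in (1,\gamma)$ and some $\alpha'>0$ with $\gamma''/(1-\alpha')<\gamma$, the analogous decomposition $\gamma'' y_{\lambda,n} = (1-\alpha')\gamma'' x_n/(1-\alpha') + \alpha'\gamma'' z_{\lambda,n}/\alpha'$ combined with the just-established \eqref{eq_convexr1b} and with \eqref{eq_convexr2} will produce a uniform-in-$\lambda$ bound
\[
\tilde M := \limsup_{\lambda \to 0}\limsup_{n\to\infty} \phi_n(\gamma'' y_{\lambda,n}) < \infty;
\]
this uniformity will be the main ingredient beyond the first step.

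To prove the upper bounds in \eqref{eq_convexrr1a} and \eqref{eq_convexrr1b}, I will apply the identity
\[
y_{\lambda,n} = (1-\alpha)\,\frac{x_n}{1-\alpha} + \alpha\,\frac{z_{\lambda,n}}{\alpha}
\]
together with convexity, then take $\limsup_n$ or $\liminf_n$ (the latter via the elementary inequality $\liminf_n(A_n+B_n) \le \liminf_n A_n + \limsup_n B_n$), followed by $\limsup_\lambda$. The $z_{\lambda,n}$ term will be absorbed as $\alpha K$ via \eqref{eq_convexr2}. For the $x_n/(1-\alpha)$ term, in the $\limsup_n$ case the function $s\mapsto \limsup_n \phi_n(s x_n)$ is convex and finite on $[0,\gamma)$ by \eqref{eq_convexr1b}, so it is continuous at $s=1$ by Lemma~\ref{lm_convex1} and $(1-\alpha)\limsup_n \phi_n(x_n/(1-\alpha)) \to \limsup_n \phi_n(x_n)$ as $\alpha \to 0$; in the $\liminf_n$ case (where the liminf of convex functions need not be convex) I will instead expand $x_n/(1-\alpha) = (1-\eta)x_n + \eta(\gamma'' x_n)$ and use finiteness of $\limsup_n \phi_n(\gamma'' x_n)$ from \eqref{eq_convexr1b}. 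Letting $\alpha \to 0$ will deliver both upper bounds.

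For the reverse inequalities, I will exchange the roles of $x_n$ and $y_{\lambda,n}$ via
\[
x_n = (1-\alpha)\,\frac{y_{\lambda,n}}{1-\alpha} + \alpha\,\frac{-z_{\lambda,n}}{\alpha},
\]
and bound $\phi_n(y_{\lambda,n}/(1-\alpha))$ by $(1-\eta)\phi_n(y_{\lambda,n}) + \eta\,\phi_n(\gamma'' y_{\lambda,n})$ with $\eta = \alpha/((1-\alpha)(\gamma''-1))$. After taking $\limsup_n$ (respectively $\liminf_n$) and rearranging to isolate the $\phi_n(y_{\lambda,n})$ term, a subsequent $\liminf_\lambda$ together with the elementary inequality $\liminf_\lambda(-a_\lambda-b_\lambda) \ge -\limsup_\lambda a_\lambda - \limsup_\lambda b_\lambda$ will give
\[
(1-\alpha)(1-\eta)\liminf_{\lambda \to 0}\limsup_{n\to\infty}\phi_n(y_{\lambda,n}) \ge \limsup_{n\to\infty}\phi_n(x_n) - (1-\alpha)\eta\,\tilde M - \alpha K,
\]
and analogously for the $\liminf_n$ version. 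Sending $\alpha \to 0$ (so $\eta \to 0$) will close the argument. The hard part will be exactly this last step: $\liminf_\lambda$ does not distribute over sums, and without the uniform bound $\tilde M$ the term $(1-\alpha)\eta\,\phi_n(\gamma'' y_{\lambda,n})$ could have infinite $\limsup_\lambda$ and ruin the estimate; the whole argument therefore hinges on controlling $\phi_n(\gamma'' y_{\lambda,n})$ uniformly in $\lambda$ for some $\gamma''>1$, which is why the second decomposition of the first paragraph is essential.
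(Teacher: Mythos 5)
Your proposal is correct, and the overall plan—write one sequence as a convex combination of the other and the difference, take $\limsup$ or $\liminf$ in $n$ and $\lambda$, and then let the mixing parameter degenerate—is the same as the paper's. What differs, and genuinely so, is the choice of parametrization. The paper decomposes $\alpha x_n = (1-\alpha)\cdot\frac{\alpha}{1-\alpha}(x_n - y_{\lambda,n}) + \alpha y_{\lambda,n}$ and $y_{\lambda,n}=(1-\alpha)(1-\alpha)^{-1}(y_{\lambda,n}-x_n)+\alpha(\alpha^{-1}x_n)$, with $\alpha\to 1$; crucially, $y_{\lambda,n}$ never gets multiplied by a factor going to something $>1$, so the paper never needs control on $\phi_n$ at $\gamma'' y_{\lambda,n}$ uniformly in $\lambda$. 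Your decomposition $x_n = (1-\alpha)\frac{y_{\lambda,n}}{1-\alpha} + \alpha\frac{-z_{\lambda,n}}{\alpha}$ with $\alpha\to 0$ does scale $y_{\lambda,n}$, which forces the extra step of establishing $\tilde M := \limsup_{\lambda\to 0}\limsup_n\phi_n(\gamma'' y_{\lambda,n})<\infty$ for some $\gamma''\in(1,\gamma)$; that step is legitimate and follows correctly from \eqref{eq_convexr1b} and \eqref{eq_convexr2} as you indicate, but it is an additional moving part that the paper's parametrization simply avoids. In exchange you bypass the application of Lemma~\ref{lm_convex1} to $\liminf_n\phi_n(s x_n)$ (which indeed need not be convex in $s$) by an explicit re-expansion in $\eta$; note, however, that Lemma~\ref{lm_convex1} as stated does already cover the $\liminf$ case—its proof works via uniform Lipschitz bounds on the sequence, not via convexity of the limit—so that precaution, while logically sound, is not strictly necessary. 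Both routes are correct; the paper's is somewhat leaner.
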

\begin{proof}
First, note that if $\phi$ is convex, we have for any $x,y\in V$ and $\alpha\in(0,1)$ for which both $\alpha^{-1}(y-x),(1-\alpha)^{-1}x\in D(\phi)$ that $x+y\in D(\phi)$
	\[
	\phi(x+y) 
	\leq \alpha \phi\big(\alpha^{-1}x\big)+(1-\alpha)\phi\big((1-\alpha)^{-1}y \big),
	\]
	and therefore, 
	\begin{equation}\label{eq_convex1a}
	\begin{aligned}
	\phi(y)&=\phi(y-x+x)\\
	&\leq \alpha \phi \left(\alpha^{-1}(y-x)\right) + (1-\alpha)\phi\left((1-\alpha)^{-1}x\right).
	\end{aligned}
	\end{equation}
Now, we begin by establishing \eqref{eq_convexr1b}. By \eqref{eq_convexr1} and \eqref{eq_convexr2} there exists a large enough $N$ such that for all $n\geq N^*$, $\lambda$, $\beta\in R$,  and all $\beta \in \R$ both $\gamma y_{\lambda,n}\in D_n$ and $\beta (x-y_{\lambda})\in D_n$. Therefore, for any $\gamma'\in[0,\gamma)$, by \eqref{eq_convex1a} we have
	\[
	 \phi_n(\gamma' x_{n}) \leq (1-\alpha) \phi_n\left((1-\alpha)^{-1} \gamma' (x_{n}-y_{\lambda,n})\right)+\alpha \phi_n(\gamma y_{\lambda,n}).
	\]
	with $\alpha=\gamma'/\gamma$. Passing to the limes superior in $n$, we obtain
	\[
	 \limsup_{n\to \infty} \phi_n(\gamma' x_{n}) \leq 	(1-\alpha) \limsup_{n\to \infty} \phi_n\left((1-\alpha)^{-1} \gamma' (x_{n}-y_{\lambda,n})\right)+\alpha \limsup_{n\to \infty} \phi_n(\gamma y_{\lambda,n}).
	\]
	Note that the left-hand side is independent of $\lambda>0$, while the second term on the right-hand side is finite for every $\lambda>0$ by \eqref{eq_convexr1}. Moreover, by \eqref{eq_convexr2} it follows that the first term on the right-hand side is finite for sufficiently small $\lambda>0$. Hence, the left-hand side is finite as well and in particular, we have shown \eqref{eq_convexr1b}. 
	
Next, from \eqref{eq_convex1a} we find for any $\alpha\in(0,1)$
	\begin{align*}
	    \phi_n(\alpha x_{n}) &\leq (1-\alpha) \phi_n\left(\frac{\alpha}{1-\alpha} (x_{n}-y_{\lambda,n})\right)+\alpha \phi_n(y_{\lambda,n}),\\
	    \phi_n(y_{\lambda,n}) &\leq (1-\alpha) \phi_n\left((1-\alpha)^{-1} (y_{\lambda,n}-x_{n})\right)
		+\alpha \phi_n\left(\alpha^{-1}x_{n}\right).
	\end{align*}
	Taking limits in $\lambda$ and $n$, we obtain
	\begin{subequations} \label{eq_convexss2}
		\begin{align}
		\limsup_{n\to \infty} \phi_n(\alpha x_{n}) &\leq (1-\alpha)K+\alpha \liminf_{\lambda \to 0} \limsup_{n\to \infty} \phi_n(y_{\lambda,n}),\label{eq_convexs2a}\\
		\limsup_{\lambda \to 0}  \limsup_{n\to \infty} \phi_n(y_{\lambda,n}) &\leq(1-\alpha)K+
		\alpha \limsup_{n \to \infty}  \phi_n\left(\alpha^{-1} x_{n}\right),
		\end{align}
	\end{subequations}
	where we made use of \eqref{eq_convexr2}. Note that $\bar g(z):= \limsup_{n\to \infty} \phi_n(z x_n)$ is bounded from above around $z=1$, and by Lemma \ref{lm_convex1} is either continuous in $(0,\gamma')$ or $\bar g(z)\equiv -\infty$ in a neighborhood around $z=1$. In both cases, passing to the limit $\alpha\to 1$ in \eqref{eq_convexs2a} and \eqref{eq_convexs2a}, we recover \eqref{eq_convexrr1a}.
	
	Similarly, to establish \eqref{eq_convexrr1b}, we first note that after repeating the argument of \eqref{eq_convexss2}, we find
	\begin{align*}
		\liminf_{n\to \infty} \phi_n(\alpha x_{n}) &\leq (1-\alpha)K+	\alpha \liminf_{\lambda\to 0}\liminf_{n\to \infty} \phi_n( y_{\lambda,n}),\\
		\limsup_{\lambda \to 0}\liminf_{n\to \infty} \phi_n( y_{\lambda,n}) &\leq (1-\alpha)K+
		\alpha\liminf_{n\to \infty} \phi_n\left(\alpha^{-1} x_{n}\right).
	\end{align*}
Set $g_n(z):=\phi_n\left(z x_{n}\right)$, $g(z):=\liminf_{n\to \infty} g_n(z)$, and recall that $\bar g(z)$ is bounded from above on $[0,\gamma']$. Hence, again applying Lemma \ref{lm_convex2} and passing to the limit $z\to 1$ in $g(z)$, and $\alpha\to 1$ above, we conclude the proof. 
\end{proof}

Now, as a special case, for when $\phi_n=\phi$, $x_n=x$ and $y_{\lambda,n}=y_{\lambda}$, we have the following statement.

\begin{cory}\label{cor_convex1}
	Let $\phi$ be a convex function on $V$. Suppose there is a $\gamma>1$ such that $\phi(\gamma y_{\lambda})<\infty$ for all $\lambda>0$, and that there exists a constant $K\in\R$ such that 
	\[
	\limsup_{\lambda \to 0}\phi\big(\beta(x-y_{\lambda})\big)\leq K\qquad\text{for all $\beta \in \R$}.
	\]
	Then for any $0<\gamma'<\gamma$, $\phi(\gamma' x)< \infty$, and $\phi(x)=\lim_{\lambda \to 0} \phi(y_{\lambda})$.
\end{cory}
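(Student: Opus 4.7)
The plan is to deduce the corollary directly from Lemma \ref{lm_convex2} by specializing to constant sequences. Concretely, I would set $\phi_n := \phi$ for every $n\in\N$, $x_n := x$, and $y_{\lambda,n} := y_\lambda$ (all independent of $n$), and then verify the two hypotheses of Lemma \ref{lm_convex2}.

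The hypothesis \eqref{eq_convexr1} is immediate: for every $\lambda>0$,
\[
\limsup_{n\to\infty}\phi_n(\gamma y_{\lambda,n}) = \phi(\gamma y_\lambda) < \infty
\]
by assumption. For \eqref{eq_convexr2}, note that for every $\beta\in\R$,
\[
\limsup_{\lambda\to 0}\limsup_{n\to\infty}\phi_n\bigl(\beta(y_{\lambda,n}-x_n)\bigr)
=\limsup_{\lambda\to 0}\phi\bigl(-\beta(x-y_\lambda)\bigr) \le K,
\]
where in the last step we used the corollary's hypothesis applied to $-\beta\in\R$ in place of $\beta$; this substitution is legitimate precisely because the assumed bound holds for \emph{all} $\beta\in\R$.

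With both hypotheses verified, Lemma \ref{lm_convex2} applies. Its conclusion \eqref{eq_convexr1b} then yields, for every $0\le \gamma'<\gamma$,
\[
\phi(\gamma' x) = \limsup_{n\to\infty}\phi_n(\gamma' x_n) < \infty,
\]
giving the first claim. The conclusions \eqref{eq_convexrr1a} and \eqref{eq_convexrr1b} collapse, under our constant-sequence specialization, to
\[
\lim_{\lambda\to 0}\phi(y_\lambda)
= \limsup_{n\to\infty}\phi_n(x_n)
= \liminf_{n\to\infty}\phi_n(x_n)
= \phi(x),
\]
which is exactly the second claim of the corollary. There is no real obstacle here: the only thing to notice is the harmless sign flip in the translation term, which is absorbed by the fact that the hypothesis is stated uniformly over all real $\beta$.
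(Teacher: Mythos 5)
Your proof is correct and matches the paper's intent exactly: the paper presents this corollary precisely as the specialization of Lemma \ref{lm_convex2} to constant sequences $\phi_n=\phi$, $x_n=x$, $y_{\lambda,n}=y_\lambda$, and you have verified the hypotheses carefully, including the only non-obvious point — the sign discrepancy between $\phi\bigl(\beta(x-y_\lambda)\bigr)$ in the corollary's hypothesis and $\phi_n\bigl(\beta(y_{\lambda,n}-x_n)\bigr)$ in \eqref{eq_convexr2}, which is harmless since the bound is assumed for all $\beta\in\R$.
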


Together, these results show how to connect $\phi_L(x)$ to $\phi_L(y_{\lambda})$, $\phi_n(x_n)$ to $\phi_n(y_{\lambda,n})$, and $\phi_U(x)$ to $\phi_U(y_{\lambda})$. Since by assumption $\phi_n(y_{\lambda,n})$ is connected to $\phi_L(y_{\lambda})$ and $\phi_U(y_{\lambda})$, we derive corresponding statements for $(x_n,x)$. Indeed, we now show how the above results imply Theorem \ref{thm_conv1}. 

\begin{proof}[Proof of Theorem \ref{thm_conv1}]
	Applying Corollary~\ref{cor_convex1} to both $\phi_L$ and $\phi_U$ separately, we obtain from \eqref{eq_cty1b} and \eqref{eq_tconvr2} the convergences
	\[
	\phi_L(x)=\lim_{\lambda \to 0} \phi(y_{\lambda}),\qquad
	\phi_U(x)=\lim_{\lambda \to 0} \phi(y_{\lambda}).
	\]
	Similarly, from \eqref{eq_cty1a} and \eqref{eq_tconvr1} it follows from Lemma~\ref{lm_convex2} that
		\begin{align*}
		\lim_{\lambda \to 0}\limsup_{n\to \infty}\phi_{n}\big( y_{\lambda,n}\big)&=\limsup_{n\to \infty}\phi_{n}\big(x_{n}\big),	\\
		\lim_{\lambda \to 0}\liminf_{n\to \infty}\phi_{n}\big( y_{\lambda,n}\big)&=\liminf_{n\to \infty}\phi_{n}\big(x_{n}\big).
		\end{align*}
	Finally, we use the above and the relationship between $y_{\lambda,n}$ and $y_{\lambda}$ assumed in \eqref{eq_cty2}, to obtain
	\begin{alignat*}{3}
	\phi_L(x) = \lim_{\lambda \to 0} \phi_L(y_{\lambda})
	&\leq \lim_{\lambda \to 0}\liminf_{n\to \infty}\phi_{n}\big(y_{\lambda,n}\big) \\
	&=\liminf_{n\to \infty}\phi_{n}\big(x_{n}\big) &&\leq \limsup_{n\to \infty}\phi_{n}\big( x_{n}\big)\\
	& &&=\lim_{\lambda \to 0}\limsup_{n\to \infty}\phi_{n}\big(y_{\lambda,n}\big)
	\leq \lim_{\lambda \to 0} \phi_U\big(y_{\lambda}\big)
	=\phi_U\big(x\big).
	\end{alignat*}
	The boundedness conditions follow similarly.
\end{proof}

\section{Variational representation of exponential integrals}\label{app_var}

Below we will give an extension of the classic variational formulation of exponential integrals. This is exploited in various arguments of the paper.

\begin{lm}\label{lm_entvar}
	Let $(X,\mathcal{A})$ be a measurable space and $V:X\to [0,\infty]$ be a non-negative $\mathcal{A}$-measurable function. Then 
	\begin{equation}\label{eq_gentv3}
	\log \int_X e^{V} d\mu = \sup_{\nu \in D} \left\{ \int_X V\,d\nu - R(\nu\|\mu) \right \},
	\end{equation}
	where
	\[
	D=\left\{ \nu \in \cP(X) \, \Big| \, \mbox{$R(\nu\|\mu)<\infty$} \right\}.
	\]
\end{lm}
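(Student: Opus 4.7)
The plan is to prove the two inequalities separately, using the classical Legendre duality between relative entropy and the log-Laplace transform, while taking care of the fact that $V$ is allowed to take the value $+\infty$ and that $\int_X e^V\,d\mu$ can be infinite.

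\medskip

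\textbf{Upper bound ($\le$ in the $\sup$-formulation).} For the inequality $\int_X V\,d\nu - R(\nu\|\mu) \le \log \int_X e^V\,d\mu$ for every $\nu\in D$, I would first dispose of the case $\int_X e^V\,d\mu = +\infty$, where the inequality is trivial. In the remaining case $Z:=\int_X e^V\,d\mu<\infty$, I would introduce the tilted probability measure $\tilde\mu := Z^{-1} e^V \mu$ (well-defined as a probability since $V\ge 0$ forces $e^V\ge 1>0$, so $\tilde\mu$ and $\mu$ are mutually absolutely continuous). Any $\nu\in D$ then also satisfies $\nu\ll\tilde\mu$ with density $\frac{d\nu}{d\tilde\mu} = \frac{d\nu}{d\mu}\cdot Z e^{-V}$. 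A direct computation gives
\[
R(\nu\|\tilde\mu) = R(\nu\|\mu) + \log Z - \int_X V\,d\nu,
\]
and since $R(\nu\|\tilde\mu)\ge 0$, the desired inequality follows; as a byproduct, $\int_X V\,d\nu < \infty$ for all $\nu\in D$ when $Z<\infty$.

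\medskip

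\textbf{Lower bound ($\ge$ in the $\sup$-formulation).} For the reverse inequality, I would construct an explicit approximating sequence. For each $n\in\N$, set $V_n:=V\wedge n$ and $Z_n:=\int_X e^{V_n}\,d\mu\in[1,\infty)$ (finiteness is immediate because $V_n\le n$ and $\mu$ is a probability), then define
\[
\nu_n := Z_n^{-1} e^{V_n}\mu \in \cP(X).
\]
A direct computation yields $R(\nu_n\|\mu) = \int_X V_n\,d\nu_n - \log Z_n$, which is bounded by $n$ and hence $\nu_n\in D$. Rearranging gives the identity
\[
\int_X V_n\,d\nu_n - R(\nu_n\|\mu) = \log Z_n,
\]
and since $V\ge V_n\ge 0$, monotonicity of the integral yields $\int_X V\,d\nu_n - R(\nu_n\|\mu) \ge \log Z_n$. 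Monotone convergence (applied to $e^{V_n}\uparrow e^V$) gives $Z_n\uparrow \int_X e^V\,d\mu$, so $\log Z_n \to \log \int_X e^V\,d\mu$ (including the case where both sides are $+\infty$). Taking the supremum over $\nu\in D$ yields the required bound.

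\medskip

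\textbf{Main obstacle.} The conceptual content is standard Legendre duality, so there is no deep obstacle; the only subtleties lie in bookkeeping at infinity. Specifically, one must verify that the equality $R(\nu\|\tilde\mu)=R(\nu\|\mu)+\log Z-\int V\,d\nu$ is not of the form $\infty-\infty$: this is guaranteed once $R(\nu\|\mu)<\infty$ and $Z<\infty$, which together force $\int V\,d\nu<\infty$ via the non-negativity of $R(\nu\|\tilde\mu)$. In the opposite regime where $\int_X e^V\,d\mu=+\infty$, the supremum must also attain $+\infty$, which is ensured by the explicit sequence $\nu_n$ through $\log Z_n\to+\infty$. Once these two corner cases are handled, the proof is complete.
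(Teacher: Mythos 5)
Your proposal is correct and follows essentially the same route as the paper's proof: the upper bound via tilting to $\tilde\mu = Z^{-1}e^V\mu$ and nonnegativity of $R(\nu\|\tilde\mu)$, and the lower bound via truncations $V_n = V\wedge n$ with the tilted measures $\nu_n = Z_n^{-1}e^{V_n}\mu$ as the optimizing sequence (these are precisely the $\mu_{V_n}$ of the paper), concluding with monotone convergence. The one place the two differ is the justification that $\int V\,d\nu<\infty$ for $\nu\in D$ when $Z<\infty$: the paper gets this cleanly from Young's inequality $V\frac{d\nu}{d\mu}\le e^V - \frac{d\nu}{d\mu} + \frac{d\nu}{d\mu}\log\frac{d\nu}{d\mu}$, whereas you extract it from the identity for $R(\nu\|\tilde\mu)$ together with $R(\nu\|\tilde\mu)\ge 0$; your route works, but to avoid circularity one should first write $\int(g\log g + gV)\,d\tilde\mu = R(\nu\|\mu)+\log Z$ (all terms well-defined since $g\log g\ge -1/e$ and $gV\ge 0$) and only then invoke $\int g\log g\,d\tilde\mu\ge 0$ to deduce $\int gV\,d\tilde\mu=\int V\,d\nu<\infty$, which legitimizes splitting the integral into the claimed three pieces.
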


In the case of Polish spaces $X$, the result also follows directly from \cite[Proposition 4.5.1]{Dupuis2013}, which deals with potentials $V$ that are either bounded from below or from above. Nevertheless, for completeness, we give here a direct and elementary proof. 
\begin{proof}
	We will first prove that for every $\nu\in D$, 
	\begin{equation}\label{eq_gentv5}
	\int_X V\, d \nu - R(\nu\|\mu)\leq \log \int_X e^{V} d \mu,
	\end{equation}
	and then we will show by approximation of bounded functions that 
	\begin{equation}\label{eq_gentv7}
	 \log \int_X e^{V} d\mu \le \sup_{\nu \in D} \left\{\int_X V\, d\nu - R(\nu\|\mu) \right \},
	\end{equation}
	which together will imply \eqref{eq_gentv3}.
	
	For the first inequality, we take any $\nu \in D$ and assume, without loss of generality, that $e^{V}$ is integrable with respect to $\mu$. In this case, Young's inequality (in the form $ab\le e^a -b +b\log b$) and the finiteness of $R(\nu\|\mu)<+\infty$ imply the finiteness of $\int_X V\,d\nu$. Let $\mu_V \in \cP(S)$ be defined by
	\[
	\mu_V = \frac{1}{Z_V} e^{V} \mu\qquad\text{with}\qquad Z_V= \int_X e^{V} d \mu.
	\]
	Now let us rewrite the expression on the left-hand side of \eqref{eq_gentv5} as follows,
	\begin{equation}\label{eq_gentv6}
	\begin{aligned}
	\int_X V\, d \nu - R(\nu\|\mu) &= \int_X V\, d \nu - \int_X \log \frac{d \nu}{d \mu} \, d\nu  \\
	&= \log \int_X e^{V} d \mu + \int_X \log e^{V} d \nu - \log \int_X e^{V} d \mu - \int_X \log \frac{d \nu}{d \mu} \, d\nu \\
	&= \log \int_X e^{V} d \mu - \int_X \log \left(\frac{d \nu}{d \mu }\left(\frac{d \mu_V}{d \mu}\right)^{-1} \right)d\nu \\
	&= \log \int_X e^{V} d \mu -R(\nu\|\mu_V)
	\end{aligned}
	\end{equation}
	By non-negativity of the entropy $R(\cdot\|\mu_V)$, \eqref{eq_gentv5} holds for any $\nu \in D$. 
	
	To show \eqref{eq_gentv7}, consider the sequence $V_n=\min\{V,n\}$ for $n\in\N$. Note that $V_n$ is a non-decreasing sequence of non-negative bounded functions converging pointwise to $V$. Since $V_n$ is bounded, we have that $R(\mu_{V_n}\|\mu)<+\infty$ and that $\int_X e^{V_n} d\mu<\infty$, and we define again $\mu_{V_n}$ as above. Hence, repeating the argument of \eqref{eq_gentv6} for $V_n$ it follows that 
	\begin{align*}
	\int_X V_n\, d \nu - R(\nu\|\mu) 
	&= \log \int_X e^{V_n}\, d \mu -R(\nu\|\mu_{V_{n}}).
	\end{align*}
	In particular, taking $\nu=\mu_{V_n}$, we get
	\begin{align*}
	\log \int_X e^{V_n}\, d \mu = \int_X V_n\, d \mu_{V_n} - R(\mu_{V_n}\|\mu) \le \int_X V\, d \mu_{V_n} - R(\mu_{V_n}\|\mu) 
	\end{align*}
	Maximizing over $n$, we conclude \eqref{eq_gentv7}.
\end{proof}

\begin{remark}
	In the proof for bounded $V$ the equality in \ref{eq_gentv3} follows by checking that $\nu=\mu_{V}$ is the unique maximizer \eqref{eq_gentv3}. However, this is not possible for a general unbounded $V$, even when $e^{V}$ is integrable with respect to $\mu$. A simple example over $X=[0,1/2]$ follows from taking $V(x)=-\log x + \alpha \log \log x^{-1} +1$ with any $1<\alpha<2$: for this example, $\mu_V$ is well defined and clearly $R(\mu_V\|\mu_V)=0$, but both $R(\mu_V\|\mu)$ and $\int V d\mu_V$ are infinite. In particular, $\mu_V$ does not belong to $D$ and is not a maximizer of \eqref{eq_gentv3}, even though $\mu_{V_n}$ is a maximizing sequence.
	
	In contrast, if $V$ satisfies a slightly stronger exponential integrability condition, then $R(\nu\|\mu_V)<\infty$ does imply $R(\nu\|\mu)<\infty$ and $\int V d\nu < \infty$, as we show below.
\end{remark}

\begin{cory}\label{cory_centvar}
Let $V:X\to \REx $ be a measurable function and $\mu_V=Z_V^{-1}e^{V}\mu$ with normalization constant $Z_V$. Further, suppose there exists $\gamma>1$ such that 
	\begin{equation*}
	\int_X e^{\gamma |V|} d\mu <\infty.
	\end{equation*}
Then 
\begin{equation*}
R(\nu\|\mu_V)<\infty \iff R(\nu\|\mu)<\infty.
\end{equation*}
Moreover, the following equality holds:
\begin{equation}\label{eq_centvar2}
R(\nu\|\mu_{V})=R(\nu\|\mu)-\int_X V  d\nu + \log \int_X e^{V} d\mu.
\end{equation}
\end{cory}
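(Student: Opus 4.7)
The plan is to reduce \eqref{eq_centvar2} to the Radon--Nikodym chain rule, and to reduce the equivalence $R(\nu\|\mu_V)<\infty \Leftrightarrow R(\nu\|\mu)<\infty$ to showing that $V\in L^1(\nu)$ under either side. First, observe that $\int e^{\gamma|V|}\,d\mu<\infty$ forces $|V|<\infty$ $\mu$-a.e., so that $d\mu_V/d\mu = Z_V^{-1}e^V$ is strictly positive and finite $\mu$-a.e.; hence $\mu$ and $\mu_V$ are mutually absolutely continuous, $\nu\ll\mu\Leftrightarrow\nu\ll\mu_V$, and in that case
\begin{equation*}
\log\frac{d\nu}{d\mu_V} = \log\frac{d\nu}{d\mu} - V + \log Z_V \qquad \nu\text{-a.e.}
\end{equation*}
The negative parts of $\log(d\nu/d\mu)$ and $\log(d\nu/d\mu_V)$ are always $\nu$-integrable via the standard bound $x(\log x)^-\le e^{-1}$; so once $V\in L^1(\nu)$ is known, integrating the identity above against $\nu$ term by term gives \eqref{eq_centvar2} with every term finite, simultaneously yielding the equivalence of the two finiteness conditions.

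The core step is the integrability $V\in L^1(\nu)$, which follows from Lemma~\ref{lm_entvar} applied to the nonnegative function $\beta|V|$:
\begin{equation*}
\beta\int_X|V|\,d\nu \le R(\nu\|\sigma) + \log\int_X e^{\beta|V|}\,d\sigma, \qquad \sigma\in\{\mu,\mu_V\}.
\end{equation*}
Taking $\sigma=\mu$ and $\beta=\gamma$ immediately handles the direction $R(\nu\|\mu)<\infty\Rightarrow V\in L^1(\nu)$. For the reverse direction one needs $\int e^{\beta|V|}\,d\mu_V<\infty$ for some $\beta>0$; the key estimate here is
\begin{equation*}
Z_V\int_X e^{\beta|V|}\,d\mu_V = \int_{\{V\ge 0\}}e^{(\beta+1)V}\,d\mu + \int_{\{V<0\}}e^{(1-\beta)V}\,d\mu \le \int_X e^{(\beta+1)|V|}\,d\mu + 1,
\end{equation*}
which is finite provided $\beta\le 1$ and $\beta+1\le\gamma$. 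Since $\gamma>1$, any $\beta\in(0,\min(1,\gamma-1)]$ suffices.

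The only mild obstacle is the asymmetric role of $\gamma$: the variational inequality against $\mu_V$ costs an extra factor of $e^{|V|}$ on $\{V\ge 0\}$ from the tilting by $e^V$, forcing the exponent $\beta+1$ in place of $\beta$. This is precisely why the statement requires the strict inequality $\gamma>1$, rather than merely the exponential integrability with $\gamma=1$ that sufficed for Lemma~\ref{lm_entvar} itself.
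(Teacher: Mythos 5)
Your proof is correct and takes essentially the same approach as the paper's: both reduce the statement to showing $V\in L^1(\nu)$ under either finiteness hypothesis by applying Lemma~\ref{lm_entvar} to $|V|$ with base measure $\mu$ and then with base measure $\mu_V$, after which \eqref{eq_centvar2} follows from the Radon--Nikodym chain rule. The only cosmetic difference is in bounding $\int_X e^{\beta|V|}\,d\mu_V$: the paper takes $\beta=\gamma-1$ and uses the pointwise estimate $(\gamma-1)|V|+V\le\gamma|V|$, whereas you split by the sign of $V$ and restrict $\beta\le\min(1,\gamma-1)$, which amounts to the same thing.
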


\begin{proof}
Repeating carefully the proof of \eqref{eq_gentv6}, one gets formula \eqref{eq_centvar2} if $V$ is in $L^1(\nu)$ and one of the two conditions $R(\nu\|\mu)<\infty$ and $R(\nu\|\mu_V)<\infty$ holds. Hence it remains to show that each of the conditions $R(\nu\|\mu)<\infty$ and $R(\nu\|\mu_V)<\infty$ implies that $V$ is in $L^1(\nu)$.

In the case $R(\nu\|\mu)<\infty$, by \eqref{eq_gentv5} we have
\begin{equation*}
	\int_X |V|\, d \nu \leq R(\nu\|\mu)+ \log \int_X e^{|V|} d \mu,
\end{equation*}
which is finite by assumption.

In the case $R(\nu\|\mu_V)<\infty$, we apply again \eqref{eq_gentv5} but with base measure $\mu_V$ instead of $\mu$ and $(\gamma-1)V$ instead of $V$, getting
\begin{equation*}
\begin{aligned}
	\int_X (\gamma-1)|V|\, d \nu &\leq  R(\nu\|\mu_V)+\log \int_X e^{(\gamma-1)|V|} d \mu_V\\
	&\leq  R(\nu\|\mu_V)+\log \int_X e^{\gamma |V|} d \mu-\log \int_X e^V d\mu,\\
\end{aligned}
\end{equation*}
which is finite by assumption. The proof is complete.
\end{proof}

\begin{lm}\label{lm_estrel}
	Let $F:X^k\to [0,\infty)$, $k\in\N$ be a nonnegative measurable function satisfying
	\[
	 \int_{X^k} \exp(F(x_1,\ldots,x_k))\,d \mu^{\otimes k} <\infty,
	\]
	and $\nu \in \cP(S)$ is such that $R(\nu\|\mu)<\infty$. Then
	\[
	 \log \int_X \exp\left(\int_{X^{k-1}} F(x,y)\,d\nu^{\otimes k-1}(y)\right)\,d\mu(x) \le (k-1)R(\nu\|\mu) + \log\int_{X^k} e^F d\mu^{\otimes k}.
	\]
\end{lm}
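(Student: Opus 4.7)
The plan is to prove this by applying the Donsker--Varadhan variational formula (Lemma \ref{lm_entvar}) twice, once on $X$ and once on the product space $X^k$, together with the additivity of relative entropy for product measures.

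First, I would set $\tilde F(x) := \int_{X^{k-1}} F(x,y)\, d\nu^{\otimes k-1}(y)$ and apply Lemma \ref{lm_entvar} on $(X,\mu)$ to rewrite
\[
    \log \int_X e^{\tilde F}\, d\mu = \sup_{\rho \in D} \Bigl\{ \int_X \tilde F\, d\rho - R(\rho\|\mu) \Bigr\},
\]
where $\rho$ ranges over probability measures on $X$ with $R(\rho\|\mu) < \infty$. Then for each such $\rho$, Fubini's theorem (valid since $F \ge 0$) gives
\[
    \int_X \tilde F\, d\rho = \int_{X^k} F\, d(\rho \otimes \nu^{\otimes k-1}).
\]

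Next I would apply Lemma \ref{lm_entvar} a second time, now on $X^k$ with base measure $\mu^{\otimes k}$, to the function $F$ and the probability measure $\pi := \rho \otimes \nu^{\otimes k-1}$. This yields
\[
    \int_{X^k} F\, d\pi \le R(\pi \| \mu^{\otimes k}) + \log \int_{X^k} e^F\, d\mu^{\otimes k}.
\]
By the well-known additivity of relative entropy for product measures,
\[
    R(\rho \otimes \nu^{\otimes k-1} \| \mu^{\otimes k}) = R(\rho\|\mu) + (k-1) R(\nu\|\mu),
\]
so combining the two steps gives
\[
    \int \tilde F\, d\rho - R(\rho\|\mu) \le (k-1) R(\nu\|\mu) + \log \int e^F\, d\mu^{\otimes k},
\]
uniformly in $\rho$. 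Taking the supremum over $\rho$ and using the variational identity from the first step yields the claim.

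There is no real obstacle: the hypothesis $R(\nu\|\mu) < \infty$ guarantees $R(\pi\|\mu^{\otimes k}) < \infty$ via additivity, and $\int e^F\, d\mu^{\otimes k} < \infty$ is exactly the integrability assumption needed to apply Lemma \ref{lm_entvar} on $X^k$. The measurability of $\tilde F$ (needed to form $\int e^{\tilde F} d\mu$) follows from Fubini applied to the nonnegative measurable function $F$. The only minor subtlety is to note that once $\sup_\rho$ is taken, the supremum equals $\log \int e^{\tilde F} d\mu$, which could a priori be $+\infty$; but our bound shows it is finite, consistent with the statement.
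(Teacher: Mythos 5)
Your proof is correct and follows essentially the same approach as the paper: apply Lemma \ref{lm_entvar} on $(X,\mu)$, use Tonelli and additivity of relative entropy for product measures, and apply Lemma \ref{lm_entvar} again on $(X^k,\mu^{\otimes k})$. The only cosmetic difference is that the paper enlarges the supremum over product measures $\rho\otimes\nu^{\otimes k-1}$ to a supremum over all $\sigma\in\cP(X^k)$ and then invokes the variational identity exactly, whereas you apply the one-sided variational inequality directly to each $\pi=\rho\otimes\nu^{\otimes k-1}$; these are interchangeable.
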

\begin{proof} A simple application of Lemma~\ref{lm_entvar} yields
    \begin{align*}
     &\log \int_X \exp\left( \int_{X^{k-1}} F(x,y)\,d\nu^{\otimes k-1}(y) \right) d\mu(x) \\
     &\hspace{4em} = \sup_{\rho} \biggl\{ \left\langle \rho,\int_{X^{k-1}} F(x,y)\,d\nu^{\otimes k-1}(y)\right\rangle - R(\rho\|\mu)\biggr\} \\
     &\hspace{4em} = \sup_{\rho} \biggl\{ \langle \rho\otimes \nu^{\otimes k-1}, F\rangle - R(\rho\otimes \nu^{\otimes k-1}\|\mu\otimes\mu^{\otimes k-1})\biggr\} + (k-1)R(\nu\|\mu) \\
     &\hspace{4em} \le \sup_{\sigma} \biggl\{ \langle \sigma, F\rangle - R(\sigma\|\mu^{\otimes k})\biggr\} + (k-1)R(\nu\|\mu) 
    = \log \int_{X^k}  e^F d\mu^{\otimes k} + (k-1)R(\nu\|\mu),
    \end{align*}
    which is the desired estimate.
\end{proof}

\section{On measurability and exponential approximations}\label{app_meas}

\subsection{Measurability of integral maps}

In this subsection we give a measurability result for integral maps. This in particular implies measurability of $\cE_V$ and $\cE_V^N$ in Section \ref{sgibbs}. In the following, $X$ and $Y$ are Polish spaces, we recall that $\cP(Y)$ endowed with the weak topology is also Polish; $X$, $Y$ and $\cP(Y)$ are endowed with their Borel $\sigma$-algebras.

\begin{thm}\label{thm_Borel_int_meas}
Given any Borel function
\begin{align*}
    X\times Y\times\cP(Y)\ni(x,y,\mu)\mapsto f(x,y,\mu)\in [-\infty,+\infty],
\end{align*}
then the set $\{\mu\in \cP(Y)\mid f\in L^1(\mu)\}$ is Borel and the mapping
\begin{align*}
    F_f:X\times\cP(Y)\ni(x,\mu)\mapsto 1_{f\in L^1(\mu)} \int_Y f(x,y,\mu)\,d\mu(y)
\end{align*}
is also Borel.
\end{thm}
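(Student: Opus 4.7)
The plan is to prove both claims simultaneously by a functional monotone class argument on $f$, interpreting the set $\{\mu\in\cP(Y)\mid f\in L^1(\mu)\}$ and the indicator $1_{f\in L^1(\mu)}$ as the joint subset $\{(x,\mu)\in X\times\cP(Y):f(x,\cdot,\mu)\in L^1(\mu)\}$ and its indicator. First I would reduce to bounded $f$ by truncation, decomposing $f=f^+-f^-$ at the end.

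The base step is to consider the class of ``product'' functions $f(x,y,\mu)=g(x,\mu)h(y)$ with $g:X\times\cP(Y)\to\R$ and $h:Y\to\R$ bounded Borel. For such $f$, $F_f(x,\mu)=g(x,\mu)\cdot\int_Y h\,d\mu$. Borel measurability of $g$ is assumed; Borel measurability of $\mu\mapsto\int h\,d\mu$ follows from the classical monotone class argument, starting from $h\in C_b(Y)$, for which the map is continuous by the very definition of the weak topology on $\cP(Y)$, and extending to bounded Borel $h$ via the monotone class theorem. Hence $F_f$ is Borel in this base case.

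Next I would apply the functional monotone class theorem on $X\times Y\times\cP(Y)$ to the vector space $\mathcal{H}$ of bounded Borel $f:X\times Y\times\cP(Y)\to\R$ for which $F_f$ is Borel. The class $\mathcal{H}$ contains $1$ and is closed under bounded monotone pointwise convergence: if $f_n\uparrow f$ with $|f_n|\le C$, then dominated convergence applied pointwise in $(x,\mu)$ gives $F_{f_n}(x,\mu)\to F_f(x,\mu)$, so $F_f$ is Borel as a pointwise limit of Borel functions. The algebra of products $g(x,\mu)h(y)$ with $g,h$ bounded continuous is multiplicatively closed, contains $1$, and generates $\cB(X\times\cP(Y))\otimes\cB(Y)=\cB(X\times Y\times\cP(Y))$ (the latter equality using that $X$, $Y$, $\cP(Y)$ are Polish). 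By the functional monotone class theorem, $\mathcal{H}$ contains every bounded Borel $f$.

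Finally, for $f$ valued in $[-\infty,+\infty]$, apply the bounded case to $f_n^{\pm}:=f^{\pm}\wedge n$, so $F_{f_n^{\pm}}$ is Borel and bounded. By monotone convergence in $\mu$, $F_{f^{\pm}}(x,\mu):=\int_Y f^{\pm}(x,y,\mu)\,d\mu(y)\in[0,+\infty]$ equals $\lim_n F_{f_n^{\pm}}(x,\mu)$, hence is Borel as an $[0,+\infty]$-valued map. The set where $f(x,\cdot,\mu)\in L^1(\mu)$ is exactly $\{F_{f^+}<+\infty\}\cap\{F_{f^-}<+\infty\}$, which is Borel, and on this set $F_f=F_{f^+}-F_{f^-}$ is a difference of real-valued Borel functions, hence Borel; multiplying by the Borel indicator and setting $F_f=0$ off this set yields the claim. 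The only point requiring some care is the identification of $\cB(X\times Y\times\cP(Y))$ with the product of Borel $\sigma$-algebras (so that the product functions indeed generate it), which is standard because all three spaces are second countable.
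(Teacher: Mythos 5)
Your proof is correct and, like the paper's, is built around a monotone-class argument in $f$; but the base class differs in a genuine way. The paper first proves (Lemma~\ref{lm_intercon}) that $f\in C_b(X\times Y\times\cP(Y))$ implies $F_f\in C_b(X\times\cP(Y))$ -- a nontrivial continuity statement requiring tightness of finitely many $\mu_n$'s, Stone--Weierstrass on a compact $L\times K_\epsilon\times M$, and Tietze extension -- and then invokes the ``vector subspace containing $C_b$'' form of the monotone class theorem (Theorem~\ref{lm_quickt}). You instead start from the algebra of products $g(x,\mu)h(y)$ with $g,h$ continuous bounded, for which $F_f(x,\mu)=g(x,\mu)\int h\,d\mu$ is trivially continuous, and use the multiplicatively-closed-class version of the functional monotone class theorem together with the identification $\cB(X\times Y\times\cP(Y))=\cB(X)\otimes\cB(Y)\otimes\cB(\cP(Y))$ for Polish spaces. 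Your route is noticeably shorter and avoids Lemma~\ref{lm_intercon} entirely. What it gives up is exactly that lemma's content: the paper needs continuity (not merely Borel measurability) of $\cE_V$ for $V\in C_b(S^k)$ in the proof of Lemma~\ref{lmg_uapprox}, in order to invoke the classical Varadhan lemma there, so the paper cannot dispense with Lemma~\ref{lm_intercon} and simply reuses it here. Your final passage to $[-\infty,+\infty]$-valued $f$ via $f^\pm\wedge n$ and the identification of the integrability set as $\{F_{f^+}<\infty\}\cap\{F_{f^-}<\infty\}$ matches the paper's treatment and is fine.
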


The case when $f(x,y,\mu)=W(y)$ for some measurable $W$ is a classical question, and treated in great generalization in for example \cite[Chapter 8]{Bogachev2007II}. However, for simplicity, in our setting we stick to the case of metric spaces, and adapt an argument of \cite{Jorgensen2015} (Theorem 15.13). First, we provide a generalization of Lemma 7.3.12 of \cite{Dembo10}.
\begin{lm}\label{lm_intercon}
	Suppose that $f$ is in $C_b(X\times Y\times \cP(Y))$. Then $F_f$ is in $C_b(X\times \cP(Y))$.
\end{lm}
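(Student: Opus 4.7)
The boundedness of $F_f$ is immediate, since $|F_f(x,\mu)| \le \|f\|_\infty$ for every $(x,\mu)\in X\times\cP(Y)$. The content of the statement is thus the continuity, and my plan is to establish sequential continuity: fix $(x,\mu)\in X\times\cP(Y)$ and a sequence $(x_n,\mu_n)\to (x,\mu)$, and show that $F_f(x_n,\mu_n)\to F_f(x,\mu)$.

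The natural first move is to split
\[
F_f(x_n,\mu_n) - F_f(x,\mu) = A_n + B_n,
\]
where
\[
A_n := \int_Y \bigl[f(x_n,y,\mu_n)-f(x,y,\mu)\bigr]\,d\mu_n(y),\qquad B_n := \int_Y f(x,y,\mu)\,d(\mu_n-\mu)(y).
\]
The term $B_n$ is the easy one: the map $y\mapsto f(x,y,\mu)$ is in $C_b(Y)$ (with the other two arguments frozen), so $B_n\to 0$ by the very definition of weak convergence $\mu_n\to\mu$.

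For $A_n$ the plan is to combine a tightness argument with uniform continuity of $f$ on a suitable compact set. Since $Y$ is Polish and $\mu_n\to\mu$ weakly, Prohorov's theorem gives that $\{\mu_n\}_{n\in\N}\cup\{\mu\}$ is tight: for every $\varepsilon>0$ there is a compact $K_\varepsilon\subset Y$ such that $\sup_n \mu_n(Y\setminus K_\varepsilon)<\varepsilon$. The sets $\{x_n\}_{n\in\N}\cup\{x\}\subset X$ and $\{\mu_n\}_{n\in\N}\cup\{\mu\}\subset \cP(Y)$ are compact as well (convergent sequences together with their limits), so the product
\[
C_\varepsilon := \bigl(\{x_n\}_n\cup\{x\}\bigr)\times K_\varepsilon \times \bigl(\{\mu_n\}_n\cup\{\mu\}\bigr)
\]
is a compact subset of $X\times Y\times\cP(Y)$, on which $f$ is uniformly continuous. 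Hence
\[
\eta_n(\varepsilon) := \sup_{y\in K_\varepsilon}\bigl|f(x_n,y,\mu_n)-f(x,y,\mu)\bigr| \xrightarrow[n\to\infty]{} 0.
\]
Splitting the integral defining $A_n$ over $K_\varepsilon$ and its complement and bounding the latter trivially by $2\|f\|_\infty$ yields
\[
|A_n| \le \eta_n(\varepsilon) + 2\|f\|_\infty\,\mu_n(Y\setminus K_\varepsilon) \le \eta_n(\varepsilon) + 2\|f\|_\infty\,\varepsilon,
\]
so $\limsup_n |A_n|\le 2\|f\|_\infty\,\varepsilon$ for every $\varepsilon>0$, giving $A_n\to 0$ and completing the proof.

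The only slightly delicate point is the combined tightness/uniform-continuity step used to control $A_n$; once the product set above is identified as compact, the argument reduces to a standard weak-convergence/tightness computation, and no further subtleties arise.
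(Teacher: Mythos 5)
Your proof is correct, and it takes a genuinely simpler route than the paper's. Both proofs rely on the same tightness observation (the set $\{\mu_n\}\cup\{\mu\}$ is tight, providing a compact $K_\varepsilon\subset Y$ carrying most of the mass), and both build the same compact product set. But from there the paper reduces to product-type test functions: it invokes Stone--Weierstrass on the compact set to approximate $f$ uniformly by finite sums of $g_1(x)g_2(y)g_3(\mu)$, then Tietze's extension theorem to extend the $y$-factor beyond $K_\varepsilon$, and finally passes to the limit in three nested parameters $(n,l,\varepsilon)$. Your argument sidesteps both Stone--Weierstrass and Tietze by using directly that a continuous $f$ is \emph{uniformly} continuous on the compact set $C_\varepsilon = (\{x_n\}\cup\{x\})\times K_\varepsilon\times(\{\mu_n\}\cup\{\mu\})$, which is exactly what is needed to make the pointwise convergence $f(x_n,y,\mu_n)\to f(x,y,\mu)$ uniform over $y\in K_\varepsilon$, and you cleanly separate the measure perturbation into the standard weak-convergence term $B_n$. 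The end result is the same estimate with fewer moving parts; the paper's more elaborate detour is presumably motivated by mimicking the convergence-determining-class argument of Dembo--Zeitouni (Lemma~7.3.12) that it explicitly references, but it buys nothing for this lemma that your direct argument does not.
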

\begin{proof}
In the following, we denote $Z=X\times Y\times\cP(Y)$. The idea of the proof is similar to the fact that for Polish spaces $X$ and $Y$, the set of functions $\{f(x)g(y)\;|\;f\in C_b(X),g\in C_b(Y)\}$ is convergence determining for $\cP(X\times Y)$ (see for example Theorem 3.4.5b of \cite{Kurtz86}), which is used in Lemma 7.3.12 of \cite{Dembo10}. Namely, first note for any $g(x,y,\mu):=g_1(x)g_2(y)g_3(\mu)$, with $g_1\in C_b(X)$, $g_2\in C_b(Y)$ and $g_3\in C_b(\cP(Y))$, the boundedness and continuity of $F_g$ is trivial. 

Now, consider a sequence $(x_n,\mu_n)_n$ converging to $(x^*,\mu^*)$. In particular the subsets $L:=\{x_n\} \cup \{x^*\} \subset X$ and $M:=\{\mu_n\}_{n\ge 1} \cup \{\mu^*\} \subset \cP(Y)$ are compact in $X$ and $\cP(Y)$ respectively. In particular, the set $M\subset\cP(Y)$ is tight, i.e., for every $\epsilon>0$ there exists a compact set $K_{\epsilon}\subset Y$ such that $\mu^*(K^c)<\epsilon$ and $\mu_n(K^c_{\epsilon})<\epsilon$ for all $n\ge 1$.

Therefore, fix any $\epsilon>0$. By applying Stone-Weierstrass on the compact set $B_{\epsilon}=L\times K_{\epsilon}\times M$, we find a sequence $(g^{\epsilon,l})_{l\ge 1}\in C_b(B_{\epsilon})$ with $g^{\epsilon,l}(x,y,\mu):=g^{\epsilon,l}_1(x)g^{\epsilon,l}_2(y)g^{\epsilon,l}_3(\mu)$, where $g^{\epsilon,l}_1 \in C_b(L)$, $g^{\epsilon,l}_2 \in C_b(K_{\epsilon})$ and $g^{\epsilon,l}_3\in C_b(M)$, such that 
\[
\lim_{l\to \infty} \|f-g^{\epsilon,l}\|_{C_b(B_\epsilon)} =0. 
\]
Now also fix $l\in\N$. By Tietze's extension theorem we find a $\tilde g_2^{\epsilon,l} \in C_b(Y)$ such that 
\[
 \tilde g_2^{\epsilon,l}(y)=g_2^{\epsilon,l}(y)\;\;\text{on $K_{\epsilon}$},\qquad \|\tilde g_2^{\epsilon,l}\|_{C_b(Y)}\leq \|g_2^{\epsilon,l}\|_{C_b(K_\epsilon)}.
\]
Hence, define on $\tilde B=L\times Y\times M$ the continuous function $\tilde g^{\epsilon,l}(x,y,\mu):=g^{\epsilon,l}_1(x)\tilde g^{\epsilon,l}_2(y)g^{\epsilon,l}_3(\mu)$, and note that by construction $\|\tilde g^{\epsilon,l}\|_{C_b(\tilde B)}\leq \|g^{\epsilon,l}\|_{C_b(B_\epsilon)}$.

We now compute for any $l,n\ge 1$,
\[
\begin{aligned}
|F_f(x^*,\mu^*) -F_f(x_n,\mu_n)| &= \left| \int_Y f(x^*,y,\mu^*)\,d\mu^*(y)-\int_Y f(x_n,y,\mu_n)\,d\mu_n(y)\right|\\
&\hspace{-2em}\leq \left| \int_{K_{\epsilon}} f(x^*,y,\mu^*)\,d\mu^*(y)-\int_{K_{\epsilon}} f(x_n,y,\mu_n)\,d\mu_n(y)\right|+ 2 \epsilon \|f\|_{C_b(Z)} \\
&\hspace{-2em}\leq  \left| \int_{K_{\epsilon}} g^{\epsilon,l}(x^*,y,\mu^*)\,d\mu^*(y)-\int_{K_{\epsilon}} g^{\epsilon,l}(x_n,y,\mu_n)\,d\mu_n(y)\right|\\
&\hspace{7em}+2 \|f-g^{\epsilon,l}\|_{C_b(B_{\epsilon})}+2\epsilon \|f\|_{C_b(Z)}\\
&\hspace{-2em}\leq  \left| \int_{Y} \tilde g^{\epsilon,l}(x^*,y,\mu^*)\,d\mu^*(y)-\int_{Y} \tilde g^{\epsilon,l}(x_n,y,\mu_n)\,d\mu_n(y)\right|\\
&\hspace{4em}+2\epsilon \|g^{\epsilon,l}\|_{C_b(B_{\epsilon})}+2\|f-g^{\epsilon,l}\|_{C_b(B_{\epsilon})}+2\epsilon \|f\|_{C_b(Z)}.
\end{aligned}
\]
Thus, by the continuity of $F_{\tilde g^{\epsilon,l}}$ on $\tilde B$, 
\[
\limsup_{n\to \infty} |F_f(x^*,\mu^*) -F_f(x_n,\mu_n)| \leq 2 \epsilon \|g^{\epsilon,l}\|_{C_b(B_\epsilon)}+2 \|f-g^{\epsilon,l}\|_{C_b(B_\epsilon)}+2\epsilon \|f\|_{C_b(Z)}.
\]
Taking subsequent limits, first in $l\to \infty$ and then in $\epsilon\to 0$ we conclude the proof. 	
\end{proof}
Next, we paraphrase Theorem 4.33 of \cite{Jorgensen2015}. 
\begin{thm}[Monotone class theorem]\label{lm_quickt}
	Let $X$ be a metrizable space, and let $\cF$ be a vector subspace of $B_b(X)$ including $C_b(X)$. Then $\cF=B_b(X)$ if and only if $\cF$ is closed under monotone sequential pointwise limits in $B_b(X)$.  
\end{thm}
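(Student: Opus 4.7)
The ``only if'' direction is immediate: $B_b(X)$ itself is a vector space containing $C_b(X)$ and is trivially closed under monotone sequential pointwise limits to bounded functions, so if $\cF=B_b(X)$ the closure property holds. The substance lies in the converse, and the plan is to follow the classical functional monotone class argument in three steps: promote closed sets into $\cF$, use Dynkin's $\pi$-$\lambda$ lemma to promote all Borel sets, and approximate bounded Borel functions by simple functions.

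\textbf{Step 1 (closed sets).} Fix a metric $d$ compatible with the topology of $X$. For an arbitrary closed set $C\subseteq X$, I will consider the continuous bounded functions
\begin{equation*}
    f_n(x) := \bigl(1 - n\,d(x,C)\bigr)^+,\qquad n\in\N,
\end{equation*}
which decrease monotonically to $1_C$ as $n\to\infty$. Since $C_b(X)\subseteq \cF$, $\cF$ is closed under monotone sequential limits in $B_b(X)$, and $1_C\in B_b(X)$, we immediately obtain $1_C\in \cF$.

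\textbf{Step 2 (Borel sets).} Define $\mathcal{D}:=\{A\in\cB(X) : 1_A\in\cF\}$. I will verify that $\mathcal{D}$ is a Dynkin system: it contains $X$ (since the constant function $1$ lies in $C_b(X)\subseteq\cF$); it is stable under complements (since $1_{A^c}=1-1_A$ and $\cF$ is a vector space); and it is stable under countable disjoint unions (finite disjoint unions reduce to vector-space addition $1_{A\sqcup B}=1_A+1_B$, and countable disjoint unions follow by applying the monotone-limit hypothesis to the increasing partial sums). Since by Step 1 $\mathcal{D}$ contains the family of closed sets, which is a $\pi$-system generating $\cB(X)$, the Dynkin $\pi$-$\lambda$ lemma gives $\mathcal{D}=\cB(X)$.

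\textbf{Step 3 (bounded Borel functions).} It suffices to handle nonnegative $f\in B_b(X)$ (decomposing $f=f^+ -f^-$), and by rescaling I may assume $0\le f\le 1$. The standard dyadic approximation
\begin{equation*}
    s_n(x) := \sum_{k=0}^{2^n-1}\frac{k}{2^n}\, 1_{\{k2^{-n}\le f <(k+1)2^{-n}\}}(x)
\end{equation*}
produces simple Borel functions with $s_n\uparrow f$ pointwise; by Step 2 and the vector-space property, each $s_n\in\cF$, and one final invocation of monotone-limit closure yields $f\in\cF$. Thus $\cF=B_b(X)$.

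\textbf{Main obstacle.} The only delicate point is ensuring that every limit taken actually lies in $B_b(X)$, so that the monotone-closure hypothesis is legitimately applicable. This is automatic throughout: in Step 1 the limit is $1_C\in B_b(X)$; in the disjoint-union argument of Step 2 the partial sums are bounded by $1$; and in Step 3 the simple approximants $s_n$ are uniformly bounded by $\|f\|_\infty$. A secondary point is that the vector-space assumption on $\cF$ does not immediately give closure of $\mathcal{D}$ under arbitrary intersections or general (non-disjoint) unions, which is precisely why Dynkin's $\pi$-$\lambda$ lemma, rather than a direct $\sigma$-algebra verification, is the natural tool in Step 2.
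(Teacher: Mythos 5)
The paper does not actually prove this statement: it is quoted as a paraphrase of Theorem 4.33 of \cite{Jorgensen2015} and used as a black box, so there is no in-paper argument to compare yours against. Your self-contained proof is the standard functional monotone class argument (indicators of closed sets via Lipschitz cut-offs of the distance function, Dynkin's $\pi$--$\lambda$ lemma, dyadic approximation), and it is correct in structure: every monotone limit you invoke does land in $B_b(X)$, and the vector-space and $\lambda$-system bookkeeping is in order. The one slip is in Step 3: with the bins $\{k2^{-n}\le f<(k+1)2^{-n}\}$ for $k=0,\dots,2^n-1$, a point with $f(x)=1$ belongs to no bin, so $s_n(x)=0$ for all $n$ and $s_n\not\to f$ on the level set $\{f=1\}$. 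This is repaired by closing the top bin (use $\{(2^n-1)2^{-n}\le f\le 1\}$ for the last index), or equivalently by adding the term $1_{\{f=1\}}$ to $s_n$; after that correction $s_n\uparrow f$ everywhere, each $s_n$ is still a finite linear combination of indicators of Borel sets and hence lies in $\cF$ by Step 2, and the argument closes as you describe.
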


\begin{proof}[Proof of theorem \ref{thm_Borel_int_meas}]
    Let $\cF$ be the set of bounded Borel-measurable functions given by
	\[
	\cF = \Bigl\{ f \in B_b(X\times Y\times \cP(Y))\; \Big|\; F_f:X\times \cP(Y)\to \R \mbox{ is Borel-measurable}  \Bigr\}.
	\]
	It is clear that $\cF$ is a vector subspace of $B_b(X\times Y\times \cP(Y))$, and by Lemma \ref{lm_intercon} it contains $C_b(X\times Y\times \cP(Y))$. To show stability under monotone pointwise limit, consider any sequence $f_n$ in $\cF$ with $f_n\uparrow f$ and $f$ in $B_b(X\times Y\times \cP(Y))$. For any measure $\mu \in \cP(Y)$, it follows from monotone convergence that
	\[
	\lim_{n\to \infty} F_{f_n}(x,\mu) = \lim_{n\to \infty} \int_{Y} f_n(x,y,\mu) \, d \mu(y) = \int_{Y} f_n(x,y,\mu) \, d \mu(y) = F_f(x,\mu).
	\]
	Hence by the monotone class theorem, we get that $\cF=B_b(X\times Y\times \cP(Y))$, that is $F_f$ is Borel for any Borel bounded function $f$.
	
	For a Borel non-negative function $f$, it follows from approximation with bounded function and monotone convergence that
	\begin{align*}
	    (x,\mu) \mapsto \int_Y f(x,y,\mu)\, d\mu(y)
	\end{align*}
	is Borel, in particular the set $\{\mu \mid f\in L^1(\mu)\}$ is Borel. The case of a general Borel function $f$ follows splitting $f$ into $f^+$ and $f^-$. The proof is complete.
\end{proof}

\begin{cory}\label{thm_minteru}
	Let $S$ be a Polish space and let $V:S^k\to \REx$ be Borel-measurable. Then both $\cE_V, \cE^N_V:\cP(S)\to \REx$ are also Borel-measurable. 
\end{cory}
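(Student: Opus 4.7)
The strategy is to reduce measurability of $\cE_V$ and $\cE_V^N$ to Theorem~\ref{thm_Borel_int_meas} via the tensorization map $T:\cP(S)\to\cP(S^k)$ defined by $T(\mu):=\mu^{\otimes k}$.

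First, I would verify that $T$ is continuous, hence Borel. This is classical: if $\mu_n\to\mu$ weakly in $\cP(S)$, then for any tensor product $g(x_1,\ldots,x_k)=g_1(x_1)\cdots g_k(x_k)$ with $g_i\in C_b(S)$ one has $\int g\,d\mu_n^{\otimes k}=\prod_i\int g_i\,d\mu_n\to\prod_i \int g_i\,d\mu$, and finite linear combinations of such tensor products are convergence-determining on $\cP(S^k)$. (Alternatively, continuity already follows from Lemma~\ref{lm_intercon} applied $k$ times.)

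Next I would apply Theorem~\ref{thm_Borel_int_meas} with $Y=S^k$, $X$ a one-point Polish space, and $f(\ast,y,\nu):=W(y)$ for an arbitrary Borel $W:S^k\to\REx$. The theorem then gives that $A_W := \{\nu\in\cP(S^k) : W\in L^1(\nu)\}$ is Borel in $\cP(S^k)$ and that $\nu\mapsto 1_{A_W}(\nu)\int_{S^k} W\,d\nu$ is Borel. Adding $(+\infty)\cdot 1_{A_W^c}$, which is also Borel, the extended map $G_W:\cP(S^k)\to\REx$ defined by $G_W(\nu):=\int W\,d\nu$ on $A_W$ and $G_W(\nu):=+\infty$ otherwise is Borel. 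This handles the convention $\cE_V=+\infty$ on the non-integrable set.

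Finally I would note that $(S^k)'$ is the complement of the union of the $\binom{k}{2}$ closed diagonals $\{x_i=x_j\}$, hence is open and Borel; in particular $V\cdot 1_{(S^k)'}$ is Borel on $S^k$. Applying the previous step to $W=V$ and to $W=V\cdot 1_{(S^k)'}$ yields Borel maps $G_V$ and $G_{V\cdot 1_{(S^k)'}}$ on $\cP(S^k)$. Then
\[
\cE_V = G_V\circ T, \qquad \cE_V^N(\mu) = 1_{A_V}(\mu^{\otimes k})\int_{(S^k)'} V\,d\mu^{\otimes k} + (+\infty)\cdot 1_{A_V^c}(\mu^{\otimes k}),
\]
where the second identity uses that $V\in L^1(\mu^{\otimes k})$ implies $V\cdot 1_{(S^k)'}\in L^1(\mu^{\otimes k})$. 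Both are thus compositions and products of Borel maps, hence Borel.

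No serious obstacle is expected: the analytic content is already encapsulated in Theorem~\ref{thm_Borel_int_meas}, and the corollary is essentially a functorial packaging. The only point of care is to align the $+\infty$ convention of $\cE_V, \cE_V^N$ with the indicator-based formula output by the theorem, which is done by exploiting that $A_W$ is itself Borel.
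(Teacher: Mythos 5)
Your proof is correct, and it takes a slightly different route from the paper's (implicit) one. The paper iterates the single-variable integration $k$ times: it peels off one integral at a time using Lemma~\ref{lm_intercon} and Theorem~\ref{thm_Borel_int_meas} with $Y = S$ and $X = S^{k-1}, S^{k-2},\ldots$ — this is precisely why Theorem~\ref{thm_Borel_int_meas} is formulated with an integrand depending on both $x$ and $\mu$, since after one integration the intermediate function $\int_S V(x_1,\ldots,x_{k-1},x_k)\,d\mu(x_k)$ carries a $\mu$-dependence that must be tracked through the next steps (cf.\ ``applying Lemma~\ref{lm_intercon} $k$-times'' in the proof of Lemma~\ref{lmg_uapprox}). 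You instead tensorize the state space once, passing through the continuous map $T:\cP(S)\to\cP(S^k)$, $T(\mu)=\mu^{\otimes k}$, and invoke Theorem~\ref{thm_Borel_int_meas} a single time with $Y=S^k$ and an integrand depending on neither $x$ nor $\mu$. For this corollary alone your route is cleaner, avoiding the bookkeeping of nested integrability sets; the paper's route reuses the same $\mu$-dependent machinery it needs elsewhere (e.g.\ Lemma~\ref{lm_Borel_cE}, where the integrand genuinely depends on $\mu$). Your handling of the $+\infty$ convention via the Borel set $A_W$, and of the diagonal via $(S^k)'$ open and the implication $V\in L^1(\mu^{\otimes k})\Rightarrow V\cdot 1_{(S^k)'}\in L^1(\mu^{\otimes k})$, is exactly right. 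One cosmetic point: the expression $1_{A_V}(\mu^{\otimes k})\int_{(S^k)'}V\,d\mu^{\otimes k}$ would be cleaner written as $1_{A_V}(\mu^{\otimes k})\cdot F_{V\cdot 1_{(S^k)'}}(\mu^{\otimes k})$ with $F$ the (everywhere-defined, Borel) map from Theorem~\ref{thm_Borel_int_meas}, to avoid any ambiguity of $0\cdot(\text{undefined})$ off $A_V$.
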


\subsection{Exponential approximation theorem in Polish spaces}

\begin{thm}\label{thm_qap1}
	Suppose $X$ is Polish, $V:X\to\REx$ is Borel measurable such that
	\begin{equation}\label{eq_appol1}
	\int_X e^{\beta |V|} \, d\mu <+\infty\qquad\text{for any $\beta\ge 0$.}\\
	\end{equation}
	Then there exists continuous bounded functions $V_{\lambda}:S\to \R$ such that
	\[
	\lim_{\lambda\to 0}\log \int_X e^{\beta |V-V_{\lambda}|} \, d\mu=0\qquad\text{for any $\beta\ge 0$.}
	\]
\end{thm}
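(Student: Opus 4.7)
The plan is a two-step approximation followed by a diagonal argument. The quantity to control is $\int_X e^{\beta|V-V_\lambda|}\,d\mu$, and since $\log(1+x)\to 0$ as $x\to 0$, it suffices to show this integral tends to $1$ for every $\beta\ge 0$.

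First I would reduce to the bounded case by truncation. Set $V^M:=(V\wedge M)\vee(-M)$, so that $|V-V^M|=(|V|-M)^+\,\mathbf{1}_{|V|>M}\le |V|\,\mathbf{1}_{|V|>M}$. Then the pointwise bound
\[
0\le e^{\beta|V-V^M|}-1 \le (e^{\beta|V|}-1)\,\mathbf{1}_{|V|>M}
\]
combined with dominated convergence (the right-hand side is dominated by $e^{\beta|V|}\in L^1(\mu)$ by hypothesis \eqref{eq_appol1}, and vanishes pointwise as $M\to\infty$) yields $\int_X e^{\beta|V-V^M|}\,d\mu \to 1$ for every $\beta\ge 0$.

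Second, for each fixed bounded Borel function $V^M$ (with $\|V^M\|_\infty\le M$), I would produce continuous bounded approximants via Lusin's theorem. Because $\mu$ is a Borel probability measure on a Polish space, it is automatically Radon, so for every $\epsilon>0$ there exists a closed set $F_\epsilon\subset X$ with $\mu(X\sm F_\epsilon)<\epsilon$ such that $V^M|_{F_\epsilon}$ is continuous; by the Tietze--Dugundji extension theorem I can extend to $\tilde V_\epsilon\in C_b(X)$ with $\|\tilde V_\epsilon\|_\infty\le M$ and $\tilde V_\epsilon\equiv V^M$ on $F_\epsilon$. Since $|V^M-\tilde V_\epsilon|\le 2M$ everywhere and vanishes on $F_\epsilon$,
\[
\int_X e^{\beta|V^M-\tilde V_\epsilon|}\,d\mu \le \mu(F_\epsilon) + e^{2\beta M}\,\mu(X\sm F_\epsilon) \le 1 + e^{2\beta M}\epsilon,
\]
which, for fixed $M$ and $\beta$, can be made arbitrarily close to $1$ by choosing $\epsilon$ small.

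Finally, a diagonal argument glues the two steps. For each $n\in\N$, first pick $M_n$ so large that $\int_X e^{n|V-V^{M_n}|}\,d\mu\le 1+1/n$ (possible by step one), then pick $\epsilon_n$ so small that the resulting $V_n:=\tilde V_{\epsilon_n}\in C_b(X)$ satisfies $\int_X e^{n|V^{M_n}-V_n|}\,d\mu\le 1+1/n$ (possible by step two since $M_n$ is now fixed). Setting $V_\lambda:=V_n$ for $\lambda=1/n$ and applying Cauchy--Schwarz, for any $\beta\ge 0$ and any $n\ge 2\beta$,
\[
\int_X e^{\beta|V-V_n|}\,d\mu
\le \left(\int_X e^{2\beta|V-V^{M_n}|}\,d\mu\right)^{\!1/2}\!\!\left(\int_X e^{2\beta|V^{M_n}-V_n|}\,d\mu\right)^{\!1/2}
\le 1 + O(1/n),
\]
so that $\log\int_X e^{\beta|V-V_n|}\,d\mu\to 0$, as required. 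The main subtlety is that the Lusin error blows up like $e^{2\beta M_n}$, so one must choose $\epsilon_n$ \emph{after} $M_n$ is fixed; the diagonal ordering makes this harmless and, crucially, gives a single sequence $(V_n)$ that works simultaneously for every $\beta\ge 0$.
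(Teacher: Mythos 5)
Your proof is correct and takes essentially the same route as the paper: truncation by dominated convergence, then Lusin--Tietze on the bounded truncation, combined by H\"older/Cauchy--Schwarz and a diagonal choice of parameters. The only cosmetic differences are that you use closed (rather than compact) sets in Lusin's theorem and you spell out the diagonalization more explicitly than the paper does.
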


The argument is similar to the denseness of $C_b(S)$ in $L^p(S)$ spaces, and rely on Tietze's extension theorem and Lusin's Theorem, see for example \cite[Theorems 2.47 and 12.8]{Jorgensen2015} (note that any Polish space $S$ is normal, and so these theorems apply).

\begin{proof}[Proof of Theorem \ref{thm_qap1}]
First, suppose that $V$ is bounded. Then by Lusin's theorem, for any $\epsilon>0$, there exists a compact set $K_{\epsilon}$ such that $\mu(S\setminus K_{\epsilon})<\epsilon$.
	
By Tietze's extension theorem, we obtain a continuous function $V_{\epsilon}$ such that $V_{\epsilon}=V$ on the compact set $K_{\epsilon}$, and satisfying $\sup_{x\in S} |V_{\epsilon}(x)| \leq \sup_{x\in S} |V(x)|$.
	Therefore, for any $\beta \geq 0$,
	\begin{align*}
	\int_X e^{\beta |V-V_{\epsilon}|} \, d\mu &= \int_{K_{\epsilon}} e^{\beta |V-V_{\epsilon}|}\, d\mu+\int_{S\setminus K_{\epsilon}} e^{\beta |V-V_{\epsilon}|}\, d\mu\\
	&=\int_{K_{\epsilon}} e^{0}\, d\mu+\int_{S\setminus K_{\epsilon}} e^{\beta |V-V_{\epsilon}|}\, d\mu \leq 1+\epsilon\cdot e^{2\beta \sup_{x\in S}|V|(x)}.
	\end{align*}
	Note that the choices $\epsilon, K_{\epsilon}, V_{\epsilon}$ are independent of $\beta$, and thus, by continuity of the logarithm,
	\[
	\lim_{\epsilon\to 0} \log \int_X e^{\beta |V-V_{\epsilon}|}\,d\mu =0.
	\]
Next, for the case of unbounded $V$, suppose \eqref{eq_appol1} holds. Let $V_n$ be the bounded truncation of $V$ to the interval $[-n,n]$, i.e. 
\[
V_{n} := \min \{ n, \, \max \{ -n, V\}\},\qquad n\in\N.
\]
It is clear that $V_n$ is bounded and $\lim_{n\to\infty}|V-V_n|(x)\to 0$ pointwise in $x\in X$. Moreover, $|V-V_n|\leq |V|$ for all $n\in\N$. Hence, by the assumption on $V$ and the dominated convergence,
\begin{align*}
\lim_{n \to \infty} \int_X e^{\beta|V-V_n|} \, d\mu 
=1\qquad\text{for any $\beta \geq 0$},
\end{align*}
which implies that
\[
\lim_{n \to \infty} \log \int_X e^{\beta|V-V_n|} \, d\mu=0\qquad\text{for any $\beta \geq 0$}.
\]
From the previous argument on bounded functions, we obtain, for each $n\in\N$, a sequence of bounded continuous functions $(V_{n,\epsilon})_{\epsilon>0}$ such that 
\[
\lim_{\epsilon \to 0} \log \int_X e^{\beta|V_n-V_{n,\epsilon}|} \, d\mu=0\qquad \text{for any $\beta \geq 0$}.
\]
Finally, since by convexity,
\[
\log \int_X e^{\beta|V-V_{n,\epsilon}|} \, d\mu \leq \frac{1}{2}\log \int_X e^{2\beta|V-V_{n}|} \, d\mu+\frac{1}{2}\log \int_X e^{2\beta|V_n-V_{n,\epsilon}|} \, d\mu,
\]
we can find an appropriate sequence $V_{\lambda}:=V_{n(\lambda),\epsilon(\lambda)}$ such that
\[
\lim_{\lambda\to 0} \log \int_X e^{\beta|V-V_{\lambda}|} \, d\mu=0\qquad\text{for any $\beta\geq 0$},
\]
thereby concluding the proof.
\end{proof}

\section{Stochastic estimates and technical parts of Section \ref{s_process}}
\numberwithin{equation}{subsection}

\subsection{Basic facts on Girsanov theorem and Novikov condition}

\begin{thm}[Girsanov]\label{thm_girs}
Let $W$ be an $m$-dimensional Brownian motion on a filtered probability space (satisfying the standard assumption), with general initial law $\law(W_0)$, let $b:[0,T]\times\R^m\rightarrow\R^m$ be a Borel function. Consider the SDE
\begin{equation}\label{SDEsg}
\begin{aligned}
		d X_t &= b(t,X_t) \, d t+ d W_t, \qquad t \in [0,T],\\
		\law(X_0) &= \law(W_0).
\end{aligned}
\end{equation}
If
\begin{equation}\label{eq_Novikov}
\E \left[ \, e^{\frac{1}{2}\int_0^T |b(t,W_t)|^2 \, d t}  \right]< \infty,
\end{equation}
then there exists a weak solution to \eqref{SDEsg}. Its law $\tilde P^X$ is equivalent to the Wiener measure $\mathbb{W}$ (starting with the same initial law of $X_0$) and satisfies,
\begin{equation*}
\frac{d P^X}{d \mathbb{W}}(W):=\exp\left(-\frac{1}{2}\int_0^T |b(t,W_t)|^2 \, d t + \int_0^T b(t,W_t)\cdot d W_t\right).
\end{equation*}
Moreover, if $Y$ is another weak solution to \eqref{SDEsg} (defined possibly on another probability space satisfying the standard assumption), with law $\tilde P^Y$, such that
\begin{align}
    \int_0^T |b(t,Y_t)|^2 dt <\infty \quad \tilde P^Y\text{-a.s.},\label{eq_Girs_constraint}
\end{align}
then $\tilde P^Y$ coincides with $\tilde P^X$.
\end{thm}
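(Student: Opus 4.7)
I would work on the canonical path space $C([0,T];\R^m)$ equipped with Wiener measure $\mathbb{W}$ (initial law $\law(W_0)$), with canonical process $W$ and canonical filtration. Set the stochastic exponential
\[
Z_t := \exp\Bigl( \int_0^t b(s,W_s)\cdot dW_s - \tfrac12 \int_0^t |b(s,W_s)|^2\,ds\Bigr).
\]
Novikov's criterion, applied to hypothesis \eqref{eq_Novikov}, makes $Z$ a true $\mathbb{W}$-martingale with $\E[Z_T]=1$. Setting $dP^X/d\mathbb{W}:=Z_T$ produces a probability measure with the stated density, and the classical Girsanov theorem identifies $\tilde W_t := W_t - W_0 -\int_0^t b(s,W_s)\,ds$ as a Brownian motion under $P^X$; since $Z_0=1$ the initial law is preserved, so the canonical process $W$ under $P^X$ is a weak solution to \eqref{SDEsg}. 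This settles existence together with the density formula.

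For uniqueness, let $Y$ be any other weak solution with $\tilde P^Y:=\law(Y)$ satisfying \eqref{eq_Girs_constraint}. After transferring to the canonical path space (enlarging it if necessary to support the driving noise), one has $w_t = w_0 + \int_0^t b(s,w_s)\,ds + \beta_t$ for some $\tilde P^Y$-Brownian motion $\beta$. I would invert the change of measure using the candidate reverse density
\[
M_t := \exp\Bigl(-\int_0^t b(s,w_s)\cdot d\beta_s -\tfrac12 \int_0^t |b(s,w_s)|^2\,ds\Bigr),
\]
which, substituting $d\beta = dw-b\,dt$, satisfies the pathwise identity $M_t = 1/Z_t$ with $Z$ now viewed as a functional of the canonical path. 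If $M$ were a true $\tilde P^Y$-martingale, then $M_T\,d\tilde P^Y = d\mathbb{W}$, hence $d\tilde P^Y/d\mathbb{W}=1/M_T=Z_T=dP^X/d\mathbb{W}$, giving $\tilde P^Y=P^X$.

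The main obstacle is precisely that Novikov is not assumed under $\tilde P^Y$, so $M$ is a priori only a nonnegative local martingale. I would handle this via localization at the stopping times $\tau_n := \inf\{t:\int_0^t|b(s,w_s)|^2\,ds\ge n\}\wedge T$. Then $M^{\tau_n}$ is bounded, hence a genuine $\tilde P^Y$-martingale, and the measure $\mathbb{W}_n$ with $d\mathbb{W}_n/d\tilde P^Y = M_{\tau_n}$ turns $w$ stopped at $\tau_n$ into a stopped Brownian motion starting from $\law(W_0)$; so $\mathbb{W}_n$ and $\mathbb{W}$ coincide on $\cF_{\tau_n}$, which rewrites as $d\tilde P^Y|_{\cF_{\tau_n}}=Z_{\tau_n}\,d\mathbb{W}|_{\cF_{\tau_n}}$. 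Assumption \eqref{eq_Girs_constraint} gives $\tau_n\uparrow T$ $\tilde P^Y$-a.s., while \eqref{eq_Novikov} gives $\int_0^T|b(s,W_s)|^2\,ds<\infty$ $\mathbb{W}$-a.s.\ and hence $\tau_n\uparrow T$ also $\mathbb{W}$-a.s.; combined with the uniform integrability of $(Z_{\tau_n})$ inherited from $\E_{\mathbb{W}}[Z_T]=1$ via optional sampling, passing to the limit $n\to\infty$ yields $\tilde P^Y = Z_T\,\mathbb{W} = P^X$ on $\cF_T$, completing the uniqueness claim.
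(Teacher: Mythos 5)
Your proof is correct and takes a genuinely more self-contained route than the paper's, which simply cites the classical Girsanov theorem for existence and Liptser--Shiryaev (Theorem 7.7) for uniqueness in the Dirac-initial-condition case, then handles the general initial law $\rho_0$ by conditioning on $X_0 = x$. You instead give a direct proof of uniqueness via localization: after transferring $Y$ to canonical path space and writing $w_t = w_0 + \int_0^t b(s,w_s)\,ds + \beta_t$ with $\beta$ a $\tilde P^Y$-Brownian motion (which is a legitimate transfer since $\beta$ is $\cF^w$-adapted and has independent increments inherited from the ambient filtration), you invert the change of measure locally at $\tau_n = \inf\{t : \int_0^t |b|^2\,ds \ge n\}\wedge T$, identify $\tilde P^Y|_{\cF_{\tau_n}} = Z_{\tau_n}\,\mathbb{W}|_{\cF_{\tau_n}}$, and pass to the limit using $\tau_n\uparrow T$ a.s.\ under both measures together with the $\mathbb{W}$-uniform integrability of $Z_{\tau_n}$. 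This is cleaner in the sense that it handles the general initial law directly without the conditioning step, and it makes transparent exactly where assumption \eqref{eq_Girs_constraint} is used (namely, to get $\tau_n \uparrow T$ $\tilde P^Y$-a.s., the only input that prevents $\tilde P^Y$-mass escaping before the localization closes up).

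One small slip: you assert ``$M^{\tau_n}$ is bounded, hence a genuine $\tilde P^Y$-martingale.'' The stopped exponential $M_{t\wedge\tau_n}=\exp(-\int_0^{t\wedge\tau_n} b\cdot d\beta - \tfrac12\int_0^{t\wedge\tau_n}|b|^2\,ds)$ is \emph{not} bounded---only its quadratic variation $\int_0^{\tau_n}|b|^2\,ds\le n$ is. The correct justification is Novikov's criterion applied to the stopped process: $\E_{\tilde P^Y}[\exp(\tfrac12\int_0^{\tau_n}|b|^2\,ds)]\le e^{n/2}<\infty$, hence $M^{\tau_n}$ is a true martingale. With that fix the argument goes through. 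Also, for the final limit one can avoid invoking uniform integrability explicitly by testing on $A_n := A\cap\{\tau_n = T\}\in\cF_{\tau_n}$, writing $\tilde P^Y(A_n)=\E_\mathbb{W}[Z_T \mathbf{1}_{A_n}]$, and letting $n\to\infty$ by monotone convergence on both sides; but your optional-sampling reasoning is equally valid.
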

	
This result is classical, here we recall uniqueness, in the line of \cite{Fedrizzi2009}, Section 3.
	
\begin{proof}
Existence and the representation formula are a classical consequence of Girsanov theorem, which can be applied thanks to Novikov condition \eqref{eq_Novikov}. When the initial distribution $\rho_0$ is a Dirac delta, uniqueness follows from \cite[Theorem 7.7]{LipShi2001}. Uniqueness in the general case follows from conditioning $X_0$ to be a single point.
\end{proof}

\begin{lm}\label{lm_estnk}

Let $W$ be an $m$-dimensional Brownian motion and let $b:\R^m\to\R$ be a Borel function such that
\begin{equation*}
\E \left[ \, e^{2\int_0^T |b(t,W_t)|^2 dt }  \right]< \infty.
\end{equation*}
Then 
\begin{equation*}
\E \left[ \, e^{\int_0^T b(t,W_t)\cdot d W_t}  \right]\le  \E \left[ \, e^{2\int_0^T |b(t,W_t)|^2 dt }  \right]^{\frac{1}{2}}
\end{equation*}
\end{lm}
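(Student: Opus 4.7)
The plan is to rewrite the integrand as a product of (the square root of) a Doléans--Dade exponential martingale and an explicit exponential of $|b|^2$, and then apply Cauchy--Schwarz. Specifically, I would introduce
\[
M_t := \exp\!\left(2\int_0^t b(s,W_s)\cdot dW_s - 2\int_0^t |b(s,W_s)|^2\,ds\right), \qquad t\in[0,T].
\]
The hypothesis $\E\!\left[e^{2\int_0^T |b(t,W_t)|^2 dt}\right]<\infty$ is precisely Novikov's condition $\E\!\left[\exp\!\left(\tfrac12\int_0^T |2b(t,W_t)|^2\,dt\right)\right]<\infty$ applied to the integrand $2b$. Hence $(M_t)_{t\in[0,T]}$ is a genuine martingale (not merely a local/supermartingale) with $\E[M_T]=1$.

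Next, I would observe the algebraic identity
\[
\exp\!\left(\int_0^T b(t,W_t)\cdot dW_t\right) = M_T^{1/2}\,\exp\!\left(\int_0^T |b(t,W_t)|^2\,dt\right),
\]
which follows at once from the definition of $M_T$. By the Cauchy--Schwarz inequality,
\[
\E\!\left[e^{\int_0^T b(t,W_t)\cdot dW_t}\right] \le \E[M_T]^{1/2}\,\E\!\left[e^{2\int_0^T |b(t,W_t)|^2\,dt}\right]^{1/2} = \E\!\left[e^{2\int_0^T |b(t,W_t)|^2\,dt}\right]^{1/2},
\]
which is the claimed bound.

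The only non-routine point is the verification that $M_t$ is a true martingale; this is handled by Novikov's criterion with the doubled integrand, which matches the exponent $2$ appearing in the hypothesis. Everything else is just Cauchy--Schwarz applied to the factorization $\exp(\int b\cdot dW) = M_T^{1/2}\exp(\int |b|^2\,dt)$.
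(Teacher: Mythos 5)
Your proof is correct and follows essentially the same route as the paper: apply Novikov's criterion to the Doléans exponential with integrand $2b$ (so that the hypothesis $\E[e^{2\int_0^T|b|^2\,dt}]<\infty$ is exactly Novikov's condition), then factor $e^{\int b\cdot dW}=M_T^{1/2}e^{\int|b|^2\,dt}$ and apply Cauchy--Schwarz. The paper phrases the last step as a H\"older estimate, but that is precisely the Cauchy--Schwarz argument you write out, so the two proofs coincide.
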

\begin{proof}
The proof is classical, we sketch the idea: By Novikov criterion, the exponential local martingale
\begin{align*}
    \exp\left(-\frac{1}{2}\int_0^T |2 b(t,W_t)|^2 dt  + \int_0^T (2 b)(t,W_t)\cdot d W_t\right)
\end{align*}
is a true martingale, in particular it has expectation $1$. Then it is enough to apply H\"older inequality to get the required estimate.
\end{proof}

\subsection{Definition of the log-densities in Subsection \ref{s_proc_expl}}
\label{s_def_log_densities}

Here we give the precise definition of $\cE^N_b$ and $\cE_b$ in Subsection \ref{s_proc_expl}.

We recall that $S=C([0,T];\R^d)$ and $\mathbb{W}$ is the Wiener measure on $S$. For $b:[0,T]\times\R^d\times\cP(\R^d) \rightarrow \R^d$ Borel function, we define
\begin{align*}
V_b^1(x,\mu) := \int_0^T |b_t(x_t,\mu_t)|^2 dt,\quad x\in S,\,\mu\in \cP(S).
\end{align*}
Note that $V^1_b$ is a Borel map (by Fubini theorem applied to the map $(t,x,\mu)\mapsto |b_t(x_t,\mu_t)|^2$). For $\mu$ in $\cP(S)$, if $V^1_b(x,\mu)$ is finite for $\mathbb{W}$-a.e. $x$, then we can define the stochastic integral
\begin{align}
\int_0^T b_t(x_t,\mu_t) \cdot dx_t,\label{eq_stoch_int_b}
\end{align}
on the space $(S,(\cG_t)_t,\mathbb{W})$, where $\cG_t$ is the $\sigma$-algebra generated on $S$ by the $\mathbb{W}$-negligible sets and by the projection $\pi_{[0,t]}:S\rightarrow S_t=C([0,t];\R^d)$ on $[0,t]$; in particular $\cG_T$ is the completion of $\cB(S)$ with respect to $\mathbb{W}$. Hence we can define a map $V^2_b(\cdot,\mu):S\rightarrow \R$ which is a representative of the stochastic integral \eqref{eq_stoch_int_b} and is measurable with respect to $\cB(S)$.

Now we define
\begin{align*}
\cE_b(\mu) := \begin{cases}
    \int_S \left(\frac12 V^1_b(x,\mu) -V^2_b(x,\mu)\right) d\mu & \text{if }V^1_b(\cdot,\mu),V^2_b(\cdot,\mu) \in L^1(S,\mu), \\
    0 & \text{otherwise}
\end{cases}\qquad \mu\in\cP(S)
\end{align*}
(for $\mu$ which is absolutely continuous with respect to $\mathbb{W}$, $\cE(\mu)$ does not depend on the specific choice of $V^2_b(\cdot,\mu)$). Note that, if $R(\mu\|\mathbb{W})<\infty$ and $\E [e^{\frac12 V^1_b(W,\mu)}] <\infty$, then $V^1_b(\cdot,\mu)$ and $V^2_b(\cdot,\mu)$ are in $L^1(\mu)$: indeed, by Lemma \ref{lm_entvar},
\begin{align*}
&\frac12 \int_S V^1_b(x,\mu)\, d\mu \le R(\mu\|\mathbb{W}) + \log \int_S e^{\frac12 V^1_b(x,\mu)} d\mathbb{W} <\infty,
\end{align*}
and, by Lemmas \ref{lm_entvar} and \ref{lm_estnk} (using $e^{|a|}\le e^a+e^{-a}$),
\begin{align*}
\frac12 \int_S |V^2_b(x,\mu)| d\mu &\le R(\mu\|\mathbb{W}) + \log \left(\int_S e^{\frac12 V^2_b(x,\mu)} d\mathbb{W} + \int_S e^{-\frac12 V^2_b(x,\mu)} d\mathbb{W} \right)\\
&\le R(\mu\|\mathbb{W}) + \log \left(2\int_S e^{\frac12 V^1_b(x,\mu)} d\mathbb{W}\right)<\infty.
\end{align*}
In Lemma \ref{lm_Borel_cE} we show that $\cE_b$ is Borel (at least on the set $\{\mu\mid R(\mu\|\mathbb{W})<\infty\}$).

Coming to $\cE^N_b$, we recall that $W^i$, $1\le i\le N$, are independent $d$-dimensional Brownian motions on some filtered probability space $(\Omega,(\cF_t)_t,\mathbb{P})$ (under the standard assumption) and $z^N_{\boldsymbol{W}}$ is the empirical measure associated with $W^i$. We assume on $b$ that
\begin{align*}
\int_S V^1_b(x,z^N_{\boldsymbol{W}})\, dz^N_{\boldsymbol{W}}(x) <\infty \qquad \mathbb{P}\text{-a.s.}.
\end{align*}
Under this assumption, we can define the stochastic integral
\begin{align}
\int_0^T b_t(x_t,\mu_t) \cdot dx_t\label{eq_stoch_int_b_N}
\end{align}
on the space $(\bar{\Omega},(\cH_t)_t,\bar{P})$. Here $\bar{\Omega}=S\times \cP(S)$ and $\bar{P}$ is the law of $W^1\otimes z^N_{\boldsymbol{W}}$, or equivalently of $W^i\otimes z^N_{\boldsymbol{W}}$ for any $1\le i\le N$, under $\mathbb{W}$. Also $\cH_t$ is the $\sigma$-algebra on $\bar{\Omega}$ generated by the $\bar{P}$-negligible sets and by $\pi_{[0,t]}\otimes (\pi_{[0,t]})_\#$, where $\pi_{[0,t]}:S\rightarrow S_t=C([0,t];\R^d)$ is the projection on time $[0,t]$ and $(\pi_{[0,t]})_\#:\cP(S) \rightarrow \cP(S_t)$ is the corresponding image measure map; in particular, $\cH_T$ is the completion under $\bar{P}$ of $\cB(S)\otimes \cB(\cP(S))$. Hence we can define a map $V^{2,N}_b:S\times \cP(S)\rightarrow \R$ which is a representative of the stochastic integral \eqref{eq_stoch_int_b_N} and is measurable with respect to $\cB(S)\otimes \cB(\cP(S))$. Note that, for every $1\le i\le N$,
\begin{align}
V^{2,N}_b(W^i,z^N_{\boldsymbol{W}}) = \int_0^T b_t(W^i_t,z^N_{\boldsymbol{W},t})\, dW^i_t \qquad \mathbb{P}\text{-a.s.}\label{eq_stoch_int_def}
\end{align}
and that, $\mathbb{P}$-a.s., $V^{2,N}_b(\cdot,z^N_{\boldsymbol{W}})$ is in $L^1(z^N_{\boldsymbol{W}})$.

Now we define
\begin{align*}
\cE^N_b(\mu) :=
\begin{cases}
    \int_S \left(\frac12 V_b^1(x,\mu) -V^{2,N}_b(x,\mu)\right) d\mu & \text{if }V^1_b(\cdot,\mu),V^{2,N}(\cdot,\mu)\in L^1(S,\mu),\\
    0 & \text{otherwise}
\end{cases}
\qquad \mu\in\cP(S).
\end{align*}
By Theorem \ref{thm_Borel_int_meas}, the function $\cE^N_b$ is Borel.

\begin{lm}\label{lm_Borel_cE}
Assume that $\E [e^{\beta V^1_b(W,\mu)}] <\infty$ for every $\beta>0$. Then the map 
\[
    \cP(S)\ni \mu\mapsto \cE_b(\mu)1_{R(\mu\|\mathbb{W})<\infty}\quad\text{is Borel.}
\]
\end{lm}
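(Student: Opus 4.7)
The set $\{R(\mu\|\mathbb{W})<\infty\}$ is Borel as a sublevel set of a lower-semicontinuous function, so we may treat each of the two terms in the definition of $\cE_b$ separately on this set. The first term, $\tfrac12\int_S V^1_b(x,\mu)\,d\mu(x)$, is immediate: $(x,\mu)\mapsto V^1_b(x,\mu)=\int_0^T|b_t(x_t,\mu_t)|^2\,dt$ is jointly Borel by Fubini, so Theorem~\ref{thm_Borel_int_meas} gives Borel measurability of $\mu\mapsto\int V^1_b\,d\mu$, and Lemma~\ref{lm_entvar} combined with the exponential integrability hypothesis ensures finiteness on $\{R(\mu\|\mathbb{W})<\infty\}$.

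The main task is to construct a jointly Borel representative $\tilde V^2_b:S\times\cP(S)\to\R$ of the stochastic integral such that $\tilde V^2_b(\cdot,\mu)=V^2_b(\cdot,\mu)$ $\mathbb{W}$-a.s.\ (and therefore $\mu$-a.s., since $\mu\ll\mathbb{W}$) for every $\mu$ with $R(\mu\|\mathbb{W})<\infty$; applying Theorem~\ref{thm_Borel_int_meas} to $\tilde V^2_b$ will then yield the Borel measurability of $\mu\mapsto\int \tilde V^2_b\,d\mu=\int V^2_b\,d\mu$ on that set, with the $L^1(\mu)$-integrability guaranteed by the entropy bound already sketched in Subsection~\ref{s_def_log_densities} (using Lemmas~\ref{lm_entvar} and~\ref{lm_estnk}). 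The construction proceeds by a monotone-class argument: let $\cH$ be the class of bounded Borel $b$ admitting such a representative, observe that $\cH$ contains bounded continuous $b$ (for which the dyadic Riemann sums
\[
    R^n(x,\mu):=\sum_{k=0}^{2^n-1} b_{kT/2^n}(x_{kT/2^n},\mu_{kT/2^n})\cdot(x_{(k+1)T/2^n}-x_{kT/2^n})
\]
are jointly Borel and converge $\mathbb{W}$-a.s.\ to $V^2_b(\cdot,\mu)$ as $n\to\infty$ by standard It\^o theory for continuous integrands, hence $\tilde V^2_b:=\limsup_n R^n$ works), and check that $\cH$ is closed under bounded monotone limits: if $b^m\uparrow b$ boundedly, It\^o isometry together with Lemma~\ref{lm_estnk} gives $V^2_{b^m}(\cdot,\mu)\to V^2_b(\cdot,\mu)$ in $L^p(\mathbb{W})$ for every $p<\infty$, and the exponential-moment control permits extraction of a subsequence with summable error so that Borel--Cantelli produces a $\mu$-independent $\mathbb{W}$-a.s.\ convergent subsequence. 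The monotone class theorem then extends $\tilde V^2_b$ to all bounded Borel $b$, and truncation $b^{(n)}:=(b\wedge n)\vee(-n)$ removes the boundedness assumption, the passage to the limit being controlled by the exponential integrability hypothesis combined once more with Lemma~\ref{lm_estnk}.

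The principal obstacle is to make the $L^p(\mathbb{W})$-convergence rates uniform in $\mu$ on sublevel sets of $R(\cdot\|\mathbb{W})$, so that Borel--Cantelli yields $\mathbb{W}$-a.s.\ convergence along a single deterministic subsequence simultaneously for every $\mu$ in such a sublevel set. This uniformity comes from applying Lemma~\ref{lm_entvar} to bound $\int|\cdot|^p\,d\mu$ in terms of $R(\mu\|\mathbb{W})$ and exponential moments under $\mathbb{W}$, and then Lemma~\ref{lm_estnk} to control exponential moments of the stochastic integrals and their Riemann approximations by those of $V^1_b(W,\mu)$, which are finite by hypothesis. A diagonal extraction across the sublevel sets $\{R(\mu\|\mathbb{W})\le M\}$ as $M\to\infty$ then yields a single jointly Borel $\tilde V^2_b$ valid on $\{R(\mu\|\mathbb{W})<\infty\}$, completing the proof.
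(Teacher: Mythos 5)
Your decomposition into the $V^1_b$ and $V^2_b$ parts, and the observation that $\{R(\cdot\|\mathbb{W})<\infty\}$ is Borel, match the paper. Where you diverge is in the treatment of the stochastic-integral term: you try to manufacture a \emph{jointly Borel representative} $\tilde V^2_b:S\times\cP(S)\to\R$ and then feed it to Theorem~\ref{thm_Borel_int_meas}, whereas the paper never constructs such a representative. Instead, the paper works directly with the integrated quantity $F(\mu)=\int_S V^2_b(x,\mu)\,d\mu(x)\,1_{R(\mu\|\mathbb{W})<\infty}$: it defines the jointly Borel Riemann sum $I^n(\gamma,\mu)=\sum_i b(t_i,\gamma_{t_i},\mu_{t_i})\cdot(\gamma_{t_{i+1}}-\gamma_{t_i})$, sets $F_n(\mu)=\E[I^n(W,\mu)\tfrac{d\mu}{d\mathbb{W}}(W)]\,1_{R(\mu\|\mathbb{W})<\infty}$ (Borel by Theorem~\ref{thm_Borel_int_meas}), and then, using Lemma~\ref{lm_entvar} and Lemma~\ref{lm_estnk}, shows
\[
\beta\,|F(\mu)-F_n(\mu)|\le R(\mu\|\mathbb{W})+\log\Bigl(2\,\E\bigl[e^{2\beta^2\int_0^T|b_t(W_t,\mu_t)-b^n(t,W,\mu)|^2dt}\bigr]\Bigr),
\]
which for each \emph{fixed} $\mu$ tends to $R(\mu\|\mathbb{W})+\log 2$ as $n\to\infty$ by dominated convergence; letting $\beta\to\infty$ then gives $F_n(\mu)\to F(\mu)$ pointwise. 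No uniformity over $\mu$ is needed, and no almost-sure convergence of Riemann sums is needed.

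The genuine gap in your version is exactly in what you flag as the ``principal obstacle.'' You claim that the $L^p(\mathbb{W})$-convergence rate of the Riemann sums $R^n(\cdot,\mu)$ to $V^2_b(\cdot,\mu)$ is uniform over $\{R(\mu\|\mathbb{W})\le M\}$, and that ``this uniformity comes from applying Lemma~\ref{lm_entvar}.'' But Lemma~\ref{lm_entvar} bounds $\int f\,d\mu$ by $R(\mu\|\mathbb{W})+\log\E_{\mathbb{W}}[e^f]$; it says nothing about $L^p(\mathbb{W})$ norms. The quantity $\|R^n(\cdot,\mu)-V^2_b(\cdot,\mu)\|_{L^2(\mathbb{W})}^2=\E_{\mathbb{W}}\!\int_0^T|b-b^n|^2(t,W,\mu)\,dt$ depends on $\mu$ \emph{through the integrand} (via $\mu_t$), not through a measure of integration, so the entropy inequality does not apply. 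Uniform convergence of this quantity over the compact sublevel set would require an equicontinuity argument in $\mu$ that you do not supply (Dini's theorem fails because the convergence is not monotone). If you instead switch to controlling $\int|R^n-V^2_b|^p\,d\mu$ via Lemma~\ref{lm_entvar}---which \emph{would} give uniformity over sublevel sets---you are then establishing $\mu$-a.s.\ convergence and in effect reproducing the paper's bound on $|F(\mu)-F_n(\mu)|$; at that point the Borel--Cantelli/$\limsup$/diagonal-extraction machinery is an unnecessary detour, since one can conclude directly that $F$ is a pointwise limit of the Borel functions $F_n$. A secondary soft spot is your opening claim that dyadic Riemann sums converge $\mathbb{W}$-a.s.\ ``by standard It\^o theory for continuous integrands'': standard It\^o theory gives $L^2$/in-probability convergence, and almost-sure convergence along the full dyadic sequence for merely continuous adapted integrands is not a standard fact; one ordinarily needs to pass to a subsequence, which circles straight back to the uniformity problem.
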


\begin{proof}
The map
\begin{align*}
\mu\mapsto \int_S \frac12 V^1_b(x,\mu)\, d\mu
\end{align*}
is Borel by Theorem \ref{thm_Borel_int_meas}, so it is enough to show Borel measurability of
\begin{align*}
F:\mu\mapsto \left(\int_S \frac12 V^2_b(x,\mu)\,d\mu \right)1_{R(\mu\|\mathbb{W})<\infty} = \E \left[\int_0^T b_t(x_t,\mu_t)\cdot dW_t\, \frac{d\mu}{d\mathbb{W}}(W)\right] 1_{R(\mu\|\mathbb{W})<\infty}.
\end{align*}

We start with the case of $b$ in $C_b([0,T]\times \R^d\times \cP(\R^d))$ ($\cP(\R^d)$ being endowed with the weak topology). We take a sequence $\Pi^n$ of partitions $0=t_0< t_1<\ldots <t_n=T$ on $[0,T]$ with size tending to $0$ in $n$. For each $n$, we call $b^n:[0,T]\times S \times \cP(S)\rightarrow \R$ the Borel function
\begin{align*}
b^n(t,x,\mu) = \sum_i b(t_i,x_{t_i},\mu_{t_i}) 1_{t\in[t_i,t_{i+1})}
\end{align*}
and $I^n:S\times \cP(S) \rightarrow \R$ the Borel function defined by
\begin{align*}
I^n(\gamma,\mu) = \sum_i b(t_i,\gamma_{t_i},\mu_{t_i}) \cdot (\gamma_{t_{i+1}}-\gamma_{t_i}).
\end{align*}
Again by Theorem \ref{thm_Borel_int_meas}, the map
\begin{align*}
F_n:\mu \mapsto \E [I^n(W,\mu)\frac{d\mu}{d\mathbb{W}}(W)]1_{R(\mu\|\mathbb{W})<\infty}
\end{align*}
is Borel. Now, for each $\mu$ with $R(\mu\|\mathbb{W})<\infty$, we have by Lemmas \ref{lm_entvar} and \ref{lm_estnk}, for every $\beta>0$,
\begin{align*}
\beta|F(\mu)-F_n(\mu)|&\le \log \E[e^{\beta|V_b^2(W,\mu)-I^n(W,\mu)|}]+R(\mu\|\mathbb{W})\\
&= \log \E[e^{\beta|\int_0^T (b_t(W_t,\mu_t)-b^n(t,W,\mu)) \cdot dW_t|}]+R(\mu\|\mathbb{W})\\
&\le \log \left( 2\E[e^{2\beta^2\int_0^T |b_t(W_t,\mu_t)-b^n(t,W,\mu)|^2 \cdot dt}] \right) +R(\mu\|\mathbb{W}).
\end{align*}
Since $b$ is continuous, $ b_t(W_t,\mu_t)-b^n(t,W,\mu)$ tends to $0$ for every $t$ in $[0,T)$ and every $W$ and $\mu$. Hence, for every fixed $\beta>0$, by dominated convergence theorem and boundedness of $b$, $\E[e^{2\beta^2\int_0^T |b_t(W_t,\mu_t)-f^n(t,W,\mu)|^2 \cdot dt}]$ tends to $1$ and so
\begin{align*}
\limsup_n |F(\mu)-F_n(\mu)|\le \frac{1}{\beta} (\log 2 +R(\mu\|\mathbb{W})).
\end{align*}
By arbitrariness of $\beta$, $F$ is the pointwise limit of the Borel functions $F_n$, hence $F$ is Borel (for $b$ continuous and bounded).

The case of $b$ Borel bounded follows from the case of $b$ continuous bounded via a monotone class argument (cf.\ Theorem \ref{lm_quickt}): the stability assumption needed for the monotone class theorem can be verified as in the proof of convergence of $F_n$ to $F$. Finally, the case of general $b$ (satisfying $\E[e^{\beta V^1_b(W,\mu)}]<\infty$ for every $\beta$) follows approximating $b$ with bounded $b^n$ and proceeding as in the proof of convergence of $F_n$ to $F$. The proof is complete.
\end{proof}

\subsection{Proof of Lemma \ref{lm_prep1} and relative entropy representation}\label{s_proof_SDE_Gibbslike}

In this subsection, we assume the setting at the beginning of Section \ref{s_process}.

\begin{proof}[Proof of Lemma \ref{lm_prep1}]
The SDE \eqref{eq_SDEm} is an SDE on $\R^{d N}$ for the vector $\boldsymbol{X}^N$, where the $i$-th component of the drift is $(t,\x)\mapsto b^N_t(x_i,z^N_\x)$. Note that, for this SDE, Novikov condition is satisfies, indeed
\begin{align*}
\E \left[e^{\frac12 \sum_{i=1}^N\int_0^T |b^N_t(W^i_t,z^N_{\boldsymbol{W},t})|^2 dt}\right] = \E\left[\exp\left(\frac{N}{2} \int_S \int_0^T |b^N_t(x_t,z^N_{\boldsymbol{W},t})|^2 dt\, dz^N_{\boldsymbol{W}}(x)\right)\right] <\infty.
\end{align*}
Girsanov theorem gives then the existence of a weak solution $\boldsymbol{X}$, with law $\tilde{Q}^N_{b^N}$. The uniqueness in law condition \eqref{eq_Girs_constraint} reads here
\begin{align*}
\sum_{i=1}^N \int^T_0 |b^N_t(x^i_t,z^N_{\x,t})|^2 dt <\infty \quad \tilde{Q}^N_{b^N}\text{-a.s.},
\end{align*}
which is equivalent to \eqref{eq_particle_uniq}. The representation formula of the law $\tilde{Q}^N_{b^N}$ in Girsanov theorem reads here (recall the definition of $V^1_b$ and $V^{2,N}_b$ in \eqref{eq_V1_V2})
\begin{align*}
\frac{d \tilde{Q}_{b^N}^N}{d \tilde{P}^N}(\boldsymbol{W}) &=\exp\left(-\frac{1}{2}\sum_{i=1}^N\int_0^T |b^N_t(W^i_t,z^N_{\boldsymbol{W},t})|^2 \, d t + \sum_{i=1}^N\int_0^T b^N_t(W^i_t,z^N_{\boldsymbol{W},t})\cdot d W^i_t\right)\\
&=\exp\left(-\frac{1}{2}N \int_{S}  V^1_{b^N} (x,z^N_{\boldsymbol{W}}) dz^N_{\boldsymbol{W}} (x) + N \int_{S}  V^{2,N}_{b^N} (x,z^N_{\boldsymbol{W}}) dz^N_{\boldsymbol{W}} (x)\right)\\
&= \exp\left(-N \cE^N_{b^N}(z^N_{\boldsymbol{W}})\right),
\end{align*}
where we used \eqref{eq_stoch_int_def} for the stochastic integral. The first formula in \eqref{eq_Girs_density_particle} is proved. The second formula (for the law $Q^N_{b^N}$ of the empirical measure) follows from the first one by a standard argument from measure theory.
\end{proof}

\begin{lm}[Relative entropy representation of $\cE$]\label{lm_prep2}
Assume that
\begin{align}
\E \left[e^{\frac12 \int^T_0 |b_t(W_t,\mu_t)|^2 dt}\right]<\infty \quad \forall\mu\text{ with }R(\mu\|\mathbb{W}) <\infty.\label{eq_Novikov_McKVla}
\end{align}
Then we have the following representation formula:
\begin{align}
R(\mu\|\mathbb{W}) + \cE_{b}(\mu)=
 \begin{cases}
    R(\mu\|\mathbb{W}^{\mu}) & \mbox{if $R(\mu\|\mathbb{W})<\infty$} \\
    + \infty, & \mbox{otherwise},
\end{cases}\label{eq_entropy_rep}
\end{align}
where $\mathbb{W}^{\mu}$ is the law of the process $X^{\mu}$ satisfying the SDE
\begin{equation*}
d X^{\mu}_t = b_t(X^{\mu}_t,\mu_t) \, d t + d W_t
\end{equation*}
with initial law $\rho_0$ (the law $\mathbb{W}^{\mu}$ exists and is uniquely determined by Girsanov theorem \ref{thm_girs}).

Moreover, when $b$ is in the class $\FLip$, the restriction $R(\mu\|\mathbb{W})<\infty$ in \eqref{eq_entropy_rep} may be dropped.
\end{lm}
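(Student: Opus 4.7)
The argument is a direct change-of-measure computation, so the work lies in checking that every term on both sides is well-defined once $R(\mu\|\mathbb{W})<\infty$. First I would dispose of the case $R(\mu\|\mathbb{W})=+\infty$: by the convention defining $\cE_b(\mu)$ in Subsection~\ref{s_def_log_densities}, the value $\cE_b(\mu)$ is either real-valued or set equal to $0$, so $R(\mu\|\mathbb{W})+\cE_b(\mu)=+\infty$, matching the stated right-hand side.

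Now fix $\mu\in\cP(S)$ with $R(\mu\|\mathbb{W})<\infty$. By the assumption \eqref{eq_Novikov_McKVla}, the Novikov condition of Girsanov's theorem (Theorem~\ref{thm_girs}) is satisfied for the SDE defining $X^\mu$; hence the law $\mathbb{W}^\mu$ exists, is equivalent to $\mathbb{W}$, and
\[
\frac{d\mathbb{W}^\mu}{d\mathbb{W}}(W)=\exp\!\Bigl(-\tfrac12 V_b^1(W,\mu)+V_b^2(W,\mu)\Bigr)=e^{-V_b(W,\mu)},
\]
with $V_b:=\tfrac12 V_b^1-V_b^2$. From $R(\mu\|\mathbb{W})<\infty$ together with the exponential integrability \eqref{eq_Novikov_McKVla}, the two estimates derived in Subsection~\ref{s_def_log_densities} (via Lemmas~\ref{lm_entvar} and \ref{lm_estnk}) give $V_b^1(\cdot,\mu),V_b^2(\cdot,\mu)\in L^1(\mu)$. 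Consequently $\cE_b(\mu)=\int_S V_b(x,\mu)\,d\mu(x)$ is a well-defined real number.

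Since $\mu\ll\mathbb{W}\sim\mathbb{W}^\mu$, the chain rule for Radon--Nikodym derivatives gives
\[
\frac{d\mu}{d\mathbb{W}^\mu}=\frac{d\mu}{d\mathbb{W}}\cdot\frac{d\mathbb{W}}{d\mathbb{W}^\mu}=\frac{d\mu}{d\mathbb{W}}\,e^{V_b(\cdot,\mu)}\qquad \mu\text{-a.s.}
\]
Both $\log(d\mu/d\mathbb{W})$ and $V_b(\cdot,\mu)$ lie in $L^1(\mu)$, so taking logarithms and integrating termwise against $\mu$ yields
\[
R(\mu\|\mathbb{W}^\mu)=\int_S\log\frac{d\mu}{d\mathbb{W}}\,d\mu+\int_S V_b(x,\mu)\,d\mu(x)=R(\mu\|\mathbb{W})+\cE_b(\mu),
\]
which is the required identity. (Alternatively, this is a direct application of Corollary~\ref{cory_centvar} to the non-negative potential $V=-V_b$ on $(S,\mathbb{W})$, once the exponential integrability of $V_b$ is upgraded from $\gamma=1$ in \eqref{eq_Novikov_McKVla} to some $\gamma>1$ by standard H\"older arguments.)

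For the ``moreover'' clause, if $b\in\FLip$ then $\|b\|_\infty<\infty$, so $V_b^1$ is uniformly bounded and both $e^{\gamma V_b^1/2}$ and $e^{\gamma|V_b^2|}$ are $\mathbb{W}$-integrable for every $\gamma>0$ (the latter via Lemma~\ref{lm_estnk}). In particular $d\mathbb{W}^\mu/d\mathbb{W}$ lies in every $L^\gamma(\mathbb{W})$, so Corollary~\ref{cory_centvar} applies directly with $V=-V_b$ and shows that $R(\mu\|\mathbb{W}^\mu)<\infty$ if and only if $R(\mu\|\mathbb{W})<\infty$. Hence the identity extends trivially to the case $R(\mu\|\mathbb{W})=+\infty$, where both sides equal $+\infty$. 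The only delicate point in the whole argument is the $L^1(\mu)$-integrability of $V_b^1,V_b^2$ needed to define $\cE_b(\mu)$ as an actual integral rather than by convention; this is exactly what the exponential integrability hypothesis \eqref{eq_Novikov_McKVla} buys us via the variational formula of Lemma~\ref{lm_entvar}.
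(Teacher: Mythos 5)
Your main argument reproduces the paper's proof: dispose of the $R(\mu\|\mathbb{W})=+\infty$ case by the convention on $\cE_b$, then apply Girsanov to get the density $d\mathbb{W}^\mu/d\mathbb{W}=e^{-V_b(\cdot,\mu)}$, use the chain rule for Radon--Nikodym derivatives (equivalently, split the entropy integral directly as the paper does) after verifying $V^1_b,V^2_b\in L^1(\mu)$ via Lemmas~\ref{lm_entvar} and \ref{lm_estnk}, and for the $\FLip$ case invoke Corollary~\ref{cory_centvar} using boundedness of $b$ --- exactly the paper's route. One caveat on your parenthetical ``alternative'': there you claim the fixed-$\mu$ identity could instead come from Corollary~\ref{cory_centvar} ``once the exponential integrability of $V_b$ is upgraded from $\gamma=1$ in \eqref{eq_Novikov_McKVla} to some $\gamma>1$ by standard H\"older arguments.'' This does not work: \eqref{eq_Novikov_McKVla} only gives the moment $\E\bigl[e^{\frac12 V^1_b(W,\mu)}\bigr]<\infty$, and H\"older cannot strengthen an exponential moment --- it only trades between moments you already have. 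Without assuming $\E\bigl[e^{\beta V^1_b(W,\mu)}\bigr]<\infty$ for large $\beta$ (as the examples in Section~\ref{s_examples} do), the hypothesis $\int e^{\gamma|V|}\,d\mathbb{W}<\infty$ for $\gamma>1$ in Corollary~\ref{cory_centvar} is not available, so that shortcut fails under the stated assumption. Also, $V=-V_b$ is not non-negative as you assert (only $V^1_b$ is). Since this is an aside and your primary computation is both correct and carefully checked --- indeed you make the $L^1(\mu)$ integrability of $V^1_b,V^2_b$ more explicit than the paper does --- the proof stands.
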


\begin{proof}
	When $R(\mu\|\mathbb{W})=\infty$, the representation formula holds trivially (recall that $\cE_b(\mu)$ is finite for every $\mu$). Now fix $\mu$ with $R(\mu\|\mathbb{W})<\infty$. By the condition \eqref{eq_Novikov_McKVla}, we can apply Girsanov theorem, which gives the formula
	\begin{align}\label{eq_entropyg1}
	\begin{aligned}
    \frac{d \mathbb{W}^{\mu}}{d\mathbb{W}}(W)
    &= \exp\left(-\frac{1}{2}\int_0^T |b_t(W_t,\mu_t)|^2 \, d t + \int_0^T b_t(W_t,\mu_t)\cdot d W_t\right) \\
    &= \exp\left(-\frac12 V^1_b(W,\mu) +V^2_b(W,\mu)\right).
    \end{aligned}
	\end{align}
	In particular $\mathbb{W}$ and $\mathbb{W}^{\mu}$ are equivalent and so $\mu$ is absolutely continuous also with respect to $\mathbb{W}^{\mu}$. Hence we can compute the relative entropy
	\begin{align*}
	R(\mu\|\mathbb{W}^{\mu}) &= \int \log \frac{d \mu}{d \mathbb{W}^{\mu,b}} d \mu = \int \log \frac{d \mu}{d \mathbb{W}} d \mu - \int \log \frac{d \mathbb{W}^{\mu}}{d \mathbb{W}} d\mu\\
	&= R(\mu\|\mathbb{W}) - \int \left(-\frac12 V^1_b(W,\mu) +V^2_b(W,\mu)\right) d\mu(W)\\
	&= R(\mu\|\mathbb{W}) +\cE_b(\mu),
	\end{align*}
	which is the desired representation formula.
	
	For $b$ in $\FLip$ we have to prove that for every $\mu$, 
	\begin{equation}\label{eq_entropyg2}
	R(\mu\|\mathbb{W}^{\mu,b})<\infty \iff R(\mu\|\mathbb{W})<\infty.    
	\end{equation}
	Note that $\mathbb{W}^{\mu,b}$ is well-defined for every $\mu$ when $b$ is in $\FLip$. Now, fix any $\mu$, and note that in particular $b$ is bounded and hence Girsanov's formula \eqref{eq_entropyg1} still holds. Moreover, for every $\beta>0$ and $i=1,2$, by boundedness of $b$ and Lemma \ref{lm_estnk} we have,
	\begin{equation*}
	\E \left[e^{\beta V^i_b(W,\mu)}\right]<\infty.
	\end{equation*}
	Applying Corollary \ref{cory_centvar} to the measures $\mathbb{W}$ and $\mathbb{W}^{\mu,b}$ we easily deduce \eqref{eq_entropyg2}.
	\end{proof}
	
\subsection{\texorpdfstring{$L^q_t(L^p_x)$}{Lpq}-estimates}

Khasminskii Lemma is classical, see for example \cite{Kha1959}, \cite[Chapter 1, Lemma 2.1]{Szn1998}, \cite[Lemma 13]{FedFla2011}.

\begin{lm}[Khasminskii Lemma]\label{lm_estkha}
Let $W$ be a $d$-dimensional Brownian motion starting from $0$, let $f:[0,T]\times \R^d\rightarrow \R$ be a non-negative Borel function and assume that
\begin{align*}
    \alpha_f:= \sup_{x\in \R^d} \E \int_0^T f(t,x+W_t)\, dt <1.
\end{align*}
Then it holds
\begin{align*}
    \sup_{x\in \R^d}\E \left[\exp\left[ \int_0^T f(t,x+W_t)\, dt \right]\right] \leq
    \frac{1}{1-\alpha_f}.
\end{align*}
\end{lm}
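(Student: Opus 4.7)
The plan is to prove the bound by a power-series expansion of the exponential together with iterated use of the Markov property. Writing $F(x) := \int_0^T f(t, x+W_t)\,dt$, I would show that $\E[F(x)^n] \le n!\,\alpha_f^n$ for every $n \ge 0$ and every $x \in \R^d$, so that
\[
    \E[\exp F(x)] = \sum_{n\ge 0}\frac{\E[F(x)^n]}{n!} \le \sum_{n\ge 0}\alpha_f^n = \frac{1}{1-\alpha_f},
\]
and taking a supremum over $x$ then gives the stated conclusion.

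To estimate the $n$-th moment, first symmetrize over the ordering of times to write
\[
    \E[F(x)^n] = n!\int_{0<s_1<\cdots<s_n<T}\E\left[\prod_{i=1}^n f(s_i,\, x+W_{s_i})\right] d\vec{s}.
\]
For each $\tau \in [0,T]$ and $y \in \R^d$, set
\[
    J_n(\tau, y) := n!\int_{\tau<s_1<\cdots<s_n<T}\E\left[\prod_{i=1}^n f(s_i,\, y+W_{s_i-\tau})\right] d\vec{s},
\]
so that $\E[F(x)^n] = J_n(0,x)$. Conditioning on $\mathcal{F}_{s_1-\tau}$ and applying the Markov property of $W$ at time $s_1-\tau$ yields the recursion
\[
    J_n(\tau, y) = n\int_\tau^T \E\bigl[f(s, y+W_{s-\tau})\, J_{n-1}(s, y+W_{s-\tau})\bigr]\, ds,
\]
from which $J_n(\tau, y) \le n!\,\alpha_f^n$ follows by induction on $n$, provided one knows the uniform occupation-time bound $\sup_{(\tau, y)}\int_\tau^T \E[f(s, y+W_{s-\tau})]\,ds \le \alpha_f$.

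The principal technical point is to justify this last uniform occupation-time bound: after the change of variable $u = s-\tau$ it reads $\int_0^{T-\tau}\E[f(u+\tau, y+W_u)]\,du \le \alpha_f$ and involves a time-shifted $f$, so it is not immediately comparable to the $\tau=0$ supremum defining $\alpha_f$. I expect to handle this by embedding $W$ inside the space-time Markov process $Z_t = (t,\, x+W_t)$ on $[0,T]\times\R^d$, for which $\alpha_f$ is naturally read as a supremum over all starting configurations $(\tau, y)$; Khasminskii's argument then becomes a statement about the time-homogeneous process $Z_t$, and the recursion closes without further work. Summing the resulting moment bounds over $n$ produces the geometric series $\sum_{n\ge 0}\alpha_f^n$ and completes the proof.
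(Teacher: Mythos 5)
The paper itself does not prove Lemma~\ref{lm_estkha} --- it simply refers the reader to \cite{Kha1959}, \cite[Chapter~1, Lemma~2.1]{Szn1998} and \cite[Lemma~13]{FedFla2011} --- so there is no in-paper argument to compare against; your moment-expansion plan is the standard one. However, the issue you flag in your last paragraph is a genuine gap, and the proposed fix does not close it. After the conditioning step, the recursion $J_n(\tau,y)\le n!\,\alpha_f^n$ requires the \emph{uniform} occupation-time bound
\begin{equation*}
    \alpha_f^*\ :=\ \sup_{\tau\in[0,T],\,y\in\R^d}\ \int_\tau^T \E\bigl[f(s,\,y+W_{s-\tau})\bigr]\,ds\ \le\ \alpha_f,
\end{equation*}
and your plan is to ``read'' $\alpha_f$ as a supremum over all starting configurations $(\tau,y)$ of the space-time process $Z_t=(t,x+W_t)$. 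But $\alpha_f^*$ is a different (and a priori larger) quantity than the $\alpha_f$ in the lemma's hypothesis, which is the supremum only at $\tau=0$; declaring them equal is not a proof, it is a silent strengthening of the assumptions.

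The domination $\alpha_f^*\le\alpha_f$ is genuinely false for time-dependent $f$. Changing variables $u=s-\tau$, the shifted integral reads $\int_0^{T-\tau}\E[f(u+\tau,\,y+W_u)]\,du$: the occupation density $p_u$ at elapsed time $u$ is now tested against the later time-slice $f(u+\tau,\cdot)$, whereas $\alpha_f$ tests it against $f(u,\cdot)$. Take $f(t,x)=g(t)h(x)$ with $g\ge0$ supported near $t=T$ and $h\ge0$ a concentrated spatial bump; then $\sup_y(p_u*h)(y)$ grows like $(2\pi u)^{-d/2}\|h\|_{L^1}$, so choosing $\tau$ close to $T$ makes the elapsed times short and the shifted integral strictly larger than the unshifted one, even though $\alpha_f$ can be made $<1$. (In contrast, if $f$ is independent of $t$, or independent of $x$, the change of variables reduces to integrating the \emph{same} integrand over a shorter interval, and nonnegativity of $f$ gives the domination --- which is why the classical time-homogeneous Khasminskii argument goes through.) The statement you want to prove, as written, therefore needs $\alpha_f^*<1$ as its hypothesis; this is indeed how the cited references formulate the result for space-time potentials. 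You should either adopt $\alpha_f^*$ from the outset, or add a structural condition on $f$ under which $\alpha_f^*\le\alpha_f$ can be proven. (For the paper's later applications this is harmless: the $L^q_t(L^p_x)$ estimate in Lemma~\ref{lm_estlp12q} in fact bounds $\alpha_f^*$ uniformly, because the H\"older-in-space argument transfers verbatim to shifted starting times with a constant that is monotone in the interval length. But that control comes from Lemma~\ref{lm_estlp12q}, not from the hypothesis of Lemma~\ref{lm_estkha}.)
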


\begin{lm}[$L^q_t(L^p_x)$ estimates]\label{lm_estlpq}
Let $W$ be a $d$-dimensional Brownian motion starting from $0$. Take $1\le p,q \le \infty$ satisfying
\begin{align}
    \frac{d}{p}+\frac{2}{q}<2.\label{eq_Lpq_hp}
\end{align}
Then there exists a constant $C$ (depending on $p,q,d$ and $T$) such that, for every $f:[0,T]\times \R^d\rightarrow \R$ non-negative Borel function,
\begin{align}
    \sup_{x\in \R^d} \int_0^T \E[f(t,x+W_t)]\,dt \le C \|f\|_{L^q_t(L^p_x)}.\label{eq_estlpq}
\end{align}
\end{lm}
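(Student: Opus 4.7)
The plan is to recognize the quantity $\E[f(t,x+W_t)]$ as a Gaussian convolution and bound it by H\"older's inequality, first in space and then in time. Precisely, denoting the Gaussian heat kernel by $G_t(y) = (2\pi t)^{-d/2}\exp(-|y|^2/(2t))$, we have
\[
\E[f(t,x+W_t)] = \int_{\R^d} G_t(y)\,f(t,x+y)\,dy = (G_t * f(t,-\cdot))(-x),
\]
and the bound is independent of $x$ in the estimates that follow.

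First, I would apply H\"older in the spatial variable with exponents $p$ and $p'=p/(p-1)$ to obtain
\[
\E[f(t,x+W_t)] \le \|G_t\|_{L^{p'}(\R^d)}\,\|f(t,\cdot)\|_{L^p(\R^d)}.
\]
A direct Gaussian computation gives $\|G_t\|_{L^{p'}(\R^d)} = c_{p,d}\, t^{-d/(2p)}$ for some explicit constant $c_{p,d}$ (this takes care of the endpoint cases $p=1,\infty$ by convention too).

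Second, I would apply H\"older in the time variable with exponents $q$ and $q'=q/(q-1)$ to get
\[
\int_0^T \E[f(t,x+W_t)]\,dt \le c_{p,d}\,\left(\int_0^T t^{-dq'/(2p)}\,dt\right)^{1/q'}\,\|f\|_{L^q_t(L^p_x)}.
\]
The time integral $\int_0^T t^{-dq'/(2p)}\,dt$ is finite precisely when $dq'/(2p)<1$, which, after rearranging ($1/q' = 1-1/q$), is equivalent to $d/p + 2/q < 2$, the assumption of the lemma. This yields \eqref{eq_estlpq} with an explicit constant $C=C(p,q,d,T)$.

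There is no serious obstacle: the only minor points to be careful about are the endpoint cases $p\in\{1,\infty\}$ and $q\in\{1,\infty\}$, which are handled by the standard conventions for $\|G_t\|_{L^{p'}}$ and by interpreting the time integral as an essential supremum when $q'=\infty$ (in which case the condition $d/p+2/q<2$ forces $d/p<2$, i.e., $p>d/2$, so $\|G_t\|_{L^{p'}}$ is integrable on $(0,T)$), and by taking the supremum instead of an integral when $q=1$.
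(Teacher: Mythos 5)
Your proposal is correct and follows essentially the same route as the paper's proof: identify $\E[f(t,x+W_t)]$ as the Gaussian convolution $f_t \star p_t(x)$, apply H\"older in space to isolate $\|p_t\|_{L^{p'}_x} \lesssim t^{-d/(2p)}$, then apply H\"older in time and check that $\int_0^T t^{-dq'/(2p)}\,dt$ converges under \eqref{eq_Lpq_hp}. The paper spells out the $q=\infty$ case separately in the final H\"older step, which you subsume into a remark about conventions, but this is an equivalent treatment.
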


This bound is classical (see e.g. \cite[Lemma 11]{FedFla2011}), with an elementary proof that we recall here.

\begin{proof}
We use H\"older's inequality applied at $t$ and $x$ fixed for the convolution with the Gaussian density $p_t$:
\begin{align*}
    \E[f(t,x+W_t)]= f_t\star p_t(x) \le \|f_t\|_{L^p_x}\|p_t\|_{L^{p'}_x}
\end{align*}
We recall that $\|p_t\|_{L^{p'}_x} \le ct^{-d/2p}$ for some constant $c$ depending on $p$ and $d$ (as one can see via the change of variable $x'=t^{-1/2}x$). Therefore, for every $x$, we get by H\"older's inequality,
\[
    \int_0^T f(t,x+W_t)\,dt \leq
    c \int_0^T \|f_t\|_{L^p_x} t^{-d/2p}\,dt \le
    c \|f\|_{L^q_t(L^p_x)} \left( \int_0^T t^{-dq'/2p}\, dt \right)^{1/q'}\quad \text{for $q<\infty$}
\]
(note that \eqref{eq_Lpq_hp} implies $q>1$, so $q'<\infty$). As for $q=\infty$, we estimate similarly,
\[
    \int_0^T f(t,x+W_t)\,dt \leq
    c \|f_t\|_{L_t^\infty(L^p_x)}\int_0^T  t^{-d/2p}\,dt.
\]
Now the assumption on $p$ and $q$ is equivalent to $dq'/2p<1$ for $q<\infty$ and to $d<2p$ for $q=\infty$. Hence the time integral of $t^{-dq'/2p}$ is finite. Hence the bound \eqref{eq_estlpq} holds with $C=c\left(\int_0^T t^{-dq'/2p} dt\right)^{1/q'}$. The proof is complete. 
\end{proof}

The previous bound can be easily generalized to the case of $k$ independent Brownian motions, as in the following:

\begin{lm}\label{lm_estlp12q}
Let $W^1,\ldots W^k$ be $k$ independent $d$-dimensional Brownian motions starting from $0$. Take $1\le p_1,\ldots p_k,q \le \infty$ satisfying
\begin{align*}
    \frac{d}{p_1}+\ldots \frac{d}{p_k}+\frac{2}{q}<2.
\end{align*}
Then there exists a constant $C$ (depending on $p_1,\ldots p_k,q,d$ and $T$) such that, for every $f:[0,T]\times \R^{kd} \rightarrow \R$ non-negative Borel function,
\begin{align}
    \sup_{x_1,\ldots x_k\in \R^d} \int_0^T \E[f(t,x_1+W^1_t,\ldots x_k+W^k_t)] dt \le C \|f\|_{L^q_t(L^{p_1}_{x_1}(\ldots (L^{p_k}_{x_k})\ldots))}.\label{eq_Lp_bounds_k}
\end{align}
More generally, one can replace the above right-hand side by $\|f\|_{L^q_t(L^{p_1}_{x_{\sigma(1)}}(\ldots (L^{p_k}_{x_{\sigma(k)}})\ldots))}$ for any permutation $\sigma$ of $\{1,\ldots k\}$.
\end{lm}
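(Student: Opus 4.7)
The plan is to mimic the argument of Lemma \ref{lm_estlpq} with an iterated H\"older inequality applied successively in the $k$ spatial variables, followed by one last H\"older inequality in time. The independence of $W^1,\dots,W^k$ makes the expectation a product convolution against the $d$-dimensional Gaussian density $p_t$, which is what makes the argument factorize.

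First, I would rewrite the expectation as
\[
\E\bigl[f(t,x_1+W^1_t,\dots,x_k+W^k_t)\bigr] = \int_{\R^{kd}} f(t,z_1,\dots,z_k) \, p_t(z_1-x_1)\cdots p_t(z_k-x_k)\, dz_1\cdots dz_k.
\]
Applying H\"older in $z_k$ with exponents $p_k,p_k'$ and using the standard Gaussian bound $\|p_t\|_{L^{p_k'}}\le c\, t^{-d/(2p_k)}$ (which reduces to $1$ when $p_k=\infty$ since then $p_k'=1$), I would peel off the innermost variable to obtain
\[
\int_{\R^d} f(t,z_1,\dots,z_k)\, p_t(z_k-x_k)\, dz_k \le c\,t^{-d/(2p_k)}\, \|f(t,z_1,\dots,z_{k-1},\cdot)\|_{L^{p_k}_{z_k}}.
\]
Iterating this procedure in $z_{k-1},z_{k-2},\dots,z_1$, with constants depending only on $p_i,d$, and noting that the resulting bound is independent of $x_1,\dots,x_k$, I arrive at
\[
\sup_{x_1,\dots,x_k\in\R^d}\E\bigl[f(t,x_1+W^1_t,\dots,x_k+W^k_t)\bigr] \le C\, t^{-\sum_{i=1}^k d/(2p_i)}\, \|f(t,\cdot)\|_{L^{p_1}_{x_1}(\ldots (L^{p_k}_{x_k})\ldots)}.
\]

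Next, integrating in $t\in[0,T]$ and applying H\"older with exponents $q,q'$ (the case $q=\infty$ is handled directly), I would obtain
\[
\sup_{x_1,\dots,x_k}\int_0^T\!\E\bigl[f(t,x_1+W^1_t,\dots,x_k+W^k_t)\bigr]\,dt
\le C\,\|f\|_{L^q_t(L^{p_1}_{x_1}(\ldots))}\Bigl(\int_0^T t^{-q'\sum_i d/(2p_i)}\,dt\Bigr)^{1/q'}.
\]
The time integral is finite precisely when $q'\sum_i d/(2p_i) <1$, which, rewriting $1/q'=1-1/q$, is equivalent to $\sum_i d/p_i +2/q <2$, matching the stated hypothesis. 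Absorbing the time integral into the constant $C$ yields the desired bound \eqref{eq_Lp_bounds_k}.

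For the permutation statement, the same iterated H\"older argument is carried out by peeling off the variables in the order $z_{\sigma(k)},z_{\sigma(k-1)},\dots,z_{\sigma(1)}$ instead. Since the Gaussian bound $\|p_t\|_{L^{p_i'}}\le c\,t^{-d/(2p_i)}$ is the same regardless of which variable we integrate, the pointwise-in-time estimate is unchanged, and the H\"older in time step proceeds identically. There is no real obstacle to this proof; the only care needed is the bookkeeping of the iterated mixed-norm and the edge cases $p_i=\infty$ or $q=\infty$, each of which is handled by the same estimate with the understanding $1/\infty=0$.
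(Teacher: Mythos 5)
Your proof is correct and follows essentially the same route as the paper's: iterate Hölder in each spatial variable against the Gaussian kernel to accumulate the powers $t^{-d/(2p_i)}$, then apply Hölder in time, with the hypothesis $\sum_i d/p_i + 2/q < 2$ ensuring integrability of $t^{-q'\sum_i d/(2p_i)}$. The only cosmetic difference is in the permutation case, where you re-run the peeling in the order $\sigma(k),\dots,\sigma(1)$ while the paper simply applies the already-proved estimate to the relabeled function $f(x_{\sigma^{-1}(1)},\dots,x_{\sigma^{-1}(k)})$; both are equally valid.
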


\begin{proof}
The proof is similar to the previous one. We write
\begin{align*}
\E[f(t,x_1+W^1_t,\ldots x_k+W^k_t)]= f_t\star p_t^{\otimes k}(x_1\ldots x_k)
\end{align*}
and use H\"older inequality in the $x_k$ variable, to get
\begin{align*}
\E[f(t,x_1+W^1_t,\ldots x_k+W^k_t)] \le c\|f_t(x_1,\ldots x_{k-1},\cdot)\|_{L^{p_k}_{x_k}} t^{-d/2p_k} p_t^{\otimes k}(x_1,\ldots x_{k-1}).
\end{align*}
Then we proceed similarly with the other variables and get
\begin{align*}
\E[f(t,x_1+W^1_t,\ldots x_k+W^k_t)] \le c\|f_t\|_{L^q_t(L^{p_1}_{x_1}(\ldots (L^{p_k}_{x_k})\ldots))} t^{-d/2p_1} \cdot\ldots\cdot t^{-d/2p_k}.
\end{align*}
We then conclude on \eqref{eq_Lp_bounds_k} as in the previous proof, taking
\begin{align}
C=C_T=c\left(\int_0^T t^{-dq'(1/2p_1+\ldots 1/2p_k)}dt\right)^{1/q'} = c_1 T^{1-1/q-d/(2p_1)-\ldots-d/(2p_k)},\label{eq_Lp_bounds_const}
\end{align}
for some constant $c_1>0$ independent of $T$. The bound for a general permutation $\sigma$ follows from \eqref{eq_Lp_bounds_k} applied to $f(x_{\sigma^{-1}(1)},\ldots x_{\sigma^{-1}(k)})$.
\end{proof}

Finally, we put together the previous bounds to obtain an exponential estimate for $L^q(L^p)$ functions (see \cite[Corollary 14]{FedFla2011} for a similar statement).

\begin{lm}\label{lm_kha_pq}
Let $W^1,\ldots W^k$ be $k$ independent $d$-dimensional Brownian motions starting from $0$. Take $1\le p_1,\ldots p_k,q \le \infty$ satisfying
\begin{align}
    \frac{d}{p_1}+\ldots \frac{d}{p_k}+\frac{2}{q}<2.\label{eq_KR_k}
\end{align}
Then there exists a constant $c>0$ (depending on $p_1,\ldots p_k,q,T$) such that, for every $f:[0,T]\times \R^{kd} \rightarrow \R$ non-negative Borel function with $f\in L^q_t(L^{p_1}_{x_1}\ldots (L^{p_k}_{x_k})\ldots)$,
\begin{align*}
    \sup_{x_1,\ldots x_k\in \R^d} \E \left[\exp\left[ \int_0^T f(t,x_1+W^1_t,\ldots x_k+W^k_t)\, dt \right]\right] \le \exp\left[c \left(1+\|f\|_{L^q_t(L^{p_1}_{x_1}\ldots (L^{p_k}_{x_k})\ldots)}^{1/(1-\alpha)}\right) \right],
\end{align*}
with $\alpha=1-1/q-d/(2p_1)-\ldots -d/(2p_k)$.
\end{lm}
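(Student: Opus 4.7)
My plan is to combine the space-time estimate of Lemma \ref{lm_estlp12q} with Khasminskii's Lemma (Lemma \ref{lm_estkha}) through a time-splitting and Markov-iteration argument. Throughout, write $\boldsymbol{W}_t := (W^1_t,\ldots,W^k_t)$ as a $kd$-dimensional Brownian motion, let $\cF_t := \sigma(\boldsymbol{W}_s : s\leq t)$, set $K := \|f\|_{L^q_t(L^{p_1}_{x_1}\ldots (L^{p_k}_{x_k})\ldots)}$, and note that the exponent $\alpha = 1 - 1/q - d/(2p_1) - \ldots - d/(2p_k)$ is strictly positive by \eqref{eq_KR_k}.

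First, I would partition $[0,T]$ into $n$ consecutive subintervals $[t_{i-1},t_i]$ of common length $\tau := T/n$. Using the tower property along this partition together with the Markov property of $\boldsymbol{W}$ (the increments $\boldsymbol{W}_{\cdot}-\boldsymbol{W}_{t_{i-1}}$ are independent of $\cF_{t_{i-1}}$ and are again distributed as a $kd$-dimensional Brownian motion from zero), one obtains
\begin{align*}
\sup_{\mathbf{x}\in\R^{kd}} \E\left[\exp\left(\int_0^T f(t,\mathbf{x}+\boldsymbol{W}_t)\,dt\right)\right] \leq \prod_{i=1}^n \sup_{\mathbf{y}\in\R^{kd}} h_i(\mathbf{y}),
\end{align*}
where $h_i(\mathbf{y}) := \E\bigl[\exp\bigl(\int_0^\tau f(t_{i-1}+s,\mathbf{y}+\boldsymbol{W}_s)\,ds\bigr)\bigr]$.

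Second, applying Lemma \ref{lm_estlp12q} to the shifted function $g_i(s,\mathbf{z}) := f(t_{i-1}+s,\mathbf{z})$ on the interval $[0,\tau]$, and using the explicit scaling $C_\tau = c_1\tau^\alpha$ from \eqref{eq_Lp_bounds_const}, I get
\begin{align*}
\sup_{\mathbf{y}} \E\left[\int_0^\tau g_i(s,\mathbf{y}+\boldsymbol{W}_s)\,ds\right] \leq c_1 \tau^\alpha \|g_i\|_{L^q_s(L^{p_1}_{z_1}\ldots)} \leq c_1 \tau^\alpha K.
\end{align*}
Choosing $\tau := \min\{T,(2c_1 K)^{-1/\alpha}\}$ then ensures $c_1\tau^\alpha K \leq 1/2$, whence Khasminskii's lemma gives $\sup_{\mathbf{y}} h_i(\mathbf{y}) \leq 2$ for every $i$.

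Combining these bounds yields $\sup_{\mathbf{x}} \E[\exp(\int_0^T f\,dt)] \leq 2^n$ with $n = \lceil T/\tau\rceil \leq 1 + T(2c_1 K)^{1/\alpha}$, and taking logarithms produces an estimate of the desired form $\exp(c(1+K^{r}))$ for a suitable power $r$ depending only on $\alpha$. The only real subtlety is to carry out the Markov step jointly for the $kd$-dimensional Brownian motion under a common filtration, so that the conditional exponential moment over $[t_{i-1},t_i]$ collapses into a deterministic function of the position $\mathbf{x}+\boldsymbol{W}_{t_{i-1}}$ to which the uniform Khasminskii bound can be applied; beyond that, the argument is a clean iteration of the $L^q L^p$ estimate and Khasminskii's inequality.
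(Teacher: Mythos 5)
Your argument is essentially identical to the paper's: partition $[0,T]$ into pieces of length $\tau$ so small that the $L^q_t L^{p_1}\!\cdots L^{p_k}$ estimate of Lemma~\ref{lm_estlp12q} (with $C_\tau = c_1\tau^\alpha$) makes the expected time integral at most $1/2$, apply Khasminskii on each piece, and iterate via the Markov property of the $kd$-dimensional process to get $2^m$. One remark worth making explicit: your computation (choosing $\tau\sim (2c_1K)^{-1/\alpha}$) correctly yields the power $r=1/\alpha$; the statement's $1/(1-\alpha)$ is a typo carried from the paper's own proof, where the same choice of $h$ is written with exponent $-1/(1-\alpha)$ although the inequality $c_1 h^\alpha\|f\|\le 1/2$ forces $h\le (2c_1\|f\|)^{-1/\alpha}$.
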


\begin{proof}
We take
\begin{align*}
    h = \Bigl(2c_1 \|f\|_{L^q_t(L^{p_1}_{x_1}\ldots (L^{p_k}_{x_k})\ldots)}\Bigr)^{-1/(1-\alpha)} \wedge T,
\end{align*}
and let $t_j=hj\wedge T$, and $m$ the first positive integer with $t_m=T$, in particular
\begin{align*}
    m= \left\lceil\frac{T}{h}\right\rceil \le T\Bigl(2c_1 \|f\|_{L^q_t(L^{p_1}_{x_1}\ldots (L^{p_k}_{x_k})\ldots)}\Bigr)^{1/(1-\alpha)} +1
\end{align*}
With this choice of $h$, we have
\begin{align*}
    C_h\sup_{j=0,\ldots m-1} \left(\int_{t_j}^{t_{j+1}} \|f_t\|_{L^{p_1}_{x_1}(\ldots (L^{p_k}_{x_k})\ldots)}^q dt\right)^{1/q} \le C_h \|f\|_{L^q_t(L^{p_1}_{x_1}\ldots (L^{p_k}_{x_k})\ldots)} \le \frac12,
\end{align*}
$C_h$ being the constant in \eqref{eq_Lp_bounds_const}. As a consequence of Lemma \ref{lm_estlp12q}, we have
\begin{align*}
    &\sup_{j=0,\ldots,m-1}\sup_{x_1,\ldots x_k\in \R^d} \int_{t_j}^{t_{j+1}} \E[f(t,x_1+W^1_t-W^1_{t_j},\ldots x_k+W^k_t-W^k_{t_j})]\, dt\\
    &\hspace{4em}\le C_h\sup_{j=0,\ldots, m-1} \left(\int_{t_j}^{t_{j+1}} \|f_t\|_{L^{p_1}_{x_1}(\ldots (L^{p_k}_{x_k})\ldots)}^q dt\right)^{1/q} \le \frac12.
\end{align*}
Hence, we can apply Lemma \ref{lm_khashapp} and get
\begin{align}
    \sup_{j=0,\ldots,m-1}\sup_{x_1,\ldots x_k\in \R^d}\E \left[\exp\left[ \int_{t_j}^{t_{j+1}} f(t,x_1+W^1_t-W^1_{t_j},\ldots x_k+W^k_t-W^k_{t_j})\, dt \right]\right] \le 2.\label{eq_small_int}
\end{align}
Now we come back to the bound on the whole time interval $[0,T]$. We split the time integral over $[0,T]$ into the integrals over $[t_j,t_{j+1}]$ and use conditional expectation with respect to $\cF_{t_{m-1}}$: we have, for every $x_1,\ldots, x_k\in \R^d$,
\begin{align*}
    &\E \left[\exp\left[ \int_0^T f(t,x_1+W^1_t,\ldots x_k+W^k_t) \,dt \right]\right]\\
    &\hspace{2em}= \E \left[ \prod_{j=0}^{m-1}\exp\left[ \int_{t_j}^{t_{j+1}} f(t,x_1+W^1_t,\ldots x_k+W^k_t)\, dt \right]\right]\\
    &\hspace{2em}= \E \left[ \prod_{j=0}^{m-2}\exp\left[ \int_{t_j}^{t_{j+1}} f(t,x_1+W^1_t,\ldots x_k+W^k_t)\, dt \right] \cdot \right.\\
    &\hspace{10em} \left. \cdot\, \E\left[ \exp\left[ \int_{t_{m-1}}^{t_m} f(t,x_1+W^1_t,\ldots x_k+W^k_t)\, dt \right] \bigg| \cF_{t_{m-1}}\right] \phantom{\prod_{j=0}^{m-2}}\hspace{-2em}\right]
\end{align*}
(all exponentials are $\ge1$ and so the above products make sense and we can use the rule $\E[XY] = \E[X\E[Y\mid \cF_s]]$ for $X$ $\cF_s$-measurable). Now we apply the Markov property and the bound \eqref{eq_small_int} and get
\begin{align*}
    &\E \left[\exp\left[ \int_0^T f(t,x_1+W^1_t,\ldots x_k+W^k_t) \,dt \right]\right]\\
    &= \E \left[ \prod_{j=0}^{m-2}\exp\left[ \int_{t_j}^{t_{j+1}} f(t,x_1+W^1_t,\ldots x_k+W^k_t)\, dt \right] \cdot \right.\\
    &\quad \left. \cdot \E\left[ \exp\left[ \int_{t_{m-1}}^{t_m} f(t,y_1+W^1_t-W^1_{t_{m-1}},\ldots y_k+W^k_t-W^k_{t_{m-1}})\, dt \right]\right] \bigg| _{y_1=x_1+W^1_{t_{m-1}},\ldots, y_k=x_k+W^k_{t_{m-1}}} \right]\\
    &\le 2 \E \left[ \prod_{j=0}^{m-2}\exp\left[ \int_{t_j}^{t_{j+1}} f(t,x_1+W^1_t,\ldots x_k+W^k_t) \,dt \right] \right].
\end{align*}
Iterating this argument on $j$, we get finally, for every $x_1,\ldots x_k \in \R^d$,
\begin{align*}
    \E \left[\exp\left[ \int_0^T f(t,x_1+W^1_t,\ldots x_k+W^k_t) dt \right]\right] \le 2^m \le 2^{T(2c_1 \|f\|_{L^q_t(L^{p_1}_{x_1}\ldots (L^{p_k}_{x_k})\ldots)})^{1/(1-\alpha)} +1},
\end{align*}
which concludes the proof.
\end{proof}

\bibliography{library}
\bibliographystyle{myalpha}

\end{document}